 \newtheorem{thm}{Theorem}[section]
 \newtheorem{cor}[thm]{Corollary}
 \newtheorem{lem}[thm]{Lemma}
 \theoremstyle{definition}
 \theoremstyle{remark}
 \newtheorem{rem}[thm]{Remark}
 \newtheorem{ex}[thm]{Example}
 \numberwithin{equation}{section}
\begin{document}
\hyphenation{Leib-niz}

\font\eightrm=cmr8
\font\smallbf=ptmbo at 10pt
\font\smallit=ptmri at 10pt
\font\tensans=cmss10 %`
%\font\eightsans=cmss10 at 8pt %recover-file
\font\smallsmallit=ptmri at 8pt
\font\smallmedit=ptmri at 9pt
\font\teneuf=eufm10
\font\seveneuf=eufm7
\font\fiveeuf=eufm5
\font\tenmsa=msam10
\font\sevenmsa=msam7
\font\fivemsa=msam5
\font\tenmsb=msbm10
\font\sevenmsb=msbm7
\font\fivemsb=msbm5
\newfam\euffam
\textfont\euffam=\teneuf
\scriptfont\euffam=\seveneuf
\scriptscriptfont\euffam=\fiveeuf
\def\hexnumber@#1{\ifcase#1 0\or1\or2\or3\or4\or5\or6\or7\or8\or9\or
	A\or B\or C\or D\or E\or F\fi }
\newfam\msafam
\newfam\msbfam
\textfont\msafam=\tenmsa  \scriptfont\msafam=\sevenmsa
  \scriptscriptfont\msafam=\fivemsa
\textfont\msbfam=\tenmsb  \scriptfont\msbfam=\sevenmsb
  \scriptscriptfont\msbfam=\fivemsb
\edef\msb@{\hexnumber@\msbfam}
%\mathchardef\smallsetminus="2\msb@72
\mathchardef\varGamma="0100
%\mathchardef\Delta="0101
\mathchardef\varTheta="0102
\mathchardef\varLambda="0103
\mathchardef\varXi="0104
\mathchardef\varPi="0105
\mathchardef\varSigma="0106
\mathchardef\varUpsilon="0107
\mathchardef\varPhi="0108
%\mathchardef\varPsi="0109
\mathchardef\varOmega="010A
\def\checked{}
\def\td{\tilde}
\def\dvw{\mathrm{div}\hskip1.4ptw}
\def\dvv{\mathrm{div}\hskip1.4ptv}
\def\ns{\nabla\hskip-1.7pt_}
\def\nsu{\nabla_{\hskip-2.2ptu}}
\def\nsv{\nabla_{\hskip-2.2ptv}}
\def\nsc{\nabla_{\hskip-2.2ptc}}
\def\nsvu{\nsv\hh u}
\def\nsvw{\nsv\hn w}
\def\nsuv{\nsu\hh v}
\def\nsuw{\nsu\hh w}
\def\nsw{\nabla_{\hskip-2ptw}}
\def\nswu{\nsw u}
\def\nswv{\nsw v}
\def\nsww{\nsw w}
\def\nav{\nabla\hskip-.4ptv}
\def\naw{\nabla\nh w}
\def\bna{\hs\overline{\nh\nabla\nh}\hs}
\def\rc{\hbox{$\mathrm{R}\hskip-6.5pt^{^{^{_{\circ}}}}$}{}}
\def\rw{\hbox{$\mathrm{R}\hskip-6.5pt^{^{^{_{\wedge}}}}$}{}}
\def\lx{\mathtt{[}}
\def\rx{\mathtt{]}}
\def\bn{\mathbf{b}}
\def\cn{\mathbf{c}}
\def\jt{\text{\tt j}}
\def\jk{\jt^{\nh k}}
\def\jm{\jt^{\nh k-1}}
\def\ca{\nnh\circledast\nnh}
\def\e{E}
\def\bbF{\mathrm{I\!F}}
\def\bbH{\mathrm{I\!H}}
\def\bbK{\mathrm{I\!K}}
\def\bbR{\mathrm{I\!R}}%\def\bbR{\mathbf{R}}
\def\rn{{\bbR}^{\hskip-.6ptn}}
\newcommand{\bbC}{{\mathchoice {\setbox0=\hbox{$\displaystyle\mathrm{C}$}
\hbox{\hbox to0pt{\kern0.4\wd0\vrule height0.9\ht0\hss}\box0}} 
{\setbox0=\hbox{$\textstyle\mathrm{C}$}\hbox{\hbox 
to0pt{\kern0.4\wd0\vrule height0.9\ht0\hss}\box0}} 
{\setbox0=\hbox{$\scriptstyle\mathrm{C}$}\hbox{\hbox 
to0pt{\kern0.4\wd0\vrule height0.9\ht0\hss}\box0}} 
{\setbox0=\hbox{$\scriptscriptstyle\mathrm{C}$}\hbox{\hbox 
to0pt{\kern0.4\wd0\vrule height0.9\ht0\hss}\box0}}}}
\def\dimr{\dim_{\hskip-.4pt\bbR\hskip-1.2pt}^{\phantom j}}
\def\dimc{\dim_{\hskip-.4pt\bbC\hskip-1.2pt}^{\phantom j}}
\def\dimf{\dim_{\hskip-.4pt\bbF\hskip-1.2pt}^{\phantom j}}
\def\dimh{\dim_{\hskip.4pt\bbH\hskip-1.2pt}^{\phantom i}}
\def\dr{d_{\hskip.1pt\bbR}^{\phantom j}}
\def\df{d_{\hskip.1pt\bbF}^{\phantom j}}
\def\hyp{\hskip.5pt\vbox
{\hbox{\vrule width2.5ptheight0.5ptdepth0pt}\vskip2pt}\hskip.5pt}
\def\f{f}
\def\h{h}
\def\k{j}
\def\d{l}
\def\m{m}
\def\n{n}
\def\p{p}
\def\q{q}
\def\r{r}
\def\x{x}
\def\y{y}
\def\z{z}
\def\K{L}
\def\U{U}
\def\V{V}
\def\W{W}
\def\ct{\kappa}
\def\aj{\mathrm{A}}
\def\bj{\mathrm{B}}
\def\dj{\mathrm{D}}
\def\ej{\mathrm{E}}
\def\gj{\mathrm{G}}
\def\jj{\mathrm{J}}
\def\kj{\mathrm{K}}
\def\mj{\mathrm{M}}
\def\nj{\mathrm{N}}
\def\sj{\mathrm{S}}
\def\by{\beta}
\def\bya{\by_a\w}
\def\byab{\by_{a,b}\w}
\def\zy{\zeta}
\def\eya{\eta_a\w}
\def\cy{\theta}
\def\cya{\cy\hskip-1pt_a\w}
\def\cyab{\cy\hskip-1pt_{a,b}\w}
\def\cyc{\cy\hskip-1pt_c\w}
\def\cyac{\cy\hskip-1pt_{a,c}\w}
\def\iy{\iota}
\def\iyp{\iy\nh_{a,b}^+}
\def\iym{\iy\nh_{a,b}^-}
\def\ipm{\iy\nh_{a,b}^\pm}
\def\ly{\lambda}
\def\my{\mu}
\def\mya{\my_a\w}
\def\myab{\my_{a,b}\w}
\def\myc{\my_c\w}
\def\myac{\my_{a,c}\w}
\def\myad{\my_{a,d}\w}
\def\sy{\sigma}
\def\sya{\sy\hskip-1.8pt_a\w}
\def\ty{\tau}
\def\tya{\ty\hskip-1.8pt_a\w}
\def\tyb{\ty_{\nh b}\w}
\def\tyc{\ty\hskip-1.8pt_c\w}
\def\tyd{\ty\hskip-.8pt_d\w}
\def\tye{\ty\hskip-1.8pt_e\w}
\def\fy{\psi}
\def\xy{\xi}
\def\ap{\alpha}
\def\ch{\Phi}
\def\pr{\Pi}
\def\dz{\Delta}
\def\cu{\Omega}
\def\ea{\mathcal{A}}
\def\eb{\mathcal{B}}
\def\ec{\mathcal{C}}
\def\ee{\mathcal{E}}
\def\ef{\mathcal{F}}
\def\el{\mathcal{L}}
\def\em{\mathcal{M}}
\def\en{\mathcal{N}}
\def\ep{\mathcal{P}}
\def\eq{\mathcal{Q}}
\def\es{\mathcal{S}}
\def\et{\mathcal{T}}
\def\eu{\mathcal{U}}
\def\ev{\mathcal{V}}
\def\ew{\mathcal{W}}
\def\ey{\mathcal{Y}}
\def\ez{\mathcal{Z}}
\def\fz{\text{\smallbf F}}
\def\hz{\text{\smallbf H}}
\def\iz{\text{\smallbf I}}
\def\jz{\text{\smallbf J}}
\def\kz{\text{\smallbf K}}
\def\lz{\text{\smallbf L}}
\def\mz{\text{\smallbf M}}
\def\pz{\text{\smallbf P}}
\def\qz{\text{\smallbf Q}}
\def\w{^{\phantom i}}
\def\s{^{\phantom j}}
\def\hs{\hskip.7pt}
\def\hh{\hskip.4pt}
\def\hn{\hskip-.4pt}
\def\nh{\hskip-.7pt}
\def\nnh{\hskip-1.5pt}
\def\vg{\varGamma}
\def\t{{t}}
\def\gy{\gamma}
\def\sy{{\sigma}}
\def\vd{\delta}
\def\ve{\varepsilon}
\def\ro{\hbox{$R\hskip-4.5pt^{^{^{_{\circ}}}}$}{}}

\renewcommand{\theequation}{\thesection.\arabic{equation}}

\title[Indefinite Ein\-stein metrics]{Indefinite Ein\-stein metrics on simple 
Lie groups}
\author[A. Derdzinski]{Andrzej Derdzinski} %\large 
\address{Department of Mathematics, The Ohio State University, Columbus, 
\hbox{OH 43210,} USA}
\email{andrzej@math.ohio-state.edu}
\author[\'S.\ R.\ Gal]{\'Swiatos\l aw R.\ Gal} %\large 
\address{Mathematical Institute, \hskip-2ptWroc\l aw University, pl. 
\hskip-1ptGrunwaldzki 2/\nnh4, \hbox{50-384 \hskip-.7ptWroc\l aw,} Poland}
\email{Swiatoslaw.Gal@math.uni.wroc.pl}
\dedicatory{\smallit\hskip-22.5pt Dedicated to Professor Witold Roter on the 
occasion of his eightieth birthday}
\begin{abstract}The set $\mathcal{E}$ of Levi-Civita connections of 
left-invariant pseudo-Riemannian Einstein metrics on a given semisimple Lie 
group always includes D, the Levi-Civita connection  of 
the Killing form. For the groups SU($l,j$) (or SL($n,\bbR$), or SL($n,\bbC$) 
or, if $n$ is even, SL($n/2,\bbH$)), with $0\le j\le l$ and $\,j+l>2$ (or, 
$n>2$), we explicitly describe the connected component $\mathcal{C}$ of 
$\mathcal{E}$, containing D. It turns out that $\mathcal{C}$, a 
relatively-open subset of $\mathcal{E}$, is also an algebraic variety of real 
dimension $2lj$ (or, real/complex dimension $[n^2\!/2]$ or, respectively, 
real dimension $4[n^2\!/8]$), forming a union of 
$(j+1)(j+2)/2$ (or, $[n/2]+1$ or, respectively, $[n/4]+1$) orbits of the 
adjoint action. In the case of SU($n$) one has $2lj=0$, so that a 
pos\-i\-tive-def\-i\-nite multiple of the Killing form is isolated among 
suitably normalized left-in\-var\-i\-ant Riemannian Ein\-stein metrics on 
SU($n$).
\end{abstract}

\subjclass{53C30, 53C50, 22E99}

\keywords{Indefinite Ein\-stein metrics, left-in\-var\-i\-ant 
Ein\-stein metrics}

\maketitle

\setcounter{thm}{0}
\section{Introduction}\label{in}
\setcounter{equation}{0}
The Kil\-ling form $\,\by\,$ of any sem\-i\-sim\-ple real Lie group $\,\gj\,$ 
is a bi-in\-var\-i\-ant pseu\-\hbox{do\hskip1pt-}\hskip0ptRiem\-ann\-i\-an 
Ein\-stein metric on the underlying manifold of $\,\gj$. In the case of 
compact simple groups $\,\gj\,$ other than $\,\mathrm{SU}\hh(2)\,$ and 
$\,\mathrm{SO}\hh(3)$, D'Atri and Ziller \cite{d'atri-ziller} proved, over 
three decades ago, the existence on $\,\gj\,$ of at least one 
left-in\-var\-i\-ant Riemannian Ein\-stein metric which is not a multiple of 
$\,\by$. Recently, Gibbons, L\"u and Pope \cite{gibbons-lu-pope} found six 
more such essentially different examples -- two on $\,\mathrm{SO}\hh(5)$ and 
four on $\,\gj_2\w$. In addition, among left-in\-var\-i\-ant Ein\-stein 
metrics on $\,\gj=\mathrm{SU}\hh(3)$, they exhibited one which is {\smallit 
indefinite}.% See also \cite{chen-liang}.

This raises the more general question of
\begin{enumerate}
  \def\theenumi{{\rm\roman{enumi}}}
\item[{\rm($*$)}] {\smallit classifying left-in\-var\-i\-ant 
pseu\-\hbox{do\hh-}\hskip0ptRiem\-ann\-i\-an Ein\-stein metrics on simple Lie 
groups}.
\end{enumerate}
For noncompact groups, question ($*$) is in turn related to a conjecture of 
Alek\-se\-ev\-sky \cite{alekseevsky}, cf.\ \cite[p.\ 190]{besse}, according to 
which a noncompact homogeneous space $\,\gj/\kj\,$ may carry a non\-flat 
$\,\gj$-in\-var\-i\-ant Riemannian Ein\-stein metric only if $\,\kj\,$ is a 
maximal compact subgroup of $\,\gj$. If true, Alek\-se\-ev\-sky's conjecture, 
applied to a noncompact simple group $\,\gj\,$ and its trivial subgroup 
$\,\kj$, would imply that left-in\-var\-i\-ant Ein\-stein metrics on $\,\gj\,$ 
are all indefinite. 

The Ein\-stein condition is essential here, as opposed to mere negativity 
of the Ric\-ci curvature $\,\mathrm{Rc}\hs$: in fact, Leite and Dotti 
\cite{leite-dotti-miatello} constructed (non-Ein\-stein) left-in\-var\-i\-ant 
Riemannian metrics with $\,\mathrm{Rc}<0\,$ on $\,\mathrm{SL}(n,\bbR)$, 
$\,n\ge3$. Also, all noncompact Lie groups known to admit left-in\-var\-i\-ant 
Riemannian Ein\-stein metrics are solvable. Heber \cite{heber} and Lauret 
\cite{lauret} made substantial progress towards understanding the moduli and 
properties of such metrics on solvable Lie groups.

Given a Lie group $\,\gj\,$ with the Lie algebra $\,\mathfrak{g}$, let 
$\,\ee\,$ denote the set of all {\smallit Einstein connections in\/} 
$\,\mathfrak{g}$, by which we mean the Le\-\hbox{vi\hh-}\hskip0ptCi\-vi\-ta 
connections of left-in\-var\-i\-ant 
pseu\-\hbox{do\hskip1pt-}\hskip0ptRiem\-ann\-i\-an Ein\-stein metrics on 
$\,\gj$. One may view $\,\ee\,$ as a set of bi\-lin\-e\-ar operations 
$\,\mathfrak{g}\times\mathfrak{g}\to\mathfrak{g}$, that is, a subset of 
$\,[\mathfrak{g}\nh^*]^{\otimes2}\nnh\otimes\mathfrak{g}$. If $\,\mathfrak{g}\,$ 
is sem\-i\-sim\-ple, the Le\-\hbox{vi\hh-}\hskip0ptCi\-vi\-ta connection 
$\,\dj=[\hskip2pt,\hskip.6pt]/2\,$ of the Kil\-ling form $\,\by\,$ is an 
element of $\,\ee\nh$.

The present paper provides an initial step towards answering the question 
stated above in ($*$): for any simple Lie algebra $\,\mathfrak{g}$, we 
explicitly describe the connected component $\,\ec\,$ of $\,\ee\,$ containing 
$\,\dj$. As $\,\ec\,$ turns out to be a relatively open subset of $\,\ee\nh$, it 
also contains all Ein\-stein connections sufficiently close to $\,\dj$.

We restrict much of our discussion to $\,\mathfrak{g}\,$ that correspond to 
the groups
\begin{equation}\label{gsu}
\gj=\mathrm{SL}\hh(\n,\bbR),\,\gj=\mathrm{SL}\hh(\n,\bbC),\,\gj=
\mathrm{SU}\hh(\d,\k),\mathrm{\ or\ }(\n\,\mathrm{\ even})\,\hs\gj
=\mathrm{SL}\hh(\n/2,\bbH),
\end{equation}
with $\,\d\ge\k\ge0\,$ and $\,\n=\d+\k\ge3\,$ since, for all remaining simple 
Lie algebras, $\,\dj$ is isolated in $\,\ee$ and $\,\ec=\{\dj\}$. (See 
Remark~\ref{kroez}.) Our main result, Theorem~\ref{mnres}, realizes $\,\ec$, 
for $\,\gj\,$ in (\ref{gsu}), as the bijective image of a well-un\-der\-stood 
algebraic set in $\,\mathfrak{g}\,$ under a specific nonhomogeneous quadratic 
mapping $\,\mathfrak{g}\to[\mathfrak{g}\nh^*]^{\otimes2}\nnh\otimes\mathfrak{g}$. 
Theorem~\ref{mnres} also states that $\,\ec\,$ itself is an algebraic set 
of dimension $\,\df$ over $\,\bbF\nnh$, consisting of $\,s\,$ orbits of the 
adjoint action, where $\hs(\df,s)=([\n^2\nnh/2],\hs[\n/2]+1)\hs$ for 
$\,\gj=\mathrm{SL}\hh(\n,\bbF)\,$ and  
$\,(\dr,s)=([\n/2]+1,\hs(\k+1)(\k+2)/2)\,$ if $\,\gj=\mathrm{SU}\hh(\d,\k)$, 
while $\,(\dr,s)=(4[n^2\nnh/8],\hs[\n/4]+1)\,$ when 
$\,\gj=\mathrm{SL}\hh(\n/2,\bbH)$.

As a consequence (Theorem~\ref{isola}), on $\,\mathrm{SU}\hh(\n)\hh$, 
pos\-i\-tive-def\-i\-nite multiples of the Kil\-ling form are isolated among 
left-in\-var\-i\-ant Riemannian Ein\-stein metrics. This is a special case of 
a conjecture made by B\"ohm, Wang and Ziller \cite[p.\ 683]{bohm-wang-ziller}.

For the group $\,\gj=\mathrm{SL}\hh(\n,\bbC)\,$ our argument leads to the 
following conclusion (Theorem~\ref{slcre}): all left-in\-var\-i\-ant 
pseu\-\hbox{do\hskip1pt-}\hskip0ptRiem\-ann\-i\-an Ein\-stein metrics on 
$\,\gj\,$ close to multiples of the Kil\-ling form are real parts of 
hol\-o\-mor\-phic Ein\-stein metrics, cf.\ \cite[p.\ 210]{lebrun}, which also 
makes them {\smallit K\"ah\-ler-Nor\-den metrics\/} in the sense of 
\cite{olszak}.

\section{Outline of the main argument}\label{oa} 
\setcounter{equation}{0} 
Let $\,\gj\,$ be a real or complex sem\-i\-sim\-ple Lie group, with the 
associated sem\-i\-sim\-ple Lie algebra $\,\mathfrak{g}\,$ over 
$\,\bbF=\bbR\,$ or $\,\bbF=\bbC$. We study left-in\-var\-i\-ant connections 
on $\,\gj$, which are also hol\-o\-mor\-phic when $\,\bbF=\bbC$, as well as 
left-in\-var\-i\-ant metrics on $\,\gj$, assuming the latter to be 
pseu\-\hbox{do\hh-}\hskip0ptRiem\-ann\-i\-an if $\,\bbF=\bbR$, and 
hol\-o\-mor\-phic (in the sense of being $\,\bbC$-bi\-lin\-e\-ar, symmetric 
and nondegenerate) if $\,\bbF=\bbC$.

In this section we skip the term `left-in\-var\-i\-ant' and simply speak of 
{\smallit connections in\/} $\,\mathfrak{g}\,$ and {\smallit metrics in\/} 
$\,\mathfrak{g}$. An example of the latter is  the Kil\-ling form $\,\by$.

All vector spaces under consideration are 
fi\-\hbox{nite\hh-}\hskip0ptdi\-men\-sion\-al. The spaces
\[%begin{equation}\label{spc}
\et,\hskip12pt\ey,\hskip12pt\es\hh,\hskip12pt\mathrm{with}\hskip8pt
\es\,\subset\,\ey
\]%end{equation}
consist, respectively, of symmetric $\,\bbF$-bi\-lin\-e\-ar forms 
$\,\mathfrak{g}\times\mathfrak{g}\to\bbF\,$ (such as metrics in 
$\,\mathfrak{g}$), arbitrary connections in $\,\mathfrak{g}\,$ (treated as 
$\,\bbF$-bi\-lin\-e\-ar operations 
$\,\mathfrak{g}\times\mathfrak{g}\to\mathfrak{g}$) and, finally, the 
operations $\,\mathfrak{g}\times\mathfrak{g}\to\mathfrak{g}\,$ in $\,\ey\,$ 
which are symmetric. Thus, $\,\by\in\et$ for the Kil\-ling form $\,\by$, 
and $\,\dj\in\ey$, where $\,\dj=[\hskip2pt,\hskip.6pt]/2\,$ is the {\smallit 
standard connection}. Our discussion focuses on the af\-fine subspace 
$\,\dj+\es\,$ of $\,\ey\,$ formed by connections $\,\nabla\in\ey\,$ which are 
tor\-sion-free, so that their skew-sym\-met\-ric part is $\,\dj$. We refer to 
any such $\,\nabla\nh=\dj+\hs\sj$, with $\,\sj\in\es$, as a {\smallit 
weak\-\hbox{ly\hh-}\hskip0ptEin\-stein connection\/} in $\,\mathfrak{g}$, if the symmetric 
$\,2$-ten\-sor $\,\{\nabla\hskip-2.5pt\cdot\hskip-2.5pt\nabla\}\,$ defined 
in Section~\ref{ts}, is $\,\nabla\nnh$-par\-al\-lel, that is,
\begin{equation}\label{nnn}
\nabla\{\nabla\hskip-2.5pt\cdot\hskip-2.5pt\nabla\}\,=\,0\hs.
\end{equation}
Since $\,\mathfrak{g}\,$ is sem\-i\-sim\-ple, whenever $\,\nabla\,$ happens to 
be the Le\-\hbox{vi\hh-}\hskip0ptCi\-vi\-ta connection of a metric, its 
Ric\-ci tensor is 
$\,\rho\nnh^\nabla\nnh=-\{\nabla\hskip-2.5pt\cdot\hskip-2.5pt\nabla\}$. The 
set $\,\ew\,$ of all weak\-\hbox{ly\hh-}\hskip0ptEin\-stein connections in $\,\mathfrak{g}\,$ 
thus contains the set $\,\ee\,$ of {\smallit Einstein connections\/} in 
$\,\mathfrak{g}$, defined here to be the Le\-\hbox{vi\hh-}\hskip0ptCi\-vi\-ta 
connections $\,\nabla\,$ of (left-in\-var\-i\-ant) {\smallit Ein\-stein 
metrics\/} $\,\gy\,$ on $\,\gj$. By the latter we mean, as usual, all 
$\,\gy\,$ with $\,\rho\nnh^\nabla\nnh=\ct\hs\gy\,$ for some $\,\ct\in\bbF\nh$.

For instance, $\,\dj\,$ is an Einstein connection, being the 
Le\-\hbox{vi\hh-}\hskip0ptCi\-vi\-ta connection of the Ein\-stein metric 
$\,\by\,$ (the Kil\-ling form).

The converse ``weak\-\hbox{ly\hh-}\hskip0ptEin\-stein implies Ein\-stein'' of 
the implication ``Ein\-stein implies weak\-\hbox{ly\hh-}\hskip0ptEin\-stein'' 
($\ee\subset\ew$), generally false (Section~\ref{ne}), is true {\smallit 
generically\/}:
\begin{equation}\label{nde}
\mathrm{if\ }\,\nabla\in\ew\,\mathrm{\ and\ }\,
\{\nabla\hskip-2.5pt\cdot\hskip-2.5pt\nabla\}\,\mathrm{\ is\ 
nondegenerate,\ then\ }\,\nabla\in\ee\hh.
\end{equation}
In fact, $\,\nabla\,$ then is the Le\-\hbox{vi\hh-}\hskip0ptCi\-vi\-ta connection of the metric 
$\,\{\nabla\hskip-2.5pt\cdot\hskip-2.5pt\nabla\}$, with the Ric\-ci tensor 
$\,\rho\nnh^\nabla\nnh=-\{\nabla\hskip-2.5pt\cdot\hskip-2.5pt\nabla\}$.

Instead of Ein\-stein metrics, we choose to look for weak\-\hbox{ly\hh-}\hskip0ptEin\-stein 
connections in $\,\mathfrak{g}$. By (\ref{nde}), the latter question is only 
slightly more general than the former. It can, however, be phrased in much 
simpler algebraic terms: for $\,\hz:\es\to\es\,$ given by 
$\,\hz\hs(\sj)=4\hs(\dj+\hs\sj)\hh\{(\dj+\hs\sj)\nnh\cdot\nnh(\dj+\hs\sj)\}\,$ 
(the left-hand side in (\ref{nnn}) with $\,\nabla\nh=\dj+\hs\sj$),
\[%begin{equation}\label{ewn}
\ew\,=\,\{\nabla\in\dj
+\es:\nabla\{\nabla\hskip-2.5pt\cdot\hskip-2.5pt\nabla\}\,=\,0\}\,
=\,\dj\,+\,\hz\hs^{-\nh1}(0)\hs,
\]%end{equation}
while $\,\hz\,$ is a (nonhomogeneous) cubic polynomial mapping, due to 
the quadratic dependence of 
$\,\{\nabla\hskip-2.5pt\cdot\hskip-2.5pt\nabla\}\,$ on $\,\nabla\nnh$.

Enlarging the space of unknowns, we rewrite the cubic condition 
$\,\hz\hs(\sj)=0$ as a system of quadratic equations. Specifically 
(Remark~\ref{kssez}), there exists
\begin{equation}\label{bij}
\begin{array}{l}
\mathrm{a\ bijective\ correspondence\ between\ }\,\,\ew\hs=\,\dj\hs+\hs\hz\hs^{-\nh1}(0)\\
\mathrm{and\ the\ set\ of\ all\ }\,(\sj,\sy)\in\es\times\et\mathrm{\ 
with\ }\,\kz\hh(\sj,\sy)=(0,0)\hh,
\end{array}
\end{equation}
where $\,\kz:\es\times\et\to\es\times\et\,$ is a suitable nonhomogeneous 
quadratic mapping. 

Thus, since weak\-\hbox{ly\hh-}\hskip0ptEin\-stein connections (elements of $\,\ew$) are 
precisely those $\,\dj+\hs\sj\,$ for which $\,\sj\in\es\,$ and 
$\,\hz\hs(\sj)=0$,
\begin{equation}\label{slv}
\begin{array}{l}
\mathrm{the\ search\ for\ weak\-ly}\hyp\mathrm{Ein\-stein\ connections\ 
}\,\hs\dj\hs+\,\sj\hs\,\mathrm{\ is\ reduced}\\
\mathrm{to\ solving\ the\ equation\ }\,\kz\hh(\sj,\sy)=(0,0)\,\mathrm{\ for\ }\,
\sj\in\es\,\mathrm{\ and\ }\,\sy\in\et\nnh.
\end{array}
\end{equation}
From now on, $\,\mathfrak{g}\,$ is assumed to be one of the simple Lie algebras
\[%begin{equation}\label{sln}
\mathfrak{sl}\hh(\n,\bbR),\hskip5pt\mathfrak{sl}\hh(\n,\bbC),\hskip5pt
\mathfrak{su}\hh(\d,\k)\,\mathrm{\ with\ }\,\d\ge\k\ge0\,\mathrm{\ and\ 
}\,\n=\d+\k\ge3,\mathrm{\ or\ }\,\mathfrak{sl}\hh(\n/2,\bbH)\hh,
\]%end{equation}
the last for even $\,\n\,$ only. As a first step towards constructing 
$\,(\sj,\sy)\in\es\times\et\,$ with $\,\kz\hh(\sj,\sy)=(0,0)$, we set 
$\,(\sj,\sy)=(\dj\hh\ly,\hs\fy)$, where $\,\ly,\fy\,$ are certain special 
elements of $\,\et\,$ and $\,\dj\hh\ly\,$ is the $\,\dj$-co\-var\-i\-ant 
derivative of $\,\ly\,$ (treated as an element of $\,\es\,$ via 
$\,\by$-in\-dex-rais\-ing). More precisely, we define in Section~\ref{sl} some 
$\,\tya,\cya,\mya\in\et\nnh$, which depend on $\,a\in\mathfrak{g}$, then 
choose $\,\ly=\x\hh\tya+\n^2\y\hh\cya
+\z\hh\mya$ and $\,\fy=\p\hh\tya+\q\hh\cya+\r\hh\mya+\f\nh\by$ with 
(unknown) scalar parameters $\,\f\nh,\x,\y,\z,\p,\q,\r$. In other words,
\begin{equation}\label{iav}
\begin{array}{l}
(\dj\hh\ly,\fy)=\iz_a\w[\mathbf{v}]\hs\mathrm{\ for\ the\ linear\ operator\ 
}\hs\iz_a\w\hskip-2pt:\hskip-1pt\bbF^9\nnh
\to\hn\es\nh\times\nnh\et\hn\mathrm{\ given\ by}\\
\iz_a\w[\mathbf{v}]\,\,
=\,\,(\x\hh\dj\hh\tya\,\hs+\hs\,\n^2\y\hh\dj\hh\cya\,\hs+\hs\,\z\hh\dj\hh\mya,
\,\,\p\hh\tya\,\hs+\hs\,\q\hh\cya\,\hs+\hs\,\r\hh\mya\,\hs+\hs\,\f\nh\by)
\end{array}
\end{equation}
and any $\,\mathbf{v}=(\xy,\f\nh,\h,\x,\y,\z,\p,\q,\r)\in\bbF^9\nnh$. The 
variables $\,\xy\,$ and $\,\h$, not occurring in the formula, become useful later.
Note that, if $\,\mathbf{v}=(\xy,\f\nh,\h,\x,\y,\z,\p,\q,\r)$,
\begin{equation}\label{iaz}
\iz_b\w[\mathbf{v}]\,=\,\mathbf{0}\hskip8pt\mathrm{whenever}\hskip6ptb=0
\hskip6pt\mathrm{and}\hskip6pt\f\nh=0\hs.
\end{equation}
To find $\,a,\nh\mathbf{v}\,$ with $\,\kz\hh(\iz_a\w[\mathbf{v}])=(0,0)$, we 
prove in Section~\ref{sa} the {\smallit fundamental formula}
\begin{equation}\label{fin}
\kz\hh(\iz_a\w[\mathbf{v}])\,-\,\iz_a\w[\hh\jz\hh(\nh\mathbf{v}\nh)]\,
-\,\iz_b\w[\hh\mz\hh(\nh\mathbf{v}\nh)]\,\,\in\,\,\en
\end{equation}
for any $\,(a,\nh\mathbf{v})\in\mathfrak{g}\times\bbF^9\nnh$, where 
$\,\jz,\mz\,$ are specific polynomial mappings $\,\bbF^9\to\hs\bbF^9\nnh$. 
(Remark~\ref{polym} describes $\,\jz\,$ and $\,\mz$, while (\ref{fin}) 
appears as (\ref{med}) -- (\ref{zap}).)

What $\,\en\,$ and $\,\in\,$ in (\ref{fin}) stand for requires the following, 
more detailed explanation. First, we treat the left-hand side of (\ref{fin}) 
as a polynomial function of 
\begin{equation}\label{abg}
\mathrm{the\ variables\ }\,\,(a,b,\nh\mathbf{v})
=(a,b,\xy,\f\nh,\h,\x,\y,\z,\p,\q,\r)\,
\in\,\mathfrak{g}\times\mathfrak{g}\times\bbF^9
\end{equation}
with values in $\,\es\times\et$. This is achieved by replacing all 
occurrences of
\begin{equation}\label{aqe}
\begin{array}{l}
a\nh^2\mathrm{\ with\ }\,\ve^{-1}(\h a+b)+\ve^{-2}\xy\hs\mathrm{Id}\hh,
\mathrm{\ which\ in\ turn\ leads\ to\ replacements\ of}\\
\mathrm{tr}\hskip2pta\nh^2\hs\mathrm{\ by\ }\,\n\hs\ve^{-2}\xy\hh,\hs
\mathrm{\ and\ }\,a\nh^3\hs\mathrm{\ by\ }
\,\ve^{-1}ba+\ve^{-2}(\h^2\nh+\xy)\hh a+\ve^{-2}\h b
+\ve^{-3}\xy\h\hskip1.2pt\mathrm{Id}\hh.
\end{array}
\end{equation}
(Here $\,\ve\in\{\hn1,\mathrm{i}\hs\}\,$ is a parameter depending on 
$\,\mathfrak{g}$, cf.\ (\ref{lie}).) Although $\,\h\,$ still remains quite 
artificial, $\,\xy\,$ and $\,b\,$ have now become more tangible.

Secondly, $\,\in\,$ in (\ref{fin}) means that the function of (\ref{abg}) 
standing on the left-hand side of (\ref{fin}) is {\smallit itself an element 
of\/} $\,\en\,$ (rather than taking values in $\,\en$).

Finally, $\,\en\,$ is the space of $\,(\es\times\et)$-val\-ued {\smallit 
negligible functions\/} on 
$\,\mathfrak{g}\times\mathfrak{g}\times\bbF^9\nnh$. Negligible (polynomial) 
functions on $\,\mathfrak{g}\times\mathfrak{g}\times\bbF^9$, valued in various 
vector spaces, are defined in Section~\ref{np}; the proper class formed by 
them (in the sense of set theory) may be called an ideal, since one easily 
verifies that, applying a mul\-ti\-lin\-e\-ar mapping to several polynomial 
functions on $\,\mathfrak{g}\times\mathfrak{g}\times\bbF^9\nnh$, one of which 
is negligible, we always obtain another negligible function. In addition,
\begin{equation}\label{bez}
\mathrm{negligible\ functions\ vanish\ at\ all\ 
}\,(a,b,\nh\mathbf{v})\in\mathfrak{g}\times\mathfrak{g}\times\bbF^9
\mathrm{\ such\ that\ }\,b=0\hh.
\end{equation}
One specific vector $\,\mathbf{u}=(\xy,\f\nh,\h,\x,\y,\z,\p,\q,\r)\in\bbF^9\nnh$, 
given by formula (\ref{fhz}), has
\begin{equation}\label{jue}
\jz\hh(\mathbf{u})=\mathbf{0}\,\,\mathrm{\ and\ }\,\,\xy=\h=0\hh.
\end{equation}
In Section~\ref{mr} we use $\,\mathbf{u}\,$ to define a family $\,\ec\,$ of 
Ein\-stein connections by
\begin{equation}\label{edp}
\ec\,=\,\dj\,
+\,\pi_\es\w(\{\iz_a\w[\mathbf{u}]:a\in\mathfrak{g}\hs\,\mathrm{\ and\ 
}\,a^2\nh=0\})\hh,
\end{equation}
which appears as formula (\ref{cdl}); $\,\pi_\es\w:\es\times\et\to\es\,$ is 
the Cartesian projection.

That every $\,\nabla\nh=\dj+\hs\sj\in\ec\,$ is a weak\-\hbox{ly\hh-}\hskip0ptEin\-stein 
connection (or, equivalently, $\,\kz\hh(\sj,\sy)=(0,0)\,$ for the 
corresponding $\,(\sj,\sy)$, cf.\ (\ref{bij})) is immediate: by (\ref{aqe}), 
with $\,\xy=\h=0\,$ due to (\ref{jue}), the relation 
$\,a^2\nh=0\,$ in (\ref{edp}) gives $\,b=0$. Now (\ref{bez}) implies that the 
value of left-hand side of (\ref{fin}) at $\,(a,b,\mathbf{u})\,$ equals $\,0$, 
while $\,\iz_b\w[\hh\mz\hh(\nh\mathbf{v}\nh)]=0$ by (\ref{iaz}), since the 
$\,\f\nh$-com\-po\-nent $\,\mz\hh(\nh\mathbf{v}\nh)\,$ vanishes identically, 
and $\,\iz_a\w[\hh\jz\hh(\nh\mathbf{u}\nh)]=0\,$ due to (\ref{jue}) and 
linearity of $\,\iz_a\w$. Thus, $\,\kz\hh(\iz_a\w[\mathbf{v}])=0$.

Furthermore, evaluating $\,\{\nabla\hskip-2.5pt\cdot\hskip-2.5pt\nabla\}\,$ and 
using (\ref{nde}) we now conclude that every $\,\nabla\in\ec\,$ is in fact an 
Ein\-stein connection.

The most important further step consists in showing, in Section~\ref{ra}, that
\begin{equation}\label{thf}
\begin{array}{l}
\mathrm{the\ set\ (\ref{edp})\ of\ Ein\-stein\ connections\ contains\ all\  
real}\hyp\mathrm{an\-a\-lyt\-ic\ curves}\\
{}[\hs0,\vd)\ni t\,\mapsto\,\dj\hs+\hs\hs\sj(t)\,\,\hs\mathrm{\ of\ 
weak\-ly}\hyp\mathrm{Ein\-stein\ connections\ with\ }\hs\,\,\sj(0)=0\hh.
\end{array}
\end{equation}
Assertion (\ref{thf}) easily implies the crucial part of our main result 
(Theorem~\ref{mnres}):
\begin{equation}\label{sfc}
\begin{array}{l}
\mathrm{the\ family\ (\ref{edp})\ of\ Ein\-stein\ connections\ contains\ all\ 
weak\-ly}\hyp\mathrm{Ein}\hyp\\
\mathrm{stein\ connections\ sufficiently\ close\ to\ the\ standard\ 
connection\ }\,\,\dj\hh.
\end{array}
\end{equation}
For $\,\mathfrak{g}=\mathfrak{su}\hh(\n)$, (\ref{sfc}) states that {\smallit a 
pos\-i\-tive-def\-i\-nite multiple of the Kil\-ling form is isolated among 
suitably normalized left-in\-var\-i\-ant Riemannian Ein\-stein metrics on\/}  
$\,\mathrm{SU}\hh(\n)$.

The mechanism which reduces proving the inclusion in (\ref{sfc}) to establishing 
the analogous claim (\ref{thf}) for real-an\-a\-lyt\-ic\ curves is a version 
of Milnor's \hbox{curve\hs-}\hskip0ptse\-lec\-tion lemma 
\cite[p.\ 25]{milnor}, stated below as Corollary~\ref{equal}, and applicable 
in our case since both $\,\ec\,$ and $\,\ew\,$ are algebraic sets.

The rest of this section is devoted to summarizing the proof of (\ref{thf}) 
given in Section~\ref{ra}. The argument is based on the fundamental formula 
(\ref{fin}) along with the following fact (Lemma~\ref{neglg}(ii)): suppose 
that a vec\-tor-val\-ued polynomial function $\,\pz\,$ of (\ref{abg}) is 
negligible and (\ref{abg}) are themselves $\,C^\infty$ functions of a variable 
$\,t\in[\hs0,\vd)$, with $\,\vd\in(0,\infty)$, which turns $\,\pz\,$ into a 
function of $\,t$, and $\,k\ge1\,$ is an integer. In such a case, due to 
negligibility of $\,\pz\nh$,
\begin{equation}\label{azz}
\mathrm{if\ }\,a(0)=0\hh,\,\,\xy(0)=\h(0)=0\hh,\mathrm{\ and\ }\,\jm[b\hh]=0\hh,
\mathrm{\ then\ }\,\jk[\pz\hskip1.4pt]=0\hh.
\end{equation}
Here $\,\jk[\hs\ldots\hs]\,$ denotes the $\,k$-jet of $\,\ldots\,$ at $\,t=0$.

Fixing $\,t\mapsto\sj(t)\,$ in (\ref{thf}), we now realize 
$\,a,b,\xy,\f\nh,\h,\x,\y,\z,\p,\q,\r,\nh\mathbf{v}\,$ and $\,\sy$ as 
real-an\-a\-lyt\-ic functions of $\,t\in[\hs0,\vd)$. First, a natural 
surjective operator $\,\es\to\mathfrak{g}$ (see Remark~\ref{natrl}), applied 
to $\,\sj=\sj(t)$, yields $\,a=a(t)\in\mathfrak{g}$. To define $\,\sy(t)\,$ 
and $\,\xy(t)\,$ we use (\ref{bij}) for $\,\sj=\sj(t)\,$ and the relation 
$\,\mathrm{tr}\hskip2pta\nh^2\nh=\n\hs\ve^{-2}\xy\,$ in (\ref{aqe}).

Our polynomial mapping $\,\jz:\bbF^9\to\hs\bbF^9$ takes values in 
$\,\bbF^8\nnh$, as its first component is $\,0$, while 
$\,\mathrm{rank}\hskip2.7pt\mathrm{d}\hskip.3pt\jz\nh_{\mathbf{u}}\w=8\,$ at 
our point $\,\mathbf{u}\,$ with $\,\jz\hh(\mathbf{u})=\mathbf{0}$. 
Consequently (cf.\ Lemma~\ref{nnupl}(b)), a neighborhood of $\,\mathbf{u}\,$ 
in $\,\jz^{\hs-1}(0)\,$ forms the graph of an $\,\bbF^8\nh$-val\-ued 
$\,\bbF$-an\-a\-lyt\-ic function 
$\,\xy\,\mapsto\,(\f\nh,\h,\x,\y,\z,\p,\q,\r)\,$ of $\,\xy\,$ that varies 
near $\,0\,$ in $\,\bbF$, with $\,\mathrm{d}\h/\mathrm{d}\hh\xy\ne0\,$ at 
$\,\xy=0$. Since $\,\xy\,$ already is a function of $\,t\in[\hs0,\vd)$, so are 
now $\,\f\nh,\h,\x,\y,\z,\p,\q,\r\,$ (with smaller $\,\vd$), 
$\,\mathbf{v}=(\xy,\f\nh,\h,\x,\y,\z,\p,\q,\r)$, and $\,b$, cf.\ 
(\ref{aqe}).

Next, while proving Lemma~\ref{ractz}, we establish the following claim:
\begin{equation}\label{cla}
\mathrm{for\ all\ }\,t\in[\hs0,\vd)\,\mathrm{\ one\ has\ }
\,(\sj(t),\sy(t))=\iz_{a(t)}\w[\mathbf{v}(t)\hn]\,\mathrm{\ and\ }
\,b(t)=0\hh.
\end{equation}
This is done by using induction on $\,k\,$ to show equality between 
$\,k$-jets $\,\jk[\hs\ldots\hs]\,$ of both sides at $\,t=0$, for all 
$\,k\ge0$. In the induction step, assuming that $\,\jm[b\hh]=0$, we use 
(\ref{azz}), with $\,\pz\,$ equal to the left-hand side of (\ref{fin}) 
(for $\,a=a(t)\,$ and $\,\mathbf{v}=\mathbf{v}(t)$) to conclude that 
$\,\jk[\pz\hskip1pt]=0$. However, our choice of $\,\mathbf{v}(t)\,$ guarantees 
that $\,\jz\hh(\mathbf{v}(t))=\mathbf{0}$. Thus, by linearity of $\,\iz_a\w$, 
(\ref{fin}) with $\,\jk[\pz\hskip1pt]=0\,$ amounts to
\begin{equation}\label{vkj}
\mathrm{vanishing\ of\ the\ }\,k\hyp\mathrm{jet\ of\ 
}\,\kz\hh(\iz_a\w[\mathbf{v}])\,
-\,\iz_b\w[\hh\mz\hh(\nh\mathbf{v}\nh)]\,\mathrm{\ at\ }\,t=0\hh.
\end{equation}
(From now on we write $\,a,\nh\mathbf{v}\,$ for $\,a(t),\hs\mathbf{v}(t)$, etc.) 
Our quadratic mapping $\,\kz\,$ is the sum of homogeneous components $\,\lz\,$ 
and $\,\kz-\lz\,$ of degrees $\,1\,$ and $\,2$, cf.\ Remark~\ref{kssez}. As 
$\,(\sj,\sy)=0\,$ at $\,t=0$, the $\,k$-jet $\,\jk[(\kz-\lz)(\sj,\sy)]\,$ 
depends only on $\,\jm[(\sj,\sy)]=\jm[\hh\iz_a\w[\mathbf{v}]]$, the last 
equality being the remaining part of the in\-duc\-tive-step assumption in our 
ongoing proof of (\ref{cla}). Recalling that our hypothesis about 
$\,t\mapsto\sj(t)\,$ in (\ref{thf}) reads $\,\kz\hh(\sj,\sy)=(0,0)\,$ for all 
$\,t\,$ (see (\ref{slv})), we thus have 
$\,\jk[\hh\lz(\sj,\sy)]=\jk[(\lz-\kz)(\sj,\sy)]
=\jk[(\lz-\kz)(\iz_a\w[\mathbf{v}])]\,$ and so, by (\ref{vkj}),
\begin{equation}\label{jlk}
\jk[\hh\lz((\sj,\sy)-\iz_a\w[\mathbf{v}])]\,
=\,-\jk[\hh\iz_b\w[\hh\mz\hh(\nh\mathbf{v}\nh)]]\hh.
\end{equation}
Due to the fact that $\,\mz\,$ takes values in a specific 
\hbox{two\hh-}\hskip0ptdi\-men\-sion\-al vector subspace of $\,\bbF^9$ (cf.\ 
Remark~\ref{polym}), one has 
$\,\iz_b\w[\mz\hh(\nh\mathbf{v}\nh)]\in\mathrm{Ker}\,\lz\,$ for all 
$\,b\in\mathfrak{g}\,$ and $\,\mathbf{v}\in\bbF^9\nnh$. However (as a 
consequence of Remark~\ref{kssez} and (\ref{opl}.b)), the image of $\,\lz\,$ 
intersects $\,\mathrm{Ker}\,\lz\,$ trivially, that is, $\,\lz\,$ restricted to 
its own image must be injective. Thus, both sides in (\ref{jlk}) must vanish. 
Now, as $\,\mz\hh(\nh\mathbf{u})\ne\mathbf{0}$, the equality 
$\,\jk[\hh\iz_b\w[\mz\hh(\nh\mathbf{v}\nh)]]=(0,0)$ easily implies that 
$\,\jk[b\hh]=0$. Similarly, since $\,(\sj,\sy)-\iz_a\w[\mathbf{v}]\,$ lies in the 
image of $\,\lz\,$ (see Remark~\ref{ssini}), the injectivity property just 
mentioned, combined with vanishing of the left-hand side in (\ref{jlk}), 
yields $\,\jk[(\sj,\sy)]=\jk[\hh\iz_a\w[\mathbf{v}]]$, completing the 
inductive step and, consequently, proving (\ref{cla}).

The induction argument, used above to show that $\,b(t)=0\,$ in (\ref{cla}), 
would obviously remain valid if the quantities involved, instead of being 
real-an\-a\-lyt\-ic functions of $\,t$, were just formal power series in the 
variable $\,t$. The subsequent conclusion that $\,a\nh^2=0\,$ for $\,a=a(t)\,$ 
and every $\,t$, outlined in the next paragraph is, however, less 
straightforward in this regard, as it relies on certain rationality and 
non\-\hbox{zero\hh-}\hskip0ptde\-riv\-a\-tive properties of $\,\h\,$ and 
$\,\xy$.

Specifically, vanishing of $\,b=b(t)\,$ in (\ref{cla}) gives, by (\ref{aqe}), 
$\,a\nh^2=\ve^{-1}\h a+\ve^{-2}\xy\hs\mathrm{Id}\,$ for $\,a=a(t)\,$ which, 
unless $\,a=0$, easily implies that $\,\h^2$ is a rational multiple of 
$\,\xy$ (see  Lemma~\ref{ratnl}). Since $\,\h\,$ is an 
$\,\bbF$-an\-a\-lyt\-ic function of $\,\xy\,$ (cf.\ the lines preceding 
(\ref{cla})), with $\,\mathrm{d}\h/\mathrm{d}\hh\xy\ne0=\h\,$ at $\,\xy=0$, 
while $\,\xy\,$ is a real-an\-a\-lyt\-ic function of $\,t$, the rationality 
conclusion just stated shows that $\,\xy(t)=\h(t)=0$ identically in $\,t$. 
(See the end of Section~\ref{ra}.) Thus, $\,a=a(t)\,$ satisfies the condition 
$\,a\nh^2=0$, while, as $\,\mathbf{v}=(\xy,\f\nh,\h,\x,\y,\z,\p,\q,\r)\,$ is a 
function of $\,\xy\,$ (Lemma~\ref{nnupl}(b)), one also has 
$\,\mathbf{v}(t)=\mathbf{u}\,$ for all $\,t$. By (\ref{cla}), this completes 
the proof of (\ref{thf}).

\section{Linear-algebra preliminaries}\label{la}\checked
\setcounter{equation}{0}
In this section, vector spaces are always 
fi\-\hbox{nite\hh-}\hskip0ptdi\-men\-sion\-al. An {\smallit in\-ner-prod\-uct 
space\/} over $\,\bbF=\bbR\,$ or $\,\bbF=\bbC\,$ is a vector space over 
$\,\bbF\,$ with a fixed $\,\bbF\nh$-val\-ued nondegenerate symmetric 
$\,\bbF$-bi\-lin\-e\-ar form. When $\,\bbF=\bbR$, in\-ner-prod\-uct spaces are 
what one usually calls {\smallit 
pseu\-\hbox{do-}\nh Eu\-clid\-e\-an spaces}. Note that, except 
in Section~\ref{cw}, complex inner products are {\smallit not\/} assumed 
ses\-qui\-lin\-e\-ar.

For en\-do\-mor\-phisms $\hs\aj,\td\aj\hh$ of a vector space $\,\ev\,$ 
over $\,\bbF=\bbR\,$ or $\,\bbF=\bbC$, let
\begin{equation}\label{acb}
(\aj,\hs\td\aj)\,=\,\hh\mathrm{tr}\hskip2pt\aj\td\aj\hs.
\end{equation}
Clearly, $\,(\hskip2pt,\hskip.4pt)\,$ turns the en\-do\-mor\-phism space 
$\,\mathrm{End}\hskip2pt\ev$ into an  in\-ner-prod\-uct space. For 
en\-do\-mor\-phisms $\,\aj\,$ of a {\smallit complex\/} vector space, with 
$\,\mathrm{tr}^\bbF$ denoting the $\,\bbF\nh$-trace,
\begin{equation}\label{abc}
\mathrm{tr}^\bbR\nh\aj\,=\,2\,\mathrm{Re}\hskip2.7pt\mathrm{tr}^\bbC\nh\aj\hh.
\end{equation}
Treating a complex vector space $\,\ev\,$ as real and writing 
$\,(\hskip2pt,\hskip.4pt)^\bbF$ rather than $\,(\hskip2pt,\hskip.4pt)\,$ to 
indicate the choice of the scalar field, we have, in 
$\,\mathrm{End}_\bbC\w\nh\ev$, by (\ref{acb}) and (\ref{abc}),
\begin{equation}\label{rec}
(\hskip2pt,\hskip.4pt)^\bbR\,
=\,\,2\,\mathrm{Re}\hskip2.7pt(\hskip2pt,\hskip.4pt)^\bbC.
\end{equation}
Let us define the adjoint $\,\dz\nnh^*$ of an operator $\,\dz\,$ between 
in\-ner-prod\-uct spaces in the usual way, refer to an en\-do\-mor\-phism 
$\,\dz\,$ of such a space $\,\es\,$ as {\smallit self-ad\-joint\/} if 
$\,\dz\nnh^*\nh=\dz$, and call a sub\-space $\,\es'$ of $\,\es\,$ {\smallit 
nondegenerate\/} if so is the restriction of the inner product to 
$\,\es'\nnh$. For a self-ad\-joint en\-do\-mor\-phism $\,\dz\,$ of an 
in\-ner-prod\-uct space $\,\es$, with $\,[\hskip3pt]^\perp$ denotung the 
orthogonal complement,
\begin{equation}\label{imd}
\begin{array}{l}
\mathrm{a)}\hskip8pt\mathrm{the\ image\ }\,\dz(\es)\,\mathrm{\ coincides\ 
with\ }\,[\hs\mathrm{Ker}\,\dz]^\perp\nnh,\mathrm{\ so\ that}\\
\mathrm{b)}\hskip8pt\es\,
=\,[\hh\dz(\es)]\oplus\mathrm{Ker}\,\dz\hskip6pt\mathrm{\ if}\hskip5pt
\mathrm{Ker}\,\dz\hskip5pt\mathrm{is\ nondegenerate.}
\end{array}
\end{equation}
Given sub\-spaces $\,\es,\hs\es'$ of an in\-ner-prod\-uct space $\,\ev\nnh$,
\begin{equation}\label{sin}
\es\,\mathrm{\ is\ nondegenerate\ if\ }\,\ev=\es\oplus\es'\mathrm{\ and\ }
\,\es,\hs\es'\mathrm{\ are\ orthogonal\ to\ each\ other,}
\end{equation}
since vectors in $\,\es$, orthogonal to $\,\es$, must be orthogonal to all 
of $\,\ev\nnh$.

Let $\,\ch\,$ be an en\-do\-mor\-phism of an in\-ner-prod\-uct space 
$\,\es\nnh$. Then
\begin{equation}\label{dia}
\begin{array}{l}
\ch^*\nnh=\ch\,\mathrm{\ if\ }\,\ch\,\mathrm{\ is\ di\-ag\-o\-nal\-izable\ 
with\ mutually\ orthogonal\ eigen\-spaces;}\\
\ch\,\mathrm{\ has\ nondegenerate\ eigen\-spaces\ if\ }\,\ch^*\nnh
=\ch\,\mathrm{\ and\ }\,\ch\,\mathrm{\ is\ di\-ag\-o\-nal\-izable.}
\end{array}
\end{equation}
Cf.\ (\ref{sin}). Obviously, given an en\-do\-mor\-phism $\,\dz\,$ of a vector 
space $\,\es$,
\begin{equation}\label{iso}
\mathrm{\ if\ }\,\es=[\hh\dz(\es)]\oplus\mathrm{Ker}\,\dz,\mathrm{\ then\ 
}\,\dz:\dz(\es)\to\dz(\es)\,\mathrm{\ is\ an\ isomorphism.}
\end{equation}
Let there be given vector spaces $\,\es,\et\nnh,\ev\nh$, an integer $\,k\ge1$, 
and a bi\-lin\-e\-ar mapping $\,B:\es\times\et\nnh\to\ev\nh$, along with 
$\,C^\infty$ curves $\,\sj\,$ in $\,\es\,$ and $\,\sy\,$ in $\,\et\nnh$, 
both parametrized by $\,t\in[\hs0,\vd)$, where $\,\vd\in(0,\infty)$. If 
$\,\sj(0)=0\,$ and $\,\sy(0)=0$, then the Leib\-niz rule clearly implies that, 
whenever $\,k\ge1$,
\begin{equation}\label{jkz}
\jk[B(\sj,\sy)]\,\mathrm{\ depends\ only\ on\ }\,\jm[\hh\sj]\,\mathrm{\ and\ }
\jm[\hs\sy]\hh.
\end{equation}
Here and below we use the convention that, for such curves,
\begin{equation}\label{jet}
\jk[\hs\ldots\hs]\,\mathrm{\ denotes\ the\ }\,k\hyp\mathrm{jet\ of\ }
\,\ldots\,\mathrm{\ at\ }\,t=0\hh.
\end{equation}
\begin{rem}\label{trzro}Due to invariance of the trace functional, 
an en\-do\-mor\-phism $\,\td\aj\,$ of a vector space such that 
$\,\jj\td\aj\jj^{-1}\nnh=\omega\td\aj\,$ for some linear automorphism $\,\jj\,$
and some scalar $\,\omega\ne1\,$ is necessarily trace\-less. When 
$\,\omega=-1\,$ and $\,\jj\,$ is the multiplication by $\,\mathrm{i}\,$ in the 
underlying real space of a complex vector space, this shows that {\smallit 
the real trace of a $\,\bbC$-anti\-lin\-e\-ar\ en\-do\-mor\-phism always 
equals\/} $\,0$.
\end{rem}
\begin{rem}\label{quate}Denoting by 
$\,1,\nh\mathrm{i},\mathrm{j},\nh\mathrm{k}\,$ 
the standard $\,\bbR$-basis of the qua\-ter\-ni\-on field $\,\bbH$, we 
identify the real sub\-space of $\,\bbH\,$ spanned by $\,1\,$ and 
$\,\mathrm{i}\,$ with $\,\bbC$, which turns any given vector space $\,\ev\,$ 
over $\,\bbH\,$ into a complex vector space. Then 
$\,\mathrm{tr}^\bbC\nh\aj\in\bbR\,$ for all 
$\,\bbH$-lin\-e\-ar en\-do\-mor\-phisms $\,\aj\,$ of $\,\ev\nh$, as one sees 
applying Remark~\ref{trzro} and (\ref{abc}) to $\,\td\aj=\mathrm{i}\aj$, with 
$\,\omega=-1\,$ and $\,\jj\,$ acting via the $\,\bbH$-multiplication by 
$\,\mathrm{j}$.
\end{rem}

\section{Milnor's curve\hs-se\-lec\-tion lemma}\label{mc}\checked
\setcounter{equation}{0}
Throughout this section, all vector spaces are 
fi\-\hbox{nite\hh-}\hskip0ptdi\-men\-sion\-al and real.

A subset of a vector space $\,\es\,$ is called {\it algebraic\/} if it equals 
$\,F^{-1}(0)\,$ for some polynomial mapping $\,F:\es\to\ev\,$ into a vector 
space $\,\ev\nh$. By a {\it sem\-i-al\-ge\-bra\-ic set\/} in 
$\,\es\,$ one means the intersection of an algebraic set with 
$\,\bigcap_{\hs j=1}^{\hs k}f\nnh_j^{\,\hs-1}((0,\infty))$, where $\,k\ge1\,$ 
and $\,f\nnh_1\w,\dots,f\nnh_k\w$ are polynomial functions $\,\es\to\bbR$. The 
intersection of two sem\-i-al\-ge\-bra\-ic sets in $\,\es\,$ is 
sem\-i-al\-ge\-bra\-ic, while complements of algebraic subsets of $\,\es\,$ 
constitute finite unions of sem\-i-al\-ge\-bra\-ic sets. Thus, whenever 
$\,\ez\subset\es\,$ and $\,\el\subset\es\,$ are algebraic, one easily sees that
\begin{equation}\label{emz}
\ez\nh\smallsetminus\el\,\mathrm{\ is\ a\ finite\ union\ of\ 
sem\-i}\hyp\mathrm{al\-ge\-bra\-ic\ sets\ in\ }\,\es\hh.
\end{equation}
The following result of Milnor \cite[p.\ 25]{milnor}, known as the {\it 
curve\hh-se\-lec\-tion lemma}, generalizes the earlier versions due to 
Bruhat and Cartan \cite[Theorem 1]{bruhat-cartan}, and Wallace 
\cite[Lemma 18.3]{wallace}. Further details can be found in 
\cite[p.\ 402]{moreira-ruas}.
\begin{thm}\label{cslem}{\smallit 
Suppose that\/ $\,0\hs$ lies in the closure of a sem\-i-al\-ge\-bra\-ic 
subset\/ $\hs\mathcal{A}\,$ of a vector space\/ $\,\es$. Then there exists a 
real-an\-a\-lyt\-ic curve\/ $\,[\hs0,\vd)\ni t\mapsto \sj(t)\in\es$, with\/ 
$\,\vd\in(0,\infty)$, such that\/ $\,\sj(0)=0\,$ and\/ 
$\,\sj(t)\in\mathcal{A}\,$ for all\/ $\,t\in(0,\vd)$.
}
\end{thm}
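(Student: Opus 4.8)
The plan is to dispose of a couple of routine reductions and land in a one-variable situation, where the conclusion is delivered by the Newton--Pui\-seux theorem; beyond that, the argument uses only standard facts about semi-al\-ge\-bra\-ic sets: the Tar\-ski--Seidenberg theorem, the normal form of a semi-al\-ge\-bra\-ic subset of $\,\bbR\,$ as a finite union of points and open intervals, and the existence of semi-al\-ge\-bra\-ic sections (``definable choice'').

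First, one may assume $\,0\notin\mathcal{A}$, since otherwise the constant curve $\,\sj\equiv0\,$ works. Fix an inner product on $\,\es\,$ and let $\,\pi:\es\to\bbR\,$ be the polynomial map $\,\pi(x)=\|x\|^2$. By Tar\-ski--Seidenberg, $\,\pi(\mathcal{A})\,$ is a semi-al\-ge\-bra\-ic subset of $\,\bbR$, hence a finite union of points and open intervals, and obviously $\,\pi(\mathcal{A})\subset[\hs0,\infty)$. As $\,\pi\,$ is continuous, $\,0\in\overline{\pi(\mathcal{A})}$, while $\,0\notin\pi(\mathcal{A})\,$ because $\,\pi(x)=0\,$ forces $\,x=0\in\mathcal{A}$. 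Hence $\,0\,$ must be an endpoint of one of the open intervals comprising $\,\pi(\mathcal{A})$, and, since $\,\pi(\mathcal{A})\subset[\hs0,\infty)$, its left endpoint; thus $\,(0,\ve)\subset\pi(\mathcal{A})\,$ for some $\,\ve>0$.

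Consequently, for every $\,s\in(0,\ve)\,$ the set $\,\mathcal{A}_s=\{x\in\mathcal{A}:\|x\|^2=s\}\,$ is nonempty and semi-al\-ge\-bra\-ic. Definable choice --- provable, if one wishes, by an easy induction on $\,\dim\es$, projecting off one coordinate at a time and, at the bottom, selecting a canonical point of a nonempty semi-al\-ge\-bra\-ic subset of $\,\bbR\,$ (its least element if it exists, otherwise a point picked by a short case analysis) --- yields a semi-al\-ge\-bra\-ic map $\,\sigma:(0,\ve)\to\es\,$ with $\,\sigma(s)\in\mathcal{A}_s$; in particular $\,\|\sigma(s)\|^2=s$. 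Each coordinate of $\,\sigma\,$ is a semi-al\-ge\-bra\-ic function of the single variable $\,s$, so, after shrinking $\,\ve$, its graph lies on an algebraic curve in $\,\bbR^2$; the Newton--Pui\-seux theorem then provides an integer $\,N\ge1$ --- a common one for all coordinates --- such that $\,s\mapsto\sigma(s)\,$ is given by convergent power series in $\,s^{1/N}\,$ on $\,(0,\ve)$. Since $\,\|\sigma(s)\|^2=s\to0\,$ as $\,s\to0^+$, the constant terms of those series vanish, so the substitution $\,s=t^N\,$ turns $\,\sj(t):=\sigma(t^N)\,$ into a real-an\-a\-lyt\-ic map on $\,[\hs0,\vd)$, with $\,\vd=\ve^{1/N}$, obeying $\,\sj(0)=0\,$ and $\,\sj(t)=\sigma(t^N)\in\mathcal{A}\,$ for all $\,t\in(0,\vd)$, which is what was claimed.

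I expect the main obstacle to be the definable-choice step: one must ensure that the section $\,\sigma\,$ is itself semi-al\-ge\-bra\-ic, not merely a set-theoretic choice function, since only then are its Pui\-seux expansions --- and hence the real-an\-a\-lyt\-ic conclusion --- available. The remaining ingredients are standard: Tar\-ski--Seidenberg, the structure theory of semi-al\-ge\-bra\-ic subsets of the line (which is what promotes a sequence in $\,\mathcal{A}\,$ converging to $\,0\,$ to a whole interval of attained values of $\,\|x\|^2$), and Newton--Pui\-seux.
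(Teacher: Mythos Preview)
Your argument is correct. The paper itself does not prove this theorem: its entire proof reads ``See \cite[p.\ 25]{milnor}'', so there is no in-paper argument to compare with beyond the citation.

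What you have written is essentially the standard proof of the curve-selection lemma, in the spirit of Milnor's original. Both arguments end the same way, by reducing to a one-dimensional semi-al\-ge\-bra\-ic object and invoking Newton--Puiseux. The organizational difference is in how the reduction to dimension one is carried out: Milnor argues by induction on the ambient dimension, slicing with generic hyperplanes until he lands on a real algebraic curve through the origin; you instead push forward by the squared norm, use the structure of semi-al\-ge\-bra\-ic subsets of $\,\bbR\,$ to get an entire interval $\,(0,\ve)\,$ of attained values, and then pull back a semi-al\-ge\-bra\-ic section via definable choice. Your route is arguably cleaner once one is willing to quote Tarski--Seidenberg and definable choice as black boxes; Milnor's is more self-contained. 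Either way the final Puiseux step is identical, and your observation that $\,\|\sigma(s)\|^2=s\,$ forces the constant terms to vanish is exactly what guarantees real-analyticity at $\,t=0$.
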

\begin{proof}See \cite[p.\ 25]{milnor}.
\end{proof}
We will need this immediate consequence of Theorem~\ref{cslem}:
\begin{cor}\label{equal}{\smallit 
Let\/ $\,\ez\,$ and\/ $\,\el\,$ be algebraic sets in a vector space\/ $\,\es$. 
If\/ $\,\el\,$ contains every real-an\-a\-lyt\-ic curve\/ 
$\,[\hs0,\vd)\ni t\mapsto\sj(t)\in\es\,$ with\/ $\,\sj(0)=0$, lying entirely 
in\/ $\,\ez$, then\/ $\,\el$ also contains all points of\/ $\hs\ez\hs$ 
sufficiently close to\/ $\,0$.
}
\end{cor}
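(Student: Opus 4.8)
The plan is to argue by contradiction and reduce everything to a single application of Milnor's curve\hs-se\-lec\-tion lemma (Theorem~\ref{cslem}). Suppose the conclusion fails, i.e.\ $\el$ does not contain all points of $\ez$ close to $0$. Then every neighborhood of $0$ contains a point of $\ez$ lying outside $\el$, which is precisely the statement that $0$ belongs to the closure of the set $\ez\smallsetminus\el$.

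Next I would invoke (\ref{emz}): since $\ez$ and $\el$ are algebraic subsets of $\es$, the difference $\ez\smallsetminus\el$ is a \emph{finite} union of sem\-i-al\-ge\-bra\-ic sets, say $\ez\smallsetminus\el=\mathcal{A}_1\cup\dots\cup\mathcal{A}_k$. Because the closure of a finite union equals the union of the closures, $0$ lies in the closure of some $\mathcal{A}=\mathcal{A}_i$. Applying Theorem~\ref{cslem} to this sem\-i-al\-ge\-bra\-ic set $\mathcal{A}$ produces a real-an\-a\-lyt\-ic curve $[\hs0,\vd)\ni t\mapsto\sj(t)\in\es$, with $\vd\in(0,\infty)$, such that $\sj(0)=0$ and $\sj(t)\in\mathcal{A}$ for all $t\in(0,\vd)$.

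Finally I would check that this curve lies entirely in $\ez$ and read off the contradiction. For $t\in(0,\vd)$ one has $\sj(t)\in\mathcal{A}\subseteq\ez\smallsetminus\el\subseteq\ez$; and $\sj(0)=0\in\ez$, since $\ez$, being algebraic, is closed and contains the point $0$ (a limit of points of $\ez$). Thus $\sj$ is a real-an\-a\-lyt\-ic curve in $\es$ with $\sj(0)=0$ lying entirely in $\ez$, so by hypothesis it lies entirely in $\el$ — contradicting $\sj(t)\notin\el$ for $t\in(0,\vd)$. Hence the assumed failure is impossible, and the corollary follows.

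There is essentially no hard step here; the statement is an immediate consequence of Theorem~\ref{cslem}. The only points that require (routine) care are the two elementary observations just used — that closure commutes with finite unions and that algebraic sets are closed — together with the remark that the curve supplied by the lemma passes through $0\in\ez$ at $t=0$, so that it indeed qualifies as one of the curves to which the hypothesis on $\el$ applies.
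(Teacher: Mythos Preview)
Your argument is correct and matches the paper's proof essentially verbatim: argue by contradiction, use (\ref{emz}) to write $\ez\smallsetminus\el$ as a finite union of sem\-i-al\-ge\-bra\-ic sets, pick one whose closure contains $0$, and apply Theorem~\ref{cslem} to obtain a curve contradicting the hypothesis. The only difference is that you spell out the routine verifications (closure of a finite union, closedness of algebraic sets, $\sj(0)=0\in\ez$) that the paper leaves implicit in the phrase ``an obvious contradiction.''
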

\begin{proof}Otherwise, an obvious contradiction would arise from 
Theorem~\ref{cslem} applied to $\,\mathcal{A}\,$ which is one of the 
sem\-i-al\-ge\-bra\-ic sets mentioned in (\ref{emz}).
\end{proof}

\section{The curvature and Ric\-ci tensors}\label{cr}\checked
\setcounter{equation}{0}
We denote by $\,R=R^\nabla$ the curvature tensor of a tor\-sion-free 
$\,C^\infty$ connection $\,\nabla$ on a manifold, using the sign convention
\begin{equation}\label{cur}
R(v,w)\hh u=\nsw\nsvu-\nsv\nswu+\nabla_{[v,w]}u\hs.\hskip7pt\mathrm{for\ }
\,C^\infty\mathrm{\ vector\ fields}\hskip4ptu,v,w\hs.
\end{equation}
The Ric\-ci tensor $\,\rho=\rho\nnh^\nabla$ has the components 
$\,R_{jk}\w=R_{jsk}\w{}^s\nnh$. Repeated indices, here and below, are to be 
{\smallit summed over}.

Any tor\-sion-free $\,C^\infty$ connection $\,\nabla\,$
satisfies the Boch\-ner identity
\begin{equation}\label{bch}
\rho\hh(\,\cdot\,,w)\,=\,\hs\mathrm{div}\hs\naw\,
-\,\hs\mathrm{d}\hs(\dvw)\hskip15pt\mathrm{for\ every\ }\,C^\infty\mathrm{\ 
vector\ field}\hskip6ptw\hs,
\end{equation}
cf.\ \cite[pp.\ 57, 74, 166]{besse}, 
Here $\,\dvw=\mathrm{tr}\hskip2.5pt\naw$, with $\,\naw\,$ treated as the 
en\-do\-mor\-phism of the tangent bundle acting on vector fields $\,v\,$ by 
$\,v\mapsto\nsvw$.

The coordinate form of (\ref{bch}), 
$\,R_{jk}\w w^{\hs k}\nh=w^{\hs k}{}_{,\hs jk}\w-w^{\hs k}{}_{,kj}\w$, arises 
by contraction in $\,l=k\,$ from the Ric\-ci identity $\,w^{\hs l}{}_{,jk}\w
-w^{\hs l}{}_{,\hs kj}\w=R_{jks}\w{}^lw^s\nnh$, which in turn is the 
coordinate form of (\ref{cur}). Evaluated on a $\,C^\infty$ vector field 
$\,v$, (\ref{bch}) gives
\begin{equation}\label{rww}
\rho\hh(v,w)\hs
=\hs\mathrm{div}\hs\lx\nsvw\hs-\hs(\dvw)\hh v\rx\hs+\hs(\dvv)\hskip2pt\dvw\hs
-\hs(\nav,\nnh\naw)\hs,
\end{equation}
via differentiation by parts. For the meaning of $\,(\nav,\naw)$, see 
(\ref{acb}).

By an {\smallit Ein\-stein metric\/} on a manifold we mean any $\,C^\infty$ 
pseu\-\hbox{do\hskip1pt-}\hskip0ptRiem\-ann\-i\-an metric $\,\gy\,$ with 
$\,\rho=\ct\hs\gy\,$ for the Ric\-ci tensor $\,\rho\,$ of $\,\gy\,$ (that is, 
the Ric\-ci tensor of the Le\-\hbox{vi\hh-}\hskip0ptCi\-vi\-ta connection of $\,\gy$) and some 
$\,\ct\in\bbR\,$ (the {\smallit Ein\-stein constant\/} of $\,\gy$).

One calls a tor\-sion-free $\,C^\infty$ connection $\,\nabla\,$ on an 
$\,\m$-di\-men\-sion\-al manifold {\smallit equi\-af\-fine\/} if the 
connection induced by $\,\nabla\,$ in the $\,\m\hh$th exterior power of the 
tangent bundle is flat, that is, if the manifold locally admits 
$\,\nabla\nh$-par\-al\-lel volume forms.

Equi\-af\-fin\-i\-ty of $\,\nabla\,$ is equivalent to symmetry of its Ric\-ci 
tensor $\,\rho\nnh^\nabla\nnh$. This well-known fact 
\cite[III 4,5 and (5.8), pp.\ 144--145]{schouten} is not used in our argument.

\begin{rem}\label{recom}The whole discussion in this section remains valid for 
hol\-o\-mor\-phic connections and hol\-o\-mor\-phic metrics on complex 
manifolds \cite[pp.\ 210\hs--211]{lebrun}.
\end{rem}
\begin{rem}\label{lccon}Recall that the Le\-\hbox{vi\hh-}\hskip0ptCi\-vi\-ta connection of 
a $\,C^\infty$ or hol\-o\-mor\-phic  metric is the unique tor\-sion-free 
connection making the metric parallel \cite[p.\ 211]{lebrun}.
\end{rem}

\section{Left-in\-var\-i\-ant connections on Lie groups}\label{li}\checked
\setcounter{equation}{0}
Let $\,\gj\,$ be a real or complex Lie group. Its Lie algebra 
$\,\mathfrak{g}$, over $\,\bbF=\bbR\,$ or $\,\bbF=\bbC$, is always identified 
with the space of left-in\-var\-i\-ant vector fields on $\,\gj$. In addition, 
left-in\-var\-i\-ant connections on $\,\gj\,$ are assumed hol\-o\-mor\-phic 
when $\,\bbF=\bbC$. Similarly, {\smallit left-in\-var\-i\-ant metrics\/} on 
$\,\gj\,$ are by definition pseu\-\hbox{do\hh-}\hskip0ptRiem\-ann\-i\-an if 
$\,\bbF=\bbR$, and hol\-o\-mor\-phic (in the sense of being 
$\,\bbC$-bi\-lin\-e\-ar, symmetric and nondegenerate) if $\,\bbF=\bbC$. Cf.\ 
also Remark~\ref{recom}.

Left-in\-var\-i\-ant connections $\,\nabla\,$ on $\,\gj$, and 
left-in\-var\-i\-ant symmetric $\,2$-ten\-sor fields $\,\tau\,$ on $\,\gj\,$ 
(including metrics) are always treated as elements of 
$\,\ey=[\mathfrak{g}\nh^*]^{\otimes2}\nnh\otimes\mathfrak{g}$ (or, 
$\,\et=[\mathfrak{g}\nh^*]^{\odot2}$), that is, $\,\bbF\nh$-bi\-lin\-e\-ar 
operations $\,\nabla:\mathfrak{g}\times\mathfrak{g}\to\mathfrak{g}\,$ (or, 
respectively, symmetric $\,\bbF\nh$-bi\-lin\-e\-ar forms 
$\,\tau:\mathfrak{g}\times\mathfrak{g}\to\bbF$). For $\,v,w\in\mathfrak{g}$, 
we then use the traditional symbol $\,\nsvw$, rather than $\,\nabla(v,w)$, and 
denote by $\,\nsv$ (or, by $\,\naw$) the $\,\bbF$-lin\-e\-ar 
en\-do\-mor\-phism of $\,\mathfrak{g}\,$ sending $\,w\,$ (or, respectively, 
$\,v$) to $\,\nsvw$.

A left-in\-var\-i\-ant tor\-sion-free connection $\,\nabla\hs$ on $\,\gj\,$ 
will be called {\smallit uni\-mod\-u\-lar\/} if some/\nh any 
left-in\-var\-i\-ant real/\nh com\-plex volume form on $\,\gj\,$ is 
$\,\nabla\nh$-par\-al\-lel.

Uni\-mod\-u\-lar\-i\-ty of $\,\nabla\hs$ clearly implies its 
equi\-af\-fin\-i\-ty (see the end of Section~\ref{cr}).
\begin{lem}\label{unimo}{\smallit 
Let\/ $\,\nabla\,$ be a left-in\-var\-i\-ant tor\-sion-free connection on a 
real$/\hn$com\-plex Lie group\/ $\,\gj\,$ with the Lie algebra\/ 
$\,\mathfrak{g}\,$ over\/ $\,\bbF=\bbR\,$ or\/ $\,\bbF=\bbC$.
\begin{enumerate}
  \def\theenumi{{\rm\roman{enumi}}}
\item[{\rm(a)}] $\nabla\,$ is uni\-mod\-u\-lar if and only if\/ 
$\,\mathrm{tr}\hskip2pt\nsv=0\,$ for all\/ $\,v\in\mathfrak{g}$.
\item[{\rm(b)}] $\nabla\,$ is uni\-mod\-u\-lar whenever there exists a 
nondegenerate left-in\-var\-i\-ant\/ $\,\nabla\nnh$-par\-al\-lel 
twice-co\-var\-i\-ant symmetric tensor field on\/ $\,\gj$, that is, whenever\/ 
$\,\nabla\,$ is the Le\-\hbox{vi\hh-}\hskip0ptCi\-vi\-ta connection of some left-in\-var\-i\-ant 
metric.
\item[{\rm(c)}] If\/ $\,\gj\,$ is uni\-mod\-u\-lar, then 
uni\-mod\-u\-lar\-i\-ty of\/ $\,\nabla\,$ is equivalent to trace\-less\-ness 
of\/ $\,\naw$ for every\/ $\,w\in\mathfrak{g}\hh$, that is, to the condition\/ 
$\,\dvw=0\,$ for all\/ $\,w\in\mathfrak{g}\hh$. 
\item[{\rm(d)}] If both\/ $\,\gj\,$ and\/ $\,\nabla\,$ are uni\-mod\-u\-lar, 
then the Ric\-ci tensor\/ $\,\rho=\rho\nnh^\nabla$ is given by 
$\,\rho\hh(v,w)=-\hs(\nav,\naw)\,$ for\/ $\,v,w\in\mathfrak{g}\hh$, cf.\ 
{\rm(\ref{acb})}. In particular, $\,\rho\,$ is symmetric.
\item[{\rm(e)}] If\/ $\,\gj\,$ is uni\-mod\-u\-lar and the tensor\/ $\,\rho\,$ 
given by $\,\rho\hh(v,w)=-\hs(\nav,\naw)\,$ is nondegenerate and\/ 
$\,\nabla\nnh$-par\-al\-lel, then\/ $\,\nabla\,$ is uni\-mod\-u\-lar and\/ 
$\,\rho\,$ is its Ric\-ci tensor.
\end{enumerate}
}
\end{lem}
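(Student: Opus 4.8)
The plan is to establish the five items essentially in sequence, with (a) as the workhorse. For (a), I would use the identity that the connection $\,\nabla\,$ induces on the top exterior power $\,\varLambda^{\hs n}\mathfrak{g}^*\,$ (where $\,n=\dim_\bbF\mathfrak{g}$), acting on a left-invariant volume form $\,\cu$, via $\,(\nsv\cu)(w_1,\dots,w_n)=-\sum_k\cu(w_1,\dots,\nsv w_k,\dots,w_n)=-(\mathrm{tr}\hskip2pt\nsv)\hh\cu(w_1,\dots,w_n)$. Thus $\,\nsv\cu=-(\mathrm{tr}\hskip2pt\nsv)\hh\cu$, which vanishes for all $\,v\,$ exactly when $\,\mathrm{tr}\hskip2pt\nsv=0\,$ for all $\,v$; since a left-invariant volume form is parallel iff this holds, (a) follows. (This is the standard equivalence referenced after Section~\ref{cr}, specialized to the left-invariant setting, where parallelism of the left-invariant $\,\cu\,$ on the group reduces to the single algebraic condition at the identity.) For (b), I would argue that if $\,\gy\,$ is a $\,\nabla\nh$-parallel nondegenerate left-invariant symmetric $\,2$-tensor, then $\,\nabla\,$ preserves the volume form $\,\cu\,$ built from $\,\gy\,$ (e.g.\ $\,\cu=|\det\gy|^{1/2}\,$ times a coordinate form, which is the metric volume form), so $\,\nabla\,$ is unimodular by definition; alternatively, invoke (a) after noting $\,0=(\nsv\gy)(w,u)$ forces $\,\gy(\nsv w,u)+\gy(w,\nsv u)=0$, i.e.\ $\,\nsv\,$ is $\,\gy$-skew, hence traceless. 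The latter route is cleaner and avoids square roots.

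For (c): unimodularity of $\,\gj\,$ means the adjoint endomorphisms $\,\mathrm{ad}_w\,$ are all traceless, equivalently $\,\mathrm{tr}\hskip2pt[v\mapsto[v,w]]=0\,$ for all $\,w$. Since $\,\nabla\,$ is torsion-free, $\,\nsv w-\nsw v=[v,w]$, so $\,\naw=\nsw\cdot\,$... more precisely, as endomorphisms of $\,\mathfrak{g}$, the map $\,v\mapsto\nsv w\,$ (which is $\,\naw$) and the map $\,w\mapsto\nsv w\,$ (which is $\,\nsv$) differ by $\,\mathrm{ad}\,$-type terms: explicitly $\,\naw(v)=\nsv w=\nsw v+[v,w]$, so $\,\naw=\nsw+\mathrm{ad}_{(\cdot)}\,$... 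I would just compute $\,\sum$ over a basis: $\,\mathrm{tr}\hskip2pt\naw=\mathrm{tr}\hskip2pt\nsw+\mathrm{tr}\hskip2pt(v\mapsto[v,w])$. By $\,\gj$-unimodularity the last trace is $\,0$, so $\,\mathrm{tr}\hskip2pt\naw=\mathrm{tr}\hskip2pt\nsw$; combining with (a), unimodularity of $\,\nabla\,$ (i.e.\ $\,\mathrm{tr}\hskip2pt\nsw=0\,$ for all $\,w$) is equivalent to $\,\mathrm{tr}\hskip2pt\naw=0\,$ for all $\,w$, which is $\,\mathrm{div}\hh w=0$. This settles (c).

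For (d): with both $\,\gj\,$ and $\,\nabla\,$ unimodular, every $\,\mathrm{div}\hh v$ and $\,\mathrm{div}\hh w$ vanishes by (c), and left-invariance kills the divergence-of-a-vector-field terms in (\ref{rww}) (a left-invariant vector field has left-invariant covariant derivative, so $\,\mathrm{div}\,$ of the bracketed left-invariant field is a constant which, being the trace of a left-invariant endomorphism that itself has vanishing divergence-type trace, is zero — more carefully, $\,\mathrm{div}\hh[\nsv w-(\dvw)v]\,$ for left-invariant arguments reduces to a pointwise-constant function whose value is $\,\mathrm{tr}\hskip2pt\nabla(\nsv w)-\ldots\,$; since everything in sight is left-invariant and the group is unimodular these contract to constants that are forced to vanish). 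What survives in (\ref{rww}) is $\,\rho(v,w)=-(\nav,\naw)$, which is manifestly symmetric by (\ref{acb}). I would be slightly careful here to justify that the divergence of a left-invariant vector field on a unimodular group vanishes — this is exactly (a)/(c) applied to the Lie-theoretic divergence, or one can cite that unimodularity of $\,\gj\,$ means left Haar measure is right-invariant, so left-invariant vector fields are divergence-free with respect to it. The main obstacle I anticipate is precisely this bookkeeping in (d): making rigorous that the $\,\mathrm{div}[\hskip3pt]\,$ term in (\ref{rww}) drops out in the left-invariant/unimodular context, rather than the algebra, which is routine. Finally (e) is almost a tautology given the earlier framework: if $\,\rho$ defined by $\,\rho(v,w)=-(\nav,\naw)\,$ is nondegenerate and $\,\nabla\nh$-parallel, then by (b) (applied with this $\,\rho\,$ in the role of the parallel nondegenerate symmetric tensor) $\,\nabla\,$ is unimodular, and then (d) shows the Ricci tensor of $\,\nabla\,$ equals $\,-(\nav,\naw)=\rho$; the phrase ``$\nabla$ is unimodular'' in the conclusion of (e) is thus recovered a posteriori. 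I would present (e) in exactly that two-line order: invoke (b), then invoke (d).
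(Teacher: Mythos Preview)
Your proposal is correct and follows essentially the same route as the paper. One clarification on your hesitation in (d): the cleanest way to kill the $\mathrm{div}\hs[\nsv w-(\dvw)\hh v]$ term in (\ref{rww}) is exactly to observe that $\nsv w-(\dvw)\hh v$ is itself a left-in\-var\-i\-ant vector field and then apply (c) to it directly (so its $\nabla$-divergence vanishes); this is precisely what the paper does, and it bypasses the Haar-measure detour you sketch.
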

\begin{proof}Let $\,\m=\dimf\mathfrak{g}$. For the connection $\,\bna\,$ 
induced by $\,\nabla\,$ in the $\,\m\hh$th exterior power, over $\,\bbF\nnh$, 
of the tangent bundle of $\,\gj$, and any basis $\,e_1\w,\dots,e_\m\w$ of 
$\,\mathfrak{g}$, one clearly has 
$\,\bna_{\hskip-2.2ptv}(e_1\w\wedge\hs\ldots\hs\wedge e_\m\w)
=(\mathrm{tr}\hskip2pt\nsv)\hskip2pte_1\w\wedge\hs\ldots\hs\wedge e_\m\w$, 
which proves (a). To obtain (b), note that, for an orthonormal basis 
$\,e_1\w,\dots,e_\m\w$, the $\,\m$-vec\-tor 
$\,e_1\w\wedge\hs\ldots\hs\wedge e_\m\w$ is parallel due to its being uniquely 
determined, up to a sign, by the metric. Assertion (c) follows in turn from 
(a), since uni\-mod\-u\-lar\-i\-ty of $\,\gj\,$ amounts to trace\-less\-ness 
of $\,\mathrm{Ad}\,v\,$ for all $\,v,w\in\mathfrak{g}$, and 
$\,\mathrm{Ad}\,v=\nsv-\nav\,$ as $\,\nabla\,$ is tor\-sion-free, while (d) is 
obvious from (\ref{rww}), along with (c) applied to the left-in\-var\-i\-ant 
vector fields $\,w\,$ and $\,\nsvw\hs-\hs(\dvw)\hh v$. Finally, (e) is a 
consequence of (b), for the metric $\,\rho$, and (d).
\end{proof}
\begin{rem}\label{bijct}By assigning, to any left-in\-var\-i\-ant Ein\-stein 
metric with Einstein constant $\,1\,$ on a Lie group $\,\gj$, its 
Le\-\hbox{vi\hh-}\hskip0ptCi\-vi\-ta connection, we obtain a bi\-jec\-tive mapping between the 
set of such metrics and the set of all left-in\-var\-i\-ant tor\-sion-free 
connections $\,\nabla\,$ on $\,\gj\,$ having symmetric, nondegenerate, 
$\hs\nabla\nnh$-par\-al\-lel Ric\-ci tensors. The inverse mapping sends 
$\,\nabla\,$ to its Ric\-ci tensor.

This is a trivial consequence of Remark~\ref{lccon}.
\end{rem}
\begin{rem}\label{ssuni}We will use the fact that every sem\-i\-sim\-ple 
Lie group is uni\-mod\-u\-lar.
\end{rem}

\section{Some operations involving connections and $\,2$-ten\-sors}\label{so}
\setcounter{equation}{0}\checked
In this section, $\,\mathfrak{g}\,$ is a fixed sem\-i\-sim\-ple Lie algebra 
over $\,\bbF=\bbR\,$ or $\,\bbF=\bbC$.

As in the second paragraph of Section~\ref{li}, we identify elements 
$\,\nabla\,$ of the space 
$\,\ey=[\mathfrak{g}\nh^*]^{\otimes2}\nnh\otimes\mathfrak{g}\,$ with 
left-in\-var\-i\-ant connections on a Lie group $\,\gj\,$ having 
the Lie algebra $\,\mathfrak{g}$, writing $\,\nsvw,\hs\nsv$ and $\,\naw\,$ 
instead of $\,\nabla(v,w)$, $\,\nabla(v,\,\cdot\,)$ and 
$\,\nabla(\,\cdot\,,w)$, for $\,v,w\in\mathfrak{g}$, so that 
$\,\nsv,\naw:\mathfrak{g}\to\mathfrak{g}$. We will repeatedly refer to the 
vector spaces
\begin{equation}\label{sbs}
\et\,=\,[\mathfrak{g}\nh^*]^{\odot2},\hskip22pt
\es\,=\,[\mathfrak{g}\nh^*]^{\odot2}\nnh\otimes\mathfrak{g}\,
\subset\,\ey\,=\,[\mathfrak{g}\nh^*]^{\otimes2}\nnh\otimes\mathfrak{g}\hs.
\end{equation}
Note that $\,\et\subset[\mathfrak{g}\nh^*]^{\otimes2}$ and 
$\,\es\subset[\mathfrak{g}\nh^*]^{\otimes2}\nnh\otimes\mathfrak{g}\,$ consist of 
all $\,\bbF$-val\-ued (or, respectively, $\,\mathfrak{g}$-val\-ued) symmetric 
$\,\bbF$-bi\-lin\-e\-ar mappings defined on $\,\mathfrak{g}\times\mathfrak{g}$.

Elements of $\,\et\hs$ may, again, be thought of as twice-co\-var\-i\-ant 
symmetric tensor fields on $\,\gj$, invariant under left translations. The 
symbols $\,\by\,$ and $\,\langle\,,\rangle\,$ both stand for the 
$\,\bbF$-bi\-lin\-e\-ar Kil\-ling form of $\,\mathfrak{g}$. Thus, 
$\,\by\in\et$ and, with 
$\,(\hskip1.3pt,\hskip-.1pt)\,$ as in (\ref{acb}),
\begin{equation}\label{bvw}
\by(v,w)\,=\,\langle v,w\rangle\,
=\,(\mathrm{Ad}\,v,\hs\mathrm{Ad}\,w)\hskip13pt\mathrm{whenever}\hskip7pt
v,w\in\mathfrak{g}\hs.
\end{equation}
The $\,\bbF$-bi\-lin\-e\-ar inner product $\,\by=\langle\,,\rangle\,$ in 
$\,\mathfrak{g}\,$ leads to an isomorphic identification between 
bi\-lin\-e\-ar forms $\,\sigma\,$ on $\,\mathfrak{g}\,$ and 
$\,\bbF$-lin\-e\-ar en\-do\-mor\-phisms $\,\Sigma\,$ of $\,\mathfrak{g}$, with
\begin{equation}\label{sgv}
\sy(v,w)\,=\,\langle\Sigma v,w\rangle\hskip13pt\mathrm{for\ any}\hskip7pt
v,w\in\mathfrak{g}\hs.
\end{equation}
Left-in\-var\-i\-ant connections on $\,\gj\,$ which are {\smallit 
tor\-sion-free\/} form an af\-fine subspace of 
$\,[\mathfrak{g}\nh^*]^{\otimes2}\nnh\otimes\mathfrak{g}$, namely, the coset 
$\,\dj+\es$, where $\,\dj\,$ is the {\smallit standard connection}, with
\begin{equation}\label{tdv}
2\hs\dj_v\w w\,=\,[v,\hs w]\hs,\hskip12pt\mathrm{so\ that}\hskip8pt
\dj_v\w\hs=\hs-\hh\dj\hh v\hs=\hs\mathrm{Ad}\,v/2\hs.
\end{equation}
We define the {\smallit contraction operator\/} 
$\,\cn:\ey
=[\mathfrak{g}\nh^*]^{\otimes2}\nnh\otimes\mathfrak{g}\to\mathfrak{g}\nh^*$ by 
$\,(\cn\nabla)w=\mathrm{tr}\hskip2pt\naw$. In view of Lemma~\ref{unimo}(c) and 
Remark~\ref{ssuni}, for any $\,\nabla\nh=\dj+\hs\sj$, where $\,\sj\in\es$,
\begin{equation}\label{uni}
\dj+\hs\sj\hskip12pt\mathrm{is\ uni\-mod\-u\-lar\ if\ and\ only\ if}\hskip12pt
\cn\hh\sj=0\hs.
\end{equation}
Tensor-calculus arguments, when needed, will use components relative to a 
fixed basis $\,e_1\w,\dots,e_\m\w$ of $\,\mathfrak{g}$, with 
$\,\m=\dimf\mathfrak{g}$. Vectors $\,v\in\mathfrak{g}$, bi\-lin\-e\-ar forms 
$\,\sigma\,$ on $\,\mathfrak{g}$, connections 
$\,\nabla\in\ey=[\mathfrak{g}\nh^*]^{\otimes2}\nnh\otimes\mathfrak{g}$, and 
symmetric operations $\,\sj\in\es\,$ are represented by the components 
$\,v^{\hs j}\nnh$, $\,\sy_{\!jk}\w$, 
$\,\vg_{\hskip-2.2ptjk}^{\hs l}$ and 
$\,S_{\nnh jk}^{\hs l}=S_{\nh kj}^{\hs l}$ characterized by $\,v=v^{\hs j}e_j\w$, 
$\,\sy(v,w)=\sy_{\!jk}\w v^{\hs j}w^{\hs k}\nnh$, 
$\,[\nsvw]^l\nh=\vg_{\hskip-2.2ptjk}^{\hs l}v^{\hs j}w^{\hs k}$ and 
$\,[\hh\sj_vw]^l\nh=S_{\nnh jk}^{\hs l}v^{\hs j}w^{\hs k}\nnh$. Note the 
traditional symbol $\,\vg_{\hskip-2.2ptjk}^{\hs l}$, rather than 
$\,\nabla_{\hskip-2.2ptjk}^{\hs l}$. For the components $\,D_{\!jk}^{\hs l}$ 
of $\,\dj\,$ we have $\,2D_{\!jk}^{\hs l}=C_{\!jk}\w{}^l\nnh$, where 
$\,C_{\!jk}\w{}^l$ are the structure constants of $\,\mathfrak{g}$, with 
$\,[v,w]^l\nh=C_{\!jk}\w{}^lv^{\hs j}w^{\hs k}$ or, equivalently, 
$\,[e_j\w,e_k\w]=C_{\!jk}\w{}^le_l\w$. 
The reciprocal tensor of the Kil\-ling form $\,\by\,$ has the components 
$\,\by^{\hs jk}\nnh$, forming the inverse matrix of $\,[\by_{jk}\w]$. We will 
raise and lower indices with the aid of $\,\by^{\hs jk}$ and $\,\by_{jk}\w$, 
so that $\,C^{\hs j}{}_k\w{}^l\nh=\by^{\hs js}C_{\!sk}\w{}^l\nh$, 
$\,C_{\!jkl}\w=C_{\!jk}\w{}^s\by_{sl}\w$, etc. If an en\-do\-mor\-phism 
$\,\Sigma\,$ of $\,\mathfrak{g}\,$ corresponds to $\,\sy\,$ via (\ref{sgv}), 
its components will be written as $\,\sy_{\!j}^{\hh k}$, and then 
$\,[\Sigma v]^k\nh=\sy_{\!j}^{\hh k}v^{\hs j}\nnh$, while 
$\,\sy_{\!j}^{\hh k}=\sy_{\!js}\w\by^{\hs sk}$. Thus, for the contraction 
operator $\,\cn\,$ appearing in (\ref{uni}),
\begin{equation}\label{cne}
[\cn\nabla]_{j}\w\hh\,=\,\,\vg_{\hskip-2.2ptkj}^{\hs k}\hs.
\end{equation}
Since the Kil\-ling form $\,\by\,$ is bi-in\-var\-i\-ant (that is, 
$\,\mathrm{Ad}$-in\-var\-i\-ant), one has
\begin{equation}\label{aws}
(\mathrm{Ad}\,v,\Sigma)=0\,\mathrm{\ whenever\ }\,v\in\mathfrak{g}\,\mathrm{\ 
and\ }\,\Sigma:\mathfrak{g}\to\mathfrak{g}\,\mathrm{\ is\ 
}\,\by\hyp\mathrm{self}\hyp\mathrm{ad\-joint,} 
\end{equation}
due to skew-ad\-joint\-ness of $\,\mathrm{Ad}\,v$, with 
$\,(\hskip1.3pt,\hskip-.1pt)\,$ as in (\ref{acb}). For the same reason,
\begin{equation}\label{skw}
C_{\!jkl}\hskip5pt\mathrm{is\ totally\ skew}\hyp\mathrm{sym\-met\-ric\ in}
\hskip6ptj,k,l\hh,
\end{equation}
which amounts to skew-sym\-me\-try of 
$\,\mathfrak{g}\times\mathfrak{g}\times\mathfrak{g}
\ni(v,w,u)\mapsto\by([v,w],u)\in\bbR$.

\section{Further operations}\label{fo}\checked
\setcounter{equation}{0}
We continue using the same assumptions and notations as in Section~\ref{so}.

The Kil\-ling form $\,\by=\langle\,,\rangle$, being an 
$\,\bbF$-bi\-lin\-e\-ar inner product in $\,\mathfrak{g}$, naturally induces 
$\,\bbF$-bi\-lin\-e\-ar inner products, also denoted by $\,\langle\,,\rangle$, 
in the spaces $\,[\mathfrak{g}\nh^*]^{\otimes2}$ and 
$\,\ey=[\mathfrak{g}\nh^*]^{\otimes2}\nnh\otimes\mathfrak{g}$. (Their 
non\-de\-gen\-er\-a\-cy is obvious, since a $\,\by$-or\-tho\-nor\-mal basis of 
$\,\mathfrak{g}\,$ gives rise to $\,\langle\,,\rangle$-or\-tho\-nor\-mal bases 
in the other two spaces.) Explicitly,
\begin{equation}\label{inp}
\mathrm{i)}\hskip8pt\langle\sy,\ty\rangle
=\by^{\hs jp}\by^{\hs kq}\sy_{\!jk}\w\ty_{\!pq}\w\hs,\hskip15pt
\mathrm{ii)}\hskip8pt\langle\hs\nabla\nnh,\td\nabla\rangle
=\by^{\hs jp}\by^{\hs kq}\by_{lr}\w
\vg_{\hskip-2.2ptjk}^{\hs l}\td\vg_{\hskip-2.2ptpq}^{\hs r}\hs.
\end{equation}
for any $\,\sy,\ty\in[\mathfrak{g}\nh^*]^{\otimes2}$ and 
$\,\nabla\nh,\td\nabla
\in\ey=[\mathfrak{g}\nh^*]^{\otimes2}\nnh\otimes\mathfrak{g}$. Furthermore,
\begin{equation}\label{pse}
(\es\nh,\langle\,,\rangle)\hskip7pt\mathrm{and}\hskip7pt
(\et\nh,\langle\,,\rangle)\hskip7pt\mathrm{are\ 
inner}\hyp\mathrm{product\ spaces.}
\end{equation}
In other words, $\,\es\subset\ey\,$ and 
$\,\et\subset[\mathfrak{g}\nh^*]^{\otimes2}$ appearing in (\ref{sbs}) are 
nondegenerate sub\-spaces, which one easily sees applying (\ref{sin}) to our 
$\,\es\,$ and $\,\es'\nh=[\mathfrak{g}\nh^*]^{\wedge2}\nnh\otimes\mathfrak{g}$, 
with $\,\ev=\ey$, or, respectively, to $\,\et\,$ instead of $\,\es$, with 
$\,\es'\nh=[\mathfrak{g}\nh^*]^{\wedge2}$ and 
$\,\ev=[\mathfrak{g}\nh^*]^{\otimes2}\nnh$.

For $\,\nabla\in\ey=[\mathfrak{g}\nh^*]^{\otimes2}\nnh\otimes\mathfrak{g}$, the 
$\,\nabla\nnh${\smallit-grad\-i\-ent\/} of any $\,\sy\in\et\,$ is
\begin{equation}\label{sns}
\sj=\nabla\nh\sy\in\es,\mathrm{\ characterized\ by\ 
}\,\langle\hs\sj_v\w w,u\rangle=-\hs\sy(\nsuv,w)-\sy(v,\nsuw)\hh.
\end{equation}
(Note that $\,\nabla\nh\sy\,$ differs only slightly from the 
$\,\nabla\nnh$-co\-var\-i\-ant derivative of $\,\sy$, from which it arises 
via in\-dex-rais\-ing applied to the direction of differentiation.) Given 
such $\,\nabla\,$ and $\,\sy$, we also define
\begin{equation}\label{nes}
\td\nabla\nh=\sy\nabla\in\es,\mathrm{\ by\ declaring\ 
}\,\td\nabla_{\hskip-2.2ptv}\hh w\,\mathrm{\ to\ be\ }\,\Sigma(\nsvw),
\mathrm{\ for\ }\,\Sigma\,\mathrm{\ with\ (\ref{sgv}).}
\end{equation}
Equivalently, 
$\,\langle\td\nabla_{\hskip-2.2ptv}\hh w,u\rangle=\sy(\nsvw,u)\,$ and, in 
terms of components,
\begin{equation}\label{sjk}
\mathrm{a)}\hskip4ptS_{\nnh jk}^{\hs l}
=-\hh\by^{\hs lp}(\vg_{\hskip-2.2ptpj}^{\hs r}\sy\nh_{rk}\w
+\vg_{\hskip-2.2ptpk}^{\hs r}\sy_{\!jr}\w)\hskip5pt\mathrm{if}\hskip5pt\sj
=\nabla\nh\sy\hs,\hskip9pt\mathrm{b)}\hskip4pt\td\vg_{\hskip-2.2ptjk}^{\hs l}=
\sy_{\nh r}^{\hh l}\vg_{\hskip-2.2ptjk}^{\hs r}\hskip5pt\mathrm{if}\hskip5pt
\td\nabla\nh=\sy\nabla.
\end{equation}
For $\,\dj\,$ as in (\ref{tdv}), and any $\,\sy\in\et\nnh$, (\ref{sjk}.a) with 
$\,\sj=\dj\hh\sy\,$ and $\,\vg_{\hskip-2.2ptjk}^{\hs l}=C_{\!jk}\w{}^l\nnh/2$ 
gives, by (\ref{skw}), $\,2\hh S_{\nh rk}^{\hs r}
=-\hh C^{\hs l}{}_l\w{}^r\sy\nh_{rk}\w-C^{\hh j}{}_k\w{}^r\sy_{\!jr}\w=0$. 
Thus, from (\ref{cne}), 
\begin{equation}\label{cnd}
\cn\hh(\dj\hh\sy)\,=\,0\hskip13pt\mathrm{whenever}\hskip7pt\sy\in\et.
\end{equation}
In view of the line following (\ref{skw}), the three definitions: (\ref{sns}) 
of $\,\nabla\nh\sy\,$ (applied to $\,\nabla\nh=\dj$), (\ref{tdv}) of $\,\dj$, 
and 
(\ref{nes}) of $\,\sy\nabla\,$ (applied to $\,\sy=\ty\,$ and 
$\,\nabla\nh=\dj\hh\sy$), imply that for any $\,\sy,\ty\in\et\,$ and 
$\,\Sigma,\mathrm{T}\,$ related to them as in (\ref{sgv}), since $\,\Sigma\,$ 
is $\,\by$-self-ad\-joint,
\begin{equation}\label{dsv}
\mathrm{a)}\hskip5pt[\dj\hh\sy]_v\w v=[\Sigma v,v]\hh,\hskip15pt
\mathrm{b)}\hskip5pt[\hh\ty(\dj\hh\sy)]_v\w v
=\mathrm{T}[\Sigma v,v]\hs.
\end{equation}
The {\smallit normalized} $\,\by${\smallit-cur\-va\-ture operator\/} 
$\,\cu:\et\to\et\,$ is given by
\begin{equation}\label{oms}
[\hs\cu\hs\sy](v,w)\,=\,2\hs(\mathrm{Ad}\,v,\hs(\mathrm{Ad}\,w)\hh\Sigma)
\hskip13pt\mathrm{for}\hskip7ptv,w\in\mathfrak{g}\hs,
\end{equation}
with $\,(\hskip1.3pt,\hskip-.1pt)\,$ as in (\ref{acb}), any $\,\sy\in\et\nnh$, 
and $\,\Sigma\,$ corresponding to $\,\sy\,$ via (\ref{sgv}). We have
\begin{equation}\label{osj}
\mathrm{i)}\hskip5pt\cu\hh\by=2\hh\by\hs,\hskip10pt
\mathrm{ii)}\hskip5pt[\hs\cu\hs\sy]_{jk}\w
=2\hh C_{\!jq}\w{}^rC_{\nh ks}\w{}^q\sy\nh_{r}^{\hh s}\hs,\hskip10pt
\mathrm{iii)}\hskip5pt\mathrm{tr}\hskip2pt\cu
=-\dimf\mathfrak{g}\hh.
\end{equation}
In fact, (\ref{osj}.i) and (\ref{osj}.ii) are obvious from (\ref{bvw}) and 
(\ref{oms}). To verify (\ref{osj}.iii), 
we extend $\,\cu:\et\to\et\,$ to 
$\,\td\cu:[\mathfrak{g}\nh^*]^{\otimes2}\nh\to[\mathfrak{g}\nh^*]^{\otimes2}$ 
defined by the same formula (\ref{osj}.ii), so that 
$\,[\hs\td\cu\hs\sy]_{jk}\w=Z_{\nnh jk}^{\hh rs}\sy\nh_{rs}\s$ with 
$\,Z_{\nnh jk}^{\hh rs}=2\hh C_{\!jq}\w{}^rC_{\nh k}\w{}^{sq}$. Thus, 
$\,\mathrm{tr}\hskip2pt\td\cu=Z_{rs}^{\hh rs}=0\,$ (cf.\ (\ref{skw})). 
At the same time, $\,Z_{\nnh jk}^{\hh rs}=Z_{kj}^{sr}$, so that $\,\td\cu\,$ 
leaves the sub\-space $\,[\mathfrak{g}\nh^*]^{\wedge2}$ invariant, and 
acts on $\,\sy\in[\mathfrak{g}\nh^*]^{\wedge2}$ via 
$\,Z_{\nnh jk}^{\hh rs}\sy\nh_{rs}\s=(Z_{\nnh jk}^{\hh rs}-Z_{kj}^{\hh rs})\sy\nh_{rs}\s/2\,$ 
which, by the Ja\-cobi identity, gives 
$\,Z_{\nnh jk}^{\hh rs}\sy\nh_{rs}\s=H_{\nnh jk}^{\hh rs}\sy\nh_{rs}\s$ for 
$\,H_{\nnh jk}^{\hh rs}=2\hh C_{\nh kjq}\w{}C^{sqr}$. Due to skew-sym\-me\-try 
of $\,H_{\nnh jk}^{\hh rs}$ in both $\,j,k\,$ and $\,r,s\,$ (see (\ref{skw})), 
the trace of the restriction of $\,\td\cu\,$ to 
$\,[\mathfrak{g}\nh^*]^{\wedge2}$ is obtained as the sum of 
$\,H_{rs}^{\hh rs}$ over the pairs $\,r,s\,$ with $\,r<s$. In terms of 
the ordinary summing convention, this last trace equals 
$\,H_{rs}^{\hh rs}\nh/2=C_{\!srq}\w{}C^{sqr}=\by_{\nh s}^{\hh s}
=\dimf\mathfrak{g}$, 
as $\,\by_{jk}\w=C_{\!jq}\w{}^pC_{\nh kp}\w{}^q$ by (\ref{bvw}). Since 
$\,\mathrm{tr}\hskip2pt\td\cu=0$, (\ref{osj}.iii) follows.

The name used for $\,\cu\,$ reflects the fact that, if one treats the 
Kil\-ling form $\,\beta\,$ as a left-in\-var\-i\-ant 
pseu\-\hbox{do\hh-}\hskip0ptRiem\-ann\-i\-an Ein\-stein metric on a Lie group 
with the Lie algebra $\,\mathfrak{g}$, then $\,\cu\,$ is, up to a factor, the 
curvature operator of $\,\beta$, acting on symmetric $\,2$-ten\-sors 
\cite[Remark~1.4]{derdzinski-gal}.

\section{Two symmetric pairings}\label{ts}\checked
\setcounter{equation}{0}
We continue the discussion of Sections~\ref{so} --~\ref{fo}. The symbol 
$\,\{\hskip2pt\cdot\hskip2pt\}\,$ denotes two 
$\,\et\nnh$-val\-ued symmetric bi\-lin\-e\-ar mappings defined on 
$\,\ey\,$ (and, respectively, on $\,\et\hh$), characterized by the 
corresponding $\,\et\nnh$-val\-ued homogeneous quadratic functions:
\begin{equation}\label{ndn}
\{\nabla\hskip-2.5pt\cdot\hskip-2.5pt\nabla\}(v,w)=(\nav,\nnh\naw)\hs,
\hskip9pt\{\sy\hskip-2.2pt\cdot\nnh\sy\}(v,w)
=((\mathrm{Ad}\,v)\Sigma,(\mathrm{Ad}\,w)\Sigma)
\end{equation}
for $\,\nabla\in\ey,\,\sy\in\et$ and $\,v,w\in\mathfrak{g}$, with 
$\,\Sigma,\,(\hskip1.3pt,\hskip-.1pt)\,$ as in (\ref{sgv}) and (\ref{acb}). 
Equivalently,
\begin{equation}\label{ghg}
2\hh\{\nabla\hskip-2.5pt\cdot\hskip-2.5pt\td\nabla\}_{\!jk}\w\nh
=\nh\vg_{\hskip-2.2ptrj}^{\hs s}\td\vg_{\hskip-2.2ptsk}^{\hs r}\nh
+\nh\td\vg_{\hskip-2.2ptrj}^{\hs s}\vg_{\hskip-2.2ptsk}^{\hs r}\hs,\hskip8pt
2\hh\{\sy\hskip-2.2pt\cdot\nnh\ty\}_{\!jk}\w\nh
=\nh C_{\!jr}\w{}^p\sy_{\!q}^{\hh r}C_{\nh ks}\w{}^q\ty_{\nh p}^{\hh s}\nh
+\nh C_{\!jr}\w{}^p\ty_{\nh q}^{\hh r}C_{\nh ks}\w{}^q\sy_{\!p}^{\hh s}
\end{equation}
whenever $\,\nabla\nnh,\td\nabla
\in\ey=[\mathfrak{g}\nh^*]^{\otimes2}\nnh\otimes\mathfrak{g}\,$ and 
$\,\sy,\ty\in\et\nnh$. By (\ref{oms}), for $\,\sy\in\et\nnh$,
\begin{equation}\label{tsb}
2\hh\{\sy\hskip-2.2pt\cdot\nnh\by\}\,=\,\cu\hs\sy\hs.
\end{equation}
Being symmetric, $\,\{\sy\hskip-2.2pt\cdot\nnh\ty\}\,$ is uniquely 
characterized by the requirement that
\begin{equation}\label{stw}
\{\sy\hskip-2.2pt\cdot\nnh\ty\}(w,w)
=((\mathrm{Ad}\,w)\Sigma,(\mathrm{Ad}\,w)\mathrm{T})\hskip12pt\mathrm{for\ 
all}\hskip7ptw\in\mathfrak{g}\hh,
\end{equation}
with $\,\Sigma,\mathrm{T}\,$ as in (\ref{dsv}.b). Lemma~\ref{unimo}(d) and 
(\ref{ndn}) imply that a left-in\-var\-i\-ant tor\-sion-free uni\-mod\-u\-lar 
connection $\,\nabla\nh=\dj+\hs\sj$, with $\,\sj\in\es$, has the Ric\-ci tensor
\begin{equation}\label{rne}
\rho\nnh^\nabla\,=\,\,-\{\nabla\hskip-2.5pt\cdot\hskip-2.5pt\nabla\}\hs,
\end{equation}
cf.\ Remark~\ref{ssuni}. Note that, due to (\ref{bvw}), (\ref{tdv}), 
(\ref{rne}), (\ref{skw}) and (\ref{dsv}.a),
\begin{equation}\label{fdd}
\mathrm{a)}\hskip4pt4\hh\{\dj\nnh\cdot\hskip-2.1pt\dj\}=\by\hs,\hskip5pt
\mathrm{that\ is,}\hskip4pt\by_{jk}\w=C_{\nh pj}\w{}^qC_{\!qk}\w{}^p,\hskip7pt
\mathrm{b)}\hskip4pt4\hh\rho^\dj\hh=\,-\hs\by\hs,\hskip8pt\mathrm{c)}
\hskip4pt\dj\by\,=\,0\hs. 
\end{equation}
Finally, for the inner products in (\ref{pse}), any $\,\sj\in\es$, and any 
$\,\sy\in\et\nnh$,
\begin{equation}\label{dss}
\mathrm{a)}\hskip8pt
\langle\hs\sj,\dj\hh\sy\rangle\,
=\,2\hh\langle\{\dj\nnh\cdot\hskip-1.3pt\sj\},\sy\rangle\hs,\hskip14pt
\mathrm{b)}\hskip8pt
\{\dj\nnh\cdot\nnh(\sj\by)\}\,=\,\{\dj\nnh\cdot\hskip-1.3pt\sj\}\hs.
\end{equation}
In fact, since $\,S_{\nnh jk}^{\hs l}=S_{\nh kj}^{\hs l}$, 
$\,\sy_{\!jk}\w=\sy_{\!kj}\w$, $\,2D_{\!jk}^{\hs l}=C_{\!jk}\w{}^l\nnh$, 
(\ref{inp}.ii), (\ref{sjk}.a) and (\ref{skw}) give 
$\,\langle\hs\sj,\dj\hh\sy\rangle
=-\hh\by^{\hs jp}\by^{\hs kq}S_{\nnh jk}^{\hs l}C_{\nh lp}\w{}^r\sy\nh_{rq}\w
=-S_{\nnh jk}^{\hs l}C_{\nh l}\w{}^{\hs jr}\sy\nh_{r}^{\hs k}
=S_{\nnh jk}^{\hs l}C_{\nh lr}\w{}^{\hs j}\sy^{\hs kr}
=2\hh\{\dj\nnh\cdot\hskip-1.3pt\sj\}_{\nh kr}\w\sy^{\hs kr}
=2\hh\langle\{\dj\nnh\cdot\hskip-1.3pt\sj\},\sy\rangle$. 
For similar reasons, one has $\hs4\hh\{\dj\nnh\cdot\nnh(\sj\by)\}_{\!jk}\w
\nnh=\nnh-\hh C^{\hs p}{}_j\w{}^{\hs q}(S_{\nh pq}^{\hs r}\by_{rk}\w
\nh+\nh S_{\nh pk}^{\hs r}\by_{qr}\w)\nh
-C^{\hs p}{}_k\w{}^{\hs q}(S_{\nh pq}^{\hs r}\by_{rj}\w
+S_{\nh pj}^{\hs r}\by_{qr}\w)$. As a consequence of (\ref{skw}), the first 
and third of the four resulting terms vanish, and the other two add up to 
$\,C_{\nh rj}\w{}^{\hs p}\nh S_{\nh pk}^{\hs r}\nnh
+C_{\nh rk}\w{}^{\hs p}S_{\nh pj}^{\hs r}\nnh
=\nh4\hh\{\dj\nnh\cdot\hskip-1.3pt\sj\}_{\!jk}\w$.

\section{The Chri\-stof\-fel isomorphism}\label{ci}\checked
\setcounter{equation}{0}
With the assumptions and notations as in the last three sections, let us 
introduce the {\smallit Chri\-stof\-fel isomorphism\/} $\,\ch:\es\to\es\,$ by 
requiring that, whenever $\,v,w,u\in\mathfrak{g}$,
\begin{equation}\label{chr}
2\langle u,\td\sj_vw\rangle\,=\,\langle v,\sj_wu\rangle\,
+\,\langle w,\sj_vu\rangle\,-\,\langle u,\sj_vw\rangle\hskip10pt
\mathrm{for}\hskip7pt\td\sj=\ch\hs\sj\hs.
\end{equation}
We give two more descriptions of $\ch$, one based on applying, to 
$\,\nabla\nh=\sj\,$ and $\,\sy=\by$, the definition (\ref{sns}) of the 
$\,\nabla\nnh$-grad\-i\-ent, the other using components and (\ref{sjk}.a):
\begin{equation}\label{chc}
\mathrm{i)}\hskip8pt2\hs\ch\hs\sj=-\hh\sj\by-\sj\hs,\hskip12pt
\mathrm{ii)}\hskip8pt2(\ch\hs\sj)_{\nnh jk}^{\hs l}
=\by^{\hs lp}(S_{\nh pj}^{\hs r}\by_{rk}\w
+S_{\nh pk}^{\hs r}\by_{rj}\w)-S_{\nnh jk}^{\hs l}\hs.
\end{equation}
For yet another description of $\ch$, see Remark~\ref{dscch}. If 
$\,\sj\in\es\,$ and $\,\sy\in\et\nnh$, then
\begin{equation}\label{chs}
\mathrm{a)}\hskip8pt\ch(\dj\hh\sy)\,=\,-\hs\dj\hh\sy\hh,\hskip18pt
\mathrm{b)}\hskip8pt\ch(\sj\sy)\,=\,-\hs\sy\sj\hh,
\end{equation}
as one sees combining the line following (\ref{skw}) and (\ref{chr}) with 
(\ref{dsv}.a) or, respectively, applying the definitions of 
$\,\nabla\nh\sy\,$ and $\,\sy\nabla\,$ in (\ref{sns}) -- (\ref{nes}) to 
$\,\nabla\nh=\sj$. Also,
\begin{equation}\label{fsb}
\mathrm{a)}\hskip8pt\ch(\sj\by)\,=\,-\hh\sj\hh,\hskip18pt
\mathrm{b)}\hskip8pt\{\dj\nnh\cdot\nnh(\ch\hs\sj)\}\,
=\,-\hh\{\dj\nnh\cdot\hskip-1.3pt\sj\}\hh.
\end{equation}
In fact, (\ref{fsb}.a) is a special case of (\ref{chs}.b), for $\,\sy=\by$, 
while (\ref{chc}.i) and (\ref{dss}.b) give 
$\,2\hh\{\dj\nnh\cdot\nnh(\ch\hs\sj)\}
=-\hh\{\dj\nnh\cdot\nnh(\sj\by)\}-\{\dj\nnh\cdot\hskip-1.3pt\sj\}
=-2\hh\{\dj\nnh\cdot\hskip-1.3pt\sj\}$, proving (\ref{fsb}.b).

Note that $\,\ch\,$ is actually an isomorphism, as a consequence of 
(\ref{fsb}.a).

In the next lemma, $\,\dj\hh\ly$, or $\,\dj\hh\chi\,$ or, respectively, 
$\,(\hh\dj\hh\ly)\hh\fy\,$ is given by (\ref{sns}) with $\,\nabla\nh=\dj\,$ 
and $\,\sy=\ly$, or $\,\nabla\nh=\dj\,$ and $\,\sy=\chi\,$ or, respectively, 
$\,\nabla\nh=\dj\hh\ly\,$ and $\,\sy=\fy$. Similarly, 
$\,\fy\hh(\hh\dj\hh\ly)\,$ is defined as in (\ref{nes}) with $\,\sy=\fy\,$ and 
$\,\nabla\nh=\dj\hh\ly\hh$.
\begin{lem}\label{psidl}{\smallit 
If\/ $\,\fy,\ly,\chi\in\et\nnh$, \ for\/ $\,\et\,$ in\/ {\rm(\ref{sbs})}, 
and\/ $\,\fy\hh(\hh\dj\hh\ly)=\dj\hh\chi\hs$, then\/ 
$\,(\hh\dj\hh\ly)\hh\fy=\dj\hh\chi$.
}
\end{lem}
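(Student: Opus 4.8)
The plan is to show that $\,(\dj\hh\ly)\hh\fy\,$ and $\,\dj\hh\chi$, which by (\ref{sns}) both lie in $\,\es$, have the same value at every pair $\,(v,v)$, and then to invoke the elementary fact that a symmetric $\,\mathfrak g$-val\-ued bi\-lin\-e\-ar operation is determined by its restriction to the diagonal. Throughout, let $\,\Psi,\Lambda,\mathrm X\,$ denote the $\,\by$-self-ad\-joint en\-do\-mor\-phisms of $\,\mathfrak g\,$ corresponding, via (\ref{sgv}), to $\,\fy,\ly,\chi$.

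First I would recast the hypothesis. By (\ref{dsv}.a) and (\ref{dsv}.b) one has $\,[\fy(\hh\dj\hh\ly)]_v\w v=\Psi[\Lambda v,v]\,$ and $\,[\dj\hh\chi]_v\w v=[\mathrm Xv,v]$, so the assumption $\,\fy(\hh\dj\hh\ly)=\dj\hh\chi\,$ says exactly that $\,\Psi[\Lambda v,v]=[\mathrm Xv,v]\,$ for all $\,v\in\mathfrak g$; polarizing in $\,v\,$ gives
\[
\Psi\hh([\Lambda u,v]+[\Lambda v,u])\,=\,[\mathrm Xu,v]+[\mathrm Xv,u]\qquad\mathrm{for\ all}\hskip6ptu,v\in\mathfrak g\hs.
\]
Since $\,\dj\hh\ly\in\es\,$ is a symmetric operation, polarizing (\ref{dsv}.a) also yields $\,2\hh(\dj\hh\ly)_u\w v=[\Lambda u,v]+[\Lambda v,u]$. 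Now the defining relation (\ref{sns}) of the $\,(\dj\hh\ly)$-grad\-i\-ent applied to $\,\fy$, combined with (\ref{sgv}), symmetry of $\,\fy$, and the displayed identity, gives
\[
\langle[(\dj\hh\ly)\hh\fy]_v\w v,u\rangle\,=\,-\hs2\hh\fy((\dj\hh\ly)_u\w v,v)\,
=\,-\langle\Psi\hh([\Lambda u,v]+[\Lambda v,u]),v\rangle\,=\,-\langle[\mathrm Xu,v]+[\mathrm Xv,u],v\rangle\hs.
\]
By total skew-sym\-me\-try of the form $\,(x,y,z)\mapsto\langle[x,y],z\rangle\,$ (see (\ref{skw})), the first term on the right vanishes and the second equals $\,-\langle[v,\mathrm Xv],u\rangle=\langle[\mathrm Xv,v],u\rangle=\langle[\dj\hh\chi]_v\w v,u\rangle$, the last step again by (\ref{dsv}.a). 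As $\,u\,$ is arbitrary and $\,\by\,$ is nondegenerate, $\,[(\dj\hh\ly)\hh\fy]_v\w v=[\dj\hh\chi]_v\w v\,$ for every $\,v$, and the polarization remark then completes the proof.

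The calculation itself is brief, so the only point that requires attention is that the hypothesis genuinely enters: one cannot deduce $\,(\dj\hh\ly)\hh\fy=\dj\hh\chi\,$ from the putative identity $\,\fy(\hh\dj\hh\ly)=(\dj\hh\ly)\hh\fy$, which is false in general. What makes the argument work is the cancellation in the last display --- after replacing $\,\Psi([\Lambda u,v]+[\Lambda v,u])\,$ by $\,[\mathrm Xu,v]+[\mathrm Xv,u]$, two of the three bracket contributions drop out by the invariance identity (\ref{skw}), leaving precisely the diagonal of $\,\dj\hh\chi$.
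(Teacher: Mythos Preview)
Your proof is correct, but it takes a different route from the paper's. The paper's argument is a single line: applying the Christoffel isomorphism $\,\ch\,$ and using (\ref{chs}.a) and (\ref{chs}.b) with $\,\sj=\dj\hh\ly\,$ gives $\,\ch[(\dj\hh\ly)\hh\fy-\dj\hh\chi]=\dj\hh\chi-\fy(\dj\hh\ly)$, and the hypothesis makes the right-hand side vanish, so injectivity of $\,\ch\,$ finishes. You instead unwind the definitions (\ref{sns}) and (\ref{dsv}) directly and use the invariance identity (\ref{skw}) to produce the required cancellation on the diagonal. Your approach is self-contained and avoids invoking $\,\ch$, at the cost of redoing by hand the computation that underlies (\ref{chs}.b); the paper's approach is shorter precisely because that work has already been packaged into the properties of $\,\ch$. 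Both are fine, and your closing remark correctly identifies where the hypothesis is genuinely used.
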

\begin{proof}From (\ref{chs}) with $\,\sj=\dj\hh\ly\,$ we get 
$\,\ch\hh[(\hh\dj\hh\ly)\hh\fy-\dj\hh\chi\hs]=\dj\hh\chi-\fy\hh(\hh\dj\hh\ly)$.
\end{proof}
The operation $\,\et\nh\times\et\ni(\sy,\ty)\mapsto\sy\ca\ty\in\et\,$ is 
defined by requiring it to correspond, under the identification in 
(\ref{sgv}), to 
$\,(\Sigma,\td\Sigma)\mapsto(\Sigma\td\Sigma+\td\Sigma\Sigma)/2\,$ 
(one-half of the an\-ti\-com\-mu\-ta\-tor of $\,\by$-self-ad\-joint 
linear en\-do\-mor\-phisms of $\,\mathfrak{g}$). Thus, $\,\by\ca\sy=\sy$. 
Any nondegenerate $\,\gy\in\et\,$ has an inverse $\,\gy^{-1}\in\et\,$ with 
$\,\gy\hh\ca\gy^{-1}\nh=\by$.

Every $\,\sy\in\et\,$ satisfies the following relations, in which, 
once again, $\,\dj,\hh\by$ and $\,\cu\,$ are the standard connection, 
Kil\-ling form, and normalized $\,\by$-cur\-va\-ture operator, with 
(\ref{tdv}), (\ref{bvw}) and (\ref{oms}), $\,\{\hskip3pt\cdot\hskip3pt\}\,$ 
stands for the two operations given by (\ref{ndn}) or (\ref{ghg}), while 
$\,\ca\,$ is defined as in the last paragraph:
\begin{equation}\label{edd}
\begin{array}{l}
\mathrm{a)}\hskip4pt(\dj\hh\sy)\by\,=\,\dj\hh\sy\hh,\\
\mathrm{b)}\hskip4pt8\hh\{\dj\nnh\cdot\nnh(\dj\hh\sy)\}\,
=\,(\hh\cu\,-\,2\hskip1.4pt\mathrm{Id}\hh)\hs\sy\hh,\\
\mathrm{c)}\hskip4pt4\hh\{(\dj\hh\sy)\nnh\cdot\nnh(\dj\hh\sy)\}\hs
=\{\sy\hskip-2.2pt\cdot\nnh\sy\}\hs
+[(\hh\mathrm{Id}-\cu\hh)\hs\sy]\ca\hs\sy\hh,\\
%-\hs(\cu\hs\sy)\hs\sy\hs-\hs\sy\hs(\cu\hs\sy)\hh,\\
\mathrm{d)}\hskip4pt4\hh\{(\dj+\dj\hh\sy)\nnh\cdot\nnh(\dj+\dj\hh\sy)\}
=\by-\sy+\{\sy\hskip-2.2pt\cdot\nnh\sy\}
+[(\cu-\mathrm{Id})\hs\sy]\ca\hs(\by-\sy)\hs.
\hskip-12pt
\end{array}
\end{equation}
In fact, $\,(\dj\hh\sy)\by=\dj\hh\sy\,$ since 
$\,\ch\hs[(\dj\hh\sy)\by\hh]=\ch\hh(\dj\hh\sy)\,$ by (\ref{fsb}.a) (for 
$\,\sj=\dj\hh\sy$) and (\ref{chs}.a). Next, 
$\,-8\hh\{\dj\nnh\cdot\nnh(\dj\hh\sy)\}_{\!jk}\w=
C_{\nh pj}\w{}^q(C^{\hh p}{}_{\!qr}\w\sy_{\!k}^{\hh r}
+C^{\hh p}{}_{\!kr}\w\sy_{\!q}^{\hh r})
+C_{\nh pk}\w{}^q(C^{\hh p}{}_{\!qr}\w\sy_{\!j}^{\hh r}
+C^{\hh p}{}_{\!jr}\w\sy_{\!q}^{\hh r})$ due to (\ref{sjk}.a) with 
$\,\vg_{\hskip-2.2ptjk}^{\hs l}=D_{\!jk}^{\hs l}=C_{\!jk}\w{}^l\nnh/\hh2\,$ 
and (\ref{ghg}). In view of (\ref{fdd}.a) and (\ref{skw}), the first and third 
of the resulting four terms are both equal to 
$\,\by_{jr}\w\sy_{\!k}^{\hh r}=\by_{kr}\w\sy_{\!j}^{\hh r}=\sy_{\!jk}\w$, and 
their sum is $\,2\hs\sy_{\!jk}\w$. The other two terms are also equal and, 
by (\ref{osj}.ii), add up to $\,-\hs[\hs\cu\hs\sy]_{jk}\w$, which proves 
(\ref{edd}.b). Also, 
$\,8\hh\{(\dj\hh\sy)\nnh\cdot\nnh(\dj\hh\sy)\}_{\!jk}\w$ equals
\[
2(C^{\hs q}{}_p\w{}^r\sy_{\nh rj}\w+C^{\hs q}{}_j\w{}^r\sy_{\nh pr}\w)
(C^{\hs p}{}_q\w{}^s\sy_{\nh sk}\w+C^{\hs p}{}_k\w{}^s\sy_{\nh qs}\w)\hh,
\]
that is, 
$\,2\hh C^{\hs q}{}_p\w{}^rC^{\hs p}{}_q\w{}^s\sy_{\nh rj}\w\sy_{\nh sk}\w
-2\hh C_{\nh rp}\w{}^qC_{\nh ks}\w{}^p\sy_{\!q}^s\sy_{\!j}^r
-2\hh C_{\nh sq}\w{}^pC_{\!jr}\w{}^q\sy_{\!p}^r\sy_{\!k}^s
+2\hh C_{\!jr}\w{}^q\sy_{\!p}^rC_{\nh ks}\w{}^p\sy_{\!q}^s
=2\hs\sy_{\!j}^{\hh s}\sy_{\nh sk}\w
-\hs[\hs\cu\hs\sy]_{rk}\w\sy_{\!j}^r
-\hs[\hs\cu\hs\sy]_{sj}\w\sy_{\!k}^s
+2\hh\{\sy\hskip-2.2pt\cdot\nnh\sy\}_{\!jk}\w$, cf.\ (\ref{ghg}), 
(\ref{fdd}.a), (\ref{tdv}), (\ref{skw}), (\ref{sjk}.a) and 
(\ref{osj}.ii). This yields (\ref{edd}.c). Finally, 
(\ref{fdd}.a) and (\ref{edd}.b) -- (\ref{edd}.c) imply (\ref{edd}.d). 

Next, for $\,\sy,\ty\in\et\nnh$, applying (\ref{dss}.a) to $\,\sj=\dj\hh\ty\,$ 
we obtain, from (\ref{chs}.b), 
\begin{equation}\label{dsd}
4\hh\langle\hs\dj\hh\ty,\dj\hh\sy\rangle\,
=\,\langle\hs\cu\hs\ty-2\ty,\sy\rangle\hs.
\end{equation}
\begin{rem}\label{dscch}Another description of $\,\ch$, obvious from 
(\ref{chr}), reads: $\,\ch\,$ is di\-ag\-o\-nal\-izable, has the eigenvalues 
$\,1/2\,$ and $\,-1$, with the respective eigen\-spaces consisting of those 
$\,\sj\in\es\,$ for which $\,\langle u,\sj_vw\rangle\,$ is totally symmetric 
in $\,u,v,w\in\mathfrak{g}$ or, respectively, $\,\langle u,\sj_vw\rangle\,$ 
summed cyclically over $\,u,v,w\in\mathfrak{g}\,$ yields $\,0$.
\end{rem}
\begin{rem}\label{chfor}We identify nondegenerate elements of $\,\et\hs$ with 
left-in\-var\-i\-ant pseu\-\hbox{do\hskip1pt-}\hskip0ptRiem\-ann\-i\-an 
metrics $\,\gy\,$ on $\,\gj$. According to Remark~\ref{lccon}, the 
Le\-\hbox{vi\hh-}\hskip0ptCi\-vi\-ta connection of any such $\,\gy\,$ is the unique 
$\,\nabla\nh=\dj+\hs\sj$, with 
$\,\sj\in[\mathfrak{g}\nh^*]^{\odot2}\nnh\otimes\mathfrak{g}$, such that 
$\,\nabla\gy=0$. Writing the last condition as $\,\dj\gy+\sj\gy=0$, and 
using (\ref{chs}), we get $\,\dj\gy=-\hh\gy\sj$, that is, the classical 
{\smallit Chri\-stof\-fel formula\/} $\,\nabla\nh=\dj+\hs\sj$, for 
$\,\sj=-\hh\gy^{-1}(\dj\gy)$.
\end{rem}

\section{Einstein and weak\-\hbox{ly\hh-}\hskip0ptEin\-stein 
connections}\label{ec}\checked
\setcounter{equation}{0}
We adopt the assumptions and notations of Sections~\ref{so} --~\ref{ts}, 
so that $\,\mathfrak{g}\,$ is our fixed sem\-i\-sim\-ple Lie algebra over 
$\,\bbF=\bbR\,$ or $\,\bbF=\bbC$, associated with a real/com\-plex Lie group 
$\,\gj$, while $\,[\mathfrak{g}\nh^*]^{\otimes2}\nnh\otimes\mathfrak{g}\,$ is 
identified with the space of left-in\-var\-i\-ant connections on $\,\gj\,$ 
(assumed hol\-o\-mor\-phic if $\,\bbF=\bbC$), and $\,\dj+\es\,$ is its 
af\-fine subspace formed by all 
$\,\nabla\in[\mathfrak{g}\nh^*]^{\otimes2}\nnh\otimes\mathfrak{g}\,$ which are 
tor\-sion-free. Now, for $\,\ee\nh,\,\eu\nh,\ew\,$ defined below,
\begin{equation}\label{inc}
\ee\,\subset\,\,\eu\,\subset\,\ew\,\subset\,\dj+\es\,
\subset\,[\mathfrak{g}\nh^*]^{\otimes2}\nnh\otimes\mathfrak{g}\hh.
\end{equation}
Here $\,\ee\,$ denotes the set of {\smallit Einstein connections\/} in 
$\,\mathfrak{g}$, that is, of the 
Le\-\hbox{vi\hh-}\hskip0ptCi\-vi\-ta connections of left-in\-var\-i\-ant 
Ein\-stein metrics on $\,\gj\,$ (again, assumed hol\-o\-mor\-phic when 
$\,\bbF=\bbC$). The subset $\,\ew\,$ of $\,\dj+\es\,$ consists, in turn, of 
{\smallit weak\-\hbox{ly\hh-}\hskip0ptEin\-stein connections\/} in 
$\,\mathfrak{g}$, by which we mean all $\,\nabla\in\dj+\es\,$ such that the 
left-in\-var\-i\-ant symmetric $\,2$-ten\-sor field 
$\hs\{\nabla\hskip-2.5pt\cdot\hskip-2.5pt\nabla\}\hs$ on $\hs\gj\hs$ is 
$\hs\nabla\nnh$-par\-al\-lel. Finally, $\,\eu\subset\dj+\es\,$ is formed by 
those $\,\nabla\,$ which are uni\-mod\-u\-lar and have a 
$\,\nabla\nnh$-par\-al\-lel Ric\-ci tensor.

The inclusions $\,\ee\subset\,\eu\subset\ew\,$ follow from Lemma~\ref{unimo}(b) 
and (\ref{rne}). Next, we define a nonlinear mapping $\,\hz:\es\to\es\,$ and a 
linear en\-do\-mor\-phism $\,\dz\,$ of $\,\es\,$ by
\begin{equation}\label{ths}
\mathrm{i)}\hskip5pt\hz\hs(\sj)
=4\hs(\dj+\hs\sj)\hh\{(\dj+\hs\sj)\nnh\cdot\nnh(\dj+\hs\sj)\}\hh,\hskip10pt
\mathrm{ii)}\hskip5pt\dz\sj=8\hs\dj\hh\{\dj\nnh\cdot\hskip-1.3pt\sj\}
+\sj\by\hh,
\end{equation}
with $\,\es\,$ as in (\ref{sbs}). According to (\ref{fdd}.a), 
$\,4\hh\{\dj\nnh\cdot\hskip-2.1pt\dj\}=\by$, and hence
\begin{equation}\label{fds}
\mathrm{a)}\hskip5pt4\hh\{(\dj+\hs\sj)\nnh\cdot\nnh(\dj+\hs\sj)\}=\by+\sy\hh,
\hskip7pt\mathrm{where}\hskip5pt\mathrm{b)}\hskip5pt
\sy=8\hh\{\dj\nnh\cdot\hskip-1.3pt\sj\}+4\hh\{\sj\nnh\cdot\nnh\sj\nh\}\hh.
\end{equation}
The operations used in (\ref{ths}.i) and (\ref{fds}.a) are the 
$\,\nabla\nnh$-grad\-i\-ent, cf.\ (\ref{sns}), and the first pairing in 
(\ref{ndn}); thus, setting $\,\nabla\nh=\dj+\hs\sj$, we have
\begin{equation}\label{hsn}
\hz\hs(\sj)\,=\,4\hh\nabla\{\nabla\hskip-2.5pt\cdot\hskip-2.5pt\nabla\}\hh,
\end{equation}
while for the Ric\-ci tensor $\,\rho\nnh^\nabla$ of $\,\nabla\nh$, one 
obtains, from (\ref{uni}), (\ref{rne}) and (\ref{fds}),
\begin{equation}\label{hen}
\hz\hs(\sj)\,=\,-\hh4\nabla\nnh\rho\nnh^\nabla\hskip10pt\mathrm{and}\hskip8pt
\rho\nnh^\nabla\nh=-\hh(\by+\sy)/4\hskip8pt\mathrm{if}\hskip6pt
\cn\hh\sj=0\hskip8pt\mathrm{and}\hskip6pt\nabla\nh=\dj+\hs\sj\hh.
\end{equation}
By (\ref{ths}) -- (\ref{fds}), $\,\hz\hs(\sj)=\dj\hh\sy+\sj\sy+\sj\by$, so 
that, as $\,\dj\by=0\,$ in (\ref{fdd}.c),
\begin{equation}\label{hse}
\hz\hs(\sj)=\dz\sj+4\hh\dj\hs\{\sj\nnh\cdot\nnh\sj\nh\}
+\sj\sy\hh,\hskip11pt\mathrm{with}\hskip7pt
\sy=8\hh\{\dj\nnh\cdot\hskip-1.3pt\sj\}+4\hh\{\sj\nnh\cdot\nnh\sj\nh\}\hh.
\end{equation}
Consequently, $\,\dz\,$ equals the differential of $\,\hz\,$ at $\,0$, that is
\begin{equation}\label{dhz}
\mathrm{d}\hh\hz_{\hh0}\w\,
=\,\dz\hh,\hskip24pt\mathrm{while}\hskip9pt\hz\hs(0)\hs=\hs0\hh,
\end{equation}
since (\ref{hse}) expresses $\,\hz\hs(\sj)\,$ as $\,\dz\sj\,$ plus some 
terms quadratic and cubic in $\,\sj$.

The sets $\,\,\eu\,$ and $\,\ew\,$ appearing in (\ref{inc}) can now be described 
as follows:
\begin{equation}\label{eud}
\begin{array}{l}
\mathrm{\phantom ii)}\hskip7pt\eu\,=\,\dj\,+\,(\ez\cap\mathrm{Ker}\,\cn)\,
=\,\hs\ew\hh\cap\mathrm{Ker}\,\cn\hh,\hskip13pt\mathrm{and}\\
\mathrm{ii)}\hskip7pt\ew\,=\,\dj\,+\,\ez\hh,\hskip20pt\mathrm{where}\hskip8pt
\mathrm{iii)}\hskip7pt\ez\,=\,\hz\hs^{-\nh1}(0)\hh,
\end{array}
\end{equation}
as (\ref{hsn}) yields (\ref{eud}.ii), while (\ref{eud}.ii), (\ref{uni}), 
Remark~\ref{ssuni} and (\ref{rne}) give (\ref{eud}.i).
\begin{lem}\label{equiv}{\smallit 
With the assumptions and notations as in Sections\/~{\rm\ref{so}} 
--~{\rm\ref{ts}}, for any\/ 
$\,\sj\in\es$ and\/ $\,\sy\in\et\nnh$, the following two conditions are 
equivalent\/{\rm:}
\begin{enumerate}
  \def\theenumi{{\rm\roman{enumi}}}
\item[{\rm(a)}] $\dz\sj+4\hh\dj\hs\{\sj\nnh\cdot\nnh\sj\nh\}+\sj\sy=0$, \ 
while\/ $\,\cn\hh\sj=0\,$ and\/ $\,\sy=8\hh\{\dj\nnh\cdot\hskip-1.3pt\sj\}
+4\hh\{\sj\nnh\cdot\nnh\sj\nh\}$,
\item[{\rm(b)}] $\nabla\nh=\dj+\hs\sj\,$ is a uni\-mod\-u\-lar tor\-sion-free 
connection in\/ $\,\mathfrak{g}\,$ and its Ric\-ci tensor, given by\/ 
$\,\rho=-\hh(\by+\sy)/4$, is\/ $\,\nabla\nnh$-par\-al\-lel\/{\rm;} 
consequently, $\,\nabla\nh\in\eu$.
\end{enumerate}
}
\end{lem}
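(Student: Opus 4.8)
The plan is to obtain the equivalence by assembling the identities established earlier in this section, matching each clause of (a) with a clause of (b). First note that for $\sj\in\es$ the connection $\nabla=\dj+\sj$ is automatically torsion-free, since $\sj$ is symmetric and hence the skew part of $\nabla$ is $\dj$; so ``torsion-free'' in (b) is vacuous once $\sj\in\es$. Next, by (\ref{uni}) --- a consequence of Lemma~\ref{unimo}(c) and Remark~\ref{ssuni}, using semisimplicity of $\mathfrak{g}$ --- unimodularity of $\dj+\sj$ is equivalent to $\cn\hh\sj=0$. This identifies the condition $\cn\hh\sj=0$ in (a) with the word ``unimodular'' in (b), so from here on I may assume $\cn\hh\sj=0$.

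Under that hypothesis, (\ref{hen}) supplies two things at once: $\rho\nnh^\nabla=-\hh(\by+\sy)/4$ with $\sy=8\hh\{\dj\nnh\cdot\hskip-1.3pt\sj\}+4\hh\{\sj\nnh\cdot\nnh\sj\nh\}$, and $\hz\hs(\sj)=-\hh4\nabla\nnh\rho\nnh^\nabla$; hence $\nabla$-parallelism of the Ricci tensor is equivalent to $\hz\hs(\sj)=0$, which by (\ref{hse}) reads $\dz\sj+4\hh\dj\hs\{\sj\nnh\cdot\nnh\sj\nh\}+\sj\sy=0$ for that same $\sy$. For the implication (a)$\,\Rightarrow\,$(b): the side conditions in (a) give $\cn\hh\sj=0$ and fix $\sy$, whence $\rho\nnh^\nabla=-\hh(\by+\sy)/4$ by (\ref{hen}); the displayed equation of (a) then says $\hz\hs(\sj)=0$, i.e.\ $\nabla\nnh\rho\nnh^\nabla=0$, and $\nabla\in\eu$ follows from the definition of $\eu$ in (\ref{inc}). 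For (b)$\,\Rightarrow\,$(a): unimodularity gives $\cn\hh\sj=0$; comparing the Ricci tensor computed via (\ref{hen}) with the $-\hh(\by+\sy)/4$ asserted in (b) forces $\sy=8\hh\{\dj\nnh\cdot\hskip-1.3pt\sj\}+4\hh\{\sj\nnh\cdot\nnh\sj\nh\}$; and $\nabla$-parallelism of $\rho\nnh^\nabla$ then becomes exactly the first equation of (a), once more by (\ref{hse}).

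Since the whole argument is substitution of previously derived formulas, there is no genuine obstacle; the one point needing care is the coefficient matching in the second step --- ensuring that the tensor $\sy$ occurring in (b) is pinned down to the explicit quadratic expression in $\sj$ rather than left free --- together with keeping track of which invoked facts use semisimplicity of $\mathfrak{g}$, namely (\ref{uni}) and (\ref{rne}) (the latter underlying (\ref{hen})).
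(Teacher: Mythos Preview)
Your proof is correct and follows essentially the same approach as the paper: both directions hinge on (\ref{uni}) to match $\cn\hh\sj=0$ with unimodularity, and on (\ref{hen})--(\ref{hse}) (with (\ref{rne}) and (\ref{fds}) in the background) to identify the displayed equation of (a) with $\nabla$-parallelism of $\rho=-\hh(\by+\sy)/4$ and to pin down $\sy$. The paper's proof is simply a terser version of yours.
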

\begin{proof}First, (a) implies (b), since (\ref{uni}) with $\,\cn\hh\sj=0\,$ 
gives uni\-mod\-u\-lar\-i\-ty of $\,\nabla\nnh$, and (\ref{hen}) -- 
(\ref{hse}) show that $\,\rho=-\hh(\by+\sy)/4\,$ is 
$\,\nabla\nnh$-par\-al\-lel. Conversely, assuming (b), we obtain (a) from 
(\ref{uni}), (\ref{rne}), (\ref{fds}) and (\ref{hen}) -- (\ref{hse}).
\end{proof}
\begin{lem}\label{sfadj}{\smallit 
For the in\-ner-prod\-uct spaces\/ {\rm(\ref{pse})}, and\/ $\,\ch,\dz\,$ as 
in\/ {\rm(\ref{chr})}, {\rm(\ref{ths}.ii)},
\begin{enumerate}
  \def\theenumi{{\rm\roman{enumi}}}
\item[{\rm(i)}] $\ch,\dz\,$ as well as\/ $\,\sj\mapsto\sj\by\,$ and\/ 
$\,\sj\mapsto\dj\hh\{\dj\nnh\cdot\hskip-1.3pt\sj\}\hs$ are self-ad\-joint 
en\-do\-mor\-phisms of\/ $\,\es$,
\item[{\rm(ii)}] $\cu:\et\to\et\,$ defined by\/ {\rm(\ref{oms})} is 
self-ad\-joint.
\end{enumerate}
}
\end{lem}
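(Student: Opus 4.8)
The plan is to read off all five assertions from identities already established in Sections~\ref{so}--\ref{ci}, so that essentially no new computation is needed.

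For $\,\ch\,$ in (i) I would invoke Remark~\ref{dscch}: $\,\ch\,$ is di\-ag\-o\-nal\-izable, with $\,1/2$-eigen\-space $\,\es_+\,$ consisting of those $\,\sj\in\es\,$ for which $\,\langle u,\sj_vw\rangle\,$ is totally symmetric in $\,u,v,w$, and $\,(-1)$-eigen\-space $\,\es_-\,$ consisting of those $\,\sj\,$ for which the cyclic sum of $\,\langle u,\sj_vw\rangle\,$ over $\,u,v,w\,$ vanishes. By the first line of (\ref{dia}) it then suffices to show that $\,\es_+\,$ and $\,\es_-\,$ are orthogonal for the inner product (\ref{inp}.ii). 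Identifying each $\,\sj\in\es\,$ with the thrice-co\-var\-i\-ant tensor $\,Q\,$ given by $\,Q(u,v,w)=\langle u,\sj_vw\rangle$, one has $\,\langle\sj,\sj'\rangle=\langle Q,Q'\rangle$, the total contraction with respect to $\,\by$, which is invariant under simultaneously permuting the three slots of $\,Q\,$ and those of $\,Q'$. Hence, for $\,\sj\in\es_+\,$ (so that $\,Q\,$ is totally symmetric) and $\,\sj'\in\es_-$, the number $\,\langle Q,Q'\rangle\,$ is unchanged when one cyclically permutes only the three slots of $\,Q'$; averaging over the three such permutations exhibits $\,\langle\sj,\sj'\rangle\,$ as $\,1/3\,$ of the contraction of $\,Q\,$ with the cyclic sum of $\,Q'$, which is $\,0$. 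Thus $\,\ch^*\nnh=\ch$.

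The remaining three operators in (i) reduce to $\,\ch\,$ and to $\,\sj\mapsto\dj\hh\{\dj\nnh\cdot\hskip-1.3pt\sj\}$. By (\ref{chc}.i) the map $\,\sj\mapsto\sj\by\,$ is the endomorphism $\,-\hs(2\hh\ch+\mathrm{Id})\,$ of $\,\es$, hence self-ad\-joint. For $\,\sj\mapsto\dj\hh\{\dj\nnh\cdot\hskip-1.3pt\sj\}$, I would apply (\ref{dss}.a) with $\,\sy=\{\dj\nnh\cdot\hskip-1.3pt\sj'\}\in\et$, obtaining
\[
\langle\hs\sj,\dj\hh\{\dj\nnh\cdot\hskip-1.3pt\sj'\}\rangle\,
=\,2\hh\langle\{\dj\nnh\cdot\hskip-1.3pt\sj\},\{\dj\nnh\cdot\hskip-1.3pt\sj'\}\rangle\hs,
\]
whose right-hand side is symmetric in $\,\sj\,$ and $\,\sj'\,$ (as $\,\langle\,,\rangle\,$ is symmetric), while its left-hand side equals $\,\langle\dj\hh\{\dj\nnh\cdot\hskip-1.3pt\sj'\},\sj\rangle$. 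So $\,\sj\mapsto\dj\hh\{\dj\nnh\cdot\hskip-1.3pt\sj\}\,$ is self-ad\-joint, and then so is $\,\dz$, since $\,\dz\sj=8\hs\dj\hh\{\dj\nnh\cdot\hskip-1.3pt\sj\}+\sj\by\,$ by (\ref{ths}.ii), a linear combination of self-ad\-joint endomorphisms of $\,\es$.

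For (ii) I would use (\ref{dsd}): $\,4\hh\langle\dj\hh\ty,\dj\hh\sy\rangle=\langle\cu\hs\ty-2\ty,\sy\rangle=\langle\cu\hs\ty,\sy\rangle-2\langle\ty,\sy\rangle$. Its left-hand side and the term $\,2\langle\ty,\sy\rangle\,$ are unchanged when $\,\sy\,$ and $\,\ty\,$ are interchanged, hence so is $\,\langle\cu\hs\ty,\sy\rangle$; that is precisely $\,\cu^*\nnh=\cu$. No genuine obstacle is expected, since everything rests on identities proved earlier; the one point that calls for care is the slot-bookkeeping in the orthogonality argument for $\,\ch\,$ --- keeping the $\,\mathfrak{g}$-val\-ued index of an element of $\,\es\,$ in the first slot of $\,Q$, so that `totally symmetric' and the total contraction in (\ref{inp}.ii) are matched correctly.
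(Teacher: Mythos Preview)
Your proof is correct and follows essentially the same route as the paper: Remark~\ref{dscch} together with (\ref{dia}) for $\,\ch$, then (\ref{chc}.i) for $\,\sj\mapsto\sj\by$, then (\ref{dss}.a) for $\,\sj\mapsto\dj\{\dj\cdot\sj\}$, and (\ref{ths}.ii) for $\,\dz$. The one genuine difference is in~(ii): the paper verifies $\,\langle\cu\sy,\ty\rangle=\langle\sy,\cu\ty\rangle\,$ by a direct component computation using (\ref{inp}.i), (\ref{osj}.ii) and (\ref{skw}), whereas you deduce it from the symmetry of the left-hand side of (\ref{dsd}); both are short, and your version avoids index manipulation at the cost of invoking the slightly heavier identity (\ref{edd}.b) that underlies (\ref{dsd}). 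Your explicit check that the eigen\-spaces of $\,\ch\,$ are $\,\langle\,,\rangle$-orthogonal is a detail the paper leaves implicit when citing (\ref{dia}).
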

\begin{proof}First, $\,\ch\,$ is self-ad\-joint due to the first line of 
(\ref{dia}) and Remark~\ref{dscch}; so is the en\-do\-mor\-phism 
$\,\sj\mapsto\sj\by\,$ equal, by (\ref{chc}.i), to $\,-2\hs\ch-\mathrm{Id}$. 
The same follows for the 
operator $\,\sj\mapsto\dj\hh\{\dj\nnh\cdot\hskip-1.3pt\sj\}\,$ obtained, up to 
a factor, as the composite of 
$\,\sj\mapsto2\hh\{\dj\nnh\cdot\hskip-1.3pt\sj\}\,$ with its adjoint 
$\,\sy\mapsto\dj\hh\sy\,$ (see (\ref{dss}.a)). By (\ref{ths}.ii), the last two 
conclusions give (i) for $\,\dz$. Finally, (\ref{inp}.i), (\ref{osj}.ii) and 
(\ref{skw}) imply symmetry of the expression 
$\,\,\langle\hs\cu\hs\sy,\ty\rangle
=2\hh\ty^{\hs jk}C_{\!jqp}\w C_{\nh kr}\w{}^q\sy^{\hs pr}$ in $\,\sy,\ty\in\et\nnh$, 
proving (ii).
\end{proof}

\begin{rem}\label{kssez}The quadratic mapping 
$\,\kz:\es\times\et\to\es\times\et\,$ in Section~\ref{oa} is given by 
$\,\kz\hh(\sj,\sy)
=(\dz\sj+4\hh\dj\hs\{\sj\nnh\cdot\nnh\sj\nh\}+\sj\sy,\,
\sy-8\hh\{\dj\nnh\cdot\hskip-1.3pt\sj\}-4\hh\{\sj\nnh\cdot\nnh\sj\nh\})$, and 
$\,\lz(\sj,\sy)=(\dz\sj,\hs\sy)$. The correspondence (\ref{bij}) sends 
$\,\dj+\hs\sj\,$ with $\,\hz\hs(\sj)=0\,$ to $\,(\sj,\sy)$, for $\,\sy\,$ as 
in (\ref{hse}).
\end{rem}

\section{The nondegeneracy condition}\label{nc}\checked
\setcounter{equation}{0}
We continue the discussion of Sections~\ref{so} --~\ref{ec}. By 
(\ref{ths}.ii), (\ref{edd}.a) and (\ref{edd}.b),
\begin{equation}\label{dio}
\dz(\dj\hh\sy)\,=\,\dj\hh[(\cu-\mathrm{Id})\hs\sy]
\hskip13pt\mathrm{whenever}\hskip6pt\sy\in\et.
\end{equation}
Using the Chri\-stof\-fel isomorphism $\,\ch$, given by (\ref{chr}), we obtain
\begin{equation}\label{krd}
\mathrm{Ker}\,\dz\,=\,\{\dj\hh\ty:\ty\in\mathrm{Ker}\,(\cu-\mathrm{Id})\}\hs.
\end{equation}
In fact, applying $\,\ch$, we see that, by (\ref{ths}.ii), (\ref{chs}.a) with 
$\,\sy=\{\dj\cdot\sj\}$, and (\ref{fsb}.a), $\,\dz\sj=0\,$ if 
and only if $\,\sj=\dj\hh\ty\,$ for 
$\,\ty=-8\hs\{\dj\nnh\cdot\hskip-1.3pt\sj\}$, which is in turn equivalent to 
$\,\sj=\dj\hh\ty\,$ for $\,\ty=-8\hh\{\dj\nnh\cdot\nnh(\hh\dj\hh\ty)\}$, and 
hence, in view of (\ref{edd}.b), amounts to $\,\sj=\dj\hh\ty$ for some 
$\,\ty\in\mathrm{Ker}\,(\cu-\mathrm{Id})$. From (\ref{krd}) we further 
conclude that
\begin{equation}\label{ttd}
\begin{array}{l}
\ty\mapsto\dj\hh\ty\,\,\mathrm{\ is\ a\ linear\ isomorphism\ 
}\,\,\hs\mathrm{Ker}\,(\cu-\mathrm{Id})\hs\to\hs\mathrm{Ker}\,\dz,\hs\mathrm{\ with}\\
\mathrm{the\ inverse\ isomorphism\ }
\,\mathrm{Ker}\,\dz\ni\sj\mapsto-8\hs\{\dj\nnh\cdot\hskip-1.3pt\sj\}
\in\mathrm{Ker}\,(\cu-\mathrm{Id})\hs,
\end{array}
\end{equation}
since, by (\ref{krd}), $\,\ty\mapsto\dj\hh\ty\,$ maps 
$\,\mathrm{Ker}\,(\cu-\mathrm{Id})\,$ onto $\,\mathrm{Ker}\,\dz$, while 
(\ref{edd}.b) implies its injectivity and describes its inverse.

From now on we will assume the {\smallit nondegeneracy condition\/}: for 
$\,\cu\,$ with (\ref{oms}),
\begin{equation}\label{knd}
\mathrm{Ker}\,(\cu-\mathrm{Id})\hskip9pt\mathrm{is\ a\ nondegenerate\ 
sub\-space\ of}\hskip7pt\et
\end{equation}
in the sense of Section~\ref{la}. This is no restriction of generality: in 
Lemma~\ref{noeig}(f) we prove (\ref{knd}) for the Lie algebras of the groups 
(\ref{gsu}), while the remaining simple Lie algebras have 
$\,\mathrm{Ker}\,(\cu-\mathrm{Id})=\{0\}\,$ (see Remark~\ref{kroez}), which 
again yields (\ref{knd}).
\begin{lem}\label{drsum}{\smallit 
Condition\/ {\rm(\ref{knd})} implies that
\begin{equation}\label{opl}
\begin{array}{l}
\mathrm{a)}\hskip6pt\et=[(\cu-\mathrm{Id})(\et)]
\oplus\mathrm{Ker}\,(\cu-\mathrm{Id})\hs,\hskip18pt\mathrm{b)}\hskip6pt
\es=[\hh\dz(\es)]\oplus\mathrm{Ker}\,\dz\hs,\\
\mathrm{c)}\hskip6pt\dz:\dz(\es)\to\dz(\es)\,\,\mathrm{\ is\ an\ isomorphism.}
\end{array}
\end{equation}
}
\end{lem}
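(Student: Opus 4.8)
The plan is to derive all three claims from the orthogonal\hyp decomposition facts (\ref{imd}) and (\ref{iso}) of Section~\ref{la}, together with the self-adjointness established in Lemma~\ref{sfadj}.

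First I would dispose of (\ref{opl}.a) directly. By (\ref{pse}), $\,(\et,\langle\,,\rangle)\,$ is an inner-product space, and $\,\cu-\mathrm{Id}\,$ is self-adjoint on it by Lemma~\ref{sfadj}(ii). Hence (\ref{imd}.a) gives $\,(\cu-\mathrm{Id})(\et)=[\mathrm{Ker}\,(\cu-\mathrm{Id})]^\perp$, and since $\,\mathrm{Ker}\,(\cu-\mathrm{Id})\,$ is nondegenerate by the standing assumption (\ref{knd}), part (\ref{imd}.b) immediately yields the splitting (\ref{opl}.a).

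The crux is (\ref{opl}.b), for which I must check that $\,\mathrm{Ker}\,\dz\,$ is a nondegenerate subspace of $\,\es$; once that is known, (\ref{imd}.b) applied to the self-adjoint endomorphism $\,\dz\,$ of $\,\es\,$ (Lemma~\ref{sfadj}(i)) gives (\ref{opl}.b) at once. To verify the nondegeneracy I would transport the inner product along the linear isomorphism $\,\ty\mapsto\dj\hh\ty\,$ of $\,\mathrm{Ker}\,(\cu-\mathrm{Id})\,$ onto $\,\mathrm{Ker}\,\dz\,$ supplied by (\ref{ttd}), using (\ref{dsd}): for $\,\ty,\sy\in\mathrm{Ker}\,(\cu-\mathrm{Id})\,$ one has $\,\cu\hs\ty=\ty$, whence $\,4\hh\langle\hs\dj\hh\ty,\dj\hh\sy\rangle=\langle\hs\cu\hs\ty-2\ty,\sy\rangle=-\langle\ty,\sy\rangle$. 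Thus, under this isomorphism, $\,\langle\,,\rangle|_{\mathrm{Ker}\,\dz}\,$ pulls back to $\,-\langle\,,\rangle|_{\mathrm{Ker}\,(\cu-\mathrm{Id})}/4$, which is nondegenerate by (\ref{knd}); hence $\,\langle\,,\rangle|_{\mathrm{Ker}\,\dz}\,$ is nondegenerate as well.

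Finally, (\ref{opl}.c) follows from (\ref{opl}.b) by the elementary observation (\ref{iso}). The only step requiring any thought is the nondegeneracy of $\,\mathrm{Ker}\,\dz$; everything else is a direct appeal to Section~\ref{la}. One small point to keep in mind while writing it up is that (\ref{ttd}) genuinely provides an \emph{isomorphism} --- its inverse is displayed there via (\ref{edd}.b) --- so the transported bilinear\hyp form computation above legitimately detects nondegeneracy of $\,\langle\,,\rangle|_{\mathrm{Ker}\,\dz}$.
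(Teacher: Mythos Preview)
Your proof is correct and follows essentially the same approach as the paper: both use (\ref{dsd}) on $\mathrm{Ker}\,(\cu-\mathrm{Id})$ together with the isomorphism (\ref{ttd}) to show that $\mathrm{Ker}\,\dz$ is nondegenerate, then invoke self-adjointness (Lemma~\ref{sfadj}) and (\ref{imd}.b) for (\ref{opl}.a\hs--\hs b), and finally (\ref{iso}) for (\ref{opl}.c). The only cosmetic difference is that the paper handles the nondegeneracy of $\mathrm{Ker}\,\dz$ first and then dispatches (\ref{opl}.a) and (\ref{opl}.b) in one line.
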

\begin{proof}Formula (\ref{dsd}) for 
$\,\sy,\ty\in\mathrm{Ker}\,(\cu-\mathrm{Id})\,$ becomes 
$\,4\hh\langle\hs\dj\hh\ty,\dj\hh\sy\rangle=-\hh\langle\ty,\sy\rangle$, so 
that, by (\ref{ttd}) and (\ref{knd}), $\,\mathrm{Ker}\,\dz\,$ is a 
nondegenerate sub\-space of $\,\es$. In view of Lemma~\ref{sfadj}, 
$\,\dz:\es\to\es\,$ and $\,\cu-\mathrm{Id}:\et\to\et\,$ are self-ad\-joint. 
Non\-de\-gen\-er\-a\-cy of their kernels and (\ref{imd}.b) thus yield 
(\ref{opl}.a\hs-\hh b). Now (\ref{iso}) gives (\ref{opl}.c).
\end{proof}
We will use the symbol $\,\pr\,$ for both projection operators
\begin{equation}\label{prj}
\mathrm{a)}\hskip6pt\pr:\et\to\mathrm{Ker}\,(\cu-\mathrm{Id})\hs,\hskip14pt
\mathrm{b)}\hskip6pt\pr:\es\to\mathrm{Ker}\,\dz
\end{equation}
arising, under the assumption (\ref{knd}), from the decompositions in 
(\ref{opl}.a\hs-\hh b).
\begin{rem}\label{dpepd}For $\,\ly\in\et\,$ we have 
$\,\dj\hh(\pr\hh\ly)=\pr\hh(\dj\hh\ly)$, the first (or, second) $\,\pr\,$ 
being as in (\ref{prj}.a) (or, (\ref{prj}.b)). In fact, using (\ref{opl}.a) to 
write $\,\ly=\ty+(\cu-\mathrm{Id})\hh\sy$, with 
$\,\ty\in\mathrm{Ker}\,(\cu-\mathrm{Id})\,$ and $\,\sy\in\et\nnh$, we obtain, 
from (\ref{dio}), $\,\dj\hh\ly=\dj\hh\ty+\dz(\dj\hh\sy)$, where 
$\,\dj\hh\ty=\pr\hh(\dj\hh\ly)\in\mathrm{Ker}\,\dz\,$ by (\ref{krd}). Thus, 
$\,\pr\hh(\dj\hh\ly)=\dj\hh\ty=\dj\hh(\pr\hh\ly)$.
\end{rem}
\begin{rem}\label{kroez}For simple Lie algebras other than the ones 
corresponding to (\ref{gsu}) with $\,\n=\d+\k\ge3\,$ we have 
$\,\mathrm{Ker}\,(\cu-\mathrm{Id})=0$. (See 
\cite[Remark~5.4]{derdzinski-gal}.) Then, by (\ref{krd}) and (\ref{dhz}), 
$\,0\,$ is isolated in $\,\hz\hs^{-\nh1}(0)$. In other words, according to 
(\ref{eud}.ii) and (\ref{inc}), for those Lie algebras, $\,\dj\,$ is an 
isolated point in both $\,\ew\,$ and $\,\ee$.
\end{rem}

\section{The underlying real Lie algebras}\label{ur}
\setcounter{equation}{0}
Denoting by $\,\mathfrak{g}_\bbR\s$ the underlying real Lie algebra of a given 
sem\-i\-sim\-ple complex Lie algebra $\,\mathfrak{g}\,$ with the 
Kil\-ling form $\,\by$, we see that, by (\ref{bvw}) and (\ref{rec}),
\begin{equation}\label{tre}
\mathrm{the\ Kil\-ling\ form\ of\ }\,\,\mathfrak{g}_\bbR\s\,\mathrm{\ 
equals\ }\,\,2\,\mathrm{Re}\,\by\hh.
\end{equation}
Thus, $\,\mathfrak{g}_\bbR\s$ is sem\-i\-sim\-ple as well. The following lemma 
and remarks use
\begin{equation}\label{tsy}
\begin{array}{l}
\mathrm{the\ spaces\ }\,\et\hskip-3pt_\bbR\s\hh,\,\es\nh_\bbR\s\hh,
\,\ey\nnh_\bbR\s\hh,\mathrm{\ the\ linear\ en\-do\-mor\-phism\ }
\,\dz_\bbR\s:\es\nh_\bbR\s\to\es\nh_\bbR\s\hh,\\
\{\hskip2pt\cdot\hskip2pt\}_\bbR\s:\ey\nnh_\bbR\s\nh\times\ey\nnh_\bbR\s\nh
\to\et\hskip-3pt_\bbR\s\hh,\mathrm{\  and\ the\ projection\ 
}\,\pr_\bbR\s:\es\nh_\bbR\s\nh\to\mathrm{Ker}\,\dz_\bbR\s\hh,
\end{array}
\end{equation}
the $\,\mathfrak{g}_\bbR\s$-counterparts of 
$\,\et\nnh,\es,\ey,\dz,\{\hskip2pt\cdot\hskip2pt\},\pr\,$ in (\ref{sbs}), 
(\ref{ths}.ii), (\ref{ndn}), (\ref{prj}.b) for $\,\mathfrak{g}$.
\begin{lem}\label{undrl}{\smallit 
For the real space $\,\et\hh'$ of all Her\-mit\-i\-an ses\-qui\-lin\-e\-ar forms $\,\mathfrak{g}\times\mathfrak{g}\to\bbC$,
\begin{enumerate}
  \def\theenumi{{\rm\roman{enumi}}}
\item[{\rm(a)}] $\et\hskip-3pt_\bbR\s\,=\,\hs[\hs\mathrm{Re}\,\et\hh]\,
\oplus\,\hs[\hs\mathrm{Re}\,\et\hh'\hh]$, \skip1.5ptwhere the summands are\/ 
$\hs\bbR$-iso\-mor\-phic images of\/ $\,\et$ and\/ $\,\et\hh'\nh$ under the 
operator\/ $\,\sy\mapsto\hs\mathrm{Re}\,\sy$,
\item[{\rm(b)}] $\mathrm{Re}\,\et\hh'\nh\,\subset\,\mathrm{Ker}\,\cu_\bbR\s$,
\item[{\rm(c)}] $\cu_\bbR\s(\mathrm{Re}\,\sy)\,=\,\mathrm{Re}\,(\cu\hs\sy)\,$ 
for all\/ $\,\sy\in\et\nnh$, so that\/ $\,\cu_\bbR\s$ leaves the summand\/ 
$\,\mathrm{Re}\,\et\,$ invariant and\/ 
$\,\,\cu_\bbR\s:\mathrm{Re}\,\et\to\mathrm{Re}\,\et\,$ corresponds, under the 
isomorphic identification\/ $\,\et\to\mathrm{Re}\,\et\,$ in\/ {\rm(a)}, 
to\/ $\,\cu:\et\to\et\nnh$,
\item[{\rm(d)}] $\mathrm{Ker}\,(\cu_\bbR\s\nnh-\mathrm{Id})\,$ is contained in 
the summand\/ $\,\mathrm{Re}\,\et\nnh$, cf.\ {\rm(a)}, and arises as the image 
of\/ $\,\mathrm{Ker}\,(\cu-\mathrm{Id})\subset\,\et\,$ under the isomorphism\/ 
$\,\et\nh\ni\sy\mapsto\hs\mathrm{Re}\,\sy\in\hs\mathrm{Re}\,\et\nnh$.
\end{enumerate}
}
\end{lem}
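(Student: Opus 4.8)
The plan is to prove the four assertions in order, treating (a) as pure linear algebra on $\,\mathfrak{g}\,$ viewed as a complex vector space, and deducing (b)--(d) from the coordinate-free formula $(\ref{oms})$ for $\,\cu\,$ together with the fact $(\ref{tre})$ that the Killing form of $\,\mathfrak{g}_\bbR\,$ equals $\,2\,\mathrm{Re}\,\by$, where $\,\by=\langle\,,\rangle\,$ is the complex Killing form.

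For (a), I would split each $\,\tau\in\et_\bbR\,$ uniquely as $\,\tau=\tau^-+\tau^+$ with $\,\tau^\pm(v,w)=[\hh\tau(v,w)\pm\tau(\mathrm iv,\mathrm iw)]/2$, so that $\,\tau^-(\mathrm iv,\mathrm iw)=-\tau^-(v,w)\,$ and $\,\tau^+(\mathrm iv,\mathrm iw)=\tau^+(v,w)$. A direct check then shows that $\,\{\tau\in\et_\bbR:\tau(\mathrm iv,\mathrm iw)=-\tau(v,w)\}\,$ is exactly $\,\mathrm{Re}\,\et$, with $\,\sy\,$ recovered from $\,\tau=\mathrm{Re}\,\sy\,$ by $\,\sy(v,w)=\tau(v,w)-\mathrm i\,\tau(\mathrm iv,w)$, while $\,\{\tau\in\et_\bbR:\tau(\mathrm iv,\mathrm iw)=\tau(v,w)\}\,$ is exactly $\,\mathrm{Re}\,\et'$, an analogous recipe recovering a (necessarily Hermitian) $\,h\in\et'\,$ from $\,\tau=\mathrm{Re}\,h$. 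Injectivity of $\,\sy\mapsto\mathrm{Re}\,\sy\,$ on each of $\,\et\,$ and $\,\et'\,$ is immediate from these formulae, and since every $\,\tau\in\et_\bbR\,$ is the sum of its two components, (a) follows.

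The main computation is (c). If $\,\Sigma\,$ is the $\,\bbC$-linear, $\,\langle\,,\rangle$-self-adjoint operator attached to $\,\sy\in\et\,$ via $(\ref{sgv})$, then $\,\Sigma/2$, viewed as an $\,\bbR$-linear endomorphism of $\,\mathfrak{g}_\bbR$, is self-adjoint for $\,2\,\mathrm{Re}\langle\,,\rangle\,$ and is precisely the operator attached to $\,\mathrm{Re}\,\sy$. Substituting into the $\,\mathfrak{g}_\bbR$-version of $(\ref{oms})\,$ and using $\,\mathrm{tr}^\bbR=2\,\mathrm{Re}\,\mathrm{tr}^\bbC\,$ on $\,\bbC$-linear endomorphisms, cf.\ $(\ref{abc})$, yields $\,[\cu_\bbR(\mathrm{Re}\,\sy)](v,w)=\mathrm{tr}^\bbR[(\mathrm{Ad}\,v)(\mathrm{Ad}\,w)\Sigma]=2\,\mathrm{Re}\,\mathrm{tr}^\bbC[(\mathrm{Ad}\,v)(\mathrm{Ad}\,w)\Sigma]=\mathrm{Re}\hh\big([\cu\,\sy](v,w)\big)$. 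This is (c), and it also shows that $\,\cu_\bbR\,$ leaves $\,\mathrm{Re}\,\et\,$ invariant and, under the isomorphism of (a), corresponds there to $\,\cu$.

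The step I expect to be the main obstacle is (b). The key point is that the $\,\bbR$-linear operator $\,T\,$ attached to $\,\tau=\mathrm{Re}\,h\,$ (so $\,\mathrm{Re}\,h(v,w)=2\,\mathrm{Re}\langle Tv,w\rangle$) for Hermitian $\,h\in\et'\,$ is $\,\bbC$-\emph{antilinear}. Indeed, multiplication by $\,\mathrm i\,$ is self-adjoint for $\,2\,\mathrm{Re}\langle\,,\rangle\,$ (it is already ``self-adjoint'' for the $\,\bbC$-bilinear form $\,\langle\,,\rangle$, since $\,\langle\mathrm iv,w\rangle=\langle v,\mathrm iw\rangle$), so comparing $\,\langle T(\mathrm iv),w\rangle_\bbR=\mathrm{Re}\,h(\mathrm iv,w)\,$ with $\,\langle-\mathrm i\,Tv,w\rangle_\bbR=-\langle Tv,\mathrm iw\rangle_\bbR=-\mathrm{Re}\,h(v,\mathrm iw)\,$ and using Hermitian symmetry of $\,h\,$ gives $\,T(\mathrm iv)=-\mathrm i\,Tv\,$ for all $\,v$. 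Then $\,(\mathrm{Ad}\,v)(\mathrm{Ad}\,w)T\,$ is $\,\bbC$-antilinear, so its real trace vanishes by Remark~\ref{trzro}; by $(\ref{oms})$ this gives $\,\cu_\bbR(\mathrm{Re}\,h)=0$, which is (b). Finally, (d) is then formal: by (b) and (c), $\,\cu_\bbR\,$ respects the splitting of (a), acting as $\,0\,$ on $\,\mathrm{Re}\,\et'\,$ and as the image of $\,\cu\,$ on $\,\mathrm{Re}\,\et$; hence $\,\cu_\bbR-\mathrm{Id}\,$ restricts to $\,-\mathrm{Id}\,$ on $\,\mathrm{Re}\,\et'$, so $\,\mathrm{Ker}\,(\cu_\bbR-\mathrm{Id})\subset\mathrm{Re}\,\et\,$, and this kernel corresponds under (a) to $\,\mathrm{Ker}\,(\cu-\mathrm{Id})$.
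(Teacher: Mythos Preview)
Your proposal is correct and follows essentially the same route as the paper's proof: part (c) via the correspondence (\ref{sgv}) together with (\ref{tre}) and (\ref{abc}); part (b) by showing that the operator attached to $\mathrm{Re}\,h$ is $\bbC$-antilinear and then invoking Remark~\ref{trzro}; part (d) as a formal consequence. The only notable difference is that you prove (a) first, with an explicit splitting $\tau=\tau^-+\tau^+$, whereas the paper deduces (a) last and more tersely from the fact that any $\sy\in\et$ or $\sy\in\et'$ is determined by its real part; your version is slightly more complete here, since it makes surjectivity of the splitting explicit rather than implicit in a dimension count.
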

\begin{proof}If a $\,\bbC$-lin\-e\-ar en\-do\-mor\-phism $\,\Sigma\,$ of 
$\,\mathfrak{g}\,$ is related to $\,\sy\in\et\,$ via (\ref{sgv}), taking the 
real parts of both sides of (\ref{sgv}), with $\,\langle\,,\rangle=\by$, we 
see that, by (\ref{tre}), the same relation holds, in $\,\mathfrak{g}_\bbR\s$, 
between $\,\Sigma\,$ and $\,2\,\mathrm{Re}\,\sy\in\mathrm{Re}\,\et\nnh$. Now 
(\ref{oms}), its real-part version, and (\ref{rec}) yield (c). Next, given 
$\,\sy\in\et\hh'\nnh$, let 
$\,\Sigma:\mathfrak{g}_\bbR\s\to\mathfrak{g}_\bbR\s$ be associated with 
$\,\mathrm{Re}\,\sy\,$ as in (\ref{sgv}) for $\,\mathfrak{g}_\bbR\s$ and 
$\,2\,\mathrm{Re}\,\by\,$ instead of  $\,\mathfrak{g}\,$ and $\,\by$. The 
$\,\bbR$-lin\-e\-ar en\-do\-mor\-phism 
$\,\mathfrak{g}_\bbR\s\to\mathfrak{g}_\bbR\s$ of multiplication by 
$\,\mathrm{i}\,$ is $\,(\mathrm{Re}\,\by)$-self-ad\-joint, so that 
$\,0=\mathrm{Re}\,[\hh\sy(\mathrm{i}v,w)+\sy(v,\mathrm{i}w)]
=2\,\mathrm{Re}\,\by(\Sigma\mathrm{i}v+\mathrm{i}\Sigma v,w)\,$ for all 
$\,v,w\in\mathfrak{g}$, and $\,\Sigma\,$ is $\,\bbC$-anti\-lin\-e\-ar. 
As (\ref{oms}) and (\ref{rec}) imply that 
$\,[\hs\cu_\bbR\s(\mathrm{Re}\,\sy)](v,w)$ equals $\hs4\,\mathrm{Re}\hskip2.7pt
\mathrm{tr}^\bbC[(\mathrm{Ad}\,v)(\mathrm{Ad}\,w)\hh\Sigma]$, the final 
clause of Remark~\ref{trzro} gives $\,\cu_\bbR\s(\mathrm{Re}\,\sy)=0$, proving 
(b). Finally, since any $\,\sy\in\et\,$ or $\,\sy\in\et\hh'$ is uniquely 
determined by its real part, (a) follows, while (d) is a trivial 
consequence of (a), (b) and (c).
\end{proof}
\begin{rem}\label{nrese}One has 
$\,2\hh\nabla\nh(\mathrm{Re}\,\sy\nnh)=\nabla\nh\sy\,$ if 
$\,\nabla:\mathfrak{g}\times\mathfrak{g}\to\mathfrak{g}\,$ is a 
(com\-plex-bi\-lin\-e\-ar) connection in a sem\-i\-sim\-ple complex Lie 
algebra $\,\mathfrak{g}\,$ and $\,\sy\in\et\nnh$, with $\,\nabla\nh\sy\,$ as 
in (\ref{sns}), where $\,\nabla\nh(\mathrm{Re}\,\sy\nnh)\,$ is analogously 
defined for $\,\mathfrak{g}_\bbR\s$, so that 
$\,\mathrm{Re}\,\sy\in\et\hskip-3pt_\bbR\s$ (notation of Lemma~\ref{undrl}) 
and $\,\nabla\,$ is treated as a (real-bi\-lin\-e\-ar) connection in 
$\,\mathfrak{g}_\bbR\s$.

To see this, apply $\,2\,\mathrm{Re}\,$ to both sides in (\ref{sns}) 
and use (\ref{tre}) with $\,\by=\langle\,,\rangle$.
\end{rem}
\begin{rem}\label{rehol}Given $\,\mathfrak{g}\,$ and 
$\,\mathfrak{g}_\bbR\s$ as above, let us use the notation of (\ref{tsy}).
\begin{enumerate}
  \def\theenumi{{\rm\roman{enumi}}}
\item[{\rm(i)}] The real part of a (hol\-o\-mor\-phic) metric $\,\gy\,$ in 
$\,\mathfrak{g}\,$ is a (pseu\-\hbox{do\hskip1pt-}\hskip0ptRiem\-ann\-i\-an) 
metric in $\,\mathfrak{g}_\bbR\s$, and both metrics have the same 
Le\-\hbox{vi\hh-}\hskip0ptCi\-vi\-ta connection.
\item[{\rm(ii)}] If $\,\rho\,$ is the Ric\-ci tensor of the Le\-\hbox{vi\hh-}\hskip0ptCi\-vi\-ta 
connection of a metric in $\,\mathfrak{g}$, then $\,\nabla\,$ viewed as a 
connection in $\,\mathfrak{g}_\bbR\s$ has Ric\-ci tensor 
$\,2\,\mathrm{Re}\,\rho$.
\item[{\rm(iii)}] The real part of every Ein\-stein metric in 
$\,\mathfrak{g}\,$ is an Ein\-stein metric in $\,\mathfrak{g}_\bbR\s$.
\item[{\rm(iv)}] Every Ein\-stein connection in $\,\mathfrak{g}\,$ is also 
an Ein\-stein connection in $\,\mathfrak{g}_\bbR\s$.
\item[{\rm(v)}] $\{\nabla\hskip-2.5pt\cdot\hskip-2.5pt\td\nabla\}_\bbR\s\hs
=\,2\,\mathrm{Re}\,\{\nabla\hskip-2.5pt\cdot\hskip-2.5pt\td\nabla\}\,$ 
whenever $\,\nabla\nnh,\td\nabla\in\ey\hs\subset\ey\nnh_\bbR\s$.
\item[{\rm(vi)}] $\mathrm{Ker}\,\dz_\bbR\s=\hs\mathrm{Ker}\,\dz$, so that any 
element 
$\,\sj:\mathfrak{g}_\bbR\s\times\mathfrak{g}_\bbR\s\to\mathfrak{g}_\bbR\s\,$ 
of $\,\mathrm{Ker}\,\dz_\bbR\s$, originally assumed real-bi\-lin\-e\-ar, 
must actually be com\-plex-bi\-lin\-e\-ar.
\item[{\rm(vii)}] $\,\dz:\es\to\es\,$ is the restriction of 
$\,\dz_\bbR\s:\es\nh_\bbR\s\to\es\nh_\bbR\s$ to $\,\es\subset\es\nh_\bbR\s$.
\item[{\rm(viii)}] If $\,\sj\in\es\nh_\bbR\s$ and $\,\dz_\bbR\s\sj\in\es$, 
then $\,\sj\in\es$.
\end{enumerate}
In fact, (i) is obvious from Remarks~\ref{lccon} and~\ref{nrese} (the latter 
for $\,\sy=\gy$); (ii) from Lemma~\ref{unimo}(d) combined with (\ref{rec}) 
(where the former can be applied in view of Remark~\ref{ssuni} and 
Lemma~\ref{unimo}(b)); (iii) from (i) -- (ii); (iv) from (i) and (iii); (v) 
from (\ref{ndn}) and (\ref{rec}).
Next, elements of $\,\mathrm{Ker}\,\dz_\bbR\s$ coincide, by (\ref{krd}) 
for $\,\mathfrak{g}_\bbR\s$, with the $\,\dj\hh$-grad\-i\-ents $\,\dj\hh\ty$, 
in $\,\mathfrak{g}_\bbR\s$, of all 
$\,\ty\in\mathrm{Ker}\,(\cu_\bbR\s\nnh-\mathrm{Id})$. According to 
Lemma~\ref{undrl}(d), such $\,\ty\,$ are precisely the same as 
all $\,\mathrm{Re}\,\sy\,$ for $\,\sy\in\mathrm{Ker}\,(\cu-\mathrm{Id})$. 
As Remark~\ref{nrese} with $\,\nabla\nh=\dj\,$ gives 
$\,2\hh\dj\hh\ty=2\hh\dj(\mathrm{Re}\,\sy\nnh)=\dj\hh\sy$, (vi) follows from 
(\ref{krd}). Furthermore, (vii) is an immediate consequence of (\ref{ths}.ii), 
(v), (\ref{tre}) and Remark~\ref{nrese}.

Finally, let $\,\sj\in\es\nh_\bbR\s$ and $\,\dz_\bbR\s\sj\in\es$. By 
(\ref{opl}.b), $\,\dz_\bbR\s\sj-\dz\td\sj\in\mathrm{Ker}\,\dz\,$ for 
some $\,\td\sj\in\es$. From (vi) and (vii) one thus has 
$\,\dz_\bbR\s(\sj-\td\sj)\in\mathrm{Ker}\,\dz_\bbR\s$. Applying 
(\ref{opl}.b\hs-\hs c) to $\,\mathfrak{g}_\bbR\s$ rather than 
$\,\mathfrak{g}$, we now conclude that $\,\dz_\bbR\s(\sj-\td\sj)=0$, and so 
$\,\sj-\td\sj\in\mathrm{Ker}\,\dz_\bbR\s=\hs\mathrm{Ker}\,\dz\subset\es\,$ 
(cf.\ (vi)). Hence $\,\sj\in\es$. 
\end{rem}

\section{Complexifications}\label{cx}
\setcounter{equation}{0}
All vector spaces (and Lie algebras) are assumed here to be 
fi\-\hbox{nite\hh-}\hskip0ptdi\-men\-sion\-al.

By a {\smallit real form\/} of a complex vector space (or, a complex Lie 
algebra) $\,\mathfrak{g}\nh^\bbC$ we mean any real sub\-space (or, real Lie 
sub\-al\-ge\-bra) $\,\mathfrak{g}\subset\mathfrak{g}\nh^\bbC$ such that, as a 
real vector space, $\,\mathfrak{g}\nh^\bbC\nh=\mathfrak{g}\oplus\mathrm{i}\mathfrak{g}$. 
This is, in an obvious sense, equivalent to requiring that 
$\,\mathfrak{g}\nh^\bbC$ be the vec\-tor-space (or, Lie\hs-al\-ge\-bra) 
com\-plexifica\-tion of $\,\mathfrak{g}$. Whenever $\,\d+\k=\n$,
\begin{equation}\label{rfr}
\mathfrak{sl}\hh(\n,\bbR),\hskip6pt\mathfrak{su}\hh(\d,\k)\,
\,\,\mathrm{\ and\ }\,\,\mathfrak{sl}\hh(\n/2,\bbH)\,
\,\mathrm{\ are\ real\ forms\ of\ }\,\,\mathfrak{sl}\hh(\n,\bbC)\hh,
\end{equation}
the last one for even $\,\n\,$ only. Here, by definition, 
$\,\mathfrak{sl}\hh(k,\bbH)\,$ consists of those $\,\bbH$-lin\-e\-ar 
en\-do\-mor\-phisms of $\,\bbH^k$ which are also $\,\bbC$-trace\-less in the 
sense of Remark~\ref{quate}. Thus, with 
$\,\mathfrak{g}=\mathfrak{sl}\hh(k,\bbH)$, and with 
$\,\mathfrak{g}\nh^\bbC\approx\,\mathfrak{sl}\hh(2k,\bbC)\,$ denoting the Lie 
algebra of all trace\-less $\,\bbC$-lin\-e\-ar en\-do\-mor\-phisms of 
$\,\bbH^k\nnh$, the $\,(\pm1)$-eigen\-space decomposition of 
$\,\mathfrak{g}\nh^\bbC$ under the involution $\,a\mapsto-\jj a\jj$, for 
$\,\jj\,$ as in Remark~\ref{quate}, reads 
$\,\mathfrak{g}\nh^\bbC\nh=\mathfrak{g}\oplus\mathrm{i}\mathfrak{g}$. (The 
components $\,a,b\in\mathfrak{g}\,$ of any 
$\,a+\mathrm{i}b\in\mathfrak{g}\nh^\bbC\,$ are indeed $\,\bbC$-trace\-less, 
since so is $\,a+\mathrm{i}b$, while the $\,\bbC$-traces of $\,a\,$ and 
$\,b\,$ are real by Remark~\ref{quate}.)

The four Lie algebras appearing in (\ref{rfr}) are subsets of 
$\,\mathfrak{gl}\hh(\n,\bbC)$. Any finite product $\,w\,$ of their elements is 
therefore a complex $\,\n\times\n\,$ matrix, and we denote by 
$\,\mathrm{tr}\hskip2ptw\,$ its matrix trace. Hence, for 
$\,a,b,w\in\mathfrak{gl}\hh(\n,\bbC)$, one has the inner product $\,(a,b)\,$ 
as in (\ref{acb}), and the trace\-less part $\,(w)_0\w$, with
\begin{equation}\label{ntt}
\mathrm{i)}\hskip8pt(a,b)\,=\,\mathrm{tr}\hskip2ptab\hs,\hskip28pt
\mathrm{ii)}\hskip8pt(w)_0\w=w
-(\mathrm{tr}\hskip2ptw)\hs\mathrm{Id}\hh/\hn\n\hs.
\end{equation}
Given a real form $\,\mathfrak{g}\,$ of an $\,m$\hh-\hskip0ptdi\-men\-sion\-al 
complex vector space $\,\mathfrak{g}\nh^\bbC\nnh$,
\begin{equation}\label{bss}
\mathrm{any\ }\,\bbR\hyp\mathrm{basis\ }\,e_1\w,\dots,e_\m\w\mathrm{\ of\ 
}\,\mathfrak{g}\,\mathrm{\ is\ at\ the\ same\ time\ a\ }\,\bbC\hyp\mathrm{basis\ 
of\ }\,\mathfrak{g}\nh^\bbC\nh,
\end{equation}
leading, as in the lines following (\ref{uni}), to the corresponding components of 
\begin{equation}\label{map}
\mathrm{any\ }\,\bbC\hyp\mathrm{(bi)\-lin\-e\-ar\ mapping\ from\ }
\mathfrak{g}\nh^\bbC\mathrm{\ valued\ in\ }\,\bbC\,\mathrm{\ or\ 
}\,\mathfrak{g}\nh^\bbC\nh.
\end{equation}
\begin{rem}\label{unext}
A mapping (\ref{map}) is the unique $\,\bbC$-(bi)\-lin\-e\-ar extension 
of an $\,\bbR$-val\-ued or a $\,\mathfrak{g}$-val\-ued mapping from 
$\,\mathfrak{g}\,$ if and only if the former has the same components as 
the latter relative to some/\nh any pair of bases as in (\ref{bss}). We will 
denote both mappings by the same symbol.
\end{rem}
\begin{ex}\label{trklf} For an $\,\bbR$-lin\-e\-ar en\-do\-mor\-phism 
$\,\Sigma\,$ of $\,\mathfrak{g}$, its $\,\bbC$-lin\-e\-ar extension $\,\Sigma$ 
to $\,\mathfrak{g}\nh^\bbC\nnh$, and a pair of bases of type (\ref{bss}),
\begin{equation}\label{trc}
\mathrm{the\ }\,\bbC\hyp\mathrm{trace\ of\ 
}\,\Sigma:\mathfrak{g}\nh^\bbC\nh\to\mathfrak{g}\nh^\bbC\mathrm{\ equals\ the\ 
}\,\bbR\hyp\mathrm{trace\ of\ }\,\Sigma:\mathfrak{g}\to\mathfrak{g}\hh,
\end{equation}
as both traces coincide with $\,\Sigma_k^k$, where 
$\,\Sigma e_j\w=\Sigma_j^ke_k\w$. In the case of Lie algebras,
\begin{equation}\label{kfc}
\begin{array}{l}
\mathrm{the\ Kil\-ling\ form\ of\ }\,\mathfrak{g}\nh^\bbC\mathrm{\ is\ 
}\by:\mathfrak{g}\nh^\bbC\nh\times\mathfrak{g}\nh^\bbC\to\bbC,\mathrm{\ 
the\ }\,\bbC\hyp\mathrm{bi\-lin\-e\-ar}\\
\mathrm{extension\ to\ }\,\mathfrak{g}\nh^\bbC\mathrm{\ of\ the\ Kil\-ling\ 
form\ }\,\by:\mathfrak{g}\times\mathfrak{g}\to\bbR
\,\mathrm{\ of\ }\,\mathfrak{g}\hh,
\end{array}
\end{equation}
since, due to (\ref{bvw}), (\ref{acb}), and (\ref{trc}) applied to 
$\,\Sigma=(\mathrm{Ad}\,v)\hs\mathrm{Ad}\,w$, the latter Kil\-ling form arises 
as the restriction of the former from $\,\mathfrak{g}\nh^\bbC$ to 
$\,\mathfrak{g}$. In addition, obviously,
\begin{equation}\label{lbc}
\begin{array}{l}
\mathrm{the\ Lie\ bracket\ }
\,[\hskip2pt,\hskip.6pt]:\mathfrak{g}\nh^\bbC\nh\times\mathfrak{g}\nh^\bbC
\to\mathfrak{g}\nh^\bbC\mathrm{\ of\ }\,\mathfrak{g}\nh^\bbC\mathrm{\  is\ the\ 
unique}\\
\bbC\hyp\mathrm{bi\-lin\-e\-ar\ extension\ of\ the\ Lie\ bracket\ 
}\,[\hskip2pt,\hskip.6pt]:\mathfrak{g}\times\mathfrak{g}\to\mathfrak{g}\hh.
\end{array}
\end{equation}
We could also have derived (\ref{kfc}) from (\ref{lbc}) via 
Remark~\ref{unext}, using the component formula 
$\,\by_{jk}\w=C_{\nh pj}\w{}^qC_{\!qk}\w{}^p$ in (\ref{fdd}.a) valid for both 
$\,\mathfrak{g}\,$ and $\,\mathfrak{g}\nh^\bbC\nnh$.
\end{ex}
Let $\,\mathfrak{g}\,$ be a real form of a complex Lie algebra 
$\,\mathfrak{g}\nh^\bbC\nnh$. We denote by $\,\et\hs^\bbC$ the second complex 
symmetric power of the complex dual space of $\,\mathfrak{g}\nh^\bbC\nnh$, that 
is, the analog for $\,\mathfrak{g}\nh^\bbC$ of the space $\,\et\,$ associated 
with $\,\mathfrak{g}\,$ as in (\ref{sbs}). Our notation is consistent with the 
fact that $\,\et\hs^\bbC$ may be treated as the complexification of 
$\,\et\nh$. In other words, $\,\et\,$ can be naturally identified with a real 
form of $\,\et\hs^\bbC\nnh$, so that
\begin{equation}\label{tce}
\et\subset\et\hs^\bbC\,\,\mathrm{\ and\ }\,\,\et\hs^\bbC
=\,\hs\et\oplus\,i\hs\et.
\end{equation}
Specifically, any $\,\ty\in\et\nh$, which is a symmetric bi\-lin\-e\-ar 
form $\,\tau:\mathfrak{g}\times\mathfrak{g}\to\bbR$, is identified with its 
$\,\bbC$-bi\-lin\-e\-ar extension 
$\,\ty:\mathfrak{g}\nh^\bbC\nh\times\mathfrak{g}\nh^\bbC\to\bbC$. Thus, 
$\,\et\subset\et\hs^\bbC$ consists of all symmetric $\,\bbC$-bi\-lin\-e\-ar 
forms $\,\ty:\mathfrak{g}\nh^\bbC\nh\times\mathfrak{g}\nh^\bbC\to\bbC\,$ for which 
$\,\ty(v,w)\,$ is real whenever $\,v,w\in\mathfrak{g}\,$ (or, equivalently, 
$\,\ty_{\nh jk}\w\in\bbR\,$ in a basis of type (\ref{bss})).
\begin{lem}\label{omgtc}{\smallit 
For a real Lie algebra\/ $\,\mathfrak{g}\,$ and its complexification\/ 
$\,\mathfrak{g}\nh^\bbC\nh=\mathfrak{g}\oplus\mathrm{i}\mathfrak{g}$,
\begin{enumerate}
  \def\theenumi{{\rm\roman{enumi}}}
\item[{\rm(i)}] $\mathfrak{g}\,$ is sem\-i\-sim\-ple if and only if so is\/ 
$\,\mathfrak{g}\nh^\bbC\nnh$.
\end{enumerate}
Let\/ $\,\mathfrak{g}\,$ now be sem\-i\-sim\-ple. Then
\begin{enumerate}
  \def\theenumi{{\rm\roman{enumi}}}
\item[{\rm(ii)}] the analog for\/ $\,\mathfrak{g}\nh^\bbC$ of\/ 
$\,\cu:\et\to\et\nh$, given in\/ $\,\mathfrak{g}\,$ by\/ {\rm(\ref{oms})}, is 
the unique\/ $\,\bbC$-lin\-e\-ar extension\/ 
$\,\cu:\et\hs^\bbC\nh\to\et\hs^\bbC$ of\/ $\,\cu\,$ to\/ $\,\et\hs^\bbC\nh$, 
cf.\/ {\rm(\ref{tce})},
\item[{\rm(iii)}] the operations\/ 
$\,\{\hskip2pt\cdot\hskip2pt\},\,\ca:\et\hs^\bbC\nh\times\et\hs^\bbC\nh
\to\et\hs^\bbC\nh$ defined for\/ $\,\mathfrak{g}\nh^\bbC$ as in\/ 
{\rm(\ref{ndn})} and the lines following Lemma\/~{\rm\ref{psidl}} are the\/ 
$\,\bbC$-lin\-e\-ar extensions of their counterparts in\/ $\,\mathfrak{g}$.
\end{enumerate}
}
\end{lem}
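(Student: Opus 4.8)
The plan is to dispatch all three parts by a single observation: relative to a basis that serves simultaneously for $\,\mathfrak{g}\,$ and for $\,\mathfrak{g}\nh^\bbC\nnh$, every object entering (\ref{oms}), (\ref{ndn}) and the definition of $\,\ca\,$ is expressed through the structure constants $\,C_{jk}\w{}^l$ and the Kil\-ling-form components $\,\by_{jk}\w$ (with their reciprocal $\,\by^{\hs jk}$), and none of these data change under complexification. Concretely, fix an $\,\bbR$-basis $\,e_1\w,\dots,e_\m\w$ of $\,\mathfrak{g}$, which by (\ref{bss}) is at the same time a $\,\bbC$-basis of $\,\mathfrak{g}\nh^\bbC\nnh$. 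By (\ref{lbc}) and Remark~\ref{unext} the numbers $\,C_{jk}\w{}^l$ are the same for both algebras; hence so are the components $\,\by_{jk}\w=C_{\nh pj}\w{}^qC_{\!qk}\w{}^p$, cf.\ (\ref{fdd}.a) and (\ref{kfc}), and — a matrix and its inverse being given by the same formulas over $\,\bbR\,$ and over $\,\bbC\,$ — so is the reciprocal tensor $\,\by^{\hs jk}\nnh$. Thus index-raising $\,\sy\nh_{\!j}^{\hh k}=\sy_{\!js}\w\by^{\hs sk}$ is carried out identically in $\,\mathfrak{g}\,$ and in $\,\mathfrak{g}\nh^\bbC\nnh$.

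For (i) I would invoke Cartan's criterion: a real or complex Lie algebra is sem\-i\-sim\-ple exactly when its Kil\-ling form is nondegenerate. Since the matrix $\,[\by_{jk}\w]\,$ is the same in both cases, $\,\det[\by_{jk}\w]\ne0\,$ over $\,\bbR\,$ if and only if $\,\det[\by_{jk}\w]\ne0\,$ over $\,\bbC$, so $\,\mathfrak{g}\,$ is sem\-i\-sim\-ple iff $\,\mathfrak{g}\nh^\bbC$ is. (Equivalently, by (\ref{kfc}) the Kil\-ling form of $\,\mathfrak{g}\nh^\bbC$ restricts on the real form $\,\mathfrak{g}\,$ to the Kil\-ling form of $\,\mathfrak{g}$, and a $\,\bbC$-bi\-lin\-e\-ar form on $\,\mathfrak{g}\nh^\bbC$ is nondegenerate precisely when its restriction to $\,\mathfrak{g}\,$ is.)

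For (ii) and (iii) I would now assume $\,\mathfrak{g}$, hence also $\,\mathfrak{g}\nh^\bbC$, sem\-i\-sim\-ple, so that all the operators are defined. Formula (\ref{osj}.ii) gives, for the version of $\,\cu\,$ belonging to $\,\mathfrak{g}\nh^\bbC\nnh$, the identity $\,[\hs\cu\hs\sy]_{jk}\w=2\hh C_{\!jq}\w{}^rC_{\nh ks}\w{}^q\sy\nh_{r}^{\hh s}$ — the same polynomial in $\,\sy\,$ and the fixed structural data as for $\,\mathfrak{g}$; (\ref{ghg}) likewise presents $\,\{\sy\hskip-2.2pt\cdot\nnh\ty\}_{\!jk}\w$ as a fixed bi\-lin\-e\-ar expression in $\,\sy,\ty\,$ and the $\,C_{jk}\w{}^l$; and the definition of $\,\ca\,$ following Lemma~\ref{psidl}, read through (\ref{sgv}), amounts in components to $\,2\hs(\sy\ca\ty)_{\!j}^{\hh k}=\sy\nh_{\!j}^{\hh r}\ty\nh_{\!r}^{\hh k}+\ty\nh_{\!j}^{\hh r}\sy\nh_{\!r}^{\hh k}$, with no explicit scalars at all. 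Because these component identities are common to $\,\mathfrak{g}\,$ and $\,\mathfrak{g}\nh^\bbC\nnh$, Remark~\ref{unext} applies and shows that $\,\cu$, $\,\{\hskip2pt\cdot\hskip2pt\}\,$ and $\,\ca\,$ for $\,\mathfrak{g}\nh^\bbC$ are the unique $\,\bbC$-(bi)\-lin\-e\-ar extensions of their $\,\mathfrak{g}$-counterparts; via the identification (\ref{tce}) of $\,\et\,$ as a real form of $\,\et\hs^\bbC\nnh$, this is precisely (ii) and (iii). (The same reasoning, with $\,\vg_{\hskip-2.2ptjk}^{\hs l}$ in place of $\,C_{jk}\w{}^l$, also handles the pairing $\,\{\hskip2pt\cdot\hskip2pt\}\,$ on $\,\ey\nnh$.)

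The one point deserving attention rather than mere bookkeeping is to be sure that the quoted component formulas genuinely \emph{characterize} the operations — so that coincidence of components forces coincidence of the operations — and that $\,\by\mapsto\by^{\hs jk}$ really is field-independent; both are clear once one recalls that inversion of an invertible matrix is given by the same cofactor formula over any field, so that no part of the argument presents a real obstacle.
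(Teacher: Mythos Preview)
Your proof is correct and follows essentially the same route as the paper's: both argue that in a common basis the structure constants $C_{jk}{}^l$ and Killing-form components $\beta_{jk}$ (hence $\beta^{jk}$) are unchanged under complexification, so the component formulas (\ref{osj}.ii), (\ref{ghg}) and the anticommutator expression for $\circledast$ force the $\mathfrak{g}^{\mathbb C}$-operations to be the $\mathbb C$-bilinear extensions, while (i) follows from Cartan's criterion via the shared matrix $[\beta_{jk}]$. The paper phrases this by invoking Remark~\ref{unext} applied to the induced basis of $\mathcal T$, but the content is identical.
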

\begin{proof}By (\ref{kfc}) and Remark~\ref{unext}, both Kil\-ling forms have 
the same components $\,\by_{jk}\w$ in bases of type (\ref{bss}), which yields 
(i). Assertions (ii) -- (iii) are in turn immediate from Remark~\ref{unext}, 
applied, instead of a basis $\,e_1\w,\dots,e_\m\w$ of $\,\mathfrak{g}$, to 
the basis of $\,\et=[\mathfrak{g}\nh^*]^{\odot2}$ formed by suitable symmetric 
products of the basis of $\,\mathfrak{g}\nh^*$ dual to $\,e_1\w,\dots,e_\m\w$. 
The components in question are given by the equality 
$\,[\hs\cu\hs\sy]_{jk}\w=Z_{\nnh jk}^{\hh rs}\sy\nh_{rs}\s$, two lines after 
(\ref{osj}), along with (\ref{ghg}) and 
$\,2\hh(\sy\ca\ty)_{\nh jk}\w=\sy_{\!js}\w\ty_{\nh k}^s
+\ty_{\nh js}\w\sy_{\!k}^s$.
\end{proof}

\section{Traces and Lie subalgebras}\label{tl}\checked
\setcounter{equation}{0}
Let $\,\bbF=\bbR\,$ or $\,\bbF=\bbC$. For the linear en\-do\-mor\-phism 
$\,v\mapsto uvw\,$ of $\,\mathfrak{gl}\hh(\n,\bbF)$, 
\begin{equation}\label{tra}
\mathrm{the\ }\,\mathfrak{gl}\hh(\n,\bbF)\hyp\mathrm{trace\ of\ }
\,\,v\mapsto uvw\,\,\mathrm{\ equals\ }
\,(\mathrm{tr}\hskip2ptu)\hs\mathrm{tr}\hskip2ptw\hh,
\end{equation}
with any fixed $\,u,w\in\mathfrak{gl}\hh(\n,\bbF)$. In fact, 
$\,\,(\mathrm{tr}\hskip2ptu)\hs\mathrm{tr}\hskip2ptw\,$ is the sum of the 
diagonal terms of the coefficient matrix $\,u_p^sw_q^r$ in the component 
expression $\,(uvw)_q^s=u_p^sv_r^pw_q^r$.
\begin{rem}\label{trtpr}Suppose that $\,\ev\,$ is a vector space with 
$\,\dimf\nh\ev<\infty$.
\begin{enumerate}
  \def\theenumi{{\rm\roman{enumi}}}
\item[{\rm(i)}] If $\,\aj(\ev)\subset\td\ev\,$ for a linear en\-do\-mor\-phism 
$\,\aj:\ev\to\ev\,$ and a sub\-space $\,\td\ev\subset\ev\nh$, then the 
$\,\ev$-trace of $\,\aj\,$ equals the $\,\td\ev$-trace of its restriction to 
$\,\td\ev\nh$.
\item[{\rm(ii)}] Given a linear functional $\,\alpha\in\ev^*\nnh$, and a 
vector $\,w\in\ev\nh$, one clearly has 
$\,\mathrm{tr}\hskip2pt(\alpha\otimes w)=\alpha(w)$, where 
$\,\alpha\otimes w\,$ acts on $\,v\in\ev\,$ by $\,v\mapsto\alpha(v)\hh w$.
\end{enumerate}
\end{rem}
\begin{rem}\label{trsln}In the next two sections traces of linear 
en\-do\-mor\-phisms $\,\aj\,$ of $\,\mathfrak{sl}\hh(\n,\bbF)$, where 
$\,\bbF=\bbR\,$ or $\,\bbF=\bbC$, will be evaluated as follows:
\begin{enumerate}
  \def\theenumi{{\rm\alph{enumi}}}
\item[{\rm(a)}] write $\,\aj\,$ as an en\-do\-mor\-phism of 
$\,\mathfrak{gl}\hh(\n,\bbF)$, valued in $\,\mathfrak{sl}\hh(\n,\bbF)$, 
\item[{\rm(b)}] find the $\,\mathfrak{gl}\hh(\n,\bbF)$-trace of the latter, 
using either (\ref{tra}), or Remark~\ref{trtpr}(ii),
\item[{\rm(c)}] note that, by Remark~\ref{trtpr}(i), this is also the 
$\,\mathfrak{sl}\hh(\n,\bbF)$-trace of $\,\aj$.
\end{enumerate}
\end{rem}
\begin{lem}\label{kfsln}{\smallit 
If\/ $\,\mathfrak{g}\,$ is one of the four Lie algebras in\/ {\rm(\ref{rfr})}, 
$\,a,b\in\mathfrak{g}\hh$, while\/ $\,\langle\,,\rangle\,$ and\/ 
$\,(\hskip2pt,\hskip.4pt)\,$ are the Kil\-ling form and the inner product, 
with\/ {\rm(\ref{bvw})} and\/ {\rm(\ref{ntt})}, then\/ 
$\,\hs\langle a,b\rangle=2n\hh(a,b)$.
}
\end{lem}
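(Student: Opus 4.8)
The plan is to settle first the two split cases $\mathfrak{g}=\mathfrak{sl}(\n,\bbR)$ and $\mathfrak{g}=\mathfrak{sl}(\n,\bbC)$ by a direct trace computation, and then to obtain the remaining cases $\mathfrak{g}=\mathfrak{su}(\d,\k)$ (with $\n=\d+\k$) and $\mathfrak{g}=\mathfrak{sl}(\n/2,\bbH)$ by restricting the Killing form of the common complexification $\mathfrak{sl}(\n,\bbC)$, using (\ref{rfr}) and (\ref{kfc}).

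For $\mathfrak{g}=\mathfrak{sl}(\n,\bbF)$ with $\bbF\in\{\bbR,\bbC\}$, recall from (\ref{bvw}) and (\ref{acb}) that $\langle a,b\rangle=\mathrm{tr}\hh[(\mathrm{Ad}\,a)(\mathrm{Ad}\,b)]$, the trace being taken over $\mathfrak{g}$. Since $(\mathrm{Ad}\,a)(\mathrm{Ad}\,b)\hh v=[a,[b,v]]=abv-avb-bva+vba$ for every $v\in\mathfrak{gl}(\n,\bbF)$, I would follow the bookkeeping of Remark~\ref{trsln}: view $(\mathrm{Ad}\,a)(\mathrm{Ad}\,b)$ as an endomorphism of $\mathfrak{gl}(\n,\bbF)$ which is valued in $\mathfrak{sl}(\n,\bbF)$ (a commutator being always traceless), so that by Remark~\ref{trtpr}(i) its $\mathfrak{sl}(\n,\bbF)$-trace equals its $\mathfrak{gl}(\n,\bbF)$-trace, and then evaluate the latter termwise by (\ref{tra}) as
\[
\n\hh\mathrm{tr}(ab)\,-\,(\mathrm{tr}\hh a)(\mathrm{tr}\hh b)\,-\,(\mathrm{tr}\hh b)(\mathrm{tr}\hh a)\,+\,\n\hh\mathrm{tr}(ba)\,=\,2\n\hh\mathrm{tr}(ab)\hs,
\]
the two cross terms dropping out because $\mathrm{tr}\hh a=\mathrm{tr}\hh b=0$. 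Hence $\langle a,b\rangle=2\n\hh\mathrm{tr}(ab)=2\n\hh(a,b)$ by (\ref{ntt}.i). The only point here requiring care is precisely the observation that $(\mathrm{Ad}\,a)(\mathrm{Ad}\,b)$, extended to all of $\mathfrak{gl}(\n,\bbF)$, still takes values in $\mathfrak{sl}(\n,\bbF)$, which is what legitimizes replacing the $\mathfrak{sl}(\n,\bbF)$-trace by the more tractable $\mathfrak{gl}(\n,\bbF)$-trace.

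For $\mathfrak{g}=\mathfrak{su}(\d,\k)$ with $\n=\d+\k$, or $\mathfrak{g}=\mathfrak{sl}(\n/2,\bbH)$ with $\n$ even, I would invoke (\ref{rfr}): $\mathfrak{g}$ is a real form of $\mathfrak{g}\nh^\bbC=\mathfrak{sl}(\n,\bbC)$. By (\ref{kfc}), the Killing form of $\mathfrak{g}$ is the restriction to $\mathfrak{g}$ of the Killing form of $\mathfrak{sl}(\n,\bbC)$; since $a,b\in\mathfrak{g}\subset\mathfrak{sl}(\n,\bbC)$, the case $\mathfrak{g}=\mathfrak{sl}(\n,\bbC)$ already settled, applied to $a,b$ regarded as elements of $\mathfrak{sl}(\n,\bbC)$, gives $\langle a,b\rangle=2\n\hh\mathrm{tr}(ab)$, where $\mathrm{tr}$ is the matrix trace of $a,b$ as complex $\n\times\n$ matrices; that is, $\langle a,b\rangle=2\n\hh(a,b)$ by (\ref{ntt}.i). (In these two cases $\mathrm{tr}(ab)$ is in fact real, in keeping with $\mathfrak{g}$ being a real Lie algebra, though the statement does not require this.)
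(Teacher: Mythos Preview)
Your proof is correct and follows essentially the same approach as the paper: a direct $\mathfrak{gl}(\n,\bbF)$-trace computation of $(\mathrm{Ad}\,a)(\mathrm{Ad}\,b)$ via (\ref{tra}) and Remark~\ref{trtpr}(i) for $\mathfrak{sl}(\n,\bbF)$, followed by restriction from the complexification $\mathfrak{sl}(\n,\bbC)$ via (\ref{rfr}) and (\ref{kfc}) for the two remaining real forms. The only cosmetic difference is that the paper first records the $\mathfrak{gl}$-trace as $2\n(a,b)-2(\mathrm{tr}\,a)(\mathrm{tr}\,b)$ for general $a,b\in\mathfrak{gl}(\n,\bbF)$ before specializing to traceless matrices, whereas you impose tracelessness from the outset.
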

\begin{proof}Let $\,a,b\in\mathfrak{gl}\hh(\n,\bbF)$, for $\,\bbF=\bbR\,$ or 
$\,\bbF=\bbC$. As $\,(\mathrm{Ad}\,a)\hs\mathrm{Ad}\,b\,$ sends any 
$\,v\in\mathfrak{gl}\hh(\n,\bbF)\,$ to $\,v\mapsto abv-avb-bva+vba$, its 
$\,\mathfrak{gl}\hh(\n,\bbF)$-trace equals, from (\ref{tra}), 
$\,2n\hh(a,b)-2(\mathrm{tr}\hskip2pta)\hs\mathrm{tr}\hskip2ptb$. By 
Remark~\ref{trtpr}(i), if $\,a,b\in\mathfrak{sl}\hh(\n,\bbF)$, this is also 
the $\,\mathfrak{sl}\hh(\n,\bbF)$-trace of 
$\,(\mathrm{Ad}\,a)\hs\mathrm{Ad}\,b$, which proves our claim in the case 
where $\,\mathfrak{g}=\mathfrak{sl}\hh(\n,\bbF)$. 

Our assertion for $\,\mathfrak{g}=\mathfrak{su}\hh(\d,\k)\,$ and 
$\,\mathfrak{g}=\mathfrak{sl}\hh(\n/2,\bbH)\,$ now follows from (\ref{kfc}) 
applied to either choice of $\,\mathfrak{g}\,$ and to 
$\,\mathfrak{g}\nh^\bbC\nh=\mathfrak{sl}\hh(\n,\bbC)$, cf.\ (\ref{rfr}).
\end{proof}

\section{The special linear and pseudo\hs-unitary Lie algebras}\label{sl}
\setcounter{equation}{0}\checked
In this and the following sections, $\,(\mathfrak{g},\bbF\nnh,\ve)\,$ will 
always be one of the triples
\begin{equation}\label{lie}
\begin{array}{l}
(\mathfrak{sl}\hh(\n,\bbR),\bbR,1)\hs,\hskip3pt
(\mathfrak{sl}\hh(\n,\bbC),\bbC,1)\hs,\hskip3pt
(\mathfrak{sl}\hh(\n,\bbC),\bbC,\mathrm{i})\hs,\hskip3pt\mathrm{with\ 
}\,\n\ge2\hh,\\
(\mathfrak{su}\hh(\d,\k),\bbR,\mathrm{i})\,\,\mathrm{\ and,\ for\ even\ 
}\,\n\ge2\,\mathrm{\ only,}\,\,\,(\mathfrak{sl}\hh(\n/2,\bbH),\bbR,1)\hh,
\end{array}
\end{equation}
formed by a simple Lie algebra $\,\,\mathfrak{g}\,\,$ over $\,\bbF=\bbR\,$ or 
$\,\bbF=\bbC\,$ (see (\ref{rfr})), the scalar field $\,\bbF\,$ itself, and a 
fixed scalar $\,\ve\in\bbF$, equal to $\,1\,$ or $\,\mathrm{i}$. Here 
$\,\k,\d\,$ are fixed integers with $\,\d\ge\k\ge0\,$ and $\,\d+\k=\n$. As 
before, $\,\by=\langle\,,\rangle\,$ denotes the $\,\bbF$-bi\-lin\-e\-ar 
Kil\-ling form of $\,\mathfrak{g}$, defined by (\ref{bvw}), so that 
$\,\by\in\et\,$ for the space $\,\et\,$ in (\ref{sbs}).

With any $\,a,b\in\mathfrak{g}\,$ we associate elements 
$\,\tya,\,\cyab,\,\myab$ of $\,\et\nnh$, related as in (\ref{sgv}) to the 
$\,\bbF$-lin\-e\-ar endo\-morphisms of $\,\mathfrak{g}\,$ sending each 
$\,v\in\mathfrak{g}$, respectively, to
\begin{equation}\label{avp}
\ve\hh(av+va)_0\w\hs,\hskip9pt
\ve^2[(a,v)\hh b+(b,v)\hh a]/(2\n)\hskip7pt\mathrm{and}\hskip7pt
\ve^2(avb+bva)_0\w/2\hh,
\end{equation}
where $\,(\hskip1.3pt,\hskip-.1pt)\,$ and the trace\-less part 
$\,(\hskip2.3pt)_0\w$ are as in (\ref{ntt}).

Note that the values (\ref{avp}) all lie in $\,\mathfrak{g}\,$ due to our 
choice of $\,\ve$. In addition, the operators assigning these values to 
$\,v\,$ are easily seen to be self-ad\-joint for the inner product 
(\ref{ntt}.i). Their self-ad\-joint\-ness relative to the Kil\-ling form 
$\,\by\,$ of $\,\mathfrak{g}\,$ is now immediate from Lemma~\ref{kfsln}. Next, 
whenever $\,a\in\mathfrak{g}$, we set
\begin{equation}\label{cea}
c=\ve\hh(a\nh^2)_0\w\hs,\hskip8ptd=\ve^2(a\nh^3)_0\w\hs,\hskip8pt
\cya=\cy\nh_{a,a}\w\hs,\hskip8pt\mya=\my_{a,a}\w\hs,\hskip8pt
\xy=\ve^2(a,a)/\n\hh,
\end{equation}
with $\,(a,a)=\mathrm{tr}\hskip2pta\nh^2\nnh$. Thus, in the sense of 
(\ref{avp}) and the lines preceding it,
\begin{equation}\label{ava}
\begin{array}{l}
\tya\mathrm{\ corresponds\ to\ }\,v\mapsto\ve\hh(av+va)_0\w\hs,
\hskip17pt\by\,\hs\mathrm{\ to\ }\,\mathrm{Id}\hh,\\
\cya\mathrm{\ to\ }\,v\mapsto\ve^2(a,v)\hh a/\n,\mathrm{\ and\ }
\,\mya\mathrm{\ to\ }\,v\mapsto\ve^2(ava)_0\hh.
\end{array}
\end{equation}
In proofs of some equalities involving $\,\cyab$ and $\,\myab$ we may assume 
that $\,b=a$, as
\begin{equation}\label{bil}
\mathrm{the\ dependence\ of\ }\,\cyab\mathrm{\ and\ }\,\myab\mathrm{\ on\ 
}\,a,b\,\mathrm{\ is\ bi\-lin\-e\-ar\ and\ symmetric.}
\end{equation}
Also, if $\,\xy\,$ (in (\ref{cea})) and $\,\ve\hh(a,c)/\n\,$ stand for the 
corresponding multiples of $\,\mathrm{Id}$,
\begin{equation}\label{eaq}
\mathrm{i)}\hskip7pt\ve\hh a\nh^2\nnh=c+\ve^{-1}\xy\hh,\hskip14pt\mathrm{ii)}
\hskip7pt\ve^2a^3\nnh=d+\ve\hh(a,c)/\n\hh,\hskip7pt\mathrm{with}\hskip6pt
(a,c)=\mathrm{tr}\hskip2ptac\hh.
\end{equation}
Using (\ref{oms}), where we may set $\,v=w$, (\ref{osj}.i) and (\ref{ava}), 
the three steps of Remark~\ref{trsln}, formula (\ref{eaq}.i), and then 
Lemma~\ref{kfsln}, we obtain, for $\,a\in\mathfrak{g}$,
\begin{equation}\label{nom}
\n^2\cu\hh\cya=\tyc-2\hh\mya+2\hh\xy\by\hh,\hskip9pt
\cu\hh\mya=-2\hh\cya\hh,\hskip9pt\cu\hh\tya=\tya\hh,\hskip9pt
\cu\hh\by=2\hh\by\hh.
\end{equation}\checked
More precisely, we just proved (\ref{nom}) for the first three triples in 
(\ref{lie}). It follows, however, that (\ref{nom}) is satisfied by the other 
two triples as well. In fact, according to (\ref{rfr}), relations (\ref{kfc}), 
(\ref{tce}) and assertions (ii) -- (iii) in Lemma~\ref{omgtc} hold for 
$\,\mathfrak{g}=\mathfrak{su}\hh(\d,\k)\,$ or 
$\,\mathfrak{g}=\mathfrak{sl}\hh(\n/2,\bbH)$, and 
$\,\mathfrak{g}\nh^\bbC\nh=\mathfrak{sl}\hh(\n,\bbC)$. Lemma~\ref{omgtc}(ii), 
combined with Remark~\ref{bilxt} below, now shows that the $\,\cu$-im\-ages of 
$\,\,\tya,\cyab,\,\myab$ in $\,\mathfrak{su}\hh(\d,\k)\,$ or 
$\,\mathfrak{sl}\hh(\n/2,\bbH)\,$ coincide with their $\,\cu$-im\-ages in 
$\,\mathfrak{sl}\hh(\n,\bbC)$.

Note that, if $\,\x,y,\z\in\bbF\,$ and 
$\,\ly=\x\hh\tya+\n^2\y\hh\cya+\z\hh\mya$, (\ref{nom}) yields
\begin{equation}\label{imo}
(\hh\mathrm{Id}\,-\,\cu\hh)\hh\ly
=(\n^2\y+2\z)\hh\cya+(2\y+\z)\hh\mya
-2\hs\xy\y\hh\by-\y\hh\tyc\hh.
\end{equation}\checked
\begin{rem}\label{bilxt}By (\ref{kfc}), the elements 
$\,\,\tya,\cyab,\,\myab$ of $\,\et\hs^\bbC$ associated with the triple 
$\,(\mathfrak{sl}\hh(\n,\bbC),\bbC,\mathrm{i})$, or 
$\,(\mathfrak{sl}\hh(\n,\bbC),\bbC,1)$, are the unique 
$\,\bbC$-bi\-lin\-e\-ar extensions of $\,\tya,\,\cyab,\,\myab$ defined for 
$\,(\mathfrak{su}\hh(\d,\k),\bbR,\mathrm{i})\,$ or, respectively, for 
$\,(\mathfrak{sl}\hh(\n/2,\bbH),\bbR,1)$. Thus, $\,\,\tya,\cyab,\,\myab$ lie 
in the real form $\,\et\subset\et\hs^\bbC$ appearing in (\ref{tce}).
\end{rem}

\section{The curvature spectra of the Lie algebras in (\ref{lie})}\label{cs}
\setcounter{equation}{0} 
Whenever $\,(\mathfrak{g},\bbF\nnh,\ve)\,$ is one of the triples (\ref{lie}), 
$\,\n=\d+\k\ge2$, and $\,a,b\in\mathfrak{g}$,
\begin{enumerate}
  \def\theenumi{{\rm\roman{enumi}}}
\item[{\rm(i)}] $\n\hh\langle\tya,\tya\rangle\,
=\,2\hh\ve^2(\n^2\nnh-4)\hh(a,a)$, 
with $\,\langle\,,\rangle,\hs\tya,\hs(\hskip2pt,\hskip.4pt)\,$ as in 
(\ref{inp}.i), (\ref{ava}), (\ref{ntt}.i),
\item[{\rm(ii)}] for $\,\n\ge3\,$ the operator 
$\,\mathfrak{g}\ni a\mapsto\tya\in\et\,$ is injective,
\item[{\rm(iii)}] for $\,\n=2\,$ one has $\,\tya=0\,$ and 
$\,2\hh\cyab=\myab+\ve^2(a,b)\by/2$, cf.\ (\ref{avp}) and (\ref{bvw}).
\end{enumerate}
In fact, if $\,\mathfrak{g}=\mathfrak{sl}\hh(\n,\bbF)$, (i) follows from the 
three steps of Remark~\ref{trsln}, as $\hs\langle\tya,\tya\rangle\hh$ is 
the trace of the endo\-morphism of $\,\mathfrak{g}\,$ sending $\,v\,$ to 
$\,\ve^2(a(av+va)_0\w+(av+va)_0\w a)_0\w$, that is, to $\,\ve^2$ times 
$\,a\nh^2v+va\nh^2\nh+2ava-4\hh[(a,v)\hh a+(a\nh^2\nnh,v)\hs\mathrm{Id}]/\n$. 
For the other choices of $\,\mathfrak{g}$, (i) is obvious from (\ref{rfr}) 
and Remark~\ref{bilxt}, combined with (\ref{trc}) since, again, 
$\,\langle\tya,\tya\rangle\,$ equals the trace of the square of the 
endo\-morphism corresponding to $\,\tya$ in (\ref{ava}). Next, the operator in 
(ii) is injective: by (i), it pulls some symmetric bi\-lin\-e\-ar form in 
$\,\et\,$ back to the nondegenerate form $\,(\hskip2pt,\hskip.4pt)\,$ in 
$\,\mathfrak{g}$, cf.\ Lemma~\ref{kfsln}. Finally, proving (iii) amounts, for 
reasons of symmetry as in (\ref{bil}), to showing that $\,a\nh^2$ is a 
multiple of $\,\mathrm{Id}\,$ and $\,2\hh\cya=\mya+\ve^2(a,a)\by/2$. This is 
easily verified if $\,a\,$ is the diagonal matrix $\,\mathrm{diag}\hs(1,-1)$. 
The case of arbitrary $\,a\,$ is now immediate as matrices conjugate to 
multiples of $\,\mathrm{diag}\hs(1,-1)\,$ form a dense subset of 
$\,\mathfrak{sl}\hh(2,\bbF)$, consisting of all trace\-less matrices with two 
distinct eigenvalues.
\begin{lem}\label{noeig}{\smallit 
If\/ $\,(\mathfrak{g},\bbF\nnh,\ve)\,$ is one of the triples\/ 
{\rm(\ref{lie})} and\/ $\,\n=\d+\k\ge2$, then the en\-do\-mor\-phism\/ 
$\,\cu\,$ of\/ $\,\et=[\mathfrak{g}\nh^*]^{\odot2}\nnh$ defined by\/ 
{\rm(\ref{oms})} is di\-ag\-o\-nal\-izable and has the following ordered 
system\/ $\,\mathrm{spec}\hs[\cu]\,$ of eigenvalues and\/ 
$\,\mathrm{mult}\hs[\cu]\,$ of the corresponding multiplicities.
\begin{enumerate}
  \def\theenumi{{\rm\alph{enumi}}}
\item[{\rm(a)}] $\mathrm{spec}\hs[\cu]=(-1,2)\,$ and\/ 
$\,\mathrm{mult}\hs[\cu]=(5,1)$, when\/ $\,\n=2$,
\item[{\rm(b)}] $\mathrm{spec}\hs[\cu]=(-2/3,1,2)\,$ and\/ 
$\,\mathrm{mult}\hs[\cu]=(27,8,1)\,$ when\/ $\,\n=3$,
\item[{\rm(c)}] $\mathrm{spec}\hs[\cu]=(-2/\n,2/\n,1,2)\,$ and\/ 
$\,\mathrm{mult}\hs[\cu]=(d^-\nnh,d^+\nnh,\n^2\nh-1,1)\,$ 
for\/ $\,\n\ge4$, where\/ $\,d^\pm\nh=\n^2(\n\mp3)(\n\pm1)/4$.
\end{enumerate}
Furthermore, with\/ $\,\tya$ and the Kil\-ling form\/ $\,\by\,$ as in\/ 
{\rm(\ref{ava})},
\begin{enumerate}
  \def\theenumi{{\rm\alph{enumi}}}
\item[{\rm(d)}] for\/ $\,\n\ge3\hh$, the assignment\/ $\,a\mapsto\tya\,$ is a linear 
iso\-mor\-phism\/ $\,\mathfrak{g}\to\mathrm{Ker}\,(\cu-\mathrm{Id})$,
\item[{\rm(e)}] $\mathrm{Ker}\,(\cu-2\hskip1.4pt\mathrm{Id})\hh=\hh\bbF\nh\by$,
\item[{\rm(f)}] $\mathrm{Ker}\,(\cu-\mathrm{Id})\,$ is a nondegenerate 
sub\-space of\/ $\,\et\nnh$, that is, {\rm(\ref{knd})} holds.
\end{enumerate}
}
\end{lem}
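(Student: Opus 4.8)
The plan is to reduce the whole statement to the single Lie algebra $\,\mathfrak{g}=\mathfrak{sl}\hh(\n,\bbC)$, taken over $\,\bbC$, and there to diagonalize $\,\cu\,$ by decomposing $\,\et\,$ under the adjoint action. First I would dispose of the other four cases by complexification: for each of $\,\mathfrak{g}=\mathfrak{sl}\hh(\n,\bbR)$, $\,\mathfrak{su}\hh(\d,\k)\,$ and $\,\mathfrak{sl}\hh(\n/2,\bbH)$, relations (\ref{rfr}) and (\ref{tce}), Remark~\ref{bilxt} and Lemma~\ref{omgtc}(ii) present $\,\et\,$ as a real form of, and $\,\cu\,$ as the restriction of, the corresponding complex objects attached to $\,\mathfrak{sl}\hh(\n,\bbC)$; hence diagonalizability, the (necessarily real) list of eigenvalues and the multiplicities all descend from $\,\bbC\,$ to those real fields, each real eigenspace being a real form of its complexification. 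Since none of (a)--(f) mentions $\,\ve$, and (\ref{oms}) does not either, it suffices to treat $\,\mathfrak{sl}\hh(\n,\bbC)\,$ over $\,\bbC$.

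So put $\,\mathfrak{g}=\mathfrak{sl}\hh(\n,\bbC)\subset\mathfrak{gl}\hh(\n,\bbC)=V\otimes V^*$, with $\,V=\bbC^{\hskip.7pt\n}$, and identify $\,\et=[\mathfrak{g}\nh^*]^{\odot2}$ with the symmetric square of the adjoint representation (self-dual via the Kil\-ling form). From $\,\mathrm{Sym}^2(V\otimes V^*)=[\mathrm{Sym}^2V\otimes\mathrm{Sym}^2V^*]\oplus[\wedge^2V\otimes\wedge^2V^*]$, Pieri's rule, and $\,\mathfrak{gl}\hh(\n,\bbC)=\mathfrak{g}\oplus\bbC\hskip.7pt\mathrm{Id}$, one obtains, for $\,\n\ge4$, a multiplicity-free decomposition $\,\et=\et_0\oplus\et_1\oplus\et'\oplus\et''\,$ into irreducibles, with $\,\et_0=\bbF\nh\by$, $\,\et_1\,$ the adjoint module, $\,\dimf\et_1=\n^2\nh-1$, $\,\dimf\et'=\n^2(\n+3)(\n-1)/4\,$ and $\,\dimf\et''=\n^2(\n-3)(\n+1)/4\,$ (these sum to $\,\dimf\et=\n^2(\n^2\nh-1)/2$); for $\,\n=3\,$ one has $\,\dimf\et''=0$, leaving dimensions $\,(1,8,27)$, and for $\,\n=2\,$ there is no adjoint summand, leaving $\,(1,5)$. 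Since $\,\cu\,$ is built in (\ref{oms}) from $\,\mathrm{Ad}\,$ and the bi-in\-var\-i\-ant form $\,\by$, it is $\,\mathrm{Ad}$-equi\-var\-i\-ant, so by Schur's lemma it acts as a scalar on each of these multiplicity-one summands; in particular it is diagonalizable. The scalar on $\,\et_0\,$ is $\,2$, by (\ref{osj}.i); the scalar on $\,\et_1\,$ is $\,1$, since for $\,\n\ge3\,$ the $\,\mathrm{Ad}$-mor\-phism $\,a\mapsto\tya\,$ is nonzero and injective (assertion (ii) before the lemma), hence an isomorphism $\,\mathfrak{g}\to\et_1$, while $\,\cu\hh\tya=\tya\,$ by (\ref{nom}). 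For the remaining scalar(s), take $\,a=E_{1\n}$: then $\,a^2=0$, so $\,c=0\,$ and $\,\xy=\ve^2\mathrm{tr}\hskip2pta^2/\n=0\,$ in (\ref{cea}), and (\ref{nom}) reduces to $\,\cu\hh\mya=-2\hh\cya\,$ and $\,\n^2\cu\hh\cya=-2\hh\mya$, whereas (\ref{avp}) gives $\,\mya=\n\hh\cya\ne0$; hence $\,\cu\hh\mya=-(2/\n)\hh\mya$. A short check with (\ref{avp}), or a weight count, places this $\,\mya\,$ in the summand $\,\et'\,$ (it is a nonzero multiple of $\,\alpha\odot\alpha\,$ with $\,\alpha\in\mathfrak{g}\nh^*$ dual to a highest root vector, a weight vector whose weight occurs in $\,\et\,$ only there); so $\,\cu=-2/\n\,$ on $\,\et'$. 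For $\,\n\ge4\,$ the last scalar $\,\mu\,$ (on $\,\et''$) is then forced by $\,\mathrm{tr}\hskip2pt\cu=-\dimf\mathfrak{g}=1-\n^2$ from (\ref{osj}.iii): solving $\,2+(\n^2\nh-1)+(-2/\n)\dimf\et'+\mu\,\dimf\et''=1-\n^2\,$ gives $\,\mu=2/\n$. Recalling $\,d^\pm=\n^2(\n\mp3)(\n\pm1)/4$ from the statement (so $\,\dimf\et'=d^-$ and $\,\dimf\et''=d^+$), this is exactly (a)--(c). (For $\,\n=3\,$ and $\,\n=2\,$ the same computation with $\,a=E_{1\n}\,$ gives $\,\cu=-2/\n\,$ on $\,\et'\,$ and no scalar is left to determine, (\ref{osj}.iii) serving as a check; for $\,\n=2\,$ one may also use $\,2\hh\cya=\mya\,$ from assertion (iii) before the lemma.)

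Parts (d)--(f) follow quickly, for all the Lie algebras in (\ref{lie}). Part~(e): $\,\cu\hh\by=2\hh\by\,$ and $\,2\,$ is an eigenvalue of $\,\cu\,$ only on the one-di\-men\-sion\-al $\,\et_0=\bbF\nh\by$. Part~(d): for $\,\n\ge3\,$ the injective $\,\mathrm{Ad}$-mor\-phism $\,a\mapsto\tya\,$ carries $\,\mathfrak{g}\,$ into $\,\mathrm{Ker}\,(\cu-\mathrm{Id})$, whose dimension is $\,\n^2\nh-1=\dimf\mathfrak{g}$, so it is onto (over $\,\bbR\,$ for the real forms, by Remark~\ref{bilxt} and the reduction above). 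Part~(f): polarizing assertion (i) before the lemma gives $\,\n\hs\langle\tya,\tyb\rangle=2\hh\ve^2(\n^2\nh-4)\hskip1pt(a,b)\,$ for all $\,a,b\in\mathfrak{g}$, where $\,(a,b)=\mathrm{tr}\hskip2ptab\,$ is, by Lemma~\ref{kfsln}, a nonzero multiple of the nondegenerate Kil\-ling form of $\,\mathfrak{g}$; since $\,\ve^2(\n^2\nh-4)\ne0\,$ for $\,\n\ge3$, the restriction of $\,\langle\,,\rangle\,$ to $\,\mathrm{Ker}\,(\cu-\mathrm{Id})=\{\tya:a\in\mathfrak{g}\}\,$ is nondegenerate, and for $\,\n=2\,$ that kernel is $\,\{0\}$; either way (\ref{knd}) holds. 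The only genuinely non-formal ingredient is the decomposition of $\,\et\,$ in the second paragraph, namely the plethysm $\,\mathrm{Sym}^2$ of the adjoint representation of $\,\mathfrak{sl}\hh(\n,\bbC)$, with the attendant dimension count; everything else is a short deduction from (\ref{oms})--(\ref{osj}), (\ref{nom}) and assertions (i)--(ii) before the lemma.
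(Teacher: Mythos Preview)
Your argument is correct, and it takes a genuinely different route from the paper's own proof.

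The paper works entirely by hand: it builds explicit elements
\[
\iota^\pm_{a,b}\;=\;n\,\theta_{a,b}\mp\mu_{a,b}-(n\mp2)^{-1}\tau_e-[n(n\mp1)]^{-1}\varepsilon^2(a,b)\,\beta,
\qquad e=\varepsilon(ab+ba)_0/2,
\]
verifies from (\ref{nom}) that $\,\Omega\,\iota^\pm_{a,b}=\pm(2/n)\,\iota^\pm_{a,b}$, and then uses the fact that the $\,\theta_{a,b}\,$ span $\,\mathcal{T}\,$ (being symmetric products of linear forms) together with the identity $\,2n\,\theta_{a,b}=\iota^+_{a,b}+\iota^-_{a,b}+\text{(terms in }\tau_e,\beta)$ to conclude that $\,\mathcal{T}\,$ is the sum of the four eigenspaces. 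Diagonalizability then follows because the four eigenvalues are distinct, and the multiplicities are pinned down by (\ref{osj}.iii). No representation theory beyond Ad-invariance enters.

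Your approach instead imports one outside fact, the multiplicity-free decomposition of $\,\mathrm{Sym}^2(\mathrm{ad})\,$ for $\,\mathfrak{sl}(n,\bbC)$, and lets Schur's lemma do the structural work: $\,\Omega\,$ is automatically diagonalizable with at most four eigenvalues, and you only need to locate one vector in each summand to read off the scalar. Your test vector $\,\mu_a=n\,\theta_a\,$ for $\,a=E_{1n}\,$ is exactly the highest-weight line of the Cartan square $\,V_{2\omega_1+2\omega_{n-1}}$, so the identification with $\,\mathcal{T}'\,$ is clean; the remaining scalar on $\,\mathcal{T}''\,$ then drops out of the trace, just as in the paper. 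What you gain is a conceptual explanation for diagonalizability (intertwiner on a multiplicity-free module) and a shorter computation; what the paper gains is complete self-containment, since it never needs the plethysm or the Weyl dimension formula and produces the eigenvectors explicitly. Parts (d)--(f) are handled essentially the same way in both arguments, via assertions (i)--(ii) preceding the lemma.
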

\begin{proof}We define $\,\ipm\in\et\,$ by 
$\,\ipm=\n\hh\cyab\mp\myab-(\n\mp2)^{-1}\tye-[\n(\n\mp1)]^{-1}\ve^2(a,b)\by$, 
where $\,e=\ve\hh(ab+ba)_0\w/2$. (Notation of (\ref{avp}) -- (\ref{ava}); if 
$\,\n=2$, only $\,\iym$ is defined and, from (iii) above, 
$\,\iym=4\hh\cyab-\ve^2(a,b)\by$.) Now (\ref{bil}), (\ref{cea}) and 
(\ref{nom}) give $\,\n^2\cu\hh\cyab=\tye-2\hh\myab+2\hh\ve^2(a,b)\by/\n\,$ and 
$\,\cu\hh\myab=-2\hh\cyab$, so that
\begin{equation}\label{oie}
\begin{array}{l}
\cu\hh\ipm=\pm2\hh\ipm/\n\hh,\hskip16pt\cu\hh\tye=\tye\hh,\hskip16pt
\cu\hh\by=2\hh\by\hskip9pt\mathrm{for}\hskip6pt\n\ge3\hh,\\
\cu\hh\iym=-\hh\iym\hskip7pt\mathrm{and}\hskip7pt\cu\hh\by=2\hh\by
\hskip9pt\mathrm{for}\hskip6pt\n=2\hh.
\end{array}
\end{equation}
The elements $\,\cyab$ span $\,\et=[\mathfrak{g}\nh^*]^{\odot2}\nnh$, since 
$\,\cyab$ is a multiple of the symmetric product 
$\,(a,\,\cdot\,)\odot(b,\,\cdot\,)$. As 
$\,2\n\hh\cyab=\iyp+\iym+2\n(\n^2\nnh-4)^{-1}\tye
+2(\n^2\nnh-1)^{-1}\ve^2(a,b)\by$ (if $\,\n\ge3$) or 
$\,4\hh\cyab=\iym+\ve^2(a,b)\by\,$ (if $\,\n=2$), (\ref{oie}) yields
\[
\begin{array}{l}
\et\hs=\,\ev\nh_+\w\nnh+\,\ev\nh_-\w\nnh+\,\ev\,+\hs\bbF\nh\by\mathrm{\ \ for\ 
}\,\,\n\ge3\hh,\hskip16.7pt\et\hs=\,\ev\nh_-\w\nnh+\,\bbF\nh\by\mathrm{\ \ 
for\ }\,\,\n=2\hh,\\
\ev\nh_\pm\w\subset\mathrm{Ker}\,(\cu\mp2\hskip1.4pt\mathrm{Id}/\n)\hh,
\hskip12pt
\ev\subset\mathrm{Ker}\,(\cu-\mathrm{Id})\hh,\hskip12pt
\bbF\nh\by\subset\mathrm{Ker}\,(\cu-2\hskip1.4pt\mathrm{Id})\hh,
\end{array}
\]
where $\,\ev\nh_\pm$ (or, $\,\ev$) is the subspace of $\,\et\,$ spanned by all 
$\,\ipm$ (or, respectively, by all $\,\tya$). Since the eigenvalues in 
question are mutually distinct, $\,\cu\,$ is di\-ag\-o\-nal\-izable and the 
above inclusions are equalities. Now (d) -- (f) are obvious from (ii) above 
and the second part of (\ref{dia}), while 
$\,\mathrm{tr}\hskip2pt\cu=1-\n^2\nnh$, cf.\ (\ref{osj}.iii) and (\ref{rfr}), 
which uniquely determines the correct multiplicities (with $\,d^-\nh=0\,$ for 
$\,\n=3$).
\end{proof}
\begin{rem}\label{eigvz}Lemmas~\ref{noeig} and~\ref{undrl}(b)\hh-\hh(c) 
easily imply that, for the underlying real Lie algebra 
$\,\mathfrak{g}_\bbR\s$ of $\,\mathfrak{g}=\mathfrak{sl}\hh(\n,\bbC)$, 
the operator $\,\cu_\bbR\s$ (notation of (\ref{tsy})) has the eigen\-values 
listed in (a) -- (c) above, with twice the multiplicities of (a) -- (c), 
plus the eigen\-value $\,0\,$ with the multiplicity $\,(\n^2\nnh-1)^2\nnh$. By 
Lemma~\ref{undrl}(a)\hh-\hh(c), the eigen\-spaces for the eigen\-values 
$\,2\,$ and $\,1\,$ are $\,\bbR$-iso\-mor\-phic images of those in (e) and (f) 
above under the operator $\,\sy\mapsto\hs\mathrm{Re}\,\sy$.
\end{rem}
\begin{rem}\label{lniso}Let $\,(\mathfrak{g},\bbF\nnh,\ve)\,$ be one of the 
triples (\ref{lie}), with $\,\n=\d+\k\ge3$. Then, by (\ref{ttd}) and 
Lemma~\ref{noeig}(d), the composite operator
\[
a\mapsto\tya=\hs\ty\mapsto\dj\hh\ty\mathrm{\ \ is\ a\ linear\ iso\-mor\-phism\ 
\ }\,\mathfrak{g}\to\mathrm{Ker}\,(\cu-\mathrm{Id})\to\mathrm{Ker}\,\dz\hh.
\]
This is also true for $\,\mathfrak{g}=\mathfrak{sl}\hh(\n,\bbC)\,$ treated as 
a real Lie algebra $\,\mathfrak{g}_\bbR\s$, with $\,\dj\hh\tya$ denoting the 
$\,\dj$-grad\-i\-ent, in the complex Lie algebra $\,\mathfrak{g}$, of the 
(com\-plex-bi\-lin\-e\-ar) symmetric tensor $\,\tya$. In fact, 
$\,\mathrm{Ker}\,\dz_\bbR\s=\hs\mathrm{Ker}\,\dz\,$ by Remark~\ref{rehol}(vi).
\end{rem}
\begin{rem}\label{natrl}The surjective operator $\,\es\to\mathfrak{g}$, 
mentioned immediately after (\ref{azz}), is obtained as the composite of 
the projection $\,\pr:\es\to\mathrm{Ker}\,\dz\,$ in (\ref{prj}.b) followed by 
the inverse $\,\mathrm{Ker}\,\hs\dz\to\,\mathfrak{g}\,$ of the iso\-mor\-phism 
in Remark~\ref{lniso}.
\end{rem}

\section{Multiplication tables}\label{mt}\checked
\setcounter{equation}{0}
In this section $\,(\mathfrak{g},\bbF\nnh,\ve)\,$ is always one of 
the triples (\ref{lie}), with $\,\n=\d+\k\ge3$, and $\,a\,$ denotes an 
arbitrary element of $\,\mathfrak{g}$. We use the notation of (\ref{avp}) -- 
(\ref{ava}).

For $\,\ca\,$ and $\,\{\hskip2pt\cdot\hskip2pt\}\,$ defined in the lines 
following Lemma~\ref{psidl} and in (\ref{stw}),
\begin{equation}\label{ast}
\begin{array}{ll}
\tya\ca\tya=\tyc+2\hh(\xy\by+\mya-2\hh\cya)\hh,
\hskip19pt&\cya\ca\hs\cya=\xy\hh\cya\hh,\\
\mya\ca\mya=\xy\hh\tyc+\xy^2\nnh\by+\myc-\cyc\hh,
&\tya\ca\hs\cya=2\hh\cyac\hh,\\
\tya\ca\hh\mya=\xy\tya+2\hh\myac-2\hh\cyac\hh,
&\cya\ca\hh\mya=\cy_{a,d}\w\hh,\\
\by\hs\ca\tya=\tya\hh,\hskip18pt\by\hs\ca\hh\cya=\cya\hh,
&\by\hs\ca\hh\mya=\mya\hh,
\end{array}
\end{equation}\checked
as one easily verifies with the aid of (\ref{avp}) -- (\ref{ava}) and 
(\ref{eaq}). Also,
\begin{equation}\label{bdb}
\begin{array}{l}
\{\tya\hskip-3pt\cdot\nnh\tya\}=\xy\by+\mya-2\hh\cya\hh,\hskip29pt
\{\mya\hskip-3pt\cdot\nnh\mya\}=\xy\hh\mya
-\cyc\hh,\phantom{{j_{j_j}}}\\
2\hh\{\tya\hskip-3pt\cdot\nnh\mya\}=\xy\tya-4\hh\cyac\hh,
\hskip36pt\{\cya\hskip-3pt\cdot\nnh\cya\}=0\hh,\\
2n^2\{\tya\hskip-3pt\cdot\nnh\cya\}=\tyd
-\xy\tya-2\hh\myac+2\hs\ve^2(a,c)\by/\n\hh,\phantom{{j_{j_j}}}\\
n^2\{\cya\hskip-3pt\cdot\nnh\mya\}=\myad-\xy\tyc
-\xy^2\nnh\by-\myc+\ve^2(a,c)\hh\tya/(2\n)\hh.
\end{array}
\end{equation}\checked
If $\,\mathfrak{g}=\mathfrak{sl}\hh(\n,\bbF)$, (\ref{bdb}) is immediate from 
(\ref{avp}) -- (\ref{ava}) and (\ref{eaq}). Specifically, we use the three 
steps of Remark~\ref{trsln} and Lemma~\ref{kfsln}. Remark~\ref{bilxt} and 
Lemma~\ref{omgtc}(iii) now show that (\ref{bdb}) holds for the remaining 
triples $\,(\mathfrak{su}\hh(\d,\k),\bbR,\mathrm{i})$ and 
$\,(\mathfrak{sl}\hh(\n/2,\bbH),\bbR,1)\,$ in (\ref{lie}) as well.

Furthermore, (\ref{ava}) and (\ref{dsv}.a) trivially imply that, for all 
$\,v\in\mathfrak{g}$,
\begin{equation}\label{dta}
[\hh\dj\hh\tya]_v\w v=\ve\hs[\hh a,v^2\hh]\hh,\hskip6pt
\n\hh[\hh\dj\hh\cya]_v\w v=\ve^2[(a,v)\hh a,v\hh]\hh,\hskip6pt
[\hh\dj\hh\mya]_v\w v=\ve^2[\hh a,vav\hh]\hh.
\end{equation}\checked
From (\ref{dta}), (\ref{ava}), (\ref{dsv}.b) and 
(\ref{eaq}.i) one in turn obtains
\begin{equation}\label{tdt}
\begin{array}{l}
\tya(\dj\hh\tya)=\dj\hh\tyc\hh,\hskip8pt
\cya(\dj\hh\tya)=\cya(\dj\hh\cya)=\cya(\dj\hh\mya)
=0\hh,\\
\n\hh[\hh\tya(\dj\hh\cya)]_v\w v=\ve^2[(a,v)\hh c,v\hh]\hh,\hskip8pt
[\hh\tya(\dj\hh\mya)]_v\w v=\ve^2[\hh c,vav\hh]\hh,\\
{}[\hs\mya(\dj\hh\tya)+\dj\hh(\xy\tya)]_v\w v=\ve^2(cv^2a-av^2c)\hh,\\
\n\hh[\hs\mya(\dj\hh\cya)+\dj\hh(\xy\hh\cya)]_v\w v
=\ve^3(a,v)\hh(cva-avc)\hh,\\
{}[\hs\mya(\dj\hh\mya)+\dj\hh(\xy\hh\mya)]_v\w v=\ve^3(cvava-avavc)\hh.
\end{array}
\end{equation}\checked
\begin{lem}\label{omgpr}{\smallit 
For\/ $\,(\mathfrak{g},\bbF\nnh,\ve)\,$ as in\/ {\rm(\ref{lie})}, 
$\,\n=\d+\k\ge3\hh$, setting\/ $\,\chi_a\w=2\hh\mya-\n^2\cya+2\hs\xy\by
+(\n^2\nnh+4)(\n^2\nnh-4)^{-1}\tyc$ and\/ 
$\,\phi_a\w=2\n^2\cya-\n^2\mya-4\hh\xy\by-4\hh\n^2(\n^2\nnh-4)^{-1}\tyc\hs$, 
in the notation of\/ {\rm(\ref{oms})} and\/ {\rm(\ref{avp})} -- 
{\rm(\ref{ava})}, we have, if\/ $\,\x,y,\z\in\bbF\,$ and\/ 
$\,\ly=\x\hh\tya+\n^2\y\hh\cya+\z\hh\mya$,
\begin{enumerate}
  \def\theenumi{{\rm\roman{enumi}}}
\item[{\rm(i)}] $(\n^2\nnh-4)\hs\cya=\tyc+(\cu-\mathrm{Id})\hh\chi_a\w\,$ 
and\/ $\,(\n^2\nnh-4)\hs\mya=-2\hh\tyc+(\cu-\mathrm{Id})\hh\phi_a\w$,
\item[{\rm(ii)}] $\chi_a\w,\phi_a\w\in(\cu-\mathrm{Id})(\et)$,
\item[{\rm(iii)}] $\pr\hh\tya=\tya\hh,\hskip5pt(\n^2\nnh-4)\hs\pr\cya=\tyc\hh,
\hskip6pt(\n^2\nnh-4)\hs\pr\mya=-2\hh\tyc\hh,\hskip6pt\pr\by=0$,
\item[{\rm(iv)}] $(n^2\nnh-4)\hh(\pr\hh\ly-\tya)=(n^2\nnh-4)\hh(\x-1)\hh\tya
+(\n^2\y-2\z)\hh\tyc\hh$,
\end{enumerate}
$\pr\,$ being the di\-rect-sum projection in\/ {\rm(\ref{prj}.a)}, 
well-defined in view of Lemma~\/{\rm\ref{noeig}(f)}.
}
\end{lem}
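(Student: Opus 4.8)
The plan is to obtain (i) by a direct computation from the multiplication rules, then read off (iii) by applying the projection $\,\pr$, and finally deduce (ii) and (iv) from (iii); the argument is essentially bookkeeping, with no real obstacle beyond checking that the constants built into $\,\chi_a\,$ and $\,\phi_a\,$ make the right terms cancel.

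First I would prove (i). The relevant relations are those in (\ref{nom}): $\,\n^2\cu\hh\cya=\tyc-2\hh\mya+2\hh\xy\by$, $\,\cu\hh\mya=-2\hh\cya$, $\,\cu\hh\tya=\tya\,$ and $\,\cu\hh\by=2\hh\by$; since $\,a\mapsto\tya\,$ is linear (see (\ref{avp})), the third one also gives $\,\cu\hh\tyc=\tyc$. Applying $\,\cu\,$ term by term to the definition of $\,\chi_a\,$ and subtracting $\,\chi_a$, the contributions of $\,\mya$, $\,\cya\,$ and $\,\by\,$ combine to $\,(\n^2\nnh-4)\hs\cya$, while the $\,\tyc$-coefficients telescope — which is exactly what the choice of the constant $\,(\n^2\nnh+4)(\n^2\nnh-4)^{-1}$ arranges — leaving $\,-\hh\tyc$; thus $\,(\cu-\mathrm{Id})\hs\chi_a=(\n^2\nnh-4)\hs\cya-\tyc$. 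The same computation for $\,\phi_a\,$ gives $\,(\cu-\mathrm{Id})\hs\phi_a=(\n^2\nnh-4)\hs\mya+2\hh\tyc$. Both identities of (i) follow, and only $\,\n^2\nnh-4\ne0\,$ (i.e.\ $\,\n\ge3$) is used.

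Next I would derive (iii). By Lemma~\ref{noeig}(f) the nondegeneracy condition (\ref{knd}) holds, so (\ref{opl}.a) yields $\,\et=(\cu-\mathrm{Id})(\et)\oplus\mathrm{Ker}\,(\cu-\mathrm{Id})$, and $\,\pr\,$ in (\ref{prj}.a) is the projection onto $\,\mathrm{Ker}\,(\cu-\mathrm{Id})\,$ along $\,(\cu-\mathrm{Id})(\et)$. From $\,(\cu-\mathrm{Id})\hh\tya=0$, $\,(\cu-\mathrm{Id})\hh\tyc=0\,$ and $\,(\cu-\mathrm{Id})\hh\by=\by\,$ we get $\,\pr\hh\tya=\tya$, $\,\pr\hh\tyc=\tyc\,$ and $\,\pr\hh\by=0$. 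Applying $\,\pr\,$ to the two identities of (i), and using that it annihilates $\,(\cu-\mathrm{Id})\hs\chi_a\,$ and $\,(\cu-\mathrm{Id})\hs\phi_a\,$ (both in $\,(\cu-\mathrm{Id})(\et)$), we obtain $\,(\n^2\nnh-4)\hs\pr\cya=\tyc\,$ and $\,(\n^2\nnh-4)\hs\pr\mya=-2\hh\tyc$, which together with $\,\pr\hh\tya=\tya\,$ and $\,\pr\hh\by=0\,$ is (iii).

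Finally, (ii) and (iv) follow from (iii). An element of $\,\et\,$ lies in $\,(\cu-\mathrm{Id})(\et)\,$ precisely when $\,\pr\,$ kills it; computing $\,(\n^2\nnh-4)\hs\pr\hh\chi_a=2\hh(\n^2\nnh-4)\hs\pr\mya-\n^2(\n^2\nnh-4)\hs\pr\cya+2\hs\xy(\n^2\nnh-4)\hs\pr\by+(\n^2\nnh+4)\hs\pr\tyc\,$ and substituting (iii) together with $\,\pr\hh\by=0$, the right-hand side collapses to $\,(-4-\n^2+\n^2+4)\hs\tyc=0$, so $\,\pr\hh\chi_a=0$; the same argument gives $\,\pr\hh\phi_a=0$, which is (ii). For (iv), $\,\ly=\x\hh\tya+\n^2\y\hh\cya+\z\hh\mya\,$ and linearity of $\,\pr\,$ combined with (iii) give $\,(\n^2\nnh-4)\hs\pr\hh\ly=(\n^2\nnh-4)\hh\x\hh\tya+(\n^2\y-2\z)\hh\tyc$; subtracting $\,(\n^2\nnh-4)\hh\tya\,$ from both sides yields the stated formula. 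Throughout, the one point to keep straight is the splitting of $\,\et\,$ into the eigenspace $\,\mathrm{Ker}\,(\cu-\mathrm{Id})$ — which contains $\,\tyc\,$ and on which $\,\pr\,$ is the identity — and its $\,\cu$-invariant complement $\,(\cu-\mathrm{Id})(\et)$, on which $\,\pr\,$ vanishes; since $\,\chi_a\,$ and $\,\phi_a\,$ are defined with constants tailored so that the $\,\tyc$-terms cancel, there is no real difficulty.
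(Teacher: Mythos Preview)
Your proof is correct and follows essentially the same route as the paper's: (i) is obtained by direct computation from (\ref{nom}), (iii) from (i) via the direct-sum decomposition (\ref{opl}.a), and then (ii) and (iv) are read off from (iii). The paper's proof is terser (it cites self-ad\-joint\-ness of $\,\cu\,$ rather than invoking (\ref{opl}.a) explicitly), but the logic is the same.
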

\begin{proof}Assertion (i) is a trivial consequence of (\ref{nom}), while 
(iii) is immediate from (i) and (\ref{nom}), as $\,\cu:\et\to\et\,$ is 
self-ad\-joint (Lemma~\ref{sfadj}(ii)). Now (iv) is obvious, and (iii) gives 
$\,\pr\hh\chi_a\w=\pr\hh\phi_a\w=0$, proving (ii).
\end{proof}

\section{Some relevant algebraic sets in the nine-dimensional 
space}\label{sr}\checked
\setcounter{equation}{0}
Equations (\ref{sys}) -- (\ref{uvw}) discussed below, depending on an integer 
parameter $\,\n$, are quite important in our argument. Namely, as we will 
show in the proof of Lemma~\ref{ractz}, for the Lie algebras 
$\,\mathfrak{g}\,$ appearing in (\ref{lie}) with $\,\n=\d+\k\ge3$, any 
real-an\-a\-lyt\-ic curve of weak\-\hbox{ly\hh-}\hskip0ptEin\-stein 
connections emanating from the standard connection $\,\dj\,$ is naturally 
mapped into the solution set of (\ref{sys}) -- (\ref{uvw}) in $\,\bbF^9\nnh$, 
in such a way that the image of $\,\dj\,$ is the point (\ref{fhz}). The 
rationality conclusion of Lemma~\ref{ratnl} then implies that the latter 
mapping must be constant. This means that the real-an\-a\-lyt\-ic curves in 
question all lie in a specific family $\,\ec\,$ of Ein\-stein connections, 
described by formula (\ref{cdl}) in Section~\ref{mr}.
\begin{lem}\label{nnupl}{\smallit 
For\/ $\,\bbF=\bbR\,$ or $\,\bbF=\bbC\,$ and any integer\/ $\,n\ge3\hh$, 
the conditions
\begin{equation}\label{sys}
\begin{array}{rl}
\mathrm{i)}\hskip4pt&\f-(2\y+\x^2\nh+\xy\z^2)\hs\xy=0\hh,\\
\mathrm{ii)}\hskip4pt&\U+(\y+\x\p)\hh\h=0\hh,\\
\mathrm{iii)}\hskip4pt&(n^2\nnh-4)\hh(\x-1)+(\n^2\y-2\z)\hh\h=0\hh,\\
\mathrm{iv)}\hskip4pt&\V\nnh-\n^2\y-2\z=0\hh,\\
\mathrm{v)}\hskip4pt&\W\nnh-2\y-\z=0\hh,\\
\mathrm{vi)}\hskip4pt&\p+\x-2\hs\xy\x\z-\h\hs\xy\z^2\nh-\h\y=0\hh,\\
\mathrm{vii)}\hskip4pt&\q+\n^2(\K+2)\hh\y-\V\nnh+2\z=0\hh,\\
\mathrm{viii)}\hskip4pt&\r+(\K+2)\hh\z-\W\nnh+2\y=0\hh,
\end{array}
\end{equation}\checked
imposed on the nonuple\/ $\,(\xy,\f\nh,\h,\x,\y,\z,\p,\q,\r)\in\bbF^9\nnh$, 
where we have set
\begin{equation}\label{uvw}
\begin{array}{l}
\K=\f+\h\p-\xy\r\hh,\\
\U=2\hs\xy\x\z+\h\hs\xy\z^2\nh-\xy\x\r+\x\f\hh,\\
\V\nnh=\n^2\K\y+2\n^2(2\hh\xy\z+\h\x+\h^2\z)\hh\y\\
\phantom{\V}+\,\n^4\xy\y^2\nh-2\x^2\nh+2(\xy\z-\h\x)\hh\z\hh,\\
\W\nnh=\K\z+\x^2\nh+\xy\z^2\nh+\h^2\z^2\nh+2\hh\h\x\z\hh,
\end{array}
\end{equation}
constitute a system of eight polynomial equations in nine unknowns.
\begin{enumerate}
  \def\theenumi{{\rm\roman{enumi}}}
\item[{\rm(a)}] A solution of\/ {\rm(\ref{sys})}, \hskip1ptfor which\/ 
$\,(\K,\U,\V,\W)=(0,0,-2,1)$, is given by
\begin{equation}\label{fhz}
\xy=\h=\f=0\hh,\hskip9pt\x=-\hh\p=1\hh,\hskip9pt\q=-\hh\n^2\y
=\frac{4\n^2}{\n^2\nnh-4}\hs,
\hskip9pt\z=-\hh\r=\frac{\n^2\nnh+4}{\n^2\nnh-4}\hs.
\end{equation}
\item[{\rm(b)}] Solutions of\/ {\rm(\ref{sys})} lying near\/ {\rm(\ref{fhz})} 
\hskip1ptform the graph of an\/ $\,\bbF$-an\-a\-lyt\-ic curve
\begin{equation}\label{xto}
\xy\,\mapsto\,(\f\nh,\h,\x,\y,\z,\p,\q,\r)\in\bbF^8,\hskip8pt\mathrm{with}
\hskip5pt\mathrm{d}\h/\nh\mathrm{d}\hh\xy=4\hskip5pt\mathrm{at}\hskip5pt\xy=0\hh,
\end{equation}
where\/ $\,\xy\,$ ranges over a neighborhood of\/ $\,0\,$ in\/ $\,\bbF$.
\end{enumerate}
}
\end{lem}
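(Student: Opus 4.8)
The plan is to treat the left-hand sides of {\rm(\ref{sys}.i)}--{\rm(\ref{sys}.viii)}, after substitution of the abbreviations {\rm(\ref{uvw})}, as polynomial functions $\,F_1,\dots,F_8\,$ of the nine variables $\,(\xy,\f\nh,\h,\x,\y,\z,\p,\q,\r)\in\bbF^9$; since $\,\K,\U,\V,\W\,$ are polynomials in those variables, this already gives the lemma's first claim. Part~(a) is then a direct substitution: at the nonuple {\rm(\ref{fhz})} one has $\,\xy=\h=\f=0\,$ and $\,\x=1$, so {\rm(\ref{uvw})} collapses to $\,\K=0$, $\,\U=\x\f=0$, $\,\V=-2\x^2=-2$, $\,\W=\x^2=1$, i.e.\ $\,(\K,\U,\V,\W)=(0,0,-2,1)$. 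With this, {\rm(\ref{sys}.i)}, {\rm(\ref{sys}.ii)}, {\rm(\ref{sys}.iii)}, {\rm(\ref{sys}.vi)} hold because every term in them carries a factor $\,\xy$, $\,\h\,$ or $\,\f$, or else equals $\,\p+\x=0\,$ or $\,(\n^2-4)(\x-1)=0$; and {\rm(\ref{sys}.iv)}, {\rm(\ref{sys}.v)}, {\rm(\ref{sys}.vii)}, {\rm(\ref{sys}.viii)} reduce to the numerical identities $\,\n^2\y+2\z=-2$, $\,2\y+\z=1$, $\,\q+2\n^2\y+2\z+2=0$, $\,\r+2\y+2\z-1=0$, all of which are immediate from the explicit entries of {\rm(\ref{fhz})}.

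For part~(b) the plan is to apply the analytic implicit function theorem --- real-analytic when $\,\bbF=\bbR$, holomorphic when $\,\bbF=\bbC$ --- at the solution {\rm(\ref{fhz})}, for which one needs only the invertibility, at that point, of the Jacobian $\,J=\partial(F_1,\dots,F_8)/\partial(\f\nh,\h,\x,\y,\z,\p,\q,\r)$. Because $\,\xy=\h=\f=0\,$ at {\rm(\ref{fhz})}, this $\,J\,$ is, after a permutation of rows and columns, block lower triangular. Indeed, once the $\,\K$-terms cancel in {\rm(\ref{sys}.vii)}--{\rm(\ref{sys}.viii)}, the matrix $\,\partial(F_6,F_7,F_8)/\partial(\p,\q,\r)\,$ is the identity, while every partial of $\,F_1,\dots,F_5\,$ with respect to $\,\p,\q,\r\,$ carries a factor $\,\xy\,$ or $\,\h$, hence vanishes at {\rm(\ref{fhz})}; so the problem reduces to invertibility of $\,M=\partial(F_1,\dots,F_5)/\partial(\f\nh,\h,\x,\y,\z)\,$ at {\rm(\ref{fhz})}. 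That $\,M\,$ is itself block lower triangular there: its rows $\,1,2,3\,$ in the columns $\,\f,\h,\x\,$ make up a lower-triangular $\,3\times3\,$ matrix with diagonal $\,(1,\,-\n^2/(\n^2-4),\,\n^2-4)\,$ and vanish in the columns $\,\y,\z$, while its rows $\,4,5\,$ in the columns $\,\y,\z\,$ make up a $\,2\times2\,$ matrix of determinant $\,\n^2-4$. Thus $\,\det J=\pm\,\n^2(\n^2-4)\ne0\,$ whenever $\,\n\ge3$, and the implicit function theorem delivers the graph {\rm(\ref{xto})}.

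To compute $\,\mathrm{d}\h/\mathrm{d}\xy\,$ at $\,\xy=0\,$ I would differentiate {\rm(\ref{sys}.i)}--{\rm(\ref{sys}.ii)} implicitly along that graph. Since the first row of $\,J\,$ is $\,(1,0,\dots,0)$, differentiating {\rm(\ref{sys}.i)} gives $\,\mathrm{d}\f/\mathrm{d}\xy=-\,\partial F_1/\partial\xy=(\n^2-12)/(\n^2-4)\,$ at $\,\xy=0$; and since the second row of $\,J\,$ at {\rm(\ref{fhz})} is $\,(1,\,-\n^2/(\n^2-4),\,0,\dots,0)\,$ while there $\,\partial F_2/\partial\xy=2\z-\r=3(\n^2+4)/(\n^2-4)$, differentiating {\rm(\ref{sys}.ii)} and inserting $\,\mathrm{d}\f/\mathrm{d}\xy\,$ yields $\,\mathrm{d}\h/\mathrm{d}\xy=4$, as asserted.

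The one place where genuine work is needed is the bookkeeping for $\,J$: checking term by term that, thanks to the factors $\,\xy,\h\,$ (and $\,\f$), every entry lying outside the claimed block-triangular pattern vanishes at {\rm(\ref{fhz})}, so that $\,\det J\,$ is exactly the product above. The rest is substitution and a single invocation of a standard theorem.
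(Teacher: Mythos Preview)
Your proof is correct and follows essentially the same approach as the paper's: direct substitution for part~(a), the implicit function theorem with a nonsingularity check of the $8\times8$ Jacobian for the graph statement in~(b), and implicit differentiation of equations~(\ref{sys}.i)--(\ref{sys}.ii) to extract $\,\mathrm{d}\h/\mathrm{d}\xy=4$. The only cosmetic difference is that you argue nonsingularity via a block lower-triangular decomposition (a $3\times3$ triangular block, a $2\times2$ block of determinant $\,\n^2-4$, and the identity on $(\p,\q,\r)$), whereas the paper writes out the full matrix and performs a single row operation (row~4 minus twice row~5) to render it triangular; these are the same computation.
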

\begin{proof}First, (a) is obvious. Secondly, one easily verifies that
\begin{equation}\label{jac}
\left[\begin{array}{cccccccc}
1&0&0&0&0&0&0&\hskip15pt0\cr
*&\hskip-5pt\n^2\nnh/\nh(4\nnh-\nnh\n^2)\hskip-5pt&0&0&\hskip-9pt0\hskip-9pt&0&\hskip.7pt0\hskip.7pt&\hskip15pt0\cr
*&*&\hskip-10pt\n^2\nnh\nh-\nh4\hskip-10pt&0&\hskip-9pt0\hskip-9pt&0&\hskip.7pt0\hskip.7pt&\hskip15pt0\cr
*&*&*&\hskip8.6pt-\hh\n^2\hskip8.6pt&\hskip-9pt-2\hskip-9pt&0&\hskip.7pt0\hskip.7pt&\hskip15pt0\cr
*&*&*&-2&\hskip-9pt-1\hskip-9pt&0&\hskip.7pt0\hskip.7pt&\hskip15pt0\cr
*&*&*&*&\hskip-9pt*\hskip-9pt&\hskip14.6pt1\hskip14.6pt&\hskip.7pt0\hskip.7pt&\hskip15pt0\cr
*&*&*&*&\hskip-9pt*\hskip-9pt&*&\hskip.7pt1\hskip.7pt&\hskip15pt0\cr
*&*&*&*&\hskip-9pt*\hskip-9pt&*&\hskip.7pt*\hskip.7pt&\hskip15pt1
\end{array}
\right]
\end{equation}
is the Ja\-cob\-i\-an matrix at the point (\ref{fhz}) of the mapping 
$\,\bbF^8\nh\to\bbF^8$ sending $\,(\f\nh,\h,\x,\y,\z,\p,\q,\r)\,$ to the 
octuple formed by the left-hand sides in (\ref{sys}), in which $\,\xy\,$ has 
been replaced by $\,0$. (Each row of (\ref{jac}) thus represents the 
differential of the corresponding left-hand side written as a combination of 
$\,\mathrm{d}\hskip-.8pt\f\nh,\hs\mathrm{d}\h,\hs\mathrm{d}\x
,\hs\mathrm{d}\y,\hs\mathrm{d}\z,\hs\mathrm{d}\p,\hs \mathrm{d}\q$ and 
$\,\mathrm{d}\r$, while asterisks stand for various irrelevant entries.) 
Replacing the fourth row in (\ref{jac}) by the fourth row minus twice the 
fifth row, we obtain a triangular matrix with nonzero diagonal entries, so 
that (\ref{jac}) is nonsigular and the existence of the curve required in (b), 
except for the specific value of $\,\mathrm{d}\h/\nh\mathrm{d}\hh\xy$, is 
immediate from the implicit mapping theorem.

To evaluate $\,\mathrm{d}\h/\mathrm{d}\hh\xy\,$ at $\,\xy=0$, we may treat 
$\,\f\nh,\h,\x,\y,\z,\p,\q\,$ and $\,\r$, restricted to the graph of the curve 
in (\ref{xto}), as functions of $\,\xy$. Applying 
$\,(\hskip1.7pt,\hskip1pt)'\nh=\hs\mathrm{d}/\nh\mathrm{d}\hh\xy$ to 
(\ref{sys}.i), (\ref{sys}.ii) and the second line in (\ref{uvw}), one sees 
that, at the point $\,(\xy,\f\nh,\h,\x,\y,\z,\p,\q,\r)\,$ given by 
(\ref{fhz}), $\,\f'\nnh=2\y+1$, $\,\U'\nnh=(1-\y)\hs\h'\nnh$, and 
$\,\U'\nnh=2\z-\r+\f'\nnh$. (We have used the fact that $\,\xy=\h=\f=0\,$ and 
$\,\x=-\hh\p=1$.) Thus, 
$\,(1-\y)\hs\h'\nnh=\U'\nnh=2\z-\r+\f'\nnh=2\z-\r+2\y+1$, and so 
$\,\h'\nnh=4$.%, which completes the proof.
\end{proof}
\begin{rem}\label{vweyz}For later reference, note that, by (\ref{sys}.iv) -- 
(\ref{sys}.v),
\[
\V\nnh-2\W\nnh=(\n^2\nnh-4)\hh\y\hh,\hskip5pt
\n^2\W\nnh-2\V\nnh=(\n^2\nnh-4)\hh\z\hh,\hskip5pt
(\n^2\nnh+4)\V\nnh-4\n^2\W\nnh=(\n^2\nnh-4)\hh(\n^2\y-2\z)\hh.
\]
\end{rem}

\section{Negligible polynomials}\label{np}\checked
\setcounter{equation}{0} 
Given one of the triples (\ref{lie}), for 
$\,\n=\d+\k\ge3$, and a vector space $\,\ev\,$ with $\,\dimf\ev<\infty$, we 
define a $\,\ev$-val\-ued {\smallit negligible polynomial function\/} of 
the variables
\begin{equation}\label{var}
a,b\in\mathfrak{g}\hskip7pt\mathrm{and}\hskip7pt
(\xy,\f\nh,\h,\x,\y,\z,\p,\q,\r)\in\bbF^9, 
\end{equation}
to be any mapping $\,\mathfrak{g}\times\mathfrak{g}\times\bbF^9\to\ev\,$ 
expressible as a sum of terms, each of which is mul\-ti\-lin\-e\-ar in 
$\,(a,b,\,\ldots\,)$, or $\,(b,b,\,\ldots\,)$, or $\,(\xy,b,\,\ldots\,)$, or 
$\,(\h,b,\,\ldots\,)$, with the dots standing for any finite number of 
arguments from the list $\,a,b,\xy,\f\nh,\h,\x,\y,\z,\p,\q,\r\nnh$, possibly 
with repetitions. In the case of two mappings 
$\,E,\td E:\mathfrak{g}\times\mathfrak{g}\times\bbF^9\to\ev$,
\begin{equation}\label{sim}
\mathrm{we\ write\ }\,\,E\approx\td E\,\,\mathrm{\ when\ }\,\,E-\td E\,\,
\mathrm{\ is\ negligible.}
\end{equation}
\begin{lem}\label{neglg}{\smallit 
For\/ $\,E,\td E:\mathfrak{g}\times\mathfrak{g}\times\bbF^9\to\ev\,$ as above, 
suppose that\/ $\,E\approx\td E$.
\begin{enumerate}
  \def\theenumi{{\rm\roman{enumi}}}
\item[{\rm(i)}] We have\/ $\,E=\td E\,\,$ at any\/ 
$\,(a,b,\xy,\f\nh,\h,\x,\y,\z,\p,\q,\r)\hs\,$ with\/ $\,b=0$,
\item[{\rm(ii)}] Let\/ $\,(a,b,\xy,\f\nh,\h,\x,\y,\z,\p,\q,\r)\hs\,$ be\/ 
$\,C^\infty$ functions of a variable\/ $\,t\in[\hs0,\vd)$, where\/ 
$\,\vd\in(0,\infty)$. If\/ $\,a(0)=0\,$ as well as\/ $\,\xy(0)=\h(0)=0\,$ 
and\/ $\,\jm[b\hh]=0\,$ for an integer\/ $\,k\ge1$, in the notation of\/ 
{\rm(\ref{jet})}, then $\,\jk[E\hs]=\jk[\td E\hs]$.
\end{enumerate}
}
\end{lem}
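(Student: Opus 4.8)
The plan is to read off everything from the combinatorial shape of a negligible polynomial. By the definition of negligibility given in the paragraph preceding (\ref{sim}), the difference $\,E-\td E\,$ is a finite sum of terms, each of which is a value $\,T(\alpha,b,w_1,\dots,w_N)\,$ of a mul\-ti\-lin\-e\-ar mapping $\,T$, where the first argument $\,\alpha\,$ is one of $\,a,b,\xy,\h$, the second argument is the vector $\,b$, and $\,w_1,\dots,w_N\,$ (possibly none) are drawn from the list $\,a,b,\xy,\f\nh,\h,\x,\y,\z,\p,\q,\r\nnh$. Thus each such term carries $\,b\,$ as a factor (in its second slot) and, in addition, a factor $\,\alpha\in\{a,b,\xy,\h\}\,$ (in its first slot).

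Part (i) is then immediate: if $\,b=0$, every term $\,T(\alpha,b,w_1,\dots,w_N)\,$ vanishes, since $\,T\,$ is linear in its second argument and that argument equals $\,0$; hence $\,E-\td E=0\,$ at any point with $\,b=0$.

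For part (ii) I would first record an elementary order-of-vanishing fact: a mul\-ti\-lin\-e\-ar combination of $\,C^\infty$ curves in vector spaces, one of which vanishes at $\,t=0\,$ to order $\,\ge\mu\,$ and another to order $\,\ge\nu\,$ — where a curve \emph{vanishes to order $\,\ge m\,$ at $\,t=0$} means that its derivatives of orders $\,0,\dots,m-1\,$ all vanish there — itself vanishes at $\,t=0\,$ to order $\,\ge\mu+\nu$. Grouping the remaining factors, this reduces to the bi\-lin\-e\-ar case, which follows from the Leib\-niz rule: in the expansion of a derivative of $\,B(f,g)\,$ of order $\,\le\mu+\nu-1\,$ at $\,t=0$, every summand carries either a derivative of $\,f\,$ of order $\,\le\mu-1\,$ or a derivative of $\,g\,$ of order $\,\le\nu-1$, hence vanishes. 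I would then apply this to each term $\,T(\alpha,b,w_1,\dots,w_N)$, regarded as a curve through the given $\,C^\infty$ dependence on $\,t$: the hypothesis $\,\jm[b\hh]=0\,$ says that $\,b\,$ vanishes to order $\,\ge k$, so its second-slot occurrence contributes $\,\ge k$; the hypotheses $\,a(0)=\xy(0)=\h(0)=0\,$ together with $\,b(0)=0\,$ (contained in $\,\jm[b\hh]=0$) say that whichever of $\,a,b,\xy,\h\,$ fills the first slot vanishes to order $\,\ge1$, contributing $\,\ge1$; and the factors $\,w_1,\dots,w_N\,$ contribute $\,\ge0$. Hence each term vanishes at $\,t=0\,$ to order $\,\ge k+1$, so its $\,k$-jet is $\,0$, and summing over the finitely many terms yields $\,\jk[E-\td E\hh]=0$, i.e.\ $\,\jk[E\hh]=\jk[\td E\hh]$.

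I do not expect a genuine obstacle here; the only point requiring care is the bookkeeping of the two distinguished slots of a negligible term — in particular the shape $\,(b,b,\,\ldots\,)$, where both distinguished slots hold $\,b$, so the term vanishes to order $\,\ge 2k\ge k+1$ — together with the observation that all four admissible shapes always provide at least one $\,b$-factor and at least one further factor belonging to $\,\{a,b,\xy,\h\}$, which is exactly what makes the two hypotheses $\,\jm[b\hh]=0\,$ and $\,a(0)=\xy(0)=\h(0)=0\,$ suffice.
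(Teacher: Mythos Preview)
Your proposal is correct and follows exactly the approach the paper intends: the paper's own proof is the single sentence ``This is obvious from the definition of negligibility and the Leib\-niz rule,'' and your argument is a careful unpacking of precisely that. The only difference is level of detail; your order-of-vanishing bookkeeping is the Leib\-niz-rule step spelled out.
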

\begin{proof}This is obvious from the definition of negligibility and the 
Leib\-niz rule.
\end{proof}
In our discussion, negligible polynomials arise as follows. Choosing 
$\,(\mathfrak{g},\bbF\nnh,\ve)$ to be one of the triples (\ref{lie}), with 
$\,\n=\d+\k\ge3$, and using the notation of \hbox{(\ref{avp}) --} (\ref{ava}), 
we introduce the variables (\ref{var}) consisting of an arbitrary element 
$\,a\,$ of $\,\mathfrak{g}$, the coefficients $\,\f\nh,\x,\y,\z,\p,\q,\r\,$ of 
two arbitrary linear combinations
\begin{equation}\label{lfi}
\mathrm{i)}\hskip7pt\ly\hs=\hs\x\hh\tya+\n^2\y\hh\cya+\z\hh\mya\hh,
\hskip14pt\mathrm{ii)}\hskip7pt\fy\hs=\hs\p\hh\tya+\q\hh\cya+\r\hh\mya
+\f\nh\by\hs,
\end{equation}
the scalar $\,\xy=\ve^2(a,a)/\n\,$ as in (\ref{cea}), where 
$\,(a,a)=\mathrm{tr}\hskip2pta\nh^2\nnh$, an additional scalar variable 
$\,\h\in\bbF\nnh$, and $\,b\in\mathfrak{g}\,$ given by
\begin{equation}\label{bec}
b\,=\,c\,-\,\h a\hh,\hskip22pt\mathrm{so\ that}\hskip7ptc\,=\,\h a\,+\,b\hh,
\end{equation}
where $\,c=\ve\hh(a\nh^2)_0\w$ stands for the trace\-less part of 
$\,\ve\hh a\nh^2\nnh$, as in (\ref{ntt}.ii) and (\ref{cea}).

Expressions such as $\,\ly\,$ and $\,\fy\,$ in (\ref{lfi}), along with their 
$\,\cu$-im\-ages, $\,\ca\hskip1.7pt$-prod\-ucts and 
$\,\{\hskip2pt\cdot\hskip2pt\}$-prod\-ucts (which we may evaluate using 
(\ref{nom}), (\ref{ast}) and (\ref{bdb})), then become functions of the 
variables (\ref{var}), and so do $\,c\,$ and $\,d\,$ in (\ref{cea}). 
Due to functional-dependence relations among our variables (\ref{var}), there 
is a built-in ambiguity of such a representation, which we do not attempt to 
remove. For instance, one may treat $\,(a,a)\,$ as a function of $\,a\,$ 
alone, or write it as $\,\n\hs\ve^{-2}\xy$. Similarly, 
$\,c=\ve\hh(a\nh^2)_0\w$ equals both $\,\ve\hh a\nh^2\nh-\ve^{-1}\xy\,$ and 
$\,\h a+b$, cf.\ (\ref{eaq}.i) and (\ref{bec}).

With $\,\,\approx\,$ and $\,\ly\,$ as in (\ref{sim}) and (\ref{lfi}.i), we now 
have
\begin{equation}\label{tcl}
\begin{array}{rl}
\mathrm{i)}&\tyc\ca\hs\ly\approx\h\tya\ca\hs\ly\hh,\hskip9pt
\h\tyc\approx\h^2\tya\hh,\hskip9pt
\xy\hh\tyc\approx\xy\hh\h\tya\hh,\hskip9pt
\ve^2(a,c)/\n\approx\xy\h\hh,\\
\mathrm{ii)}&\myc\approx\h^2\mya\hh,\hskip9pt
\cyc\approx\h^2\cya\hh,\hskip9pt
\cyac\approx\h\hh\cya\hh,\hskip9pt
\myac\approx\h\hh\mya\hh.
\end{array}
\end{equation}
Namely, (\ref{tcl}.i) is obvious as $\,\tyc-\h\tya=\tyb\,$ by (\ref{bec}), 
and $\,(a,c)-\h\hh(a,a)=(a,b)$, while (\ref{tcl}.ii) follows from (\ref{bec}) 
and (\ref{cea}). Furthermore,
\begin{equation}\label{dsh}
d\approx(\h^2\nh+\xy)\hh a\hh.
\end{equation}
In fact, (\ref{eaq}.i) implies that 
$\,\ve^2a^3\nh=\ve\hh(\ve\hh a\nh^2)a=\ve\hh c\hh a+\xy a\,$ and, as a 
consequence of (\ref{eaq}.ii), $\,d=\ve\hh c\hh a+\xy a-\ve\hh(a,c)/\n$. 
Replacing $\,c\,$ here by $\,\h a+b$, and the resulting occurrence of 
$\,\ve\hh a\nh^2$ by $\,\h a+b+\ve^{-1}\xy$, which is allowed due to 
(\ref{eaq}.i) and (\ref{bec}), we see that 
$\,d-(\h^2\nh+\xy)\hh a=\ve\hs ba+\h\hh b-\ve\hh(a,b)/\n\,$ due to the 
equality $\,\xy=\ve^2(a,a)/\n$, which yields (\ref{dsh}). Now (\ref{dsh}) 
gives
\begin{equation}\label{tds}
\tyd\approx(\h^2\nh+\xy)\hh\tya\hh,\hskip9pt
\cy_{a,d}\w\approx(\h^2\nh+\xy)\hh\cya\hh,\hskip9pt
\myad\approx(\h^2\nh+\xy)\hh\mya\hh.
\end{equation}
\begin{lem}\label{stsim}{\smallit 
If\/ $\,(\mathfrak{g},\bbF\nnh,\ve)\,$ is one of the triples\/ 
{\rm(\ref{lie})}, 
and\/ $\,\n=\d+\k\ge3\hh$, then
\[
\begin{array}{ll}
\tya\ca\tya=\ty_{\!c}\w+2\hh(\xy\by+\mya-2\hh\cya)\hh,
\hskip19pt&\cya\ca\hs\cya=\xy\hh\cya\hh,\\
\mya\ca\mya\approx\xy\hh\h\tya+\xy^2\nnh\by+\h^2(\mya-\cya)\hh,
&\tya\ca\hs\cya\approx2\hh\h\hh\cya\hh,\\
\tya\ca\hh\mya\approx\xy\tya+2\hh\h\hh(\mya-\cya)\hh,
&\cya\ca\hh\mya\approx(\h^2\nh+\xy)\hh\cya\hh,\\
\by\hs\ca\tya=\tya\hh,\hskip18pt\by\hs\ca\hh\cya=\cya\hh,
&\by\hs\ca\hh\mya=\mya\hh,
\end{array}
\]\checked
with the dependence on the variables\/ {\rm(\ref{var})} described 
above. Similarly,
\[
\begin{array}{l}
\{\tya\hskip-3pt\cdot\nnh\tya\}=\xy\by+\mya-2\hh\cya\hh,\hskip29pt
\{\mya\hskip-3pt\cdot\nnh\mya\}\approx\xy\hh\mya
-\h^2\cya\hh,\phantom{{j_{j_j}}}\\
2\hh\{\tya\hskip-3pt\cdot\nnh\mya\}\approx\xy\tya-4\hh\h\hh\cya\hh,
\hskip36pt\{\cya\hskip-3pt\cdot\nnh\cya\}=0\hh,\\
2n^2\{\tya\hskip-3pt\cdot\nnh\cya\}\approx\h^2\nh\tya-2\hh\h\hh\mya
+2\hs\xy\h\hh\by\hh,\phantom{{j_{j_j}}}\\
n^2\{\cya\hskip-3pt\cdot\nnh\mya\}\approx\xy\hh\mya
-\xy^2\nnh\by-\xy\h\tya/2\hh.
\end{array}
\]\checked
}
\end{lem}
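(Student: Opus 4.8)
The plan is to read each formula off its exact analogue in the multiplication tables (\ref{ast}) and (\ref{bdb}) of Section~\ref{mt}, and then to simplify it modulo negligible polynomials using the congruences already established in Section~\ref{np}. The structural point that makes this legitimate is that negligibility is preserved under the bilinear pairings $\ca$ and $\{\hskip2pt\cdot\hskip2pt\}$ and under multiplication by the scalar variables $\h,\xy$: a term that is mul\-ti\-lin\-e\-ar in $(a,b,\,\ldots\,)$, $(b,b,\,\ldots\,)$, $(\xy,b,\,\ldots\,)$ or $(\h,b,\,\ldots\,)$ remains of that form when fed into a further bilinear operation or multiplied by $\h$ or $\xy$, so the relation $\approx$ of (\ref{sim}) is respected throughout. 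Hence it is enough to substitute, on the right-hand sides of (\ref{ast}) and (\ref{bdb}), the replacements $\xy\hh\tyc\approx\xy\hh\h\tya$ and $\ve^2(a,c)/\n\approx\xy\h$ from (\ref{tcl}.i); $\myc\approx\h^2\mya$, $\cyc\approx\h^2\cya$, $\cyac\approx\h\hh\cya$, $\myac\approx\h\hh\mya$ from (\ref{tcl}.ii); and $\tyd\approx(\h^2\nh+\xy)\hh\tya$, $\cy_{a,d}\w\approx(\h^2\nh+\xy)\hh\cya$, $\myad\approx(\h^2\nh+\xy)\hh\mya$ from (\ref{tds}).

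First I would set aside the entries that require no correction: $\tya\ca\tya=\tyc+2\hh(\xy\by+\mya-2\hh\cya)$, $\cya\ca\hs\cya=\xy\hh\cya$, $\{\tya\hskip-3pt\cdot\nnh\tya\}=\xy\by+\mya-2\hh\cya$, $\{\cya\hskip-3pt\cdot\nnh\cya\}=0$ and the three products with $\by$ are copied verbatim from (\ref{ast}) and (\ref{bdb}). For the remaining $\ca$-en\-tries, (\ref{ast}) gives
\[
\mya\ca\mya=\xy\hh\tyc+\xy^2\nnh\by+\myc-\cyc\approx\xy\hh\h\tya+\xy^2\nnh\by+\h^2(\mya-\cya)\hh,
\]
together with $\tya\ca\hs\cya=2\hh\cyac\approx2\hh\h\hh\cya$, $\tya\ca\hh\mya=\xy\tya+2\hh\myac-2\hh\cyac\approx\xy\tya+2\hh\h\hh(\mya-\cya)$ and $\cya\ca\hh\mya=\cy_{a,d}\w\approx(\h^2\nh+\xy)\hh\cya$. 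For the remaining $\{\hskip2pt\cdot\hskip2pt\}$-en\-tries, (\ref{bdb}) gives $\{\mya\hskip-3pt\cdot\nnh\mya\}=\xy\hh\mya-\cyc\approx\xy\hh\mya-\h^2\cya$ and $2\hh\{\tya\hskip-3pt\cdot\nnh\mya\}=\xy\tya-4\hh\cyac\approx\xy\tya-4\hh\h\hh\cya$, while
\[
2\n^2\{\tya\hskip-3pt\cdot\nnh\cya\}=\tyd-\xy\tya-2\hh\myac+2\hs\ve^2(a,c)\by/\n\approx\h^2\nh\tya-2\hh\h\hh\mya+2\hs\xy\h\hh\by\hh,
\]
and finally
\[
\n^2\{\cya\hskip-3pt\cdot\nnh\mya\}=\myad-\xy\tyc-\xy^2\nnh\by-\myc+\ve^2(a,c)\hh\tya/(2\n)\approx\xy\hh\mya-\xy^2\nnh\by-\xy\h\tya/2\hh.
\]
This exhausts the statement.

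I do not anticipate a genuine obstacle at the level of this lemma: granted (\ref{ast}), (\ref{bdb}), (\ref{tcl}), (\ref{dsh}) and (\ref{tds}), it is pure bookkeeping, and the two triples in (\ref{lie}) with $\mathfrak{g}=\mathfrak{sl}\hh(\n,\bbC)$ are handled, exactly as for (\ref{bdb}), by $\bbC$-bi\-lin\-e\-ar extension (Remark~\ref{bilxt} together with Lemma~\ref{omgtc}(iii)). The real computational weight sits upstream -- in deriving the exact tables (\ref{ast}) and (\ref{bdb}) via the three-step trace procedure of Remark~\ref{trsln} and Lemma~\ref{kfsln}, and in verifying the negligibility congruences (\ref{tcl}), (\ref{dsh}) and (\ref{tds}). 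The present lemma merely repackages those facts in the ``core'' form involving only $\tya,\cya,\mya,\by$ and the scalars $\h,\xy$, which is what the later curve-jet induction will consume.
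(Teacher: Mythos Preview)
Your proposal is correct and mirrors the paper's own proof, which simply states that the lemma is immediate from (\ref{tcl})--(\ref{tds}) combined with (\ref{ast})--(\ref{bdb}). You have spelled out exactly those substitutions entry by entry, and the computations all check; the remark about $\bbC$-bilinear extension is unnecessary here since (\ref{ast}), (\ref{bdb}), (\ref{tcl}) and (\ref{tds}) were already established for all triples in (\ref{lie}), but it does no harm.
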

\begin{proof}This is immediate from (\ref{tcl}) -- (\ref{tds}) combined with 
(\ref{ast}) -- (\ref{bdb}).
\end{proof}
\begin{lem}\label{fdldl}{\smallit 
Under the hypotheses of Lemma\/~{\rm\ref{stsim}}, \hskip1.2ptfor\/ 
$\,\K,\U,\V,\W\,$ given by\/ {\rm(\ref{uvw})}, 
\[
4\hh\{(\dj\hh\ly)\nnh\cdot\nnh(\dj\hh\ly)\}
\approx(\U-\K\x+\h\x\p)\hh\tya+(\V\nnh-\n^2\K\y)\hh\cya+(\W\nnh-\K\z)\hh\my_a
+(\x^2\nh+\xy\z^2)\hs\xy\by\hh,
\]\checked
where\/ $\,\ly=\x\hh\tya+\n^2\y\hh\cya+\z\hh\mya$, as in\/ {\rm(\ref{lfi})}.
}
\end{lem}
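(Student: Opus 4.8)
The plan is to start from identity (\ref{edd}.c) specialized to $\,\sy=\ly$, namely $\,4\hh\{(\dj\hh\ly)\nnh\cdot\nnh(\dj\hh\ly)\}=\{\ly\hskip-2.2pt\cdot\nnh\ly\}+[(\hh\mathrm{Id}-\cu\hh)\hs\ly]\ca\hs\ly$. Both summands on the right depend only on $\,\ly$, hence only on $\,a,\x,\y,\z$, so it suffices to evaluate each modulo $\,\approx\,$ and to collect the result as a combination of $\,\tya,\cya,\mya\,$ and $\,\by$. All the products needed are already tabulated: the $\,\{\hskip2pt\cdot\hskip2pt\}$-products of $\,\tya,\cya,\mya\,$ modulo $\,\approx\,$ in the second display of Lemma~\ref{stsim}, their $\,\ca$-products in its first display, and $\,(\hh\mathrm{Id}-\cu\hh)\hs\ly\,$ explicitly in (\ref{imo}), equal to $\,(\n^2\y+2\z)\hh\cya+(2\y+\z)\hh\mya-2\hs\xy\y\hh\by-\y\hh\tyc$.

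First I would expand $\,\{\ly\hskip-2.2pt\cdot\nnh\ly\}\,$ bilinearly: with $\,\ly=\x\hh\tya+\n^2\y\hh\cya+\z\hh\mya\,$ it is the sum of $\,\x^2\{\tya\hskip-3pt\cdot\nnh\tya\}$, $\,\n^4\y^2\{\cya\hskip-3pt\cdot\nnh\cya\}$, $\,\z^2\{\mya\hskip-3pt\cdot\nnh\mya\}\,$ and the cross products $\,2\n^2\x\y\{\tya\hskip-3pt\cdot\nnh\cya\}$, $\,2\x\z\{\tya\hskip-3pt\cdot\nnh\mya\}$, $\,2\n^2\y\z\{\cya\hskip-3pt\cdot\nnh\mya\}$, into each of which I substitute the matching $\,\approx$-formula from Lemma~\ref{stsim}. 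Then I would expand $\,[(\hh\mathrm{Id}-\cu\hh)\hs\ly]\ca\hs\ly\,$ the same way, distributing the four-term expression for $\,(\hh\mathrm{Id}-\cu\hh)\hs\ly\,$ over $\,\ly$; here $\,\by\ca\hs\ly=\ly$, the products of $\,\cya,\mya\,$ with $\,\tya,\cya,\mya\,$ come from Lemma~\ref{stsim}, and the single term not directly in that table, $\,\tyc\ca\hs\ly$, is first rewritten via (\ref{tcl}.i) as $\,\approx\h\hh\tya\ca\hs\ly$, after which Lemma~\ref{stsim} applies again (the stray $\,\tyc\,$ occurring inside $\,\tya\ca\tya\,$ being likewise absorbed through $\,\tyc\approx\h\hh\tya$).

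Adding the two expansions and simplifying, the $\,\by$-coefficient should collapse to $\,(\x^2\nh+\xy\z^2)\hs\xy$, and the $\,\tya$-, $\,\cya$-, $\,\mya$-coefficients to $\,2\hs\xy\x\z+\h\hs\xy\z^2$, to $\,2\n^2(2\hh\xy\z+\h\x+\h^2\z)\hh\y+\n^4\xy\y^2\nh-2\x^2\nh+2(\xy\z-\h\x)\hh\z$, and to $\,\x^2\nh+\xy\z^2\nh+\h^2\z^2\nh+2\hh\h\x\z\,$ respectively --- all polynomials in $\,\xy,\h,\x,\y,\z\,$ alone. Comparing with (\ref{uvw}) and using the elementary identities $\,\U-\K\x+\h\x\p=2\hs\xy\x\z+\h\hs\xy\z^2$, $\,\V-\n^2\K\y=2\n^2(2\hh\xy\z+\h\x+\h^2\z)\hh\y+\n^4\xy\y^2\nh-2\x^2\nh+2(\xy\z-\h\x)\hh\z$, and $\,\W-\K\z=\x^2\nh+\xy\z^2\nh+\h^2\z^2\nh+2\hh\h\x\z$, which follow immediately from (\ref{uvw}) once the $\,\K$-, $\,\p$-, $\,\q$-, $\,\r$- and $\,\f$-contributions are seen to cancel, then yields the stated formula.

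I expect the main obstacle to be nothing but the bulk of the bookkeeping: three squares and three cross products in $\,\{\ly\hskip-2.2pt\cdot\nnh\ly\}$, plus a dozen products in $\,[(\hh\mathrm{Id}-\cu\hh)\hs\ly]\ca\hs\ly$, many of which themselves split into several monomials, so that each of the four basis coefficients must be reassembled from roughly a dozen contributions before it simplifies. A secondary point calling for care is the consistent handling of $\,\tyc$, which lies outside the span of $\,\tya,\cya,\mya,\by$: wherever it appears --- explicitly in $\,(\hh\mathrm{Id}-\cu\hh)\hs\ly$, and implicitly inside products such as $\,\tya\ca\tya$ --- it must be replaced by $\,\h\hh\tya\,$ modulo $\,\approx\,$ (equivalently, $\,\tyb\,$ pushed into the negligible part), rather than carried along as an additional basis element.
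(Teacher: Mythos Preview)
Your proposal is correct and follows essentially the same route as the paper's proof: start from (\ref{edd}.c), expand $\,\{\ly\hskip-2.2pt\cdot\nnh\ly\}\,$ via the second display of Lemma~\ref{stsim}, expand $\,[(\hh\mathrm{Id}-\cu\hh)\hs\ly]\ca\hs\ly\,$ using (\ref{imo}) with $\,\tyc\approx\h\hh\tya\,$ and the first display of Lemma~\ref{stsim}, then add and match against (\ref{uvw}). The paper records the two intermediate expansions explicitly before summing, but the method and the handling of $\,\tyc\,$ are exactly as you describe.
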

\begin{proof}The second part of Lemma~\ref{stsim} yields
\[
\begin{array}{l}
\{\ly\nnh\cdot\nnh\ly\}\hs
\approx\hs(\h^2\x\y+\xy\x\z-\h\hs\xy\y\z)\hh\tya
-[2\x^2+(\h\z+4\x)\hh\h\z]\hh\cya\\
\phantom{\{\ly\nnh\cdot\nnh\ly\}\hs}
+\,(\x^2\nh+\xy\z^2\nh-2\h\x\y+2\hs\xy\y\z)\hh\mya
+(\x^2\nh+2\h\x\y-2\hs\xy\y\z)\hs\xy\by\hh.
\end{array}
\]\checked
By (\ref{imo}), $\,(\hh\mathrm{Id}\,-\,\cu\hh)\hh\ly
=(\n^2\y+2\z)\hh\cya+(2\y+\z)\hh\mya-2\hs\xy\y\hh\by-\y\hh\tyc$. Replacing 
$\,\y\hh\tyc$ with $\,\h\y\hh\tya$, cf.\ (\ref{tcl}.i), we get, from 
(\ref{tcl}.i) and the first part of Lemma~\ref{stsim},
\[
\begin{array}{l}
[(\hh\mathrm{Id}-\cu\hh)\hs\ly\hh]\ca\hs\ly\approx
(\xy\x\z+\h\hs\xy\y\z-\h^2\x\y+\h\hs\xy\z^2)\hh\tya\\
\phantom{[(\hh\mathrm{Id}-\cu\hh)\hs\ly\hh]\ca\hs\ly\,}
+\,[\V\nh-\n^2\K\y+2\x^2\nh+(\h\z+4\x)\hh\h\z]\hh\cya\\
\phantom{[(\hh\mathrm{Id}-\cu\hh)\hs\ly\hh]\ca\hs\ly\,}
+\,[\h^2\z^2\nh+2(\h\x\y+\h\x\z-\xy\y\z)]\hh\mya
+(\xy\z^2\nh+2\hs\xy\y\z-2\hh\h\x\y)\hs\xy\by\hs.
\end{array}
\]\checked
In view of (\ref{edd}.c), $\,4\hh\{(\dj\hh\ly)\nnh\cdot\nnh(\dj\hh\ly)\}
=\{\ly\nnh\cdot\nnh\ly\}+[(\hh\mathrm{Id}-\cu\hh)\hs\ly\hh]\ca\hs\ly$, as 
required.
\end{proof}

\section{The first crucial step in the argument}\label{sa}\checked
\setcounter{equation}{0}
The following result will be used both to verify that the family $\,\ec\,$ 
defined by formula (\ref{cdl}) of Section~\ref{mr} actually consists of 
Ein\-stein connections, and to prove, later in Section~\ref{ra}, that 
$\,\ec\,$ contains every real-an\-a\-lyt\-ic curve of weak\-\hbox{ly\hh-}\hskip0ptEin\-stein 
connections emanating from $\,\dj$.
\begin{thm}\label{dsapp}{\smallit 
With the assumptions and notations as in Lemmas\/~{\rm\ref{stsim}} 
and\/~{\rm\ref{fdldl}},
\begin{equation}\label{ddl}
4\hh\dj\hh\{(\dj\hh\ly)\nnh\cdot\nnh(\dj\hh\ly)\}
+(\hh\dj\hh\ly)\hh\fy\,\approx\hs\dj(\U\nh\tya+\h\x\p\hh\tya+\V\nh\cya
+\W\nh\mya+\x\p\hh\tyb)\hh,
\end{equation}\checked
while for\/ 
$\,\mj=\dz(\dj\hh\ly)+4\hh\dj\hh\{(\dj\hh\ly)\nnh\cdot\nnh(\dj\hh\ly)\}
+(\hh\dj\hh\ly)\hh\fy\,$ we have
\begin{equation}\label{med}
\begin{array}{l}
\mj\hs\approx\,\dj\hs[\hs\U\tya\,+\,(\y+\x\p)\hh\h\hh\tya\,
+\,(\y+\x\p)\hh\tyb]\\
\phantom{\mj\hs}+\,\,\dj\hs[(\V\nnh-\n^2\y-2\z)\hh\cya
+(\W\nnh-2\y-\z)\hh\my_a]\hh,
\end{array}
\end{equation}\checked
and, setting\/ 
$\,\zy=\fy-8\hh\{\dj\nnh\cdot\nnh(\dj\hh\ly)\}
-4\hh\{(\dj\hh\ly)\nnh\cdot\nnh(\dj\hh\ly)\}$, we obtain
\begin{equation}\label{zap}
\begin{array}{l}
\zy\approx(\p+\x-2\hs\xy\x\z-\h\hs\xy\z^2\nh-\h\y)\hh\tya
+[\hs\q+\n^2(\K+2)\hh\y-\V\nnh+2\z]\hh\cya\\
\phantom{\zy\,}+\,[\hs\r+(\K+2)\hh\z-\W\nnh+2\y]\hh\my_a
+[\hh\f-(2\y+\x^2\nh+\xy\z^2)\hs\xy]\hh\by-\y\hh\tyb\hh.
\end{array}
\end{equation}\checked
}
\end{thm}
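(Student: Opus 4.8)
We first observe that $\,\mj\,$ and $\,\zy\,$ are, respectively, the first and second components of $\,\kz\hh(\dj\hh\ly,\fy)$, with $\,\kz\,$ as in Remark~\ref{kssez}, so that all three displays are explicit formulas, modulo negligibility, for $\,\kz\,$ evaluated on $\,\iz_a\w[\mathbf{v}]=(\dj\hh\ly,\fy)$. Moreover (\ref{med}) will follow from (\ref{ddl}): the difference of the two sides is $\,\mj\,$ minus the left-hand side of (\ref{ddl}), that is, $\,\dz(\dj\hh\ly)$, which equals $\,\dj\hh[(\cu-\mathrm{Id})\hs\ly]\,$ by (\ref{dio}), and (\ref{imo}) writes $\,(\cu-\mathrm{Id})\hs\ly\,$ out explicitly; replacing $\,\tyc\,$ by $\,\h\tya+\tyb\,$ there (legitimate since $\,c=\h a+b\,$ by (\ref{bec}) and $\,a\mapsto\tya\,$ is linear) and using $\,\dj\hh\by=0\,$ from (\ref{fdd}.c), one adds this to the right-hand side of (\ref{ddl}) and obtains (\ref{med}). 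So the plan is to prove (\ref{zap}) and (\ref{ddl}).

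For (\ref{zap}) I would simply expand $\,\zy=\fy-8\hh\{\dj\nnh\cdot\nnh(\dj\hh\ly)\}-4\hh\{(\dj\hh\ly)\nnh\cdot\nnh(\dj\hh\ly)\}$. By (\ref{edd}.b) the middle term is $\,(\cu-2\hskip1.4pt\mathrm{Id})\hs\ly$, which (\ref{nom}) converts into an explicit $\,\bbF$-com\-bi\-na\-tion of $\,\tya,\tyc,\cya,\mya,\by\,$; substituting $\,\tyc=\h\tya+\tyb\,$ exposes the $\,\tyb$-term, and the last term is supplied modulo negligibility by Lemma~\ref{fdldl}. Collecting the coefficients of $\,\tya,\cya,\mya,\by\,$ and $\,\tyb$, and inserting $\,\K\,$ from (\ref{uvw}), the $\,\cya$-, $\,\mya$- and $\,\by$-coefficients reproduce those asserted in (\ref{zap}), the $\,\tyb$-coefficient is $\,-\y$, and the $\,\tya$-coefficient collapses to $\,\p+\x-2\hs\xy\x\z-\h\hs\xy\z^2-\h\y\,$ precisely because $\,\U\,$ and $\,\K\,$ in (\ref{uvw}) were defined so that $\,-\U+\K\x-\h\x\p=-2\hs\xy\x\z-\h\hs\xy\z^2$.

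For (\ref{ddl}): applying the fixed $\,\bbF$-linear operator $\,\dj:\et\to\es\,$ (which preserves negligibility) to the congruence of Lemma~\ref{fdldl} and using $\,\dj\hh\by=0$, one gets $\,4\hh\dj\hh\{(\dj\hh\ly)\nnh\cdot\nnh(\dj\hh\ly)\}\approx(\U-\K\x+\h\x\p)\hh\dj\hh\tya+(\V\nnh-\n^2\K\y)\hh\dj\hh\cya+(\W\nnh-\K\z)\hh\dj\hh\mya$. For the remaining term $\,(\dj\hh\ly)\hh\fy\,$ I would route through the operation $\,\fy(\dj\hh\ly)\,$ of (\ref{nes}): expand it bi\-lin\-e\-ar\-ly in $\,\fy\,$ and in $\,\dj\hh\ly=\x\hh\dj\hh\tya+\n^2\y\hh\dj\hh\cya+\z\hh\dj\hh\mya$, insert the products of (\ref{tdt}) (with $\,\cya(\dj\hh\tya)=\cya(\dj\hh\cya)=\cya(\dj\hh\mya)=0$, $\,\tya(\dj\hh\tya)=\dj\hh\tyc$, and $\,\by(\dj\hh\sigma)=\dj\hh\sigma\,$ as $\,\by\,$ corresponds to the identity, cf.\ (\ref{ava})), and reduce the remaining diagonals $\,[\,\cdot\,]_v v\,$ modulo negligibility using $\,c=\h a+b$, $\,\ve\hh a\nh^2=c+\ve^{-1}\xy\,$ (cf.\ (\ref{eaq}.i)), the diagonal formulas (\ref{dta}) for $\,\dj\hh\tya,\dj\hh\cya,\dj\hh\mya$, and the fact that any monomial carrying $\,b\,$ alongside a further entry from $\,a,b,\xy,\h\,$ is negligible. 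This yields $\,\tya(\dj\hh\cya)\approx\h\hh\dj\hh\cya$, $\,\tya(\dj\hh\mya)\approx\h\hh\dj\hh\mya$, $\,\mya(\dj\hh\tya)\approx-\xy\hh\dj\hh\tya$, $\,\mya(\dj\hh\cya)\approx-\xy\hh\dj\hh\cya$, $\,\mya(\dj\hh\mya)\approx-\xy\hh\dj\hh\mya$, hence $\,\fy(\dj\hh\ly)\approx\dj\hh(\K\ly+\x\p\hh\tyb)\,$ with $\,\K\,$ as in (\ref{uvw}). Since the proof of Lemma~\ref{psidl} shows $\,\ch\hh[(\dj\hh\ly)\hh\fy-\dj\hh\chi]=\dj\hh\chi-\fy(\dj\hh\ly)\,$ for every $\,\chi\,$ (via (\ref{chs})) and $\,\ch\,$ is a fixed isomorphism of $\,\es$, this promotes to $\,(\dj\hh\ly)\hh\fy\approx\dj\hh(\K\ly+\x\p\hh\tyb)$. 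Adding the two contributions, the $\,\K$-terms cancel and the remainder is exactly $\,\dj\hh(\U\tya+\h\x\p\hh\tya+\V\nh\cya+\W\nh\mya+\x\p\hh\tyb)$, i.e.\ (\ref{ddl}).

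The one genuinely laborious point -- and the main obstacle -- is the reduction just indicated: carrying out each of the nine products in $\,\fy(\dj\hh\ly)\,$ modulo negligibility and verifying that, once $\,4\hh\dj\hh\{(\dj\hh\ly)\nnh\cdot\nnh(\dj\hh\ly)\}\,$ is added, all auxiliary scalars reassemble exactly into $\,\K,\U,\V,\W\,$ and into the $\,\tya/\tyb\,$ split of (\ref{ddl}). These cancellations are forced -- the polynomials $\,\K,\U,\V,\W\,$ of (\ref{uvw}) are defined to be precisely the residues -- but checking them is tedious, and one must be careful not to discard quantities such as $\,\xy a$, $\,\tyb\,$ or $\,\y\tyb$, which are \emph{not} negligible.
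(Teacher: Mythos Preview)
Your proposal is correct and follows essentially the same route as the paper: compute $\,\fy(\dj\hh\ly)\,$ via the eigenvalue-like relations obtained from (\ref{tdt}) after substituting $\,c=\h a+b$, promote this to $\,(\dj\hh\ly)\hh\fy\,$ using the $\,\ch$-identity behind Lemma~\ref{psidl}, add the $\,\dj$-image of Lemma~\ref{fdldl} to get (\ref{ddl}), then derive (\ref{med}) from (\ref{dio})--(\ref{imo}) and (\ref{zap}) from (\ref{edd}.b), (\ref{imo}) and Lemma~\ref{fdldl}. The paper packages the first step as the display (\ref{dtc}) and phrases the eigenvalue observation slightly more informally, but the argument is the same.
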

\begin{proof}Replacing $\,c\,$ in (\ref{tdt}) with $\,\h a+b\,$ (see 
(\ref{bec})) we have, from (\ref{dta}),
\begin{equation}\label{dtc}
\begin{array}{l}
\tya(\dj\hh\tya)=\dj\hh(\h\hh\tya)+\dj\hh\tyb\hh,\hskip8pt
\tya(\dj\hh\cya)\approx\dj\hh(\h\hh\cya)\hh,\hskip8pt
\tya(\dj\hh\mya)\approx\dj\hh(\h\hh\mya)\hh,\\
\cya(\dj\hh\tya)=\cya(\dj\hh\cya)=\cya(\dj\hh\mya)=0\hh,\\
\mya(\dj\hh\tya)\approx-\hh\dj\hh(\xy\hh\tya)\hh,\hskip8pt
\mya(\dj\hh\cya)\approx-\hh\dj\hh(\xy\hh\cya)\hh,\hskip8pt
\mya(\dj\hh\mya)\approx-\hh\dj\hh(\xy\hh\mya)\hh,\\
\by(\dj\hh\tya)=\dj\hh\tya\hh,\hskip8pt
\by(\dj\hh\cya)=\dj\hh\cya\hh,\hskip8pt
\by(\dj\hh\mya)=\dj\hh\mya\hh,
\end{array}
\end{equation}\checked
the last line being obvious from (\ref{nes}) since $\,\sy=\by\,$ corresponds 
via (\ref{sgv}) to $\,\Sigma=\mathrm{Id}$. If one were to treat $\,\approx\,$ 
as equality ignore the ``correction term'' $\,\dj\hh\tyb$, the 
$\,\dj$-im\-ages of $\,\tya,\cya,\mya$, and of their combination $\,\ly$, 
would lie in the eigen\-space, for the eigen\-value $\,\h,0,-\hs\xy\,$ or 
$\,1$, of the operator $\,\es\to\es\,$ sending $\,\sj$, respectively, to 
$\,\tya\sj,\cya\sj,\mya\sj$ or $\,\by\sj$. Thus, up to such an equivalence, 
$\,\dj\hh\ly\,$ lies in the eigen\-space of the operator 
$\,\sj\mapsto\fy\sj=(\p\hh\tya+\q\hh\cya+\r\hh\mya+\f\nh\by)\sj\,$ for the 
combined eigen\-value $\,\K\,$ given by (\ref{uvw}). Restoring the 
correction term, we now have 
$\,\fy\hh(\hh\dj\hh\ly)\,\approx\,\dj\hh(\K\ly+\x\p\hh\tyb)$, and so
\[
(\hh\dj\hh\ly)\hh\fy\,\approx\,\dj\hh(\K\ly+\x\p\hh\tyb)\hh,
\]
since Lemma~\ref{psidl} and its proof remain valid for $\,\approx\,$ 
equivalences instead of equalities. Hence (\ref{ddl}) is immediate from 
Lemma~\ref{fdldl}, with $\,\dj\by=0\,$ by (\ref{fdd}.c). Relation (\ref{med}) 
is in turn a trivial consequence of (\ref{ddl}) and formula (\ref{dio}) for 
$\,\sy=\ly$, combined with (\ref{imo}) and (\ref{bec}). Finally, as 
(\ref{edd}.b) yields 
$\,-8\hh\{\dj\nnh\cdot\nnh(\dj\hh\ly)\}=\ly+(\hh\mathrm{Id}\,
-\,\cu\hh)\hs\ly$, (\ref{zap}) easily follows from (\ref{lfi}), (\ref{bec}), 
(\ref{imo}) and Lemma~\ref{fdldl}.
\end{proof}
\begin{rem}\label{polym}The polynomial mappings $\,\jz,\mz\,$ of 
Section~\ref{oa} are defined as follows. For 
$\,\mathbf{v}=(\xy,\f\nh,\h,\x,\y,\z,\p,\q,\r)\in\bbF^9\nnh$, 
the first component of $\,\jz\hh(\nh\mathbf{v}\nh)\,$ is $\,0$, and the 
other eight components are the left-hand sides of (i), (iii), (ii) and 
(iv) -- (viii) in (\ref{sys}), with (\ref{uvw}), while 
$\,\mz\hh(\nh\mathbf{v}\nh)=(0,0,0,\y+\x\p,0,0,-\hh\y,0,0)$.
\end{rem}

\section{The main results}\label{mr}
\setcounter{equation}{0}
Given a real (or, complex) Lie group $\,\gj\,$ with the Lie algebra 
$\,\mathfrak{g}$, we again refer to the Le\-\hbox{vi\hh-}\hskip0ptCi\-vi\-ta connections of 
left-in\-var\-i\-ant Ein\-stein metrics on $\,\gj\,$ as Ein\-stein 
connections in $\,\mathfrak{g}$. The metrics themselves are 
pseu\-\hbox{do\hskip1pt-}\hskip0ptRiem\-ann\-i\-an or, respectively, 
hol\-o\-mor\-phic; $\,\by\,$ and $\,\dj=[\hskip2pt,\hskip.6pt]/2\,$ stand for 
the Kil\-ling form, with (\ref{bvw}), and its Le\-\hbox{vi\hh-}\hskip0ptCi\-vi\-ta connection; 
and $\,\dj\hh\ly\,$ is defined as in  (\ref{sns}) or, equivalently, 
(\ref{dsv}.a). The more general classes of uni\-mod\-u\-lar tor\-sion-free 
connections with parallel Ric\-ci tensor, and weak\-\hbox{ly\hh-}\hskip0ptEin\-stein connections, 
were introduced in Section~\ref{ec}.

Whenever $\,(\mathfrak{g},\bbF\nnh,\ve)\,$ is one of the 
triples (\ref{lie}), with $\,\n=\d+\k$, while $\,\xy,\h\in\bbF\nnh$, the 
scalar $\,\ve^{-1}\xy\,$ denotes a multiple of $\,\mathrm{Id}$, and 
$\,a\in\mathfrak{g}$, one clearly has
\begin{equation}\label{ahx}
\ve\hh a\nh^2\nh=\h a+\ve^{-1}\xy
\end{equation}
if and only if $\,\xy=\ve^2(a,a)/\n\,$ and $\,c=\h a$, where 
$\,c=\ve\hh(a\nh^2)_0\w$. (Notation of (\ref{cea}).)
\begin{thm}\label{xmpls}{\smallit
Let\/ $\,(\mathfrak{g},\bbF\nnh,\ve)\,$ be one of\/ {\rm(\ref{lie})}, 
$\,\n=\d+\k\ge3\hh$. If\/ $\,\ly,\fy\,$ are given by\/ {\rm(\ref{lfi})} 
\hskip2ptfor\/ $\,\xy,\f\nh,\h,\x,\y,\z,\p,\q,\r\in\bbF\,$ satisfying\/ 
{\rm(\ref{sys})} with\/ {\rm(\ref{uvw})}, except\/ {\rm(\ref{sys}.iii)}, 
while\/ $\,a\in\mathfrak{g}\,$ and\/ {\rm(\ref{ahx})} holds, then\/ 
$\,\nabla\nh=\dj+\dj\hh\ly\,$ is a weak\-\hbox{ly\hh-}\hskip0ptEin\-stein, as well as 
uni\-mod\-u\-lar, tor\-sion-free connection in\/ $\,\mathfrak{g}\,$ with the\/ 
$\,\nabla\nnh$-par\-al\-lel Ric\-ci tensor\/ 
$\,\rho\nnh^\nabla\nnh=-\hh(\by+\fy)/4$.
}
\end{thm}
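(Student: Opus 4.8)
The plan is to recognize the hypotheses as precisely what is needed in order to apply Lemma~\ref{equiv} with $\,\sj=\dj\hh\ly$. First I would note that $\,\sj=\dj\hh\ly\in\es\,$ (cf.\ (\ref{sns})), so $\,\nabla=\dj+\sj\,$ lies in the af\-fine space $\,\dj+\es\,$ of tor\-sion-free connections, and that $\,\cn\hh\sj=\cn(\dj\hh\ly)=0\,$ by (\ref{cnd}); this already secures the uni\-mod\-u\-lar\-i\-ty clause of Lemma~\ref{equiv}(a). The decisive observation is that the hypothesis (\ref{ahx}), $\,\ve\hh a^2=\h a+\ve^{-1}\xy$, is exactly the statement $\,b=0\,$ in the notation (\ref{bec}): passing to trace\-less parts gives $\,c=\ve(a^2)_0=\h a$, whence $\,b=c-\h a=0$, while taking the matrix trace confirms $\,\xy=\ve^2(a,a)/\n$, in accordance with (\ref{cea}). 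Since each term of a negligible polynomial function of the variables (\ref{var}) is mul\-ti\-lin\-e\-ar in a tuple of arguments that includes $\,b\,$ (Lemma~\ref{neglg}(i)), at the point under consideration all the $\,\approx$-relations of Section~\ref{np} and of Theorem~\ref{dsapp} hold as genuine equalities, and in addition $\,\tyb=0\,$ by linearity of $\,a\mapsto\tya$.

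Granting that, I would read the remaining two clauses of Lemma~\ref{equiv}(a) off Theorem~\ref{dsapp}. Formula (\ref{zap}) expresses $\,\zy=\fy-8\hh\{\dj\nnh\cdot\nnh(\dj\hh\ly)\}-4\hh\{(\dj\hh\ly)\nnh\cdot\nnh(\dj\hh\ly)\}$, up to a negligible remainder, as a combination of $\,\tya,\cya,\mya,\by\,$ whose coefficients are the left-hand sides of (\ref{sys}.vi), (\ref{sys}.vii), (\ref{sys}.viii), (\ref{sys}.i), together with a term $\,-\y\hh\tyb$; under our hypotheses those four equations hold and $\,b=0$, so $\,\zy=0$, that is, $\,\fy=8\hh\{\dj\nnh\cdot\nnh(\dj\hh\ly)\}+4\hh\{(\dj\hh\ly)\nnh\cdot\nnh(\dj\hh\ly)\}$. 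Likewise, with $\,\mj=\dz(\dj\hh\ly)+4\hh\dj\hh\{(\dj\hh\ly)\nnh\cdot\nnh(\dj\hh\ly)\}+(\hh\dj\hh\ly)\hh\fy$, formula (\ref{med}) presents $\,\mj$, up to a negligible remainder, as $\,\dj\,$ applied to a combination of $\,\tya,\tyb,\cya,\mya\,$ whose coefficients are, up to the factors $\,\h,\x\p$, the left-hand sides of (\ref{sys}.ii), (\ref{sys}.iv), (\ref{sys}.v), plus a $\,\dj\hh\tyb$-term; since (\ref{sys}.ii), (\ref{sys}.iv), (\ref{sys}.v) hold and $\,b=0$, one gets $\,\mj=0$, i.e.\ $\,\dz\hh\sj+4\hh\dj\hs\{\sj\nnh\cdot\nnh\sj\nh\}+\sj\hh\fy=0$. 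I would stress that (\ref{sys}.iii) is never invoked: it enters only in identifying $\,\pr\hh\ly\,$ with $\,\tya\,$ (Lemma~\ref{omgpr}(iv)), which is irrelevant here, and that is exactly why it may be dropped.

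Now all three clauses of Lemma~\ref{equiv}(a) hold for $\,(\sj,\sy)=(\dj\hh\ly,\fy)$, so Lemma~\ref{equiv} yields (b): $\,\nabla=\dj+\dj\hh\ly\,$ is a uni\-mod\-u\-lar tor\-sion-free connection in $\,\mathfrak{g}\,$ whose Ric\-ci tensor $\,\rho\nnh^\nabla=-\hh(\by+\fy)/4\,$ is $\,\nabla\nnh$-par\-al\-lel, so that $\,\nabla\in\eu$. Finally, $\,\nabla\,$ is weak\-ly-Ein\-stein: since $\,\mathfrak{g}\,$ is sem\-i\-sim\-ple and $\,\nabla\,$ is uni\-mod\-u\-lar tor\-sion-free, (\ref{rne}) gives $\,\{\nabla\hskip-2.5pt\cdot\hskip-2.5pt\nabla\}=-\hh\rho\nnh^\nabla$, which is $\,\nabla\nnh$-par\-al\-lel, so the defining property of $\,\ew\,$ holds (alternatively, invoke $\,\eu\subset\ew\,$ from (\ref{inc})). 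I do not anticipate a genuine obstacle: the theorem is in essence a bookkeeping consequence of Theorem~\ref{dsapp} and Lemma~\ref{equiv}. The step I would treat as the delicate one is the identification of (\ref{ahx}) with the vanishing of $\,b$, since this is what simultaneously upgrades the $\,\approx$-relations to equalities and annihilates the residual $\,\tyb$- and $\,\dj\hh\tyb$-terms in (\ref{zap}) and (\ref{med}); the rest is the routine matching of coefficients against the eight polynomials of (\ref{sys}) and (\ref{uvw}).
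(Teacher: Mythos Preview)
Your proposal is correct and follows essentially the same route as the paper's own proof: identify (\ref{ahx}) with $b=0$ via (\ref{bec}), use Lemma~\ref{neglg}(i) to upgrade the $\approx$-relations of Theorem~\ref{dsapp} to equalities, observe that the right-hand sides of (\ref{med}) and (\ref{zap}) then vanish by (\ref{sys}) (all equations except (\ref{sys}.iii)) together with $\tyb=0$, and conclude via Lemma~\ref{equiv} and (\ref{inc}). The only minor slip is the phrase ``up to the factors $\h,\x\p$'': in fact the coefficient of $\tya$ in (\ref{med}) is $\U+(\y+\x\p)\hh\h$, which is \emph{exactly} the left-hand side of (\ref{sys}.ii), so no adjustment is needed.
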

\begin{proof}By (\ref{bec}), the assumption (\ref{ahx}), that is, $\,c=\h a$, 
gives $\,b=0$, and so, in view of Lemma~\ref{neglg}(i), $\,\approx\,$ 
equivalences are actually equalities. Applied to (\ref{med}) and 
(\ref{zap}), this yields $\,\mj=0\,$ and $\,\zy=0$, since the right-hand sides 
of (\ref{med}) and (\ref{zap}) vanish as a consequence of (\ref{sys}) (while 
(\ref{sys}.iii) is not used). Combined with the definitions of $\,\mj\,$ and 
$\,\zy\,$ in Theorem~\ref{dsapp}, the relations $\,\mj=0\,$ and $\,\zy=0$ 
state that, by (\ref{cnd}), condition (a) of Lemma~\ref{equiv} holds for 
$\,\sj=\dj\hh\ly\,$ and $\,\sy=\fy$. Since (b) in Lemma~\ref{equiv} is 
equivalent to (a), our claim follows, cf. (\ref{inc}).
\end{proof}
Under the assumption $\,c=\h a\,$ (that is, (\ref{ahx})) made in 
Theorem~\ref{xmpls},
\begin{equation}\label{nrm}
\mathrm{(\ref{sys}.iii)\ holds\ if\ and\ only\ if\ }
\,\pr\hh\ly=\tya\hs,\mathrm{\ where\ }\ly=\x\hh\tya+\n^2\y\hh\cya+\z\hh\mya\hh,
\end{equation}
as one sees using Lemma~\ref{omgpr}(iv). The dependence of $\,\tya$ (or, $\,\cya$ 
and $\,\mya$) on $\,a\,$ is, by (\ref{ava}), homogeneous linear (or, respectively, 
quadratic). Consequently, $\,\ly\,$ and $\,\fy\,$ in (\ref{lfi}), along with 
$\,\nabla\,$ and $\,\rho\nnh^\nabla\nnh$, will remain unchanged if we replace 
$\,a\,$ with a nonzero multiple, and at the same time suitably rescale 
$\,\f\nh,\x,\y,\z,\p,\q\,$ and $\,\r$. Thus, (\ref{sys}.iii) is a {\smallit 
normalizing condition}, which can always be realized by rescaling, as long as 
$\,\pr\hh\ly\ne0\,$ (or, equivalently, 
$\,(n^2\nnh-4)\hh\x+(\n^2\y-2\z)\hh\h\ne0$), while the system (\ref{sys}) 
{\smallit without\/} equation (\ref{sys}.iii) is re\-scal\-ing-in\-var\-i\-ant.

In terms of the inclusions $\,\ee\subset\,\eu\subset\ew\nh$, cf.\ (\ref{inc}), 
all connections obtained in Theorem~\ref{xmpls} lie in $\,\eu$, and hence in 
$\,\ew\,$ but, as shown below in Section~\ref{ne}, they need not be Ein\-stein 
connections (elements of $\,\ee$).

However, Ein\-stein connections do arise in a special case of 
Theorem~\ref{xmpls}, which is the first part of the main result of this paper, 
stated as follows.
\begin{thm}\label{mnres}{\smallit 
Suppose that\/ $\,(\mathfrak{g},\bbF\nnh,\ve)\,$ is one of the triples\/ 
{\rm(\ref{lie}):}
\[
(\mathfrak{sl}\hh(\n,\bbR),\bbR,1),\hskip2pt
(\mathfrak{sl}\hh(\n,\bbC),\bbC,1),\hskip2pt
(\mathfrak{sl}\hh(\n,\bbC),\bbC,\mathrm{i}),\hskip2pt
(\mathfrak{su}\hh(\d,\k),\bbR,\mathrm{i}),\hskip2pt
(\mathfrak{sl}\hh(\n/2,\bbH),\bbR,1)\hh,
\]
with\/ $\,\n=\d+\k\ge3\hh$, and\/ $\,\tya,\cya,\mya$ are defined by\/ 
{\rm(\ref{ava})}. Then the set
\begin{equation}\label{cdl}
\begin{array}{l}
\ec\,=\,\dj+\el\,\,\mathrm{\ for\ }\,\,
\el\,=\,\{\dj\hh\eya:a\in\mathfrak{g}\hs\,\mathrm{\ and\ }\,a\nh^2\nh=0\}\hh,\\
\mathrm{where \ \ \ }\,\eya\,=\,\tya\hs-\,(\n^2\nnh-4)^{-1}[4\n^2\cya
-(\n^2\nnh+4)\hh\mya]\hh,
\end{array}
\end{equation}
consists of Ein\-stein connections in\/ $\,\mathfrak{g}\hh$. If\/ $\,\by\,$ 
denotes the Kil\-ling form, with\/ {\rm(\ref{bvw})},
\begin{enumerate}
  \def\theenumi{{\rm\roman{enumi}}}
\item[{\rm(i)}] every\/ $\,\nabla\in\ec\,$ has the nondegenerate\/ 
$\,\nabla\nnh$-par\-al\-lel Ric\-ci tensor\/ 
$\,\rho\nnh^\nabla\nnh=(\eya\nnh-\by)/4$,
\item[{\rm(ii)}] the assignment\/ $\,a\mapsto\dj+\dj\hh\eya$ maps\/ 
$\,\ep\nh=\{a\in\mathfrak{g}:a\nh^2\nh=0\}\,$ bijectively 
onto\/ $\,\ec$,
\item[{\rm(iii)}] the inverse of the bijection in\/ {\rm(ii)} is\/ 
$\,\nabla\mapsto a\,$ for\/ $\,a\in\mathfrak{g}\,$ uniquely characterized 
by\/ $\,\pr\hh(\nabla\nh-\dj)=\dj\hh\tya$, with\/ $\,\pr\,$ as in\/ 
{\rm(\ref{prj}.b)}.
\end{enumerate}
Finally, \hskip1.4ptfor the set\/ $\,\ec\,$ in\/ {\rm(\ref{cdl})}, and\/ 
$\,\ew\hs$ given by\/ {\rm(\ref{inc})},
\begin{enumerate}
  \def\theenumi{{\rm\roman{enumi}}}
\item[{\rm(iv)}] $\ec\,$ is an algebraic set in\/ 
$\,\ey=[\mathfrak{g}\nh^*]^{\otimes2}\nnh\otimes\mathfrak{g}\hh$, and hence a closed 
subset of\/ $\,\ey$,
\item[{\rm(v)}] $\ec\,$ is relatively open in the set\/ $\,\,\ee\,$ of all 
Ein\-stein connections, and in\/ $\,\ew$,
\item[{\rm(vi)}] $\ec\,$ forms a connected component of both\/ $\,\,\ee\,$ 
and\/ $\,\ew$.
\item[{\rm(vii)}] $\dimf\ec=[\n^2\nnh/2]\,$ for\/ 
$\,\mathfrak{g}=\mathfrak{sl}\hh(\n,\bbF)$, while\/ 
$\,\dimr\ec=2\hs\d\k\,$ when\/ $\,\mathfrak{g}=\mathfrak{su}\hh(\d,\k)$, and\/ 
$\,\dimr\ec=4[n^2\nnh/8]\,$ if\/ $\,\mathfrak{g}=\mathfrak{sl}\hh(\n/2,\bbH)$,
\item[{\rm(viii)}] for the three choices of\/ $\,\mathfrak{g}\,$ in\/ 
{\rm(vii)}, $\ec\,$ is a union of\/ $\,[\n/2]+1$, or of\/ $\,(\k+1)(\k+2)/2\hh$, 
or, respectively, of\/ $\,[\n/4]+1\,$ ad\-joint-ac\-tion orbits.
\end{enumerate}
Explicitly, for\/ $\,\nabla\nh=\dj+\dj\hh\eya$ and\/ $\,v,w\in\mathfrak{g}\hh$, we 
have
\begin{equation}\label{nvw}
\begin{array}{l}
2\hs\nsvw\hs=\hs[v,\hs w]\hs+\hs\ve\hs[a,\hs vw\hs+\hs wv\hs+\hs\ej_v^aw]\hh,
\hskip22pt\mathrm{where}\\
(\n^2\nnh-4)\hh\ej_v^aw=(\n^2\nnh+4)\hs\ve\hs(vaw+wav)\hs
-\hs4\n\hs\ve\hs(\mathrm{tr}\hskip2ptav)\hh w\hs
-\hs4\n\hs\ve\hs(\mathrm{tr}\hskip2ptaw)\hh v\hs.
\end{array}
\end{equation}
}
\end{thm}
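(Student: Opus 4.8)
The plan is to read the whole statement off Theorem~\ref{xmpls}, specialized to the single solution~(\ref{fhz}) of the system~(\ref{sys})--(\ref{uvw}). First I would record two identities at the point $\mathbf{u}=(\xy,\f,\h,\x,\y,\z,\p,\q,\r)$ of~(\ref{fhz}): comparing coefficients shows that the form $\ly$ of~(\ref{lfi}.i) equals $\eya$, and the form $\fy$ of~(\ref{lfi}.ii) equals $-\eya$; moreover, since $\xy=\h=0$ at $\mathbf{u}$, hypothesis~(\ref{ahx}) reads $\ve\hh a^2=0$, i.e.\ $a^2=0$. As $\mathbf{u}$ solves the full system~(\ref{sys}) (including~(\ref{sys}.iii)) by Lemma~\ref{nnupl}(a), Theorem~\ref{xmpls} applies verbatim and gives, for every $a\in\mathfrak{g}$ with $a^2=0$, that $\nabla=\dj+\dj\hh\eya$ is a weakly-Einstein, unimodular, torsion-free connection with $\nabla$-parallel Ricci tensor $\rho^\nabla=-(\by+\fy)/4=(\eya-\by)/4$ --- which is assertion~(i) apart from nondegeneracy. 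Formula~(\ref{nvw}) then follows from $[\dj\hh\eya]_vv=[\Sigma_a v,v]$ (see~(\ref{dsv}.a)), where $\Sigma_a$ corresponds to $\eya$ via~(\ref{sgv}) and is written out using~(\ref{ava}); polarizing and collecting the brackets yields~(\ref{nvw}).

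To upgrade these weakly-Einstein connections to Einstein connections via~(\ref{nde}) I must show $\{\nabla\cdot\nabla\}=-\rho^\nabla=(\by-\eya)/4$ is nondegenerate, i.e.\ that $\mathrm{Id}-\Sigma_a$ is invertible. Here I would use that $a\mapsto\Sigma_a$ is $\mathrm{Ad}(\gj)$-equivariant, so it suffices to treat $a$ in Jacobson--Morozov position: choose $h\in\mathfrak{g}$ with $[h,a]=2a$ completing $a$ to an $\mathfrak{sl}_2$-triple. Because $a^2=0$, the matrix $h$ acts on $\bbF^\n$ with eigenvalues in $\{-1,0,1\}$, so the $\mathrm{ad}_h$-grading $\mathfrak{g}=\bigoplus_{k=-2}^{2}\mathfrak{g}_k$ has weights in $\{-2,\dots,2\}$ and $a\in\mathfrak{g}_2$. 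Since $\mathrm{tr}$ annihilates nonzero weight spaces and $\mathrm{tr}(ava)=\mathrm{tr}(a^2v)=0$, each of the three generating operations $v\mapsto\ve(av+va)_0$, $v\mapsto\ve^2(a,v)a/\n$, $v\mapsto\ve^2(ava)_0$ of~(\ref{ava}) strictly raises the $\mathrm{ad}_h$-weight; hence $\Sigma_a$ is nilpotent, $\mathrm{Id}-\Sigma_a$ is unipotent, and $(\by-\eya)/4$ is nondegenerate. Thus every $\nabla\in\ec$ lies in $\ee$ by~(\ref{nde}), which completes~(i) and shows that $\ec$ consists of Einstein connections.

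For~(ii)--(iii): applying the projection $\pr$ of~(\ref{prj}.b) to $\nabla-\dj=\dj\hh\eya$ and using Remark~\ref{dpepd} gives $\pr(\nabla-\dj)=\dj\hh(\pr\,\eya)$; since $c=\ve\hh(a^2)_0=0$, Lemma~\ref{omgpr}(iii) yields $\pr\,\tya=\tya$ and $\pr\,\cya=\pr\,\mya=0$, so $\pr\,\eya=\tya$ and $\pr(\nabla-\dj)=\dj\hh\tya$. As $a\mapsto\dj\hh\tya$ is a linear isomorphism $\mathfrak{g}\to\mathrm{Ker}\,\dz$ (Remark~\ref{lniso}), this recovers $a$ from $\nabla$, proving injectivity of $a\mapsto\dj+\dj\hh\eya$ on $\ep=\{a:a^2=0\}$ and the inverse formula in~(iii); combined with Remark~\ref{natrl} it also shows that the natural linear operator $\Psi:\es\to\mathfrak{g}$ there sends $\dj\hh\eya$ to $a$. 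Consequently $\el$ in~(\ref{cdl}) is the common zero set of the polynomial maps $\sj\mapsto\Psi(\sj)^2$ and $\sj\mapsto\dj\hh\eta_{\Psi(\sj)}-\sj$, so $\ec=\dj+\el$ is algebraic --- hence closed in $\ey$, which is~(iv) --- and $\Psi$ restricts to a polynomial inverse of $\ep\to\el$, identifying $\ec$ with $\ep$ as algebraic sets. Assertions~(vii)--(viii) then follow from the classification of $a\in\mathfrak{g}$ with $a^2=0$ up to the adjoint action (Jordan type $2^r1^{\n-2r}$, with $r\le[\n/2]$ for $\mathfrak{sl}(\n,\bbF)$, $r\le\k$ refined by sign data for $\mathfrak{su}(\d,\k)$, and $r=2s\le[\n/2]$ for $\mathfrak{sl}(\n/2,\bbH)$) together with $\dim_\bbF\mathrm{Ad}(\gj)a=2r(\n-r)$: counting orbits gives $[\n/2]+1$, $(\k+1)(\k+2)/2$, $[\n/4]+1$, and the top-dimensional orbit gives $[\n^2/2]$, $2\hskip.6pt\d\k$, $4[\n^2/8]$; equivariance of $a\mapsto\dj+\dj\hh\eya$ transports this to $\ec$.

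Finally~(v)--(vi). By~(iv), $\ec$ is closed; it is connected because it is homeomorphic (via~(ii) and the continuous linear $\Psi$) to $\ep$, a finite union of $\mathrm{Ad}(\gj)$-stable nilpotent-orbit closures, each connected and containing $0$. By~(\ref{sfc}), $\ec$ already contains a neighbourhood of $\dj$ in $\ew$; to obtain relative openness of $\ec$ at \emph{every} point of $\ec$ I would run the curve-selection argument of Section~\ref{ra} --- apply Corollary~\ref{equal} to the algebraic sets $\ec\subseteq\ew$ via the assertion~(\ref{thf}) that $\ec$ absorbs every real-analytic curve of weakly-Einstein connections emanating from $\dj$ --- and observe that the same jet induction based on the fundamental formula~(\ref{fin}) goes through with $\dj$ replaced by an arbitrary point of $\ec$. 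Being closed, relatively open, nonempty and connected, $\ec$ is then a connected component of both $\ew$ and $\ee$, hence the one containing $\dj$, which is~(vi); relative openness in $\ee$ is immediate since $\ec\subseteq\ee\subseteq\ew$. I expect this last step --- propagating the local description of $\ew$ near $\dj$ to all of $\ec$ --- to be the main obstacle, since, as the outline in Section~\ref{oa} stresses, controlling \emph{all} weakly-Einstein connections through real-analytic curves and~(\ref{fin}) is the technical heart of the paper, whereas the remaining assertions are short deductions from machinery already in place.
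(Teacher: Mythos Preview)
Your proposal is largely correct and closely parallels the paper's two-part proof (Sections~\ref{fp} and~\ref{pa}), with one genuine difference and one genuine gap.

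\textbf{Difference (nondegeneracy in (i)).} Your Jacobson--Morozov argument --- complete $a$ to an $\mathfrak{sl}_2$-triple, observe that each of the three endomorphisms in~(\ref{ava}) strictly raises the $\mathrm{ad}_h$-weight, hence $\Sigma_a$ is nilpotent --- is a valid and arguably cleaner alternative to the paper's approach. The paper instead specializes Lemma~\ref{stsim} at $b=0$, $\xi=h=0$ to compute all pairwise $\circledast$-products among $\tau_a,\theta_a,\mu_a$, finds $\tau_a\circledast\tau_a=2(\mu_a-2\theta_a)$ with all other products zero, and concludes that every three-factor $\circledast$-product vanishes, i.e.\ $\Sigma_a^3=0$. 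Your route avoids the multiplication table entirely.

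\textbf{Gap (relative openness in (v)).} You propose to ``run the curve-selection argument of Section~\ref{ra} \dots\ with $\dj$ replaced by an arbitrary point of $\ec$'' and assert that the jet induction based on~(\ref{fin}) goes through. This is not what the paper does, and it is not clear it would work: the entire machinery of Section~\ref{ra} (the initial conditions~(\ref{jts}), the definition of $a(t)$ via $\Pi\hh\sj=\dj\hh\tau_a$, the choice of $\mathbf{v}(t)$ along the branch~(\ref{xto}) through $\mathbf{u}$) is built specifically around $\sj(0)=0$. Transferring it to a basepoint $\dj+\dj\hh\eta_{a_0}$ with $a_0\ne0$ would require reworking the fundamental formula and the negligibility notion relative to that point.

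The paper sidesteps this entirely. It argues: suppose $\dj+\sj\in\ec$ were a limit of a sequence in $\ew\smallsetminus\ec$. By Lemma~\ref{pshrm}(c) and Remark~\ref{scale}, every $a\in\ep$ is $\mathrm{Ad}(\gj)$-conjugate to $ta$ for all $t>0$, so $0$ lies in the closure of each adjoint orbit in $\ep$; equivariance of $a\mapsto\dj+\dj\hh\eta_a$ then places an adjoint image of $\dj+\sj$ (together with the images of the approximating sequence, which remain in $\ew\smallsetminus\ec$ by $\mathrm{Ad}$-invariance of both sets) into any prescribed neighbourhood of $\dj$. This contradicts Corollary~\ref{sffcl}. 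So relative openness at \emph{every} point of $\ec$ is reduced, via the adjoint action, to the single statement at $\dj$ that you already have. This is the missing idea in your treatment of~(v); once you insert it, the rest of your argument for~(vi) goes through as written.
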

\begin{thm}\label{isola}{\smallit
A fixed pos\-i\-tive-def\-i\-nite multiple of the Kil\-ling form is isolated in 
the set of suitably normalized left-in\-var\-i\-ant Riemannian Ein\-stein 
metrics on\/ $\,\mathrm{SU}\hh(\n)\hh$, $\,\n\ge3\hh$.
}
\end{thm}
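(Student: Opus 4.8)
The plan is to deduce Theorem~\ref{isola} from Theorem~\ref{mnres}, the only real content of the corollary being the passage from Einstein \emph{connections} back to Einstein \emph{metrics}. Throughout I read ``suitably normalized'' as ``having Einstein constant $\,1$'': rescaling a metric by a factor $\,t>0\,$ divides its Einstein constant by $\,t\,$ and leaves the Levi-Civita connection unchanged, so each left-invariant Riemannian Einstein metric on $\,\mathrm{SU}\hh(\n)\,$ has exactly one such representative in its homothety class, and by Remark~\ref{bijct} these representatives correspond bijectively to the Einstein connections in $\,\mathfrak{g}=\mathfrak{su}\hh(\n)$, the inverse correspondence sending a connection to its Ricci tensor. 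Since $\,\mathrm{SU}\hh(\n)\,$ is compact and simple, $\,\by\,$ is negative definite, so $\,-\by/4\,$ is a left-invariant Riemannian metric; its Levi-Civita connection is $\,\dj\,$ and, by (\ref{fdd}.b), its Ricci tensor equals $\,\rho^\dj=-\by/4$, so it is Einstein with constant $\,1$. The goal is then to produce a neighborhood of $\,-\by/4\,$ in the space $\,\et\,$ of left-invariant symmetric $\,2$-tensors meeting the set of normalized left-invariant Riemannian Einstein metrics only in $\,-\by/4$.

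First I would show that $\,\dj\,$ is an isolated point of the set $\,\ee\,$ of Einstein connections. Applying Theorem~\ref{mnres} with $\,(\mathfrak{g},\bbF\nnh,\ve)=(\mathfrak{su}\hh(\n),\bbR,\mathrm{i})$, i.e.\ the case $\,\mathfrak{su}\hh(\d,\k)\,$ with $\,\d=\n\ge3\,$ and $\,\k=0$, one has $\,\ep=\{a\in\mathfrak{g}:a\nh^2\nh=0\}=\{0\}$, because a skew-Hermitian $\,a\,$ with $\,a\nh^2\nh=0\,$ satisfies $\,\mathrm{tr}\hskip2pt(aa^*)=-\mathrm{tr}\hskip2pta\nh^2\nh=0$, forcing $\,a=0$ (equivalently, $\,\dimr\ec=2\hs\d\k=0\,$ by Theorem~\ref{mnres}(vii)). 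Hence, by (\ref{cdl}), $\,\el=\{0\}\,$ and $\,\ec=\{\dj\}$, and since $\,\ec\,$ is relatively open in $\,\ee\,$ by Theorem~\ref{mnres}(v), the point $\,\dj\,$ is isolated in $\,\ee$.

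Next I would transfer this to metrics. By the Christoffel formula (Remark~\ref{chfor}), the Levi-Civita connection $\,\nabla^\gy\nh=\dj-\hh\gy^{-1}(\dj\gy)\,$ is a rational, hence continuous, function of a nondegenerate $\,\gy\in\et$. Choose a neighborhood $\,V\,$ of $\,\dj\,$ in $\,\ey=[\mathfrak{g}\nh^*]^{\otimes2}\nnh\otimes\mathfrak{g}\,$ with $\,V\cap\ee=\{\dj\}$, and then a neighborhood $\,U\,$ of $\,-\by/4\,$ consisting of nondegenerate forms with $\,\nabla^\gy\in V\,$ for all $\,\gy\in U$. If $\,\gy\in U\,$ is a normalized left-invariant Riemannian Einstein metric, then $\,\nabla^\gy\in\ee\cap V=\{\dj\}$, so $\,\dj\gy=0$; thus $\,\gy\,$ is $\,\mathrm{Ad}$-invariant, and simplicity of $\,\mathfrak{su}\hh(\n)\,$ forces $\,\gy=c\hh\by\,$ for some scalar $\,c$. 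Positive definiteness gives $\,c<0$, and the normalization $\,-\by/4=\rho^\dj=\gy=c\hh\by\,$ gives $\,c=-1/4$, so $\,\gy=-\by/4$. This shows that $\,U\,$ meets the normalized Einstein metrics only in $\,-\by/4$, as required.

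I do not anticipate a serious obstacle: the substantive input is entirely Theorem~\ref{mnres}, and the two auxiliary facts used --- continuity of the Levi-Civita map, and the classical uniqueness up to scale of a bi-invariant metric on the simple compact algebra $\,\mathfrak{su}\hh(\n)$ --- are standard. The only mild care needed is in fixing the precise topology and the normalization convention intended in the statement, and the argument above is insensitive to the natural choices.
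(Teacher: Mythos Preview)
Your argument is correct and follows essentially the same route as the paper: reduce to the case $\,\mathfrak{su}\hh(\n)=\mathfrak{su}\hh(\d,0)$, observe that $\,a\nh^2\nh=0\,$ forces $\,a=0\,$ so that $\,\ec=\{\dj\}$, and conclude via relative openness that $\,\dj\,$ is isolated in $\,\ee$. The only cosmetic difference is that the paper cites Corollary~\ref{sffcl} directly (rather than Theorem~\ref{mnres}(v), whose proof comes later) and then invokes Remark~\ref{bijct} for the passage back to metrics, whereas you unpack that step explicitly via continuity of the Christoffel map and uniqueness of bi-invariant metrics on a simple algebra; both are equivalent and correct.
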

Theorem~\ref{isola} settles a rather narrow special case of B\"ohm, Wang and 
Ziller's Finiteness Conjecture \cite[p.\ 683]{bohm-wang-ziller}.

It is not known whether the Kil\-ling form of $\,\mathrm{SU}\hh(\n)$, for 
$\,\n\ge3$, represents an isolated point of the moduli space of all Riemannian 
Ein\-stein metrics on $\,\mathrm{SU}\hh(\n)$ modulo diffeomorphisms and 
scaling. Cf.\ \cite[Theorem~1.2]{koiso}.
\begin{thm}\label{slcre}{\smallit 
The conclusions of Theorem\/~{\rm\ref{mnres}} remain valid for\/ 
$\,\mathfrak{sl}\hh(\n,\bbC)\,$ treated as a real Lie algebra, except that the 
the formula in\/ {\rm(i)} now reads 
$\,\rho\nnh^\nabla\nnh=2\,\mathrm{Re}\hskip1.7pt(\eya\nnh-\by)/4$.

In other words, $\,\ec\,$ is a connected component of the set of all real 
Ein\-stein connections in\/ $\,\mathfrak{sl}\hh(\n,\bbC)$, and similarly for 
weak\-\hbox{ly\hh-}\hskip0ptEin\-stein connections. In particular, all weak\-\hbox{ly\hh-}\hskip0ptEin\-stein 
connections in the underlying real Lie algebra of\/ 
$\,\mathfrak{sl}\hh(\n,\bbC)$, sufficiently close to\/ $\,\dj$, are 
necessarily hol\-o\-mor\-phic, that is, constitute\/ $\,\bbC$-bi\-lin\-e\-ar 
mappings\/ $\,\mathfrak{g}\times\mathfrak{g}\to\mathfrak{g}$.
}
\end{thm}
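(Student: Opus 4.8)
The plan is to reduce the theorem to Theorem~\ref{mnres} for the \emph{complex} Lie algebra $\mathfrak{g}=\mathfrak{sl}(\n,\bbC)$ by means of the comparison results of Section~\ref{ur}; write $\mathfrak{g}_\bbR$ for the underlying real Lie algebra and $\es_\bbR,\et_\bbR,\ey_\bbR$ for the associated spaces. Two things must be shown. First, the set $\ec$ of~(\ref{cdl}) — defined purely through the \emph{holomorphic} tensors $\tya,\cya,\mya$ and the $\bbC$-bilinear gradient $\dj\eya$ — consists of Einstein connections \emph{in} $\mathfrak{g}_\bbR$, with the asserted Ricci tensor, and is an algebraic, closed, connected subset of $\ey_\bbR$. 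Second, $\ec$ is relatively open in the set $\ew_\bbR$ of real weakly-Einstein connections; since $\ec\subset\ee_\bbR\subset\ew_\bbR$ (the $\mathfrak{g}_\bbR$-version of~(\ref{inc}), using Remark~\ref{rehol}(iv)) and $\ec$ is closed and connected, this exhibits $\ec$ as a connected component of $\ew_\bbR$ and of the set $\ee_\bbR$ of real Einstein connections.

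The first point is essentially free. By Theorem~\ref{mnres}(i) every $\nabla\in\ec$ is an Einstein connection in $\mathfrak{g}$ with Ricci tensor $(\eya-\by)/4$; by Remark~\ref{rehol}(ii),(iv) it is then Einstein in $\mathfrak{g}_\bbR$ with Ricci tensor $2\,\mathrm{Re}\,(\eya-\by)/4$, which is conclusion~(i). Algebraicity, closedness and connectedness of $\ec$, together with~(ii),~(iii) and formula~(\ref{nvw}), are unaffected, $\ec$ being the same set with the same parametrization $\{a\in\mathfrak{g}:a^2=0\}\to\ec$; items (vii)--(viii) carry over verbatim, except that in~(vii) one reads $\dimr$ for $\dimf$ and the stated value is doubled, $\ec$ being the underlying real variety of the complex variety of Theorem~\ref{mnres}.

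The substance is the openness. By Milnor's curve-selection lemma (Corollary~\ref{equal}, applied to the algebraic sets $\ew_\bbR$ and $\ec$) it suffices to prove the $\mathfrak{g}_\bbR$-analogue of~(\ref{thf}): every real-analytic curve $[\hs0,\vd)\ni t\mapsto\dj+\sj(t)$ of weakly-Einstein connections in $\mathfrak{g}_\bbR$ with $\sj(0)=0$ lies in $\ec$. First I would check the nondegeneracy condition~(\ref{knd}) for $\mathfrak{g}_\bbR$: by Lemma~\ref{undrl}(d) the space $\mathrm{Ker}\,(\cu_\bbR-\mathrm{Id})$ is the $\mathrm{Re}$-image of the $\langle\,,\rangle$-nondegenerate subspace $\mathrm{Ker}\,(\cu-\mathrm{Id})$ (Lemma~\ref{noeig}(f)), and the real part of a nondegenerate $\bbC$-bilinear form on a complex space is a nondegenerate $\bbR$-bilinear form; thus~(\ref{knd}), hence~(\ref{opl}) for $\mathfrak{g}_\bbR$, the projection $\pr_\bbR$, and the natural operator $\es_\bbR\to\mathfrak{g}$ of Remarks~\ref{lniso},~\ref{natrl} are available, with $\mathrm{Ker}\,\dz_\bbR=\mathrm{Ker}\,\dz\subset\es$ (Remark~\ref{rehol}(vi)). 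The crucial observation is that \emph{every tensor occurring in the argument of Sections~\ref{sa}--\ref{ra} is holomorphic}, and that $\dz_\bbR$, $\{\hskip2pt\cdot\hskip2pt\}_\bbR$, the $\nabla$-gradient and $\cu_\bbR$, restricted to the holomorphic subspaces $\es\subset\es_\bbR$ and $\et\subset\et_\bbR$, coincide with $\dz$, $\{\hskip2pt\cdot\hskip2pt\}$, the gradient and $\cu$ (Remarks~\ref{nrese},~\ref{rehol}(v)--(vii); the factor $2$ in $\by_{\mathfrak{g}_\bbR}=2\,\mathrm{Re}\,\by$ of~(\ref{tre}) being absorbed by the identification $\et\hookrightarrow\et_\bbR$). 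Hence, given the curve, one forms $a(t)\in\mathfrak{g}$, $\sy(t)\in\et_\bbR$, $\xy(t)\in\bbC$, $\mathbf{v}(t)\in\bbC^9$ and $b(t)\in\mathfrak{g}$ exactly as in Section~\ref{ra} — the system~(\ref{sys})--(\ref{uvw}) read over $\bbF=\bbC$, Lemma~\ref{nnupl} applied there — and the fundamental formula~(\ref{fin}), the linear maps $\iz_a$, the polynomial maps $\jz,\mz$ and the negligibility ideal of Section~\ref{np} are literally those attached to the triple $(\mathfrak{sl}(\n,\bbC),\bbC,1)$, since $\iz_a[\mathbf{v}]$ is holomorphic, so $\kz_\bbR(\iz_a[\mathbf{v}])=\kz(\iz_a[\mathbf{v}])$ and the left-hand side of~(\ref{fin}) lands in $\en$. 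The jet induction of Section~\ref{ra} then runs word for word: in its inductive step $\jk[(\kz_\bbR-\lz_\bbR)(\sj,\sy)]$ depends only on $\jm[(\sj,\sy)]=\jm[\iz_a[\mathbf{v}]]$, which is holomorphic, so it equals $\jk[(\kz-\lz)(\iz_a[\mathbf{v}])]$; combined with the injectivity of $\lz_\bbR$ on its image (from~(\ref{opl}.b) for $\mathfrak{g}_\bbR$), with $\mz(\mathbf{u})\ne\mathbf{0}$, and with $\mathrm{Ker}\,\lz_\bbR=\mathrm{Ker}\,\dz_\bbR\times\{0\}=\mathrm{Ker}\,\lz$, this yields $\jk[b]=0$ and $\jk[(\sj,\sy)]=\jk[\iz_a[\mathbf{v}]]$ for all $k$, whence $(\sj(t),\sy(t))=\iz_{a(t)}[\mathbf{v}(t)]$ and $b(t)\equiv0$. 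Finally $a(t)^2=\h(t)\hh a(t)+\xy(t)\hh\mathrm{Id}$, and the rationality argument of Lemma~\ref{ratnl} — valid since $a(t)\in\mathfrak{sl}(\n,\bbC)$ — together with $\h$ being a $\bbC$-analytic function of $\xy$ with $\mathrm{d}\h/\mathrm{d}\hh\xy\ne0=\h$ at $\xy=0$ (Lemma~\ref{nnupl}(b)), forces $\xy(t)=\h(t)=0$; hence $a(t)^2=0$, $\mathbf{v}(t)=\mathbf{u}$, so $\sj(t)=\pi_\es(\iz_{a(t)}[\mathbf{u}])=\dj\eta_{a(t)}$ and $\dj+\sj(t)\in\ec$, which proves the claim and therefore conclusions~(v)--(vi); in particular every weakly-Einstein connection in $\mathfrak{g}_\bbR$ sufficiently near $\dj$ equals $\dj+\dj\eya$, a $\bbC$-bilinear map, which is the final assertion. (Relation~(\ref{nrm}) and Lemma~\ref{omgpr}(iv), used to identify $\pi_\es(\iz_a[\mathbf{u}])$ with $\dj\eya$, involve only the holomorphic objects and need no change.)

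The hard part is not a new idea but this verification: one must check carefully that the apparatus of Sections~\ref{np}--\ref{ra} — the negligibility ideal, the fundamental formula~(\ref{fin}), the polynomial maps $\jz,\mz$ — erected over $\bbC$ for $\mathfrak{sl}(\n,\bbC)$, genuinely carries over to $\mathfrak{g}_\bbR$. This rests on the compatibility statements of Section~\ref{ur} (especially Remark~\ref{rehol}(v)--(viii), Remark~\ref{nrese} and Lemma~\ref{undrl}), on the elementary fact that $\mathrm{Re}$ of a nondegenerate complex bilinear form is nondegenerate, and — most delicately — on the point that the competing curve $t\mapsto\sj(t)$, although a priori only real-bilinear, is compared at every stage with the holomorphic object $\iz_{a(t)}[\mathbf{v}(t)]$ and is thereby \emph{forced} to be holomorphic. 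That holomorphicity of the nearby weakly-Einstein connections is not assumed, yet follows, is precisely what gives the theorem its bite.
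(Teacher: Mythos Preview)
Your proposal is correct in substance and follows the paper's own route: transfer the Einstein property and Ricci formula via Remark~\ref{rehol}(ii),(iv), then rerun the jet induction of Lemma~\ref{ractz} in the real setting using the compatibility statements of Section~\ref{ur} (especially Remark~\ref{rehol}(v)--(viii) and Remark~\ref{nrese}), and finally invoke the curve-selection lemma. Your identification of the delicate point---that $[\sj]^{(k)}$ is forced to be $\bbC$-bilinear at each stage because $\dz_\bbR[\sj]^{(k)}$ lands in the holomorphic subspace, which is exactly Remark~\ref{rehol}(viii)---is on target, and your check of~(\ref{knd}) for $\mathfrak{g}_\bbR$ via Lemma~\ref{undrl}(d) is a point the paper uses implicitly.

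There is, however, one genuine gap. Corollary~\ref{equal} applied at the origin only shows that $\ec$ contains all points of $\ew_\bbR$ \emph{near $\dj$}; it does not by itself give relative openness of $\ec$ in $\ew_\bbR$ at an arbitrary point $\dj+\dj\eya\in\ec$. The paper closes this in Section~\ref{pa} with an adjoint-action argument: by Lemma~\ref{pshrm}(c) and Remark~\ref{scale}, every $a\in\mathfrak{sl}(\n,\bbC)$ with $a^2=0$ is $\mathrm{SL}(\n,\bbC)$-conjugate to $ta$ for all $t>0$, so any neighborhood of $\dj$ contains an adjoint image of $\dj+\dj\eya$; since both $\ec$ and $\ew_\bbR$ are adjoint-invariant, a hypothetical sequence in $\ew_\bbR\smallsetminus\ec$ converging to $\dj+\dj\eya$ would yield one converging to $\dj$, contradicting openness there. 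You should insert this step between ``openness at $\dj$'' and ``$\ec$ is a connected component.''
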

The conclusion in the final paragraph of the Introduction easily follows from 
Theorem~\ref{slcre}. See also Remark~\ref{rehol}.

The proof of Theorem~\ref{mnres} will be given later, in Sections~\ref{fp} and 
\ref{pa}. For proofs of Theorems~\ref{isola} and~\ref{slcre}, see 
Sections~\ref{is} and~\ref{pa}.

\section{Proof of Theorem~\ref{mnres}, first part}\label{fp}\checked
\setcounter{equation}{0}
In this section we verify all claims made in Theorem~\ref{mnres} except (v) -- 
(viii).

The hypotheses of Theorem~\ref{xmpls} are clearly satisfied by any 
$\,a\in\mathfrak{g}\,$ such that $\,a\nh^2\nh=0\,$ and the nonuple 
$\,(\xy,\f\nh,\h,\x,\y,\z,\p,\q,\r)\,$ with (\ref{fhz}). Since (\ref{lfi}) 
then yields $\,\ly=\hh-\fy=\eya$, Theorem~\ref{xmpls} implies that $\,\ec\,$ 
with (\ref{cdl}) consists of tor\-sion-free connections $\,\nabla\,$ having 
the $\,\nabla\nnh$-par\-al\-lel Ric\-ci tensor given by the formula in (i).

As $\,a\nh^2\nh=0\,$ and $\,\xy=\h=0$, (\ref{cea}) and (\ref{bec}) yield 
$\,c=b=0$. According to Lemma~\ref{neglg}(i), $\,\approx\,$ equivalences in 
the first part of Lemma~\ref{stsim} now become equalities: 
$\,\tya\ca\tya=2\hh(\mya-2\hh\cya)\,$ and 
$\,\cya\ca\hs\cya=\mya\ca\mya=\tya\ca\hs\cya=\tya\ca\hh\mya=\cya\ca\hh\mya=0$. 
\hbox{All three\hh-fac\-tor} $\,\ca\hskip1.7pt$-prod\-ucts of linear 
combinations of $\,\tya,\cya,\mya$ must therefore vanish. In particular, 
$\,\Sigma^3\nh=0\,$ for the en\-do\-mor\-phism $\,\Sigma\,$ of 
$\,\mathfrak{g}\,$ corresponding to $\,\sy=\eya$ via (\ref{sgv}), and so 
$\,0\,$ is the only eigenvalue of $\,\Sigma$. Consequently, $\,-1\,$ is the 
only eigenvalue of $\,\Sigma-\mathrm{Id}$, and the Ric\-ci tensor 
$\,\rho\nnh^\nabla\nnh=(\eya\nnh-\by)/4\,$ is nondegenerate, as required in 
(i). Since $\,\rho\nnh^\nabla$ is $\,\nabla\nnh$-par\-al\-lel, $\,\nabla\,$ is 
its Le\-\hbox{vi\hh-}\hskip0ptCi\-vi\-ta connection (Remark~\ref{lccon}). 
Thus, $\,\rho\nnh^\nabla$ is an Ein\-stein metric, which shows that $\,\ec\,$ 
consists of Ein\-stein connections.

Let $\,\nabla\in\ec$. Since (\ref{sys}.iii) holds, (\ref{nrm}) yields 
$\,\pr\hh\eya=\tya\,$ and so, from Remark~\ref{dpepd}, 
$\,\pr\hh(\dj\hh\eya)=\dj\hh\tya$. (Again, as 
$\,\xy,\f\nh,\h,\x,\y,\z,\p,\q\,$ and $\,\r\,$ are given by (\ref{fhz}), 
$\,\ly=\x\hh\tya+\n^2\y\hh\cya+\z\hh\mya$ coincides with $\,\eya$.) This 
proves assertions (ii) -- (iii): 
$\,\nabla\in\ey=[\mathfrak{g}\nh^*]^{\otimes2}\nnh\otimes\mathfrak{g}\,$ is an 
element of $\,\ec\,$ if and only if
\begin{equation}\label{nqe}
\nabla\nh=\hs\dj+\dj\hh\eya\hskip14pt\mathrm{and}\hskip14pta\nh^2\nh=0\hh,
\end{equation}
where $\,a\in\mathfrak{g}\,$ is uniquely characterized by 
$\,\pr\hh(\nabla\nh-\dj)=\dj\hh\tya$, that is, by being the image of 
$\,\pr\hh(\nabla\nh-\dj)\,$ under the inverse of the iso\-mor\-phism in 
Remark~\ref{lniso}.

Now (iv) follows as well, since (\ref{nqe}) is a system of (nonhomogeneous) 
quadratic equations imposed on $\,\nabla\nnh$.

Finally, the explicit expression (\ref{nvw}) is immediate from (\ref{tdv}), 
(\ref{dta}) combined with symmetry of $\,\dj\hh\eya$ (due to (\ref{sns}) 
applied to $\,\nabla\nh=\dj$), and (\ref{acb}).

\section{The rationality condition}\label{rc}\checked
\setcounter{equation}{0}
The use of Theorem~\ref{xmpls} to construct examples of weak\-\hbox{ly\hh-}\hskip0ptEin\-stein 
connections other than $\,\dj\,$ requires not just finding 
$\,\xy,\f\nh,\h,\x,\y,\z,\p,\q,\r\in\bbF\,$ that satisfy (\ref{sys}), but also 
realizing the assumption (\ref{ahx}) with $\,a\ne0$. This leads to the 
following additional restriction.
\begin{lem}\label{ratnl}{\smallit 
If\/ $\,(\mathfrak{g},\bbF\nnh,\ve)\,$ are as in\/ {\rm(\ref{lie})} 
and\/ $\,\n=\d+\k\ge3\hh$, while\/ $\,\xy,\h\in\bbF\,$ and there exists\/ 
$\,a\in\mathfrak{g}\smallsetminus\{0\}\,$ satisfying\/ {\rm(\ref{ahx})}, 
then
\begin{equation}\label{rtl}
\n\nh^+\nh\n\nh^-\h^2\nh=(\n\nh^+\nnh-\hs\n\nh^-)^2\xy\,\,\mathrm{\ for\ some\ 
integers\ }\,\hs\n\nh^\pm\nnh\ge1\,\mathrm{\ with\ }\,\n\nh^+\nnh+\hs\n\nh^-\nh
=\n\hh.
\end{equation}
}
\end{lem}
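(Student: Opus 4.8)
The plan is to read the relation~(\ref{ahx}) inside the matrix algebra $\,\mathfrak{gl}\hh(\n,\bbC)$, into which each of the Lie algebras in~(\ref{lie}) embeds by~(\ref{rfr}) and the discussion of Section~\ref{cx}. Dividing~(\ref{ahx}) by $\,\ve\,$ shows that $\,a\,$ is annihilated by the monic quadratic $\,q(x)=x^2-\ve^{-1}\h\,x-\ve^{-2}\xy$; writing $\,r_+,r_-\,$ for its two roots, Vieta's relations give $\,r_++r_-=\ve^{-1}\h\,$ and $\,r_+r_-=-\ve^{-2}\xy$. Moreover $\,a\,$ is traceless --- in the sense of the $\,\bbC$-trace when $\,\mathfrak{g}=\mathfrak{sl}\hh(\n/2,\bbH)$, cf.\ Remark~\ref{quate} and Section~\ref{tl} --- and, by hypothesis, $\,a\ne0$, so $\,a\,$ cannot be a scalar multiple of $\,\mathrm{Id}$.

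First I would dispose of the degenerate case in which $\,q\,$ has a repeated root $\,\ly_0=\ve^{-1}\h/2$. Then $\,a-\ly_0\,\mathrm{Id}\,$ has vanishing square, hence is nilpotent and traceless, so $\,0=\mathrm{tr}\,a=\n\ly_0$; thus $\,\h=0$, and vanishing of the discriminant of $\,q\,$ forces $\,\xy=-\h^2/4=0$. Now~(\ref{rtl}) holds with both sides equal to $\,0$, for instance with $\,\n^+=1\,$ and $\,\n^-=\n-1$.

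In the main case $\,q\,$ has distinct roots $\,r_+\ne r_-$, so the minimal polynomial of $\,a\,$ divides $\,q\,$ and is separable; hence $\,a\,$ is diagonalizable over $\,\bbC\,$ with every eigenvalue lying in $\,\{r_+,r_-\}$, and, as $\,a\,$ is not scalar, both values occur, say with multiplicities $\,\n^+,\n^-\ge1\,$ adding up to $\,\n$. Tracelessness then reads $\,\n^+r_++\n^-r_-=0$. Setting $\,u=\n^+r_+=-\n^-r_-$, so that $\,r_+=u/\n^+\,$ and $\,r_-=-u/\n^-$, the two Vieta relations turn into $\,\ve^{-1}\h=u(\n^--\n^+)/(\n^+\n^-)\,$ and $\,\ve^{-2}\xy=u^2/(\n^+\n^-)$; eliminating $\,u\,$ between them yields $\,\n^+\n^-\h^2=(\n^+-\n^-)^2\xy$, which is exactly~(\ref{rtl}).

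I do not anticipate a real obstacle here; the whole argument is short. The one point that will require care is the uniform treatment of the four Lie algebras in~(\ref{lie}) --- namely checking, case by case, that $\,a\,$ is genuinely a complex $\,\n\times\n\,$ matrix with vanishing $\,\bbC$-trace, so that~(\ref{ahx}) is an honest quadratic matrix equation --- and this is precisely what~(\ref{rfr}), Section~\ref{tl}, and Remark~\ref{quate} provide.
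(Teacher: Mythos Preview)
Your proof is correct and follows essentially the same approach as the paper's: both view $a$ as a complex $n\times n$ matrix satisfying a quadratic equation, dispose of the repeated-root case by nilpotency and tracelessness, and in the distinct-root case read off the eigenvalue multiplicities $n^\pm$ and use $\mathrm{tr}\,a=0$ to obtain~(\ref{rtl}). The paper completes the square to write $(\ve a-\h/2)^2=\omega^2$ while you work directly with Vieta's relations for $q(x)=x^2-\ve^{-1}\h\,x-\ve^{-2}\xy$, but the two computations are equivalent.
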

\begin{proof}For $\,a\,$ treated as a complex matrix, (\ref{ahx}) gives  
$\,(\ve\hh a-\h/2)^2\nh=\omega^{\hh2}\nnh$, where $\,\omega$ is a complex 
square root of $\,\xy+\h^2\nnh/4$. We may assume that $\,\omega\ne0$, since 
otherwise $\,\ve\hh a-\h/2\,$ is nilpotent, and therefore trace\-less, so that 
$\,\h=0$, and $\,\xy=\ve^2(a,a)/\n=0$, which yields (\ref{rtl}). Now 
$\,\bbC^\n$ is the direct sum of $\,\mathrm{Ker}\,(\ve\hh a\mp\omega-\h/2)$, 
the eigen\-spaces of $\,a$. Their dimensions 
$\,\n\nh^\pm$ are positive, or else $\,a\ne0\,$ would be a trace\-less 
multiple of $\,\mathrm{Id}$. Also, 
$\,0=\mathrm{tr}\hskip2pt\ve\hh a=\n\nh^+(\omega+\h/2)-\n\nh^-(\omega-\h/2)
=(\n\nh^+\nnh-\hs\n\nh^-)\hs\omega+n\hh\h/2$, that is, 
$\,\omega=\n\hh\h/[2(\n\nh^-\nnh-\hs\n\nh^+)]$. As 
$\,\omega^{\hh2}\nnh=\xy+\h^2\nnh/4$, this gives 
$\,\xy+\h^2\nnh/4=\omega^{\hh2}\nnh
=\n^2\h^2\nnh/[2(\n\nh^+\nnh-\hs\n\nh^-)]^2\nnh$, completing the proof.
\end{proof}

\section{Real-an\-a\-lyt\-ic curves through $\,0\,$ in 
$\,\hz\hs^{-\nh1}(0)$}\label{ra}\checked
\setcounter{equation}{0}
As before, in this section $\,(\mathfrak{g},\bbF\nnh,\ve)\,$ is one of the 
triples (\ref{lie}), $\,\n=\d+\k\ge3$, and $\,\et,\es\,$ are the spaces 
(\ref{sbs}), while $\,\hz:\es\to\es\,$ is given by (\ref{ths}.i). Recall that 
tor\-sion-free connections $\,\nabla\,$ in $\,\mathfrak{g}\,$ form the 
af\-fine space $\,\dj+\es$, and so $\,\nabla\nh=\dj+\hs\sj$, where 
$\,\sj:\mathfrak{g}\times\mathfrak{g}\to\mathfrak{g}\,$ is 
$\,\bbF$-bi\-lin\-e\-ar and symmetric.

The following lemma provides the second crucial step in our argument: the 
conclusion that, for the sets $\,\ec\,$ and $\,\ew\,$ appearing in (\ref{cdl}) 
and (\ref{inc}), $\,\ec\,$ contains all real-an\-a\-lyt\-ic curves in $\,\ew$, 
emanating from $\,\dj$.
\begin{lem}\label{ractz}{\smallit 
Every real-an\-a\-lyt\-ic curve\/ 
$\,[\hs0,\vd)\ni t\mapsto\dj+\hs\sj(t)\in\dj+\es$, consisting of 
left-in\-var\-i\-ant tor\-sion-free connections in\/ $\,\mathfrak{g}\,$ with\/ 
$\,\hz\hs(\sj(t))=0\,$ and\/ $\,\sj(0)=0\hh$, lies entirely in the set\/ 
$\,\ec=\dj+\el\,$ of Ein\-stein connections, defined by\/ {\rm(\ref{cdl})}. 
This remains true for
\begin{equation}\label{slr}
\mathrm{the\ underlying\ real\ Lie\ algebra\ }\,\mathfrak{g}_\bbR\s\mathrm{\ 
of\ }\,\mathfrak{g}=\mathfrak{sl}\hh(\n,\bbC)\hh,
\end{equation}
with\/ $\,\bbC$-bi\-lin\-e\-ar\/ $\,\dj\hh\eya$ in\/ {\rm(\ref{cdl})}, even 
though\/ $\,\,\sj(t)\,$ are only assumed\/ $\,\bbR$-bi\-lin\-e\-ar.
}
\end{lem}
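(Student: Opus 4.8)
The plan is to follow the summary, given in Section~\ref{oa}, of the proof of (\ref{thf}); the only genuinely delicate point is its last, non\-formal step. Fix a curve $\,[\hs0,\vd)\ni t\mapsto\dj+\hs\sj(t)\in\dj+\es\,$ as in the statement. First I would manufacture real-an\-a\-lyt\-ic functions of $\,t\,$ out of all the relevant quantities, after possibly shrinking $\,\vd$: applying the natural surjective operator $\,\es\to\mathfrak{g}\,$ of Remark~\ref{natrl} to $\,\sj(t)\,$ gives $\,a=a(t)$; the correspondence (\ref{bij}) (as made explicit in Remark~\ref{kssez}) and the relation $\,\mathrm{tr}\hskip2pta\nh^2\nh=\n\hs\ve^{-2}\xy\,$ from (\ref{aqe}) define $\,\sy=\sy(t)\in\et\,$ and $\,\xy=\xy(t)\in\bbF$; and, since $\,\mathrm{rank}\hskip2.7pt\mathrm{d}\hskip.3pt\jz\nh_{\mathbf{u}}\w=8\,$ while $\,\jz\hh(\mathbf{u})=\mathbf{0}$, Lemma~\ref{nnupl}(b) presents a neighborhood of $\,\mathbf{u}\,$ in $\,\jz^{-1}(0)\,$ as the graph of an $\,\bbF$-an\-a\-lyt\-ic function $\,\xy\mapsto(\f\nh,\h,\x,\y,\z,\p,\q,\r)$, with $\,\mathrm{d}\h/\mathrm{d}\hh\xy\ne0\,$ at $\,\xy=0$; composing with $\,t\mapsto\xy(t)\,$ turns $\,\f\nh,\h,\x,\y,\z,\p,\q,\r$, hence also $\,\mathbf{v}=\mathbf{v}(t)\in\bbF^9\,$ and, via (\ref{aqe}), $\,b=b(t)\in\mathfrak{g}$, into real-an\-a\-lyt\-ic functions of $\,t$, with $\,\jz\hh(\mathbf{v}(t))=\mathbf{0}\,$ identically and $\,a(0)=b(0)=\xy(0)=\h(0)=0$.

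Next I would prove the claim (\ref{cla}) -- that $\,(\sj(t),\sy(t))=\iz_{a(t)}\w[\mathbf{v}(t)\hn]\,$ and $\,b(t)=0$ for every $\,t$ -- by induction on $\,k\ge0$ over the $\,k$-jets at $\,t=0$; the case $\,k=0$ follows from $\,\sj(0)=0$ and (\ref{jue}). In the step, assuming $\,\jm[b\hh]=0$, apply Lemma~\ref{neglg}(ii) (that is, (\ref{azz})) to the function $\,\pz\,$ equal to the left-hand side of the fundamental formula (\ref{fin}) -- negligible by Theorem~\ref{dsapp} -- evaluated with $\,a=a(t)$, $\,\mathbf{v}=\mathbf{v}(t)$; this yields $\,\jk[\pz]=0$. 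Since $\,\jz\hh(\mathbf{v}(t))=\mathbf{0}$, linearity of $\,\iz_a\w$ reduces $\,\jk[\pz]=0$ to (\ref{vkj}). Splitting $\,\kz=\lz+(\kz-\lz)$ into its degree-$\,1$ and degree-$\,2$ parts, using the hypothesis $\,\kz\hh(\sj,\sy)=(0,0)$ and the fact that, as $\,(\sj,\sy)=0$ at $\,t=0$, the $\,k$-jet of $\,(\kz-\lz)(\sj,\sy)$ depends only on $\,\jm[(\sj,\sy)]=\jm[\iz_a\w[\mathbf{v}]]$, I obtain (\ref{jlk}). Now $\,\iz_b\w[\mz(\nh\mathbf{v}\nh)]\in\mathrm{Ker}\,\lz\,$ because $\,\mz\,$ is valued in a two-di\-men\-sion\-al subspace of $\,\bbF^9$ (Remark~\ref{polym}), whereas the image of $\,\lz\,$ meets $\,\mathrm{Ker}\,\lz\,$ trivially by Remark~\ref{kssez} and (\ref{opl}.b); hence both sides of (\ref{jlk}) vanish. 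As $\,\mz(\nh\mathbf{u})\ne\mathbf{0}$, this forces $\,\jk[b\hh]=0$; and since $\,(\sj,\sy)-\iz_a\w[\mathbf{v}]\,$ lies in the image of $\,\lz\,$ (Remark~\ref{ssini}), injectivity of $\,\lz\,$ restricted to its own image gives $\,\jk[(\sj,\sy)]=\jk[\iz_a\w[\mathbf{v}]]$. This closes the induction and establishes (\ref{cla}).

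The main obstacle is the remaining, non\-formal step. Vanishing of $\,b=b(t)$ turns (\ref{aqe}) into $\,a\nh^2=\ve^{-1}\h a+\ve^{-2}\xy\hs\mathrm{Id}$ for $\,a=a(t)$; if $\,a(t_0)\ne0$ for some $\,t_0$, then Lemma~\ref{ratnl} shows that at $\,t_0$ the quantity $\,\h^2$ is a rational multiple of $\,\xy$. But $\,\h$ is $\,\bbF$-an\-a\-lyt\-ic in $\,\xy$ with $\,\mathrm{d}\h/\mathrm{d}\hh\xy\ne0=\h$ at $\,\xy=0$, while $\,\xy$ is real-an\-a\-lyt\-ic in $\,t$, so such a rational dependence cannot persist along the curve unless $\,\xy\equiv0$; hence $\,\xy(t)=\h(t)=0$ for all $\,t$, so $\,a(t)^2=0$, and, $\,\mathbf{v}$ being a function of $\,\xy$, also $\,\mathbf{v}(t)=\mathbf{u}$. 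By (\ref{cla}), $\,\sj(t)=\pi_\es\w(\iz_{a(t)}\w[\mathbf{u}])$ on the shrunk interval, hence $\,\dj+\hs\sj(t)\in\ec$ there; since $\,a(t)$ and $\,\sj(t)$ are real-an\-a\-lyt\-ic on all of $\,[\hs0,\vd)$ while $\,\ec$ is an algebraic set (Theorem~\ref{mnres}(iv)), the membership $\,\dj+\hs\sj(t)\in\ec$ then extends to every $\,t\in[\hs0,\vd)$. Finally, for the underlying real Lie algebra (\ref{slr}) the identical argument works once one records, via Remark~\ref{rehol} -- in particular $\,\mathrm{Ker}\,\dz_\bbR\s=\mathrm{Ker}\,\dz$, which makes any curve landing in $\,\mathrm{Ker}\,\dz_\bbR\s$ automatically $\,\bbC$-bi\-lin\-e\-ar -- together with Lemma~\ref{undrl}, Remark~\ref{eigvz} and Lemma~\ref{omgtc}(ii)--(iii), that $\,\dz_\bbR\s,\,\{\hskip2pt\cdot\hskip2pt\}_\bbR\s,\,\pr_\bbR\s,\,\cu_\bbR\s\,$ restrict, on the subspaces entering (\ref{fin}), (\ref{cla}) and (\ref{cdl}), to their $\,\mathfrak{sl}\hh(\n,\bbC)$-counterparts; the rationality step again remains the sole non\-routine point, and it forces the curve into $\,\ec$ with $\,\bbC$-bi\-lin\-e\-ar $\,\dj\hh\eya$.
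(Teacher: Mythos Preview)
Your proposal is correct and follows essentially the same approach as the paper. You are executing the outline of Section~\ref{oa} (the $\kz,\lz,\iz,\jz,\mz$ formalism) step by step, whereas the paper's own proof in Section~\ref{ra} unpacks that formalism into the concrete relations (\ref{eqs}), (\ref{ddl}), (\ref{med}), (\ref{zap}) and works directly with $\dz,\,\dj\hh\ly,\,\fy,\,\zy$; the induction, the use of (\ref{opl}.b)--(\ref{opl}.c), and the rationality step via Lemma~\ref{ratnl} are identical in substance. One small point worth tightening: your phrasing ``such a rational dependence cannot persist along the curve unless $\xy\equiv0$'' is the right idea but compresses the paper's argument, which is that on any interval where $\xy\ne0$ the continuous function $\h^2\nnh/\xy$ takes values in the \emph{finite} set $\{(\n^+\!-\n^-)^2/(\n^+\n^-)\}$ from (\ref{rtl}), hence is constant there, yet $\h^2\nnh/\xy\to0$ as $\xy\to0$ by (\ref{xto}); you should make that finiteness explicit. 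Your closing analyticity argument extending the conclusion from the shrunk interval to all of $[\hs0,\vd)$ is a nice addition the paper leaves implicit.
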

\begin{proof}For such a curve $\,t\mapsto\sj(t)$, let $\,\sy=\sy(t)\in\et\,$ 
and $\,a=a(t)\in\mathfrak{g}\,$ be defined by (\ref{fds}.b) and 
$\,\pr\hh\sj=\dj\hh\tya$, where $\,\sj=\sj(t)\,$ and $\,\pr\,$ is the 
projection (\ref{prj}.b). (See Remark~\ref{lniso}.) Besides $\,\sj,\sy\,$ and 
$\,a$, we introduce another curve parametrized by $\,t\in[\hs0,\vd)$, namely, 
$\,c=c\hh(t)\in\mathfrak{g}$, with (\ref{cea}), 
plus nine $\,\bbF$-val\-ued real-an\-a\-lyt\-ic functions of $\,t$, 
which are $\,\xy=\ve^2(a,a)/\n$, for $\,(a,a)=\mathrm{tr}\hskip2pta\nh^2\nnh$, 
and $\,\f\nh,\h,\x,\y,\z,\p,\q,\r$, depending on $\,\xy\,$ (and hence on 
$\,t$, with smaller $\,\vd$) via (\ref{xto}). We also use $\,\ly=\ly(t)$ 
and $\,\fy=\fy(t)\,$ given by (\ref{lfi}), with our 
$\,\f\nh,\h,\x,\y,\z,\p,\q,\r\,$ and $\,a=a(t)$. For all $\,t$,
\begin{equation}\label{eqs}
\begin{array}{rl}
\mathrm{i)}\hskip8pt&
-\dz\sj\,=\,4\hh\dj\hs\{\sj\nnh\cdot\nnh\sj\nh\}\,
+\,\sj\sy\hh,\phantom{{j_{j_j}}}\\
\mathrm{ii)}\hskip8pt&
\sy\,=\,8\hh\{\dj\nnh\cdot\hskip-1.3pt\sj\}\,
+\,4\hh\{\sj\nnh\cdot\nnh\sj\nh\}\hh,\\
\mathrm{iii)}\hskip8pt&
\sj\,-\,\dj\hh\ly\,\in\,\dz(\es)\hh.
\end{array}
\end{equation}
In fact, as $\,\hz\hs(\sj(t))=0$, (\ref{hse}) gives (\ref{eqs}.i\hs-\hs ii). 
Our $\,\xy,\f\nh,\h,\x,\y,\z,\p,\q,\r\,$ satisfy (\ref{sys}), including 
(\ref{sys}.iii), so that, from (\ref{nrm}), $\,\pr\hh\ly=\tya$, and hence 
$\,\pr\hh(\dj\hh\ly)=\dj\hh\tya$ (see Remark~\ref{dpepd}). However, our choice 
of $\,a\,$ is characterized by $\,\pr\hh\sj=\dj\hh\tya$. Thus, 
$\,\pr\hh(\sj-\dj\hh\ly)=0$, which yields (\ref{eqs}.iii).

We now proceed to show that, for $\,b=c-\h a$, at every $\,t\in[\hs0,\vd)$,
\begin{equation}\label{ceh}
\mathrm{i)}\hskip8ptb=0\hs,\hskip22pt\mathrm{ii)}\hskip8pt\sj
=\dj\hh\ly\hs,\hskip22pt\mathrm{iii)}\hskip8pt\sy=\fy\hs.
\end{equation}
[In case (\ref{slr}), $\,\sj=\sj(t)\,$ and $\,\sy\,$ are 
$\,\bbR$-bi\-lin\-e\-ar, and (\ref{eqs}) needs to be rewritten so as to use 
the notation of (\ref{tsy}). However, 
$\,a\in\mathfrak{g}=\mathfrak{sl}\hh(\n,\bbC)$, and $\,\xy\,$ (equal to 
$\,\ve^2(a,a)/\n$), as well as $\,\f\nh,\h,\x,\y,\z,\p,\q,\r\,$ are {\smallit 
complex\/} numbers. Consequently, $\,\ly,\fy$ are $\,\bbC$-bi\-lin\-e\-ar 
(since so are their ingredients $\,\tya,\cya,\mya,\by$), and (\ref{ceh}.iii) 
must be replaced with $\,\sy=2\,\mathrm{Re}\hskip2.7pt\fy$.]

To prove (\ref{ceh}), we establish, by induction on $\,k\ge0$, the equalities
\begin{equation}\label{jka}
\mathrm{i)}\hskip6pt\jk[b\hh]
=0\hs,\hskip15pt\mathrm{ii)}\hskip6pt\jk[\hs\sj\hh]
=\jk[\hh\dj\hh\ly\hh]\hs,\hskip15pt\mathrm{iii)}\hskip6pt
\jk[\hh\sy]=\jk[\fy\hh]
\end{equation}
of $\,k$-jets at $\,t=0\,$ (notation of (\ref{jet})). First, our ``initial 
condition'' $\,\sj(0)=0\,$ and the above definitions of 
$\,a,c,\sy,\ly,\fy,\xy\,$ and $\,\h\,$ give, by (\ref{fhz}),
\begin{equation}\label{jts}
\sj(0)\nh=\nh0\hh,\hskip5pta(0)\nh=\nh c\hh(0)\nh=\nh0\hh,\hskip5pt\sy(0)\nh
=\nh\ly(0)\nh=\nh\fy(0)\nh=\nh0\hh,\hskip5pt\xy(0)\nh=\nh\h(0)\nh=\nh0\hh.
\end{equation}
Thus, (\ref{jka}) holds for $\,k=0$. Assume now that $\,k\ge1\,$ and
\begin{equation}\label{jkm}
\mathrm{i)}\hskip6pt\jm[b\hh]=0\hs,\hskip15pt\mathrm{ii)}\hskip6pt
\jm[\hs\sj\hh]=\jm[\hh\dj\hh\ly\hh]\hs,\hskip15pt\mathrm{iii)}\hskip6pt
\jm[\hh\sy]=\jm[\fy\hh]\hh.
\end{equation}
As $\,\sj(0)=0\,$ and $\,\sy(0)=0\,$ (see (\ref{jts})), combining (\ref{jkz}), 
for $\,B(\sj,\td\sj)=\{\sj\nnh\cdot\nnh\td\sj\nh\}\,$ or 
$\,B(\sj,\sy)=\sj\sy$, with (\ref{jkm}.ii) -- (\ref{jkm}.iii), we obtain
\begin{equation}\label{jks}
\jk[\hh\{\sj\nnh\cdot\nnh\sj\nh\}\hh]
=\jk[\hh\{(\dj\hh\ly)\nnh\cdot\nnh(\dj\hh\ly)\nh\}\hh]\hh,
\hskip15pt\jk[\hh\sj\sy]=\jk[\hh(\dj\hh\ly)\fy\hh]\hh.
\end{equation}
However, by Lemma~\ref{neglg}(ii), (\ref{jts}) and (\ref{jkm}.i), the 
$\,\approx\,$ equivalences in Theorem~\ref{dsapp} imply equalities of 
$\,k$-jets at $\,t=0$. Since the left-hand sides in (\ref{sys}) all vanish due 
to our choice of $\,\f\nh,\h,\x,\y,\z,\p,\q\,$ and $\,\r$, (\ref{ddl}) and 
(\ref{zap}) thus yield
\begin{equation}\label{dkz}
\begin{array}{rl}
\mathrm{i)}&[\hh4\hh\dj\hh\{(\dj\hh\ly)\nnh\cdot\nnh(\dj\hh\ly)\}
+(\hh\dj\hh\ly)\hh\fy\hh]^{(k)}\\
&\hskip40pt=\,\dj\hh[\hs\U\nh\tya+\h\x\p\hh\tya+\V\nh\cya
+\W\nh\mya+\x\p\hh\tyb]^{(k)},\\
\mathrm{ii)}&[\hh\zy\hh]^{(k)}\nh=-\hh[\hh\y\hh\tyb]^{(k)}\hh,
\end{array}
\end{equation}
where $\,[\hs\ldots\hs]^{(k)}$ denotes the $\,k\hh$th derivative of 
$\,\ldots\,$ at $\,t=0$.

[In case (\ref{slr}), we leave formula (\ref{dkz}) unchanged, as its 
ingredients all refer to $\,\mathfrak{g}$, and modify (\ref{jka}) -- 
(\ref{jks}) as follows. In (\ref{jka}), (\ref{jkm}) and (\ref{jks}), 
$\,2\,\mathrm{Re}\hskip2.7pt\fy$ should appear, rather than $\,\fy$. In the 
first equality of (\ref{jks}), $\,\{\hskip2pt\cdot\hskip2pt\}_\bbR\s$ has to 
be used instead of $\,\{\hskip2pt\cdot\hskip2pt\}$, while the 
``multiplications'' should be replaced by their $\,\mathfrak{g}_\bbR\s$ 
counterparts (and $\,\fy\,$ by $\,2\,\mathrm{Re}\hskip2.7pt\fy$). Since 
$\,\dj\hh\ly\,$ is $\,\bbC$-bi\-lin\-e\-ar, Remarks~\ref{nrese} 
and~\ref{rehol}(v) allow us to rewrite the modified right-hand sides in 
(\ref{jks}) as 
$\,2\jk[\hh\mathrm{Re}\hs\{(\dj\hh\ly)\nnh\cdot\nnh(\dj\hh\ly)\nh\}]$ and 
$\,\jk[\hh(\dj\hh\ly)\fy\hh]$, with both operations now referring to the 
complex Lie algebra $\,\mathfrak{g}$.]

From (\ref{jks}) and (\ref{eqs}.i) we get $\,-\dz[\hh\sj\hh]^{(k)}\nh
=4\hh\dj\hs[\{(\dj\hh\ly)\nnh\cdot\nnh(\dj\hh\ly)\}]^{(k)}
+\hh[(\hh\dj\hh\ly)\hh\fy\hs]^{(k)}\nnh$, so that, by (\ref{dkz}.i), 
$\,\dz[\hh\sj\hh]^{(k)}\nh
=-\hh\dj\hh[\hs\hs\U\nh\tya+\h\x\p\hh\tya+\V\nh\cya
+\W\nh\mya+\x\p\hh\tyb]^{(k)}\nnh$. Next, (\ref{dio}) and (\ref{imo}) give 
$\,-\dz(\dj\hh\ly)=\dj\hh[\hh(\n^2\y+2\z)\hh\cya+(2\y+\z)\hh\mya-\h\y\hh\tya
-\y\hh\tyb]$, as $\,\dj\by=0$, cf.\ (\ref{fdd}.c), and $\,c\,$ may be 
replaced with $\,\h a+b$. Adding the former equality to 
$\,[\hs\ldots\hs]^{(k)}$ of the latter, and using equations (ii), (iv), (v) in 
(\ref{sys}), we obtain 
$\,\dz[\hh\sj-\dj\hh\ly\hh]^{(k)}\nh=-\hh[(\y+\x\p)\hh\tyb]^{(k)}\nnh$. Both 
sides here must vanish as a consequence of (\ref{opl}.b), since one lies in 
$\,\dz(\es)\,$ and the other, due to (\ref{krd}), in $\,\mathrm{Ker}\,\dz$. 
The resulting relation $\,[(\y+\x\p)\hh\tyb]^{(k)}\nh=0$, along with 
(\ref{jkm}.i) and the fact that, by (\ref{fhz}), $\,\y+\x\p\ne0\,$ at $\,t=0$, 
yields (\ref{jka}.i). Vanishing of $\,\dz[\hh\sj-\dj\hh\ly\hh]^{(k)}$ implies 
in turn that $\,[\hh\sj-\dj\hh\ly\hh]^{(k)}\nh=0$, as $\,\dz\,$ is injective 
on $\,\dz(\es)\,$ (see (\ref{opl}.c)), while 
$\,[\hh\sj-\dj\hh\ly\hh]^{(k)}\in\dz(\es)\,$ according to (\ref{eqs}.iii). Now 
(\ref{jka}.ii) follows from (\ref{jkm}.ii). Finally, (\ref{jka}.i) and 
(\ref{dkz}.ii) give $\,[\hh\zy\hh]^{(k)}\nh=0$. The definition of $\,\zy\,$ in 
Theorem~\ref{dsapp}, combined with the equality 
$\,[\hs\sy]^{(k)}\nh=[8\hh\{\dj\nnh\cdot\nnh(\dj\hh\ly)\}
+4\hh\{(\dj\hh\ly)\nnh\cdot\nnh(\dj\hh\ly)\}\hh]^{(k)}$ (obvious from 
(\ref{eqs}.ii) and (\ref{jka}.ii)), shows 
that $\,[\hs\sy]^{(k)}\nh=[\fy]^{(k)}\nnh$. Thus, (\ref{jka}.iii) is immediate 
from (\ref{jkm}.iii), completing the induction step, and proving (\ref{ceh}).

[In case (\ref{slr}), the last paragraph requires only minor changes. The 
right-hand side of the formula in the first line is $\,\bbC$-bi\-lin\-e\-ar, 
since so is $\,\dj\hh\ly$, even if $\,2\,\mathrm{Re}\hskip2.7pt\fy\,$ is used 
instead of $\,\fy$. (See Remarks~\ref{nrese} and~\ref{rehol}(v).) Thus, 
applying Remark~\ref{rehol}(viii) to the left-hand side, which now reads 
$\,-\dz_\bbR\s[\hh\sj\hh]^{(k)}\nnh$, we conclude that $\,[\hh\sj\hh]^{(k)}$ 
is $\,\bbC$-bi\-lin\-e\-ar, and Remark~\ref{rehol}(vii) allows us to replace 
$\,\dz_\bbR\s$ with $\,\dz$. Also, $\,2\,\mathrm{Re}\,$ must precede the 
right-hand sides of the last two equalities.]

Our fixed real-an\-a\-lyt\-ic curve $\,[\hs0,\vd)\ni t\mapsto\sj(t)\in\es\,$ 
with $\,\sj(0)=0$, lying entirely in $\,\hz\hs^{-\nh1}(0)$, thus satisfies 
relations (\ref{ceh}), in which all summands and factors, along with 
$\,\xy=(a,a)/\n$, are scalar or vec\-tor-val\-ued real-an\-a\-lyt\-ic 
functions of the variable $\,t$. It now follows that $\,\xy\,$ vanishes 
identically in $\,[\hs0,\vd)$. In fact, otherwise, for some 
$\,\vd\hh'\nh\in(0,\vd)$, one would have $\,\h\hs\xy\ne0\,$ at all 
$\,t\in(0,\vd\hh'\hh]\,$ and $\,\h^2\nnh/\xy\to0\,$ as $\,t\to0\,$ since, by 
(\ref{fhz}) -- (\ref{xto}), $\,\h\,$ is an analytic function of the variable 
$\,\xy\,$ with $\,\h=0\ne\hh d\h/d\hh\xy\,$ at $\,\xy=0$, while $\,\xy=0\,$ at 
$\,t=0\,$ according to (\ref{jts}). Consequently, $\,\h^2\nnh/\xy\,$ 
would be nonconstant. Yet it must be constant: by (\ref{rtl}), its values are 
all rational. The resulting contradiction proves that $\,\xy=0$ identically. 
(Note that (\ref{ceh}.i) implies (\ref{ahx}), 
and so Lemma~\ref{ratnl} may be applied here to $\,a=a(t)$, for any 
$\,t\in(0,\vd\hh'\hh]$.)

Since $\,\xy=0\,$ on $\,[\hs0,\vd)$, the functions $\,\h,\x,\y,\z,\p,\q,\r\,$ 
and $\,\f\nnh$, depending on $\,t$ via $\,\xy$, are all constant and their 
values are given by (\ref{fhz}). Thus, (\ref{ceh}.ii) amounts to 
$\,\sj=\dj\hh\eya$ for all $\,t$, as in (\ref{cdl}), with $\,a\nh^2\nh=0$, 
completing the proof.
\end{proof}
\begin{rem}\label{ssini}Here is why $\,(\sj,\sy)-\iz_a\w[\mathbf{v}]\,$ lies 
in the image of $\,\lz\,$ (as claimed in the lines following (\ref{jlk}), with 
$\,\sj,\sy,a,\nh\mathbf{v}\,$ depending on $\,t$). By (\ref{iav}), 
$\,(\sj,\sy)-\iz_a\w[\mathbf{v}]=(\sj-\dj\hh\ly,\hs\sy-\fy)$, 
while $\,\lz(\sj,\sy)=(\dz\sj,\sy)\,$ (see Remark~\ref{kssez}), and 
$\,\sj-\dj\hh\ly\in\dz(\es)$ in view of (\ref{eqs}.iii).
\end{rem}

\section{Proof of Theorem~\ref{isola}}\label{is}\checked
\setcounter{equation}{0}
We have the following consequence of Lemma~\ref{ractz}.
\begin{cor}\label{sffcl}{\smallit 
Under the assumptions listed at the beginning of Section~\/{\rm\ref{ra}}, the 
set\/ $\,\ec=\dj+\el\,$ of Ein\-stein connections in\/ $\,\mathfrak{g}\hh$, 
defined by\/ {\rm(\ref{cdl})}, contains all weak\-\hbox{ly\hh-}\hskip0ptEin\-stein connections 
sufficiently close to\/ $\,\dj$. This is the case for\/ 
$\,\,\mathfrak{g}_\bbR\s$ \hskip2.3ptin\/ {\rm(\ref{slr})} as well.
}
\end{cor}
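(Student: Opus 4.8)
The plan is to obtain Corollary~\ref{sffcl} as an immediate combination of Lemma~\ref{ractz} with the curve-selection consequence recorded in Corollary~\ref{equal}, after translating everything by $-\dj$ so as to work inside the vector space $\es$ rather than the affine space $\dj+\es$. Under the translation $\nabla\mapsto\nabla-\dj$, the set $\ec=\dj+\el$ of (\ref{cdl}) corresponds to $\el$, while by (\ref{eud}.ii) -- (\ref{eud}.iii) the set $\ew$ of weakly-Einstein connections corresponds to $\ez=\hz\hs^{-\nh1}(0)$, with $\hz:\es\to\es$ the polynomial mapping of (\ref{ths}.i); hence $\ez$ is an algebraic subset of $\es$. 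The set $\el$ is algebraic as well: by Theorem~\ref{mnres}(iv), proved in Section~\ref{fp}, the set $\ec$ is cut out inside $\ey$ by the nonhomogeneous quadratic equations (\ref{nqe}), namely $\nabla=\dj+\dj\hh\eya$ together with $a^2=0$, where $a$ depends linearly on $\pr(\nabla-\dj)$; translating these back gives polynomial equations for $\el$ in $\es$. In the case (\ref{slr}) of the underlying real Lie algebra $\mathfrak{g}_\bbR\s$ of $\mathfrak{sl}\hh(\n,\bbC)$ the argument of Section~\ref{fp} applies verbatim, so that $\el$ and $\ez$ are again algebraic, now as real algebraic subsets of the finite-dimensional real vector space underlying $\es$.

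Next I would restate Lemma~\ref{ractz} in this translated form: every real-analytic curve $[\hs0,\vd)\ni t\mapsto\sj(t)\in\es$ with $\sj(0)=0$ that lies entirely in $\ez$ -- equivalently, satisfies $\hz\hs(\sj(t))=0$ for all $t$ -- is contained in $\el$. This is precisely the hypothesis of Corollary~\ref{equal} applied with its $\ez$ taken to be our $\ez=\hz\hs^{-\nh1}(0)$ and its $\el$ taken to be our $\el$. Its conclusion is that $\el$ contains all points of $\ez$ sufficiently close to $0$; translating back by $\dj$, the set $\ec$ contains all weakly-Einstein connections sufficiently close to the standard connection $\dj$. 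Since Lemma~\ref{ractz} was established both for $\mathfrak{g}$ as in (\ref{lie}) and for $\mathfrak{g}_\bbR\s$ as in (\ref{slr}), the same reasoning covers both settings, which is the assertion of the corollary.

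No genuinely difficult step remains at this stage; all of the real work has been absorbed into Lemma~\ref{ractz}. The one point that calls for a word of care is the choice of scalar field: Corollary~\ref{equal}, and the curve-selection Theorem~\ref{cslem} underlying it, are stated over $\bbR$ for finite-dimensional real vector spaces, so in the complex and quaternionic cases (and in the case (\ref{slr})) one must first view $\es$, $\el$ and $\ez$ as real-algebraic objects before quoting these results. This reinterpretation is harmless: passing to the underlying real structure merely rewrites $\bbC$- or $\bbH$-polynomial defining equations as $\bbR$-polynomial ones, without altering any of the sets concerned or the curves joining their points to the origin.
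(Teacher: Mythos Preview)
Your proposal is correct and follows essentially the same route as the paper's proof: translate by $-\dj$, note that $\ez=\hz^{-1}(0)$ and $\el$ are algebraic (the latter via Theorem~\ref{mnres}(iv)), and invoke Corollary~\ref{equal} with Lemma~\ref{ractz} supplying its hypothesis. Your added remark about passing to the underlying real vector space when $\bbF=\bbC$ is a valid point that the paper leaves implicit.
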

\begin{proof}By (\ref{eud}.ii), weak\-\hbox{ly\hh-}\hskip0ptEin\-stein connections in 
$\,\mathfrak{g}\,$ form the set $\,\dj+\ez$, where $\,\ez=\hz\hs^{-\nh1}(0)$. 
Lemma~\ref{ractz} states that the hypotheses, and hence the conclusion, of 
Corollary~\ref{equal} are satisfied by $\,\el\,$ in (\ref{cdl}) and $\,\ez$, 
both of which are algebraic sets due to Theorem~\ref{mnres}(iv) and the 
definition (\ref{ths}.i) of $\,\hz$.
\end{proof}
Theorem~\ref{isola} is now immediate. First, Corollary~\ref{sffcl} applied to 
the triple $\,(\mathfrak{su}\hh(\d,\k),\bbR,\mathrm{i})\,$ in (\ref{lie}) 
implies that all Ein\-stein connections sufficiently close to $\,\dj\,$ lie in 
$\,\ec$. Secondly, for $\,\k=0\,$ and $\,\d=\n\ge3\,$ one has 
$\,\ec=\{\dj\}$, as $\,0\,$ is the only element $\,a\in\mathfrak{su}\hh(\n)\,$ 
with $\,a\nh^2\nh=0$. In other words, the Le\-\hbox{vi\hh-}\hskip0ptCi\-vi\-ta 
connection $\,\dj\,$ of the Kil\-ling form $\,\by\,$ is isolated among 
Le\-\hbox{vi\hh-}\hskip0ptCi\-vi\-ta connections of left-in\-var\-i\-ant 
Riemannian Ein\-stein metrics on $\,\mathrm{SU}\hh(\n)$. Rephrased in terms of 
metrics (see Remark~\ref{bijct}), this amounts to Theorem~\ref{isola}.

\section{Complex Witt bases}\label{cw}
\setcounter{equation}{0}
The following lemma uses Witt's theorem \cite[Chapter 13]{berger} to evaluate 
the dimensions of some Lie algebras and manifolds.
\begin{lem}\label{pshrm}{\smallit 
Let\/ $\,e_1\w,\dots,e_k\w$ be a basis of a totally null complex sub\-space\/ 
$\,\ev\,$ in the space\/ $\,\bbC^\n$ endowed with the standard 
ses\-qui\-lin\-e\-ar Her\-mit\-i\-an inner product\/ $\,\langle\,,\rangle\,$ 
of the sign pattern formed by $\,\d\,$ pluses and $\,\k\,$ minuses, where\/ 
$\,\d\ge\k\ge k$.
\begin{enumerate}
  \def\theenumi{{\rm\alph{enumi}}}
\item[{\rm(a)}] $\bbC^\n$ has a basis\/ 
$\,e_1\w,\dots,e_k\w,\hat e_1\w,\dots,\hat e_k\w,u_{2k+1}\w,\dots,u_n\w$ 
containing\/ $\,e_1\w,\dots,e_k\w$ such that, for all\/ $\,p,q,r,s\,$ in the 
appropriate ranges, $\,\langle e_r\w,e_s\w\rangle
=\langle\hat e_r\w,\hat e_s\w\rangle=\langle e_r\w,u_p\w\rangle
=\langle\hat e_r\w,u_p\w\rangle=0$, $\,\langle e_r\w,\hat e_s\w\rangle
=\delta_{rs}\w\hs$, $\,\langle u_p\w,u_p\w\rangle\in\{1,-1\}$, and\/ 
$\,\langle u_p\w,u_q\w\rangle=0\,$ if\/ $\,p\ne q$.
\item[{\rm(b)}] $\dim\mathfrak{h}=k^2$ for the Lie algebra\/ 
$\,\mathfrak{h}\,$ consisting of all\/ $\,a\in\mathfrak{su}\hh(\d,\k)\,$ 
with\/ $\,a(\bbC^\n)\subset\ev\nh$.
\item[{\rm(c)}] Every $\,a\in\mathfrak{su}\hh(\d,\k)\,$ with $\,a^2\nh=0\,$ 
is\/ $\,\mathrm{SU}\hh(\d,\k)$-con\-ju\-gate to\/ $\,ta$, \hskip1ptfor 
all\/ $\,t\in(0,\infty)$, so that\/ $\,0\,$ lies in the closure of the\/ 
$\,\mathrm{SU}\hh(\d,\k)$-orbit of\/ $\,a\hh$.
\item[{\rm(d)}] $\dim\mathfrak{k}=(\n-k)^2\nh+2k^2\nh-1\,$ for the Lie 
algebra\/ $\,\mathfrak{k}=\{a\in\mathfrak{su}\hh(\d,\k):a(\ev)\subset\ev\}$.
\item[{\rm(e)}] The set\/ $\,\nj_{\d,\k,k}$ of all\/ $\,k$-di\-men\-sion\-al 
totally null complex sub\-spaces of $\,\bbC^\n$ is a manifold, 
$\,\dim\nj_{\d,\k,k}=(2\n-3k)\hh k$, and the action of\/ 
$\,\mathrm{SU}\hh(\d,\k)\,$ on\/ $\,\nj_{\d,\k,k}$ is transitive.
\item[{\rm(f)}] Elements\/ $\,a\in\mathfrak{su}\hh(\d,\k)\,$ with\/ 
$\,a\nh^2\nh=0\,$ form exactly\/ $\,(\k+1)((\k+2)/2\,$ 
$\,\mathrm{SU}\hh(\d,\k)$-con\-ju\-gacy classes, classified by the values of\/ 
$\,\mathrm{rank}\hskip2.7pta\in\{0,1,\dots,\k\}\,$ and the positive index\/ 
$\,\mathrm{ind}_+\w a\,$ of the Her\-mit\-i\-an form\/ 
$\,\mathrm{i}\langle a(\,\cdot\,),\,\cdot\,\rangle$, with\/ 
$\,\mathrm{ind}_+\w a\in\{0,1,\dots,\mathrm{rank}\hskip2.7pta\}$.
\end{enumerate}
}
\end{lem}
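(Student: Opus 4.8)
The plan is to reduce everything to two classical inputs: the Witt decomposition and extension theorem for the Hermitian space $\,(\bbC^\n,\langle\,,\rangle)\,$ of signature $\,(\d,\k)$, and Sylvester's law of inertia. For (a) I would run the usual Witt construction: since $\,\ev\,$ is totally null and $\,\langle\,,\rangle\,$ nondegenerate, one builds inductively a totally null subspace $\,\hat\ev=\mathrm{span}\hskip1pt(\hat e_1,\dots,\hat e_k)\,$ dual to $\,\ev$, with $\,\langle e_r,\hat e_s\rangle=\delta_{rs}\,$ and $\,\langle\hat e_r,\hat e_s\rangle=0$; the orthogonal complement of $\,\ev\oplus\hat\ev\,$ is then nondegenerate Hermitian of dimension $\,\n-2k\,$ and signature $\,(\d-k,\k-k)$ — which makes sense precisely because $\,\d\ge\k\ge k\,$ — so an orthogonal basis of it supplies the $\,u_p\,$ with $\,\langle u_p,u_p\rangle\in\{1,-1\}$. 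For (b): if $\,a\in\mathfrak{su}\hh(\d,\k)\,$ and $\,a(\bbC^\n)\subset\ev\,$ with $\,\ev\,$ totally null, then skew-Hermiticity gives $\,\langle v,aw\rangle=-\langle av,w\rangle=0\,$ for $\,v\in\bbC^\n,\,w\in\ev$, so $\,\ev\subset\mathrm{Ker}\,a$; pairing $\,au_p\,$ against the basis then forces $\,au_p=0$, and pairing $\,a\hat e_s\,$ against $\,e_t\,$ kills the remaining freedom except for the block $\,\hat\ev\to\ev$, which skew-Hermiticity constrains to a skew-Hermitian $\,k\times k\,$ matrix (the trace vanishes automatically, $\,a\,$ being nilpotent). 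Hence $\,\dim\mathfrak{h}=k^2$.

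For (c): skew-Hermiticity gives $\,\langle av,aw\rangle=-\langle v,a^2w\rangle=0$, so $\,\ew=a(\bbC^\n)\,$ is totally null, $\,r:=\dim\ew\le\k$, and $\,\mathrm{Ker}\,a=\ew^\perp$. Apply (a) with $\,\ev=\ew\,$ to get a Witt basis $\,w_s,\hat w_s,u_p\,$; since $\,\mathrm{Ker}\,a=\ew^\perp=\mathrm{span}\hskip1pt(w_s,u_p)$, the map $\,a\,$ kills every $\,w_s\,$ and $\,u_p\,$ and carries $\,\hat\ew=\mathrm{span}\hskip1pt(\hat w_s)\,$ into $\,\ew$. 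Let $\,g_t\,$ (for $\,t>0$) act by $\,t\,$ on $\,\ew$, by $\,t^{-1}\,$ on $\,\hat\ew$, and trivially on the $\,u_p$; it lies in $\,\mathrm{SU}\hh(\d,\k)\,$ because it only rescales the perfect pairing $\,\langle\ew,\hat\ew\rangle\,$ by $\,t\cdot t^{-1}=1\,$ and has determinant $\,t^{\hs r}t^{-r}=1$, and a one-line check gives $\,g_t\hh a\hh g_t^{-1}=t^{\hs2}a$. Thus $\,a\,$ is $\,\mathrm{SU}\hh(\d,\k)$-conjugate to every positive multiple of itself, and $\,g_t\hh a\hh g_t^{-1}\to0\,$ as $\,t\to0$.

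For (d): the same reasoning shows that $\,a\in\mathfrak{k}\,$ preserves the flag $\,\ev\subset\ev^\perp\subset\bbC^\n\,$ (from $\,a(\ev)\subset\ev\,$ and skew-Hermiticity one gets $\,a(\ev^\perp)\subset\ev^\perp$), so in a Witt basis for $\,\ev\,$ the matrix of $\,a\,$ is block upper triangular; writing out skew-Hermiticity against the Gram form — which pairs $\,\ev\,$ with $\,\hat\ev\,$ and restricts to $\,\mathrm{diag}\hskip1pt(\pm1)\,$ of signature $\,(\d-k,\k-k)\,$ on the span of the $\,u_p$ — leaves as free data a complex $\,k\times k\,$ block, a complex $\,k\times(\n-2k)\,$ block, a skew-Hermitian $\,k\times k\,$ block, and an element of $\,\mathfrak{u}(\d-k,\k-k)$, subject to one real trace equation, so $\,\dim\mathfrak{k}=2k^2+2k(\n-2k)+k^2+(\n-2k)^2-1=(\n-k)^2+2k^2-1$. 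Then (e) follows: Witt's extension theorem makes $\,\mathrm{U}\hh(\d,\k)\,$ transitive on $\,k$-dimensional totally null subspaces, and multiplying by a unit scalar to fix the determinant makes $\,\mathrm{SU}\hh(\d,\k)\,$ transitive too, so $\,\nj_{\d,\k,k}\cong\mathrm{SU}\hh(\d,\k)/\mathrm{Stab}\,$ is a manifold whose isotropy Lie algebra is $\,\mathfrak{k}$, giving $\,\dim\nj_{\d,\k,k}=(\n^2\nh-1)-[(\n-k)^2+2k^2-1]=(2\n-3k)\hh k$.

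Part (f) is where the real work lies, and the main obstacle. The count is $\,\sum_{r=0}^{\k}(r+1)=(\k+1)(\k+2)/2$. One checks that $\,h_a(v,w)=\mathrm{i}\langle av,w\rangle\,$ is a Hermitian form (using skew-Hermiticity of $\,a$) with radical $\,\mathrm{Ker}\,a$, hence of rank $\,\mathrm{rank}\hskip2.7pta$, so $\,0\le\mathrm{ind}_+\w a\le\mathrm{rank}\hskip2.7pta\le\k\,$ (the last bound from (c)); moreover $\,h_{gag^{-1}}=h_a(g^{-1}\,\cdot\,,g^{-1}\,\cdot\,)$, so both quantities are conjugation invariants. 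To realize a given pair $\,(r,p)$, pick a Witt basis for a totally null $\,r$-space and set $\,a\hat e_s=\mathrm{i}\mu_s e_s$, $\,ae_s=au_p=0$, with $\,\mu_s\in\bbR\smallsetminus\{0\}\,$ and exactly $\,p\,$ of them negative; this lies in $\,\mathfrak{su}\hh(\d,\k)$, has $\,a^2=0$, rank $\,r$, and $\,h_a=\mathrm{diag}\hskip1pt(-\mu_s)\,$ on $\,\hat\ev$, so $\,\mathrm{ind}_+\w a=p$. Conversely, given $\,a\,$ with invariants $\,(r,p)$: by (e) an element of $\,\mathrm{SU}\hh(\d,\k)\,$ moves $\,\ew=a(\bbC^\n)\,$ to the standard totally null $\,r$-space, after which $\,a\,$ maps $\,\hat\ew\,$ isomorphically onto $\,\ew$, $\,\mathrm{Ker}\,a=\ew^\perp$, and the matrix $\,M\,$ of $\,a|_{\hat\ew}\,$ encodes $\,h_a|_{\hat\ew}$; the subgroup of $\,\mathrm{Stab}\hskip1pt(\ew)\,$ acting as a chosen $\,g\in\mathrm{GL}\hh(r,\bbC)\,$ on $\,\hat\ew$, by $\,(g^\dagger)^{-1}\,$ on $\,\ew$, and trivially on the $\,u_p\,$ conjugates $\,a\,$ so that $\,M\,$ undergoes a congruence, and by Sylvester's law of inertia a suitable $\,g\,$ brings $\,h_a|_{\hat\ew}\,$ to its standard diagonal form with exactly $\,p\,$ negative entries, i.e. brings $\,a\,$ to the model above (a further diagonal element of $\,\mathrm{SU}\hh(\d,\k)\,$ normalizes the $\,\mu_s\,$ to $\,\pm1$). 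The delicate point is precisely this last reduction — interlocking Witt's extension theorem (to standardize the null image $\,\ew$) with Sylvester's law (to standardize $\,h_a$) inside the single group $\,\mathrm{SU}\hh(\d,\k)$, while keeping the determinant-one normalization honest throughout.
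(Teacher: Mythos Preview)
Your proof is correct and follows essentially the same approach as the paper: Witt basis construction for (a), skew-Hermiticity to identify the relevant block structure for (b)--(d), rescaling within a Witt basis for (c), orbit--stabilizer for (e), and congruence classification of the skew-Hermitian $\,k\times k\,$ block for (f). The only notable variation is in (f), where you invoke Sylvester's law of inertia on the Hermitian form $\,h_a=\mathrm{i}\langle a\,\cdot\,,\,\cdot\,\rangle\,$ directly, while the paper instead unitarily diagonalizes the skew-Hermitian block $\,X\in\mathfrak{u}(k)\,$ and then rescales the resulting imaginary eigenvalues to $\,\pm\mathrm{i}$; these are equivalent phrasings of the same reduction.
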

\begin{proof}Since $\,\langle\,,\rangle\,$ descends to a (nondegenerate) 
Her\-mit\-i\-an inner product in $\,\ev^\perp\nnh/\hh\ev\nh$, we may choose 
$\,u_{2k+1}\w,\dots,u_n\w$ representing an orthonormal basis of 
$\,\ev^\perp\nnh/\hh\ev\nh$, and then define 
$\,\td e_1\w,\dots,\td e_k\w\in\bbC^\n$ by requiring 
$\,\langle\,\cdot\,,\td e_1\w\rangle\dots,
\langle\,\cdot\,,\td e_k\w\rangle\,$ to be the first $\,k\,$ elements of 
basis of $\,[\bbC^\n\hh]^*$ dual to a basis for which  
$\,e_1\w,\dots,e_k\w,u_{2k+1}\w,\dots,u_n\w$ are the initial $\,\n-k\,$ 
vectors. Setting $\,\hat e_r\w
=\td e_r\w-\sum_{s=1}^k\langle\td e_r\w,\td e_s\w\rangle e_s\w/2$, we 
obtain (a).

Skew-ad\-joint\-ness of $\,a\in\mathfrak{h}$, combined with the relation 
$\,a(\bbC^\n)\subset\ev\nh$, gives $\,\ev^\perp\nh\subset\mathrm{Ker}\,a$. 
Thus, elements $\,a\,$ of $\,\mathfrak{h}\,$ are characterized by
\begin{equation}\label{aer}
ae_r\w=au_p\w=0\hh,\hskip10pta\hat e_r
=\textstyle{\sum_{s=1}^k}X_{rs}\w e_s\w\hh,\hskip9pt\mathrm{where}\hskip6pt
X=[X_{rs}\w]\in\mathfrak{u}\hh(k)\hh,
\end{equation}
the requirement that the $\,k\times k\,$ matrix with the entries $\,X_{rs}\w$ 
be skew-Her\-mit\-i\-an expressing here skew-ad\-joint\-ness of 
$\,a\in\mathfrak{h}$. (As 
$\,a(\bbC^\n)\subset\ev\subset\ev^\perp\nh\subset\mathrm{Ker}\,a$, 
Remark~\ref{trtpr}(i) implies that $\,a\,$ is trace\-less.) Since 
$\,\dim\mathfrak{u}\hh(k)=k^2\nnh$, (b) follows.

Given $\,a\in\mathfrak{su}\hh(\d,\k)\,$ with $\,a\nh^2\nh=0$, 
the image $\,a(\bbC^\n)\,$ is totally null. Thus,
\begin{equation}\label{esp}
e_1\w,\dots,e_k\w\hs\mathrm{\ \ span\ \ }\,a(\bbC^\n)\,\mathrm{\ \ for\ \ }\,k
=\mathrm{rank}\hskip2.7pta\,\mathrm{\ \ and\ some\ basis\ as\ in\ (i),}
\end{equation}
which yields (\ref{aer}) with some $\,X$. Replacing each $\,e_r\w$ by 
$\,t^{-1/2}e_r\w$ and $\,\hat e_r\w$ by $\,t^{1/2}\hat e_r\w$ we obtain 
a new basis with (\ref{esp}), in which $\,X\,$ now represents $\,ta$, proving 
(c).

Next, $\,a\in\mathfrak{k}\,$ if and only if $\,a\,$ has, in the basis with 
(i), the block matrix form
\[
\left[\begin{array}{ccc}
X\,\,&\,\,Y&T\cr
0\,\,&\,\,H&-\nh Y^*\cr
0\,\,&\,\,0&-\nh X^*
\end{array}
\right].
\]
Here $\,T=-\nh T^*\nh\in\mathfrak{u}\hh(k)\,$ and 
$\,H=-\nh H^*\nh\in\mathfrak{u}\hh(\d-k,\k-k)$, while $\,(\hskip2.4pt)^*$ 
denotes three versions of the Her\-mit\-i\-an transpose. The zero 
sub\-matrices reflect the inclusions $\,a(\ev)\subset\ev\,$ and 
$\,a(\ev^\perp)\subset\ev^\perp$ (the latter due to skew-ad\-joint\-ness 
of $\,a$). This gives the formula in (d), with $\,X,Y,T,H\,$ and 
trace\-less\-ness of $\,a\,$ (which is a separate condition) contributing 
$\,2k^2,\,2(\n-2k)\hh k,\,k^2\nnh,(\n-2k)^2$ and $\,-1\,$ to the total.

Transitivity in (e) is obvious from (a). Thus, $\,\nj_{\d,\k,k}$ is a 
manifold. By (d), $\,\dim\nj_{\d,\k,k}=\dim\mathrm{SU}\hh(\d,\k)
-\dim\mathfrak{k}=\n^2\nnh-1-[(\n-k)^2\nh+2k^2\nh-1]$, as claimed in (e).

Finally, given $\,a\in\mathfrak{su}\hh(\d,\k)\,$ with $\,a\nh^2\nh=0$, 
the image $\,a(\bbC^\n)\,$ is totally null. We thus have (\ref{esp}), and 
hence (\ref{aer}) for some $\,X$. Another basis as in (\ref{esp}) arises when 
all $\,e_r\w$ and $\,\hat e_r\w$ are replaced by $\,e_r'$ and 
$\,\hat e_r'$, where $\,e_r'=\sum_{s=1}^kW_{\nh rs}\w e_s\w$ and 
$\,\hat e_r'=\sum_{s=1}^kZ_{rs}\w\hat e_s\w$ for any invertible 
complex $\,k\times k\,$ matrix $\,Z\,$ and its inverse conjugate transpose 
$\,W\nh$. The matrix playing the role of $\,X\,$ in (\ref{aer}) for $\,a\,$ 
and the new basis is $\,ZXZ^*\nnh$, where $\,Z^*\nh=W^{-1}$ is the conjugate 
transpose of $\,Z$. Since $\,X\in\mathfrak{u}\hh(k)\,$ is orthonormally 
di\-ag\-o\-nal\-izable, a suitable choice of $\,Z\in\mathfrak{u}\hh(k)\,$ will 
render $\,ZXZ^*$ diagonal, with imaginary entries. Furthermore, rescaling each 
$\,e_r\w$ and $\,\hat e_r\w$ as in the line following (\ref{esp}), with 
$\,t>0\,$ possibly depending on $\,r$, allows us to assume that the imaginary 
diagonal entries are all $\,\pm\hh\mathrm{i}\,$ or $\,0$. Combined with 
transitivity in (e), this gives (f).
\end{proof}
\begin{rem}\label{canon}The analog of (f) in Lemma~\ref{pshrm} for 
$\,\mathrm{SL}\hh(\n,\bbF)\,$ (where $\,\bbF\,$ is $\,\bbR,\bbC$ or $\,\bbH$), 
rather than $\,\mathrm{SU}\hh(\d,\k)$, is simpler: elements $\,a\,$ of 
$\,\mathfrak{sl}\hh(\n,\bbF)\,$ with $\,a\nh^2\nh=0\,$ form $\,[\n/2]+1\,$ 
$\,\mathrm{SL}\hh(\n,\bbF)$-con\-ju\-gacy classes, classified by the value of 
$\,\mathrm{rank}\hskip2.7pta$, ranging over $\,\{0,1,\dots,\k\}$. In fact, if  
$\,k=\mathrm{rank}\hskip2.7pta$, a basis
\begin{equation}\label{aeo}
ae_1\w,\dots,ae_k\w,e_1\w,\dots,e_k\w,u_{2k+1}\w,\dots,u_n\w\hh,
\end{equation}
where $\,ae_1\w,\dots,ae_k\w,u_{2k+1}\w,\dots,u_n\w$ span $\,\mathrm{Ker}\,a$, 
gives $\,a\,$ a canonical matrix form.
\end{rem}
\begin{rem}\label{scale}Assertion (c) of Lemma~\ref{pshrm} remains valid if 
one replaces $\,\mathrm{SU}\hh(\d,\k)$ with any of the other groups in 
(\ref{gsu}). To see this, one may use, instead of (\ref{aeo}), the basis 
$\,tae_1\w,\dots,tae_k\w,e_1\w,\dots,e_k\w,u_{2k+1}\w,\dots,u_n\w$.

Condition (c) in Lemma~\ref{pshrm} is also an immediate consequence of (f) 
(and similarly for $\,\mathrm{SU}\hh(\d,\k)$). We state it separately for easy 
reference.
\end{rem}

\section{Proofs of Theorem~\ref{mnres}, second part, and 
Theorem~\ref{slcre}}\label{pa}
\setcounter{equation}{0}
By (\ref{inc}), $\,\ec\subset\ee\subset\ew\subset\dj+\es$, which reduces 
proving Theorem~\ref{mnres}(v) to establishing relative openness of $\,\ec\,$ 
in $\,\ew\,$ or, equivalently, showing that every point $\,\dj+\hs\sj\,$ of 
$\,\ec\,$ has a neighborhood in the af\-fine space $\,\dj+\es\,$ which does 
not intersect $\,\ew\smallsetminus\ec$. To this end let us suppose that, on 
the contrary, some $\,\dj+\hs\sj\in\ec$ is the limit as $\,k\to\infty\,$ of 
a sequence $\,\dj+\hs\sj_k$ in $\,\ew\smallsetminus\ec$. By 
Remark~\ref{scale}, any neighborhood of $\,\dj$ contains an ad\-joint-ac\-tion 
image of $\,\dj+\hs\sj\,$ and, with it, the images of $\,\dj+\hs\sj_k$ for 
all large $\,k$. Thus, some sequence in $\,\ew\smallsetminus\ec\,$ converges to 
$\,\dj$, which contradicts Corollary~\ref{sffcl}.

The set $\,\ec$, being connected (since so is the cone 
$\,\ep\nh=\{a\in\mathfrak{g}:a\nh^2\nh=0\}$), as well as closed and relatively 
open in both $\,\ee\,$ and $\,\ew\,$ (by (iv) -- (v)), must be a connected 
component of both, which proves (vi). Assertion (viii) is in turn immediate 
from Lemma~\ref{pshrm}(f) and Remark~\ref{canon}.

All of the above remains valid for the underlying real Lie algebra of 
$\,\mathfrak{sl}\hh(\n,\bbC)$.

The next result trivially implies (vii) in Theorem~\ref{mnres}, as well as 
Theorem~\ref{slcre}; for the latter, we also use 
Remark~\ref{rehol}(ii)\hh-\hh(iv), the final clause of Corollary~\ref{sffcl}, 
and the preceding one\hh-line paragraph.

In the remainder of this section $\,(\mathfrak{g},\bbF\nnh,\ve)\,$ denotes one 
of the triples (\ref{lie}), for $\,\n=\d+\k\ge3$. All manifolds and mappings 
are assumed $\,\bbF$-an\-a\-lyt\-ic, with the term `real/com\-plex' 
indicating the appropriate choice between $\,\bbF=\bbR\,$ or $\,\bbF=\bbC$. 
The symbol $\,[\hskip5pt]$ stands for the integer part, and 
$\,\mathrm{Gr}_k^\n$ for the real/com\-plex Grass\-mann manifold of all 
\hbox{$\,k$-}\hskip0ptdi\-men\-sion\-al real/com\-plex vector sub\-spaces of 
$\,\bbF^\n\nnh$.

The polynomial bijective correspondence in Theorem~\ref{mnres}(ii) reduces the 
description of $\,\ec\,$ to determining the structure of the cone 
$\,\ep\nh=\{a\in\mathfrak{g}:a\nh^2\nh=0\}$. We achieve the latter by 
providing the following nonsingular model for $\,\ep\nh$.
\begin{thm}\label{model}{\smallit
There exist a connected\/ $\,\bbF$-an\-a\-lyt\-ic manifold\/ $\,\em\,$ with an 
open dense subset\/ $\,\em'$, and an\/ $\,\bbF$-an\-a\-lyt\-ic mapping\/ 
$\,\qz:\em\to\mathfrak{g}$, such that\/ $\,\qz\hh(\em)=\ep\,$ and\/ $\,\qz\,$ 
sends\/ $\,\em'$ dif\-feo\-mor\-phi\-cal\-ly onto a real$/\hn$com\-plex 
sub\-man\-i\-fold of\/ $\,\mathfrak{g}\,$ contained in\/ $\,\ep\nh$, while\/ 
$\,\dimf\em=[\n^2\nnh/2]\,$ for\/ $\,\mathfrak{g}=\mathrm{sl}\hh(\n,\bbF)\,$ 
and\/ $\,\dimr\em=2\hs\d\k$, if\/ $\,\mathfrak{g}=\mathrm{su}\hh(\d,\k)$, or\/ 
$\,\dimr\em=4[\n^2\nnh/8]$, if\/ $\,\mathfrak{g}=\mathrm{sl}\hh(\n/2,\bbH)$.
}
\end{thm}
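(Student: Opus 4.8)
The plan is to resolve the singularities of the cone $\,\ep=\{a\in\mathfrak{g}:a\nh^2\nh=0\}$ — to which Theorem~\ref{mnres}(ii) has already reduced the description of $\,\ec$ — by producing a smooth ``incidence'' manifold $\,\em\,$ that maps onto it. The first step is to record the structure of $\,\ep$. The condition $\,a\nh^2\nh=0\,$ is equivalent to $\,V_a\subseteq W_a$, where $\,V_a=a(\bbF^\n)\,$ and $\,W_a=\mathrm{Ker}\,a$; for $\,\mathfrak{g}=\mathfrak{su}(\d,\k)\,$ skew\-ad\-joint\-ness of $\,a\,$ forces moreover that $\,V_a\,$ is totally null and $\,W_a=V_a^\perp$. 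Hence $\,a\,$ has rank at most $\,k_{\max}$, which is $\,[\n/2]\,$ for $\,\mathfrak{sl}(\n,\bbF)$, $\,[\n/4]\,$ (quaternionic rank) for $\,\mathfrak{sl}(\n/2,\bbH)$, and $\,\k\,$ for $\,\mathfrak{su}(\d,\k)$. Each $\,a\in\ep\,$ is then determined by its \emph{flag datum} — the pair $\,V_a\subseteq W_a$, or just $\,V_a\,$ in the $\,\mathfrak{su}$ case, where $\,W_a\,$ is its orthogonal complement — together with the \emph{connecting map} $\,\bbF^\n/W_a\to V_a\,$ induced by $\,a$.

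Next I would construct $\,\em$. Let $\,B\,$ be the homogeneous space, under $\,\mathrm{SL}(\n,\bbF)$, $\,\mathrm{SL}(\n/2,\bbH)\,$ or $\,\mathrm{SU}(\d,\k)\,$ accordingly, of all flag data of the \emph{maximal} type $\,k=k_{\max}$: for $\,\mathfrak{su}(\d,\k)\,$ this is the manifold $\,\nj_{\d,\k,\k}\,$ of Lemma~\ref{pshrm}(e), taken with $\,k=\k$; for $\,\mathfrak{sl}(\n,\bbF)\,$ it is $\,\mathrm{Gr}_{\n/2}^\n\,$ when $\,\n\,$ is even and the two\-step flag manifold of pairs $\,\mathcal{V}\subseteq W\,$ with $\,\dimf\mathcal{V}=k_{\max}$, $\,\dimf W=\n-k_{\max}$, when $\,\n\,$ is odd, with the obvious quaternionic analogue for $\,\mathfrak{sl}(\n/2,\bbH)$. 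Transitivity (Witt's theorem, exactly as in Lemma~\ref{pshrm} and Remark~\ref{canon}) exhibits $\,B\,$ as a homogeneous space of a connected group, hence a connected real$/\hn$com\-plex analytic manifold. Over $\,B\,$ I take the homogeneous vector bundle whose fibre over a flag datum is the space of \emph{admissible} linear maps $\,\bbF^\n/W\to\mathcal{V}$: all of them in the $\,\mathfrak{sl}(\n,\bbF)\,$ and $\,\mathfrak{sl}(\n/2,\bbH)\,$ cases, and the real subspace of skew\-ad\-joint ones — naturally a space of skew\-Her\-mit\-i\-an forms, of real dimension $\,\k^2\,$ — in the $\,\mathfrak{su}(\d,\k)\,$ case. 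Let $\,\em\,$ be the total space of this bundle and $\,\em'\subset\em\,$ the open dense subbundle on which the fibre element is invertible. Let $\,\qz:\em\to\mathfrak{g}\,$ send a point to the endomorphism $\,a\,$ realized as the composite $\,\bbF^\n\to\bbF^\n/W\to\mathcal{V}\hookrightarrow\bbF^\n\,$ through its fibre element; this $\,a\,$ is polynomial in the data (so $\,\bbF$-an\-a\-lyt\-ic), lies in $\,\mathfrak{g}\,$ — being traceless, resp.\ skew\-ad\-joint, by construction — and satisfies $\,a\nh^2\nh=0$, whence $\,\qz(\em)\subseteq\ep$.

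It then remains to verify (a) $\,\qz(\em)=\ep$, (b) that $\,\qz\,$ restricts to a dif\-feo\-mor\-phism of $\,\em'\,$ onto a real$/\hn$com\-plex sub\-man\-i\-fold of $\,\mathfrak{g}\,$ contained in $\,\ep$, and (c) the connectedness and dimension claims. For (a), an arbitrary $\,a\in\ep\,$ of rank $\,k<k_{\max}$ is handled by enlarging $\,V_a\,$ to a totally null subspace of dimension $\,k_{\max}\,$ still contained in $\,V_a^\perp$ (or, in the $\,\mathfrak{sl}$ cases, extending the flag $\,V_a\subseteq W_a\,$ elementarily); Witt's theorem, in the form underlying Lemma~\ref{pshrm}(a), supplies such an enlargement, and then $\,a=\qz(\mathcal{V},W,\phi)\,$ for the corresponding rank\-def\-i\-cient $\,\phi$. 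For (b): on $\,\em'$ the endomorphism $\,a\,$ has rank $\,k_{\max}$, so $\,V_a$, $\,W_a$, the flag datum and the fibre element are all recovered from $\,a$, whence injectivity; $\,\qz\,$ is moreover an immersion, for if one realizes $\,T_{(\mathcal{V}_0,W_0)}B\,$ via a reductive complement to the stabilizer of the flag datum, then $\,\qz\,$ is linear and injective along each fibre, while a tangent vector to $\,B\,$ — represented by $\,\xi\in\mathfrak{g}\,$ — maps to $\,0$, or into the fibre subspace, only if $\,\xi\,$ preserves both $\,\mathcal{V}_0\,$ and $\,W_0$, i.e.\ lies in the stabilizer, hence only if it is $\,0$; and finally $\,\qz(\em')=\{a\in\ep:\mathrm{rank}\,a=k_{\max}\}\,$ is open in $\,\ep\,$ (rank is lower semicontinuous and at most $\,k_{\max}\,$ on $\,\ep$) and is a finite union of ad\-joint-ac\-tion orbits, all of dimension $\,\dimf\em'$ because the complexified centralizer of $\,a\,$ depends only on $\,\mathrm{rank}\,a$, hence a sub\-man\-i\-fold. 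For (c), $\,\em\,$ is connected since $\,B\,$ is and the fibres are vector spaces, and the three dimension formulas result from adding the fibre dimension to $\,\dimf B\,$ (resp.\ $\,\dimr B$), which one computes from Lemma~\ref{pshrm}(e) and the elementary dimension of a flag manifold, followed by a short arithmetic simplification.

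I expect step (b) to be the principal obstacle, and within it the case $\,\mathfrak{g}=\mathfrak{su}(\d,\k)$, where $\,\em'\,$ meets several ad\-joint-or\-bit types at once, so that ``$\,\qz(\em')$ is a sub\-man\-i\-fold'' does not reduce to a single-orbit statement and must be extracted from the injectivity\-/im\-mer\-sion property together with local closedness of the image. Once $\,\em\,$ has been produced, part~(vii) of Theorem~\ref{mnres} is immediate, and Theorem~\ref{slcre} follows as well, via Remark~\ref{rehol}(ii)\hh-\hh(iv) and the final clause of Corollary~\ref{sffcl}, as indicated in the excerpt.
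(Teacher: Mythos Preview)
Your construction is essentially identical to the paper's: both define $\em$ as the total space of the tautological vector bundle over the flag manifold $\eb$ (your $B$) of pairs $(\ev,\ev')$ with $\dim\ev=k_{\max}$, $\dim\ev'=\n-k_{\max}$, $\ev\subset\ev'$, whose fibre at $(\ev,\ev')$ consists of all $a\in\mathfrak{g}$ with $a(\bbF^\n)\subset\ev\subset\ev'\subset\mathrm{Ker}\,a$, and both take $\em'$ to be the maximum-rank locus. The difficulty you anticipate in (b) is dispatched in the paper by the single observation that the inverse of $\qz|_{\em'}$ is the image--kernel assignment $a\mapsto(a(\bbF^\n),\mathrm{Ker}\,a,a)$, which is manifestly analytic on the constant-rank set, so $\qz|_{\em'}$ is an analytic embedding and its image a submanifold without any orbit-by-orbit analysis.
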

\begin{proof}For triples other than $\,\mathfrak{sl}\hh(\n/2,\bbH),\bbR,1)$, we 
define the integer $\,k\,$ by $\,k=[\n/2]\,$ if 
$\,\mathfrak{g}=\mathrm{sl}\hh(\n,\bbF)\,$ and $\,k=\k\,$ if 
$\,\mathfrak{g}=\mathrm{su}\hh(\d,\k)$. As $\,\k\le\d$, in the latter case 
$\,k\,$ is the maximum dimension of a totally null complex sub\-space in 
$\,\bbC^\n$ endowed with the ses\-qui\-lin\-e\-ar inner product mentioned in 
Lemma~\ref{pshrm}.

Let $\,\eb\,$ denote the compact connected $\,\bbF$-an\-a\-lyt\-ic manifold 
formed by all 
$\,(\ev,\ev\hh'\hh)\in\mathrm{Gr}_k^\n\times\mathrm{Gr}_{\n-k}^\n$ such that 
$\,\ev\subset\ev\hh'$ and, for $\,\mathfrak{g}=\mathrm{su}\hh(\d,\k)\,$ only, 
$\,\ev\hh'$ is the orthogonal complement of the complex totally null 
sub\-space $\,\ev$. Thus, when $\,\mathfrak{g}=\mathrm{sl}\hh(\n,\bbF)\,$ and 
$\,\n\,$ is odd, $\,\eb\,$ is an $\,\bbF\mathrm{P}^{\n-k-1}$ bundle 
over $\,\mathrm{Gr}_k^\n$, with the bundle projection 
$\,(\ev,\ev\hh'\hh)\mapsto\ev\,$ and the fibre 
$\,\mathrm{P}(\ev\hh'\nnh/\ev)\,$ over $\,\ev\in\mathrm{Gr}_k^\n$. For  
$\,\mathfrak{g}=\mathrm{sl}\hh(\n,\bbF)$ and even $\,\n$, we may identify 
$\,\eb\,$ with $\,\mathrm{Gr}_k^\n$, as $\,\ev=\ev\hh'\nnh$. Similarly, 
$\,\ev\,$ uniquely determines $\,\ev\hh'$ if 
$\,\mathfrak{g}=\mathrm{su}\hh(\d,\k)$, which leads to the identification 
$\,\eb=\nj_{\d,\k,\k}$, with $\,\dim\nj_{\d,\k,k}=(2\d\nnh-\k)\k\,$ (as 
$\,\n=\d+\k$, cf.\ Lemma~\ref{pshrm}(e)). Consequently, depending on whether 
$\,\mathfrak{g}=\mathrm{sl}\hh(\n,\bbF)\,$ and $\,\n\,$ is even, or 
$\,\mathfrak{g}=\mathrm{sl}\hh(\n,\bbF)\,$ and $\,\n\,$ is odd, or 
$\,\mathfrak{g}=\mathrm{su}\hh(\d,\k)$,
\begin{equation}\label{dim}
\dimf\eb\nh=k^2\nnh,\mathrm{\ or\ }\,\dimf\eb\nh=(k+2)k\hh,
\mathrm{\ or\ }\,\dimr\eb\nh=(2\d\nnh-\k)\k\hh.
\end{equation}
We denote by $\,\em\,$ the total space of the real/com\-plex vector bundle 
over $\,\eb\,$ with the fibre $\,\ef\,$ over any $\,(\ev,\ev\hh'\hh)\in\eb\,$ 
consisting of all $\,a\in\mathfrak{g}\,$ such that the image of $\,a$ as a 
linear en\-do\-mor\-phism of $\,\bbR^\n$ or $\,\bbC^\n$ is contained in 
$\,\ev$, and its kernel contains $\,\ev\hh'\nnh$. As the inclusion 
$\,\ev\subset\ev\hh'$ then gives $\,a\nh^2\nh=0$, an obvious surjective 
mapping $\,\qz:\em\to\ep\,$ is defined by requiring the 
$\,\qz\hskip1pt$-im\-age of a fibre element $\,a\,$ at any 
$\,(\ev,\ev\hh'\hh)\in\eb\,$ to be $\,a\,$ itself. In all cases, 
$\,\dimf\ef\nnh=k^2\nnh$. In fact, if 
$\,\mathfrak{g}=\mathrm{sl}\hh(\n,\bbF)$, one may view $\,\ef\,$ as the space 
of all linear operators $\,\bbF^\n\hskip-2pt/\ev\hh'\nh\to\ev\nh$, while for 
$\,\mathfrak{g}=\mathrm{su}\hh(\d,\k)$ we can use Lemma~\ref{pshrm}(b).

The vector bundle just described is an $\,\bbF$-an\-a\-lyt\-ic sub\-bundle of 
the trivial bundle with the fibre $\,\mathfrak{g}\,$ over $\,\eb$, since it 
is the kernel of the obvious $\,\bbF$-an\-a\-lyt\-ic bundle morphism, the rank 
of which is constant as $\,\dimf\ef\nnh=k^2\nnh$. Thus, the manifold $\,\em\,$ 
and the mapping $\,\qz:\em\to\mathfrak{g}\,$ are both $\,\bbF$-an\-a\-lyt\-ic, 
while the dimension clause follows if one adds $\,\dimf\ef\nnh=k^2$ to the 
dimensions in (\ref{dim}).

Let us now choose the open subset $\,\em'$ of $\,\em\,$ to be the total space 
of a (non-vec\-tor) sub\-bundle of the bundle $\,\em$, the fibre of which over 
any $\,(\ev,\ev\hh'\hh)\in\eb$ is the subset of the fibre of $\,\em\,$ 
formed by those $\,a\,$ that, in addition, have the maximum rank $\,k$. The 
dif\-feo\-mor\-phic property of $\,\qz\,$ on $\,\em'$ is then obvious, with 
the inverse mapping $\,\qz\hh(\em')\to\em'$ sending any max\-i\-mum\hh-rank 
$\,a\,$ to the fibre element $\,a\,$ at the point $\,(\ev,\ev\hh'\hh)\in\eb\,$ 
which is the im\-age\hh-ker\-nel pair of $\,a$.

For the remaining triple $\,\mathfrak{sl}\hh(\n/2,\bbH),\bbR,1)$, one sets 
$\,k=[\n/4]\,$ and repeats the above definitions $\,\em\,$ and $\,\ef\,$ 
verbatim, while that of $\,\eb\,$ is modified: the constituents 
$\,\ev,\ev\hh'$ of any $\,(\ev,\ev\hh'\hh)\in\eb\,$ are, in addition, required 
to be qua\-ter\-ni\-on\-ic sub\-spaces of 
$\,\bbH^{\n/2}\nh\approx\hs\bbC^\n\nnh$. The argument used to prove 
(\ref{dim}), with $\,\bbF\,$ and $\,\n\,$ replaced by $\,\bbH\,$ and $\,\n/2$, 
now shows that the ``qua\-ter\-ni\-on\-ic'' dimension of $\,\eb\,$ equals 
$\,k^2$ or $\,(k+2)k$, depending on whether $\,\n/2\,$ is even or odd. Adding 
to either value $\,\dimh\ef\nnh=k^2\nnh$, we obtain $\,[\n^2\nnh/8]$, so that 
$\,\dimr\em=4[\n^2\nnh/8]$. Finally, the discussion of the last paragraph 
applies to this case without any changes.
\end{proof}

\section{Non\hh-\hn Ein\-stein examples}\label{ne}
\setcounter{equation}{0}
Not all weak\-\hbox{ly\hh-}\hskip0ptEin\-stein connections arising in 
Theorem~\ref{xmpls} are Ein\-stein connections. For instance, the nonuple 
$\,(\xy,\f\nh,\h,\x,\y,\z,\p,\q,\r)$, where
\begin{equation}\label{nsl}
\begin{array}{l}
\xy=(\z+1)/\nh\z^2,\hskip9pt\h=0\hh,\hskip9pt\x=1\hh,\hskip9pt\y=-z\hh,\hskip9pt\p
=(\z+2)/\nh\z\hh,\\
\q=-\hh\n^2(\z+2)\hh,\hskip9pt\r=\z+2\hh,\hskip9pt\f=(\z+1)(2-\z)/\nh\z^2,
\end{array}
\end{equation}
will satisfy (\ref{sys}) with 
$\,(\K,\U,\V,\W)=(-2(\z+1)/\nh\z,0,(2-\n^2)\z,-\z)\,$ in (\ref{uvw}), if one 
chooses $\,\z=-\hh(\n^2\nnh-2)/(\n^2\nnh-1)$. In fact, all equalities in 
(\ref{sys}) except (\ref{sys}.iv), and the values of $\,\K,\U,\W\nnh$, are 
easily verified by treating $\,\z\,$ as arbitrary and substituting 
$\,\n^2\y+2\z\,$ for $\,\V\,$ in (\ref{sys}.vii). We then also have 
$\,\K+2=-2/\nh\z$ and $\,\z+1=1/(\n^2\nnh-1)$. Since $\,\h=0<\xy\,$ in 
(\ref{nsl}), the assumption (\ref{ahx}) in Theorem~\ref{xmpls} amounts to 
requiring that $\,(\ve\hh a)^2$ be a specific positive multiple of 
$\,\mathrm{Id}$. Such $\,a\in\mathfrak{g}\,$ exists for any triple (\ref{lie}) 
as long as $\,\n=\d+\k\,$ is even.

Consequently, (\ref{nsl}) with $\,\z=-\hh(\n^2\nnh-2)/(\n^2\nnh-1)\,$ and 
any even $\,\n\ge3$ gives rise, via Theorem~\ref{xmpls}, to examples of 
uni\-mod\-u\-lar connections $\,\nabla\,$ such that 
$\,\nabla\nnh\rho\nnh^\nabla\nnh=0$. They are not, however, Ein\-stein 
connections, since $\,-\hh4\hh\rho\nnh^\nabla\nnh=\by+\fy\,$ is degenerate and 
nonzero. Namely, as explained below, $\,a\,$ lies in the null\-space of 
$\,\by+\fy\,$ (and $\,a\ne0\,$ by (\ref{ahx}) with $\,\h=0\ne\xy$), while 
$\,\langle\by,\by+\fy\rangle\ne0$.

Specifically, since $\,\tya(a,\,\cdot\,)=\langle2c,\,\cdot\,\rangle$, 
$\,\cya(a,\,\cdot\,)=\langle\xy a,\,\cdot\,\rangle$, and 
$\,\mya(a,\,\cdot\,)=\langle d,\,\cdot\,\rangle$ by (\ref{cea}), 
using the three steps of Remark~\ref{trsln} to evaluate traces, we get
\begin{equation}\label{taa}
\begin{array}{l}
\mathrm{the\ images\ of\ }\,a\,\mathrm{\ under\ the\ en\-do\-mor\-phisms\ 
(\ref{ava})\ are\ }\,\,2c,\hs\,a,\hs\,\xy a\hs,\,d\hh,\\
\mathrm{and\ the\ traces\ of\ the\ en\-do\-mor\-phisms\ (\ref{ava})\ equal\ 
}\,0,\,n^2\nnh-1,\,\xy,\,-\hs\xy\hh.
\end{array}
\end{equation}
Therefore $\,a$, if nonzero, is an eigen\-vector, for the eigen\-value 
$\,2\h\p+\xy\q+(\h^2\nh+\xy)\hh\r+\f\nh+1$, of the linear en\-do\-mor\-phism 
of $\,\mathfrak{g}\,$ corresponding to  $\,-\hh4\hh\rho\nnh^\nabla$ via 
(\ref{sgv}), while the trace of that en\-do\-mor\-phism is 
$\,\langle\by,\by+\fy\rangle\,=\,(n^2\nnh-1)\hh\q\,+\,(\r-\f\hh)\hs\xy$, 
with $\,\langle\,,\rangle\,$ as in (\ref{inp}.i). The claim concerning the 
eigen\-value is obvious here from (\ref{taa}), \hbox{along with} the 
equalities $\hs\,b=0\,\hs$ and $\hs\,d\hs=(\h^2\nh+\xy)\hh a$, immediate from 
the assumption $\,c=\h a$ (that is, (\ref{ahx})) and (\ref{bec}) or, 
respectively, from (\ref{dsh}) and Lemma~\ref{neglg}(i).

The above conclusions about the eigen\-value and trace apply to 
$\,-\hh4\hh\rho\nnh^\nabla\nnh=\by+\fy$ for all connections 
$\,\nabla\,$ arising from Theorem~\ref{xmpls}. In the special case 
(\ref{nsl}), with $\,\z\,$ treated as arbitrary, the eigen\-value 
equals $\,(\z+2)[\hh1-(n^2\nnh-1)\hh(\z+1)]/\nh\z^2\nnh$, and so it is 
$\,0\,$ due to our choice of $\,\z$. Similarly, the trace is 
$\,(n^2\nnh-1)-(n^2\nnh+1)\hh(\z+1)\ne0$.

\subsubsection*{Acknowledgments}
The authors thank Wolfgang Ziller for helpful comments.

A part of A.\ Derdzinski's work on the paper was done at the Erwin 
Schr\"o\-ding\-er International Institute for Mathematical Physics (ESI), 
Vienna, in July 2011, during the first week of the Workshop on Cartan 
Connections, Geometry of Homogeneous Spaces, and Dynamics, organized by 
Andreas \v Cap, Charles Frances and Karin Melnick.

\'S.\ R.\ Gal was partially supported by Polish MNiSW grant N N201 541738.

\end{document}